%% file: main.tex
\documentclass[reqno,10pt,letterpaper]{amsart}

\usepackage{xcolor}
\definecolor{winered}{rgb}{0.6,0,0}
\definecolor{lessblue}{rgb}{0,0,0.7}

\usepackage{graphicx}
\usepackage{import}
 \usepackage{amsmath,amssymb,amsthm,mathrsfs,cancel}
\newtheorem{lemma}{Lemma}[section]
\newtheorem{theorem}{Theorem}[section]
\newtheorem{proposition}{Proposition}[section]
\newtheorem{definition}{Definition}[section]

\theoremstyle{remark}
\newtheorem{remark}{Remark}[section]

\usepackage{pdfpages}
\newcommand\restr[2]{{
  \left.\kern-\nulldelimiterspace 
  #1 
  \vphantom{\big|} 
  \right|_{#2} 
  }}
\usepackage[pdftex,colorlinks=true,linkcolor=winered,citecolor=lessblue,urlcolor=lessblue,breaklinks=true,bookmarksopen=true]{hyperref}

\begin{document}
\bibliographystyle{amsalpha}
\begin{abstract}
    Using an approach similar to \cite{hintz2024stability}, we give a new proof of the nonlinear stability of de Sitter space as a solution to the Einstein vacuum equations with positive cosmological constant in $n+1$ dimensions, with $n\geq3$. Using the gauge freedom of the equations, we are able to prove a precise expansion of the perturbed spacetime at the conformal boundary. In $n=$ odd spatial dimensions, the conformally rescaled metric is smooth up to the future conformal boundary and in $n=$ even spatial dimensions it is smooth if and only if the obstruction tensor of the boundary metric vanishes; if not, then the conformally rescaled metric is log smooth at the boundary. These results also hold for asymptotically de~Sitter spaces. Using the results of \cite{fefferman1985,fefferman2008ambientmetric,Rodnianski2018-lt,Hintzasydesi}, the structure of our expansion allows us to establish a 1-1 correspondence between solutions to the Einstein vacuum equations close to de Sitter space and scattering data prescribed on the conformal boundary in general dimension.
\end{abstract}

\title{Stability of de Sitter Space and Expansion at the Conformal Boundary}
\date{\today}
\author{Maurus Leimbacher}
\address{Department of Mathematics, ETH Zurich, Rämistrasse 101, 8092 Zürich, Switzerland}
\email{lmaurus@ethz.ch}

\maketitle

\section{Introduction}
We study the stability of solutions to the Einstein vacuum equation
\begin{equation}
\label{eq:Einstein}
    \operatorname{Ric}g-\Lambda g=0
\end{equation}
in $n+1$ space-time dimensions with $n\geq3$. Here $g$ is a Lorentzian metric of signature $(-,+,\ldots,+)$ and $\Lambda$ is the cosmological constant and assumed positive; we fix $\Lambda=n$ as can always be achieved by scaling. The basic example is the de~Sitter solution
\begin{equation}
    \label{eq:de Sitter}
    M_{\mathrm{dS}}=(0,\pi)_s\times \mathbb{S}^n_\omega,\quad g_{\mathrm{dS}}=\frac{-\mathrm{d}s^2+\cancel{g}}{\sin(s)^2},
\end{equation}
where $\cancel{g}$ is the standard metric on $\mathbb{S}^n$. The conformal rescaling $\sin(s)^2g=-\mathrm{d}s^2+\cancel{g}$ gives the Einstein static universe which is smooth up to the boundary of $[0,\pi]\times\mathbb{S}^n$, called the \textit{conformal boundary}. It consists of $\mathcal{I}^+:=\{s=0\}\times\mathbb{S}^n$ and $\mathcal{I}^-:=\{\pi\}\times\mathbb{S}^n$, which we call the future and past conformal infinities, respectively.\footnote{Working near $s=0$ is notationally easier and in order to consider the future Cauchy problem we chose this convention. As de~Sitter space is symmetrical around $s=\pi/2$, we could have equivalently switched the naming convention throughout this paper.} As a consequence, in this paper, we consider the $-\partial_s$ vector to be future-directed.\\

\subsection{Stability}
We consider the stability problem. Global stability of de Sitter space was proved in $3+1$ dimensions by Helmut Friedrich in \cite{Friedrich1986}, rewriting the Einstein equations into conformal field equations. The conformal approach breaks down in general dimensions, but Anderson \cite{Anderson2005} was able to generalize the result to arbitrary dimension $n+1$ where $n$ is odd. He used properties of the ambient obstruction tensor of \cite{fefferman1985} which is identically 0 in the dimensions considered and was also able to characterize the asymptotic behavior. Both Friedrich and Anderson showed that the metrics can be written as $\Omega^{-2}\bar{g}$, where the conformal factor $\Omega$ vanishes simply at the conformal boundary of the perturbed spacetime and $\bar{g}$ is the unphisical metric which extends smoothly to the conformal boundary. Stability in the case of fully general dimension was proved in \cite{Ringstrom2008-zu}. In fact, Ringström proves a more general setting with a scalar field present, but this result also encompasses the cosmological constant case. Ringström's method on the other hand only gives the leading order term in the expansion at conformal infinity. Our approach (adapted from \cite{hintz2024stability}) to proving stability is similar in spirit to Ringström's approach, but we improve on it by explaining in a more conceptual fashion the reason for stability in the chosen gauge via constraint damping and indicial roots. An indicial roots perspective on de~Sitter type spaces was already emphasized in Vasy's work on the wave equation on asymptotically de~Sitter-like spaces \cite{Vasy2007-to}. Furthermore, the method we use can then be used to prove the precise asymptotic behavior of the solution in arbitrary dimension. For more details on the context of this expansion and other related results, see the discussion after the rough statement of the two main results of this paper. Cicortas \cite{cicortas2024nonlinearscatteringtheoryasymptotically} also proves the nonlinear stability in $n+1$ dimensions, $n$ even, together with a sharp scattering isomorphism which we discuss in more detail below.\\

We will study the future Cauchy problem for equation \eqref{eq:Einstein} with initial data posed at a hypersurface $\Sigma=\{s_0\}\times\mathbb{S}^n$ of $M_{\mathrm{dS}}$. The initial data for this system are the first and second fundamental forms $\gamma$ and $k$ of $\Sigma$, satisfying the constraint equations
$$
R_\gamma-|k|_\gamma^2+\left(\operatorname{tr}_\gamma k\right)^2-2 \Lambda=0, \quad \delta_\gamma k+\mathrm{d} \operatorname{tr}_\gamma k=0.
$$
Here, $R_\gamma$ is the scalar curvature, and $\left(\delta_\gamma k\right)_\mu=-\gamma^{\kappa \lambda} k_{\mu \kappa ; \lambda}$.\\

We consider smooth initial data close to the first and second fundamental form on $\Sigma$ induced by $g_{\mathrm{dS}}$, as measured in a suitable Sobolev space. By working in a generalized harmonic gauge (see section \ref{se:stability}) on the domain 
$$
\Omega=[0,s_0]\times\mathbb{S}^n,
$$
we are able to prove stability on $\Omega$. Gluing together the past solution using a finite in time result, we are able to prove global stability. The precise version is the content of Theorem \ref{thm:solution}.
\begin{theorem}[Stability of de Sitter space, rough version.]
    The development $(g,M_{\mathrm{dS}})$ of this set of initial data can on $\Omega$ be written as $g_{\mathrm{dS}} + \frac{\chi(s) h_{(0)}}{s^2} + \tilde{h}$. The first correction term encapsulates the first-order change $h_0(s,\omega;\mathrm{d}\omega):=\frac{h_{(0)}(\omega; \mathrm{d}\omega)}{s^2}$ of $g$ from $g_{\mathrm{dS}}$ at $\mathcal{I}^+$. Here, $h_{(0)}$ is a smooth section of $S^2T^*\mathbb{S}^n$ independent of $s$ and $\chi(s)$ is a smooth cutoff, compactly supported in $[0,s_0/2)$. The second term, $\tilde{h}$, is a smooth section of $ S^2 T^*((0,s_0]\times\mathbb{S}^n)$ for which $s^2\tilde{h}$ extends continuously and decays at $\mathcal{I}^+$. It describes further corrections that decay in comparison to $g_{\mathrm{dS}}$ and $h_0$ at future conformal infinity and ensures the initial conditions are satisfied. Both correction terms are small in their respective (in the case of $\tilde{h}$ weighted-) Sobolev space. Furthermore, the metric $g$ in a neighborhood around the past conformal boundary $\mathcal{I}^-$ is of the same form, if written in the coordinate $\pi-s$. Lastly, the constructed space-time is future and past geodesically complete.
\end{theorem}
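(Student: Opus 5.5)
The plan is to follow the Hintz-type approach: gauge-fix, exhibit the linearized problem as a 0-b type hyperbolic system whose indicial roots at $s=0$ can be computed explicitly, and close a nonlinear iteration in weighted b-Sobolev spaces on $\Omega=[0,s_0]\times\mathbb{S}^n$.

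\textbf{Gauge and background.} I impose a generalized harmonic gauge $\Upsilon(g)-\theta=0$ relative to $g_{\mathrm{dS}}$, where
\[
\Upsilon(g)_\mu=g_{\mu\nu}g^{\kappa\lambda}\bigl(\Gamma(g)^\nu_{\kappa\lambda}-\Gamma(g_{\mathrm{dS}})^\nu_{\kappa\lambda}\bigr).
\]
The gauge-fixed equation is
\[
P(g):=\operatorname{Ric}g-\Lambda g-\tilde\delta^*(\Upsilon(g)-\theta)=0,
\]
with $\tilde\delta^*$ a modified symmetric gradient carrying lower-order terms chosen for constraint damping. Writing $g=g_{\mathrm{dS}}+h$ and $h=s^{-2}\dot h$, the linearization $L=D_{g_{\mathrm{dS}}}P$ becomes, after conjugation, a b-operator in $(s\partial_s,\partial_\omega)$ near $s=0$. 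Decomposing $\dot h$ into the components $\mathrm{d}s^2$, $\mathrm{d}s\,\mathrm{d}\omega$ and $\mathrm{d}\omega^2$, I compute the indicial family $I(L,\lambda)$. The design goal for $\tilde\delta^*$ is that the only indicial root with $\operatorname{Re}\lambda\le 0$ is $\lambda=0$, and that it occurs only in the pure spatial component. This is exactly the term $h_{(0)}(\omega;\mathrm{d}\omega)$ in the statement.

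\textbf{Constraint propagation.} The Bianchi identity shows that $\Upsilon$ solves a wave equation $\delta_g G_g\tilde\delta^*\Upsilon=0$, which is again a b-operator. I choose the gauge modification so that all of its indicial roots have positive real part (constraint damping). Then $\Upsilon$, which vanishes to first order at $\Sigma$ because the data satisfy the constraints, vanishes identically on $\Omega$, and solutions of $P(g)=0$ solve \eqref{eq:Einstein}.

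\textbf{Linear and nonlinear estimates.} Energy estimates with weight $s^{-\alpha}$, for $0<\alpha$ smaller than the smallest positive indicial root, combined with the b-normal operator argument, show the following. A solution of $L\dot h=f$ with $f$ decaying is $\dot h=\chi h_{(0)}+\tilde h'$, where $h_{(0)}=\dot h|_{s=0}$ on the spatial block and $\tilde h'\in s^\alpha H_b^k$; this is the rough-version space. I then prove tame estimates for $P(g_{\mathrm{dS}}+h)-Lh$, which is quadratic in $h$ and in its b-derivatives. The key point is that the quadratic terms in $h_{(0)}$ produce only errors in which the leading order cancels, because the nonlinear structure maps $s$-independent spatial perturbations to $\mathcal{O}(s)$ corrections. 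The solution then follows by a contraction or Nash--Moser iteration in the space $\mathbb{R}$-augmented by $h_{(0)}\in H^k(\mathbb{S}^n;S^2T^*\mathbb{S}^n)$. Smoothness of $\tilde h$ in the interior follows from propagation of regularity, and continuity and decay of $s^2\tilde h$ follow from Sobolev embedding in the weighted space.

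\textbf{Globalization and completeness.} On $[s_0,\pi-s_0]$, Cauchy stability for the harmonic-gauge hyperbolic system gives a solution close to $g_{\mathrm{dS}}$. At $s=\pi-s_0$ the data are again close to de~Sitter data, so the symmetric argument in the coordinate $\pi-s$ gives the same form near $\mathcal{I}^-$. For geodesic completeness, the metric is conformal to $\bar g=-\mathrm{d}s^2+\cancel g+\mathcal{O}(\epsilon)$ with factor $s^{-2}$. Causal geodesics reach $s=0$ only at infinite affine parameter, because the affine parameter compares to $\int s^{-1}\,\mathrm{d}s$ up to perturbation. This can be shown via the ODE for $\dot s$ along geodesics, perturbing the explicit de~Sitter computation. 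Spacelike geodesics are handled similarly.

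\textbf{Main obstacle.} The main obstacle is the indicial-root computation: choosing $\tilde\delta^*$ and $\theta$ so that simultaneously (i) $L$ has no roots in $\operatorname{Re}\lambda<0$ and only the spatial root $0$ on $\operatorname{Re}\lambda=0$, without log terms, and (ii) the constraint propagation operator has only positive roots. I would first try the modification $\tilde\delta^*u=\delta^*u+\gamma_1\,\frac{\mathrm{d}s}{s}\cdot u-\gamma_2\,u(s\partial_s)\,g$, and tune $\gamma_1,\gamma_2$ by explicit computation on the block decomposition.
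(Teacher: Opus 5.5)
Your overall architecture broadly matches the paper: gauge fixing, indicial roots at $\mathcal{I}^+$, growing-weight energy estimates upgraded by indicial inversion, Nash--Moser, Cauchy stability in the middle region, and symmetry near $\mathcal{I}^-$. The gap is the gauge. The one you propose cannot satisfy your requirement (i), and fixing it requires structure your iteration does not have.

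\textbf{Modifying $\tilde\delta^*$ alone cannot work.} Take a gauge source $\theta$ that does not depend on $g$, as in your $L=D_{g_{\mathrm{dS}}}P$. Then $D\Upsilon=-\delta_gG_g$, so $I(L)=I(D\operatorname{Ric}-\Lambda+\tilde\delta^*\delta_gG_g)$. At the indicial level this gives $L\delta^*\omega=\tilde\delta^*(\delta_gG_g\delta^*_g\omega)$. For $\omega=s^\lambda\omega_0e^0$, the indicial family of $\delta_gG_g\delta_g^*$ is $\tfrac12(\lambda^2-n\lambda-2n)$; this is \eqref{eq:gaugeind} with the $E$-contribution $\lambda-2n$ removed. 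Meanwhile $I(\delta^*,\lambda)\omega\neq0$. Hence $\lambda_-=\tfrac12\bigl(n-\sqrt{n^2+8n}\bigr)<0$ is an indicial root of $L$ for \emph{every} choice of $\tilde\delta^*$. These are the growing pure-gauge modes of the unmodified wave gauge on de~Sitter, and no tuning of $\gamma_1,\gamma_2$ removes them. You must modify the gauge condition itself. The paper adds $E_{g_0}(g-g_0)$, which turns the $e^0$ pure-gauge family into $-\tfrac12(\lambda-2)(\lambda-n)$. It uses $\tilde E$ only for the $\mathrm{d}s\,\mathrm{d}\omega$ block, which otherwise has the root $-1$.

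\textbf{The repair breaks your nonlinear scheme.} A term like $E$ cannot be measured against the fixed metric $g_{\mathrm{dS}}$. Then $E(\chi h_0)$ contains $\operatorname{tr}h_{(0)}$, which is $O(1)$ at $\mathcal{I}^+$, so $P$ no longer maps into decaying spaces. The paper therefore uses the background $g_0=g_{\mathrm{dS}}+\chi h_0$, which depends on the unknown. But then the solution $\chi v_0+\tilde v$ of the $\tilde h$-linearized equation (with $v_0$ traceless for $g_{(0)}$) does not solve the linearization in $(h_0,\tilde h)$. Varying $h_0$ also moves the gauge, producing an extra term $\tilde\delta^*_g\tilde\theta$. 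This $\tilde\theta$ decays but is of the same size as $v_0$, so it cannot be treated perturbatively.

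The paper's key device is to make the gauge one-form $\theta$ a third unknown and run Nash--Moser on $(h_0,\tilde h,\theta)$. The discrepancy is absorbed by the update \eqref{eq:gaugeupdate}, which is shown to lie in $s^\beta H^\infty_b$ with tame bounds. Your scheme iterates on $(h_{(0)},\tilde h)$ alone, with only the ``leading-order cancellation'' in $P(g_{\mathrm{dS}}+\chi h_0)$ as structural input. It therefore supplies no tame right inverse of the linearization, and the nonlinear step does not go through as described.

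Separately, the vanishing of the gauge one-form on $\Omega$ needs only uniqueness for $\delta_gG_g\tilde\delta^*_g\eta=0$ with zero Cauchy data at $\Sigma$. Damping matters for the indicial roots of $L$, not for that step.
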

\subsection{Asymptotic expansion}
The second half of the paper is concerned with proving a precise asymptotic expansion of the solution metric $g$ at $\mathcal{I}^+$, namely bringing it into Fefferman-Graham form. This is achieved by pulling back the already obtained solution by suitable diffeomorphisms to improve the behavior at $\mathcal{I^+}$. The resulting expansion is different depending on whether the spatial dimension $n\geq3$ is odd or even. We give here a rough version of the result, the precise statement is Theorem \ref{thm:expansion}.\\

\begin{theorem}[Main result, rough version]
The solution to the Cauchy problem \eqref{eq:Einstein} with initial data close to de~Sitter data posed at $\Sigma$ is isometric to $(g,M_{\mathrm{dS}})$, where in a neighborhood of $\mathcal{I}^+$, $g$ is block diagonal,
$$
g=\frac{-\mathrm{d}s^2+H(s,\omega;\mathrm{d}\omega)}{s^2}.
$$
Furthermore, we have the following power series structure of $g$. Setting $g_{(0)}:= (s^2g)\left|_{\mathcal{I}^+}\right.$, we have: \\

\begin{enumerate}
    \item If the dimension $n$ is odd, the conformally rescaled metric $s^2g$ is smooth down to $\mathcal{I}^+$. More precisely, there exist $g_{(i)}\in C^\infty(\mathcal{I}^+;S^2T^*\mathbb{S}^n)$ for $ i \in \mathbb{N}_{\geq2}$ such that for all $N\in \mathbb{N}$ we have that
    \begin{equation}
    \label{eq:expansionoddpre}
         g(s,\omega)-\left(\frac{-\mathrm{d}s^2+g_{(0)}(\omega)}{s^2} +s^{-2}\sum_{i=2}^Ns^{i}g_{(i)}(\omega)\right)
    \end{equation}
    is smooth and decays faster than $s^{N-2}$ for $s\rightarrow0$. Furthermore, for $i<n$, $g_{(i)}$ is computable using only $g_{(0)}$. In fact, for odd $i<n$ $g_{(i)}=0$, therefore the first odd order that appears in the expansion is $g_{(n)}$. It is a transverse traceless tensor, meaning $\operatorname{tr}_{g_{(0)}}g_{(n)}=0$ and $\delta_{g_{(0)}}g_{(n)}=0$. All further $g_{(i)}$ can then be computed using $g_{(0)},g_{(n)}$.\\
    \item  If the dimension $n$ is even, the conformally rescaled metric $s^2g$ is smooth down to $\mathcal{I}^+$ if and only if the obstruction tensor (a notion explained below) of $g_{(0)}$ vanishes. More precisely, for all $i\in \mathbb{N}_{\geq2}$ and $m\leq \lfloor i/n\rfloor$ there exist $g_{(i)}^m\in \mathcal{C}^\infty(\mathcal{I}^+;S^2T^*\mathbb{S}^n)$ such that for all $N\in\mathbb{N}$
    \begin{equation}
        g(s,\omega)-\left(\frac{-\mathrm{d}s^2+g_{(0)}(\omega)}{s^2}+s^{-2}\sum_{i=2}^N\sum_{m=0}^{\lfloor i/n\rfloor} s^{i}\log(s)^m g_{(i)}^m(\omega)\right)
    \end{equation}
    is of class $\mathcal{C}^{N-2}$ and decays faster than $s^{N-2}$ for $s\rightarrow0$. Furthermore, $g_{(i)}^m=0$ if $i$ is odd and thus only even powers of $s$ appear in the expansion. For $i<n$, the $g_{(i)}^m=g_{(i)}^0$ can be computed solely using $g_{(0)}$. Further, $g_{(n)}^0$ is a traceless tensor whose divergence is a certain one-form, and all further $g_{(i)}^m$ are then determined by $g_{(0)},g_{(n)}^0$. The first log-order that appears is $g_{(n)}^1$, which vanishes if and only if the obstruction tensor of $g_{(0)}$ vanishes. If it does, all further $g_{(i)}^m$ for $m>0$ vanish.
\end{enumerate} 
In a neighborhood of $\mathcal{I}^-$, $g$ can be brought into the same structural form in the coordinates $\tilde{s}=\pi-s$ through isometry.
\end{theorem}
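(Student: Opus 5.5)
Starting from the stability result (the rough Theorem above, made precise in Theorem \ref{thm:solution}), we have $g=g_{\mathrm{dS}}+\chi h_{(0)}/s^2+\tilde h$ on $\Omega$, with $s^2\tilde h$ decaying at $\mathcal{I}^+$ in a weighted b-Sobolev sense. The plan is to upgrade this to a full polyhomogeneous expansion in two stages. First we get a full expansion in the generalized harmonic gauge. Then we change gauge by a diffeomorphism so that the metric becomes block diagonal with $-\mathrm{d}s^2/s^2$ as the normal part.

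\textbf{Stage one: expansion in the wave gauge.} Write the gauge-fixed Einstein equation as $P(g)=0$, where $P$ is quasilinear and its linearization at $g_{\mathrm{dS}}$ is, near $s=0$, a b-operator in $s\partial_s$. We compute the indicial family of this linearization acting on $S^2T^*$, decomposed into its $\mathrm{d}s^2$, $\mathrm{d}s\,\mathrm{d}\omega$ and $\mathrm{d}\omega^2$ components. Its indicial roots should be $0$ and $n$ for the tangential part (the free data $g_{(0)},g_{(n)}$), together with gauge-dependent roots for the other components. We then run the standard iteration. Treat the nonlinear terms as forcing. Move one order at a time by inverting the indicial operator on the Mellin side. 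Each step gains a power of $s$, and a $\log s$ factor appears whenever the forcing hits an indicial root. Parity of the forcing terms, which are built from $s^2$ and $\mathrm{d}s^2$, shows that only even powers arise below order $n$. Higher regularity in $\omega$ and in $s\partial_s$ comes from b-regularity estimates for the hyperbolic problem, so each remainder is conormal, and that suffices for the iteration.

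\textbf{Stage two: Fefferman--Graham gauge.} We look for a diffeomorphism $\Phi$ fixing $\mathcal{I}^+$, with new boundary defining function $\tilde s$, such that $\tilde s^2\Phi^*g=-\mathrm{d}\tilde s^2+H$. This is the usual geodesic normal form. We pick $\tilde s$ solving the eikonal equation $|\mathrm{d}\log\tilde s|^2_{g}=-1$, i.e. $|\mathrm{d}\tilde s|^2_{s^2g}=-(\tilde s/s)^2$ after rescaling. We solve it with $\tilde s=s e^{\phi}$, and the rescaled equation is noncharacteristic at $s=0$. We then flow along the gradient of $\tilde s$. The polyhomogeneous expansion of $g$ passes to $\phi$ and to the flow, so $H$ is polyhomogeneous with the index set from stage one, possibly enlarged by logs.

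\textbf{Stage three: identify the coefficients.} Impose $\operatorname{Ric}(g)=ng$ on the block-diagonal form. Following \cite{fefferman1985,fefferman2008ambientmetric}, this gives the recursion
\[
sH''-(n-1)H'-\operatorname{tr}(H^{-1}H')H+\dots=s\cdot(\text{curvature terms})
\]
with indicial roots $0,n$. This yields the stated facts:
\begin{itemize}
\item $g_{(i)}$ for $i<n$ is locally determined by $g_{(0)}$, and the odd ones vanish.
\item $g_{(n)}$ is undetermined but has trace and divergence fixed by the constraints. Both vanish when $n$ is odd, and are a local one-form when $n$ is even.
\item For $n$ even, the coefficient of $s^n\log s$ equals a multiple of the obstruction tensor.
\end{itemize}
Uniqueness of formal solutions also shows that polyhomogeneity can only generate the claimed logs. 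Consequently, for $n$ odd no logs arise, and for $n$ even no logs arise when the obstruction tensor vanishes. The statement at $\mathcal{I}^-$ follows by time symmetry applied to the past solution.

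The main obstacle is stage one: controlling the gauge-fixed indicial roots and the $\log$ terms they create in the $\mathrm{d}s$-components. These must not spoil smoothness in odd dimensions, so we have to show that the gauge-induced logs are pure gauge and are removed in stage two. I would try first to choose the gauge source functions so that the indicial roots of the non-tangential parts are non-integer, or avoid coincidences with the forcing orders.
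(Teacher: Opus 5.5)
Your Stage one has a genuine gap. It presupposes that the metric from the stability theorem satisfies a gauge-fixed equation $P(g)=0$ whose sources admit an expansion at $\mathcal{I}^+$, and nothing you start from gives this. Theorem~\ref{thm:solution} only provides an Einstein metric $g_{\mathrm{dS}}+\chi h_0+\tilde h$ with $\tilde h\in s^\beta H^\infty_b$. The metric actually constructed (Theorem~\ref{thm:nonlinsol}) satisfies $\Upsilon(g;g_0)+E_{g_0}(g-g_0)=\tilde\chi\theta$. Here $\theta\in s^\beta H^\infty_b(\Omega;{}^0T^*M)$ is a gauge one-form generated by the Nash--Moser scheme: it absorbs the change of the background $g_0=g_{\mathrm{dS}}+\chi h_0$ whenever $h_0$ is updated, cf.\ \eqref{eq:gaugeupdate}. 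Nothing beyond this conormal bound is known about it. In the gauge-fixed equation it enters as the source $\tilde\delta^*_g(\tilde\chi\theta)$, so your Mellin iteration stalls at the a priori weight $s^\beta$. You also cannot simply drop the gauge. Since $I(D\operatorname{Ric}-\Lambda,\lambda)\circ I(\delta^*,\lambda)=0$, the indicial family of the ungauged operator is singular for every $\lambda$, so there is nothing to invert.

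The missing step is a preliminary diffeomorphism, a conormal perturbation of the identity, that makes the gauge source vanish to infinite order; this is Proposition~\ref{prop:gaugeup}. One pulls back along flows of $V\in s^\beta H^\infty_b(\Omega;{}^0TM)$, whose behaviour is controlled by Lemmas~\ref{le:flowclose}--\ref{le:flowdif}. One then uses Lemma~\ref{le:pullbackdiff}, namely $\phi^*_{V+W}g-\phi^*_Vg\equiv\delta^*_{g_0}(2g_0(W,\cdot))$ modulo $s^{\alpha+\beta}H^\infty_b$. Each step gains $\beta$ by inverting the indicial family \eqref{eq:gaugeind} of $(-\delta_gG_g+E_{g_0})\delta^*_g$, which is invertible off $\{-1,2,n,n+1\}$, on the lines $\operatorname{Re}\lambda=(k+1)\beta$ with $\beta$ irrational. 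Only after that does your Stage one argument (Lemma~\ref{le:logsmooth}) work.

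Relatedly, the obstacle you single out is not the real one. Logs produced by the integer gauge roots $2,3,n,n+1$ are pure gauge and are removed afterwards, so no redesign of the gauge is needed.

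Granting log-smoothness, your Stages two and three are a legitimate alternative to the paper's route. The paper instead strips pure-gauge terms order by order via $\ker I(D\operatorname{Ric}-\Lambda,\lambda)/\operatorname{ran}I(\delta^*,\lambda)$ and the Bianchi identity (Lemmas~\ref{le:410}--\ref{le:paritylog}). It then needs the eikonal equation only with $s^\infty$ data (Proposition~\ref{prop:technicalblockdiag}). Your route has to prove that polyhomogeneity survives the eikonal equation and the gradient flow. Also, in the Fefferman--Graham recursion the trace part has roots $0,2n$, not $0,n$, so the $ss$ equation is needed to fix the trace at order $2n$.
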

This result shows that initial data close to de Sitter space evolve into a spacetime with two asymptotic functional degrees of freedom, both in odd and even dimensions. Those degrees of freedom are the first order metric $g_{(0)}$ and the order $n$ contribution $g_{(n)}$ (or $g_{(n)}^0$ in the even dimensional case), which is traceless. Depending on the parity of $n$, $g_{(n)}$ is either transverse or its divergence is a certain one-form. Data of this type is called \textit{scattering data}.\\

It was shown by Fefferman-Graham in \cite{fefferman1985}\cite{fefferman2008ambientmetric} that for such scattering data, there exists a formal solution to the Einstein vacuum equations \eqref{eq:Einstein} with uniquely determined expansion coefficients at the conformal boundary. Furthermore, this expansion has the exact same form as the one in our result. It was then proved that this Taylor expansion (in even spacial dimensions also allowing log terms) converges when the scattering data are real-analytic \cite{Kichenassamy2004, Rendall2004}. More generally, the formal solution (also in the non analytic case) can be upgraded to a true solution in a neighborhood of the conformal boundary by adding a tensor that vanishes to infinite order. This was first shown in \cite{Rodnianski2018-lt} in a more general setting, and then, using a simpler argument, in \cite{Hintzasydesi} for asymptotically de Sitter metrics.\\

This shows that the description of the metric we achieved is optimal. Given the solution $g$ from our paper, compute the $g_{(0)},g_{(n)}$ found in our proof. This specifies scattering data. If, in turn, we are given these scattering data, the expansion constructed by Fefferman-Graham recovers the full expansion of this paper. This can then be upgraded to a unique true solution, recovering our initial solution $g$, up to isometry. This proves a 1-1 correspondence between solutions to the Einstein vacuum equation close to the de Sitter solution and scattering data close to de~Sitter scattering data prescribed on the boundary. This result can be generalized to asymptotically de~Sitter type metrics with a compact spatial manifold, as stated in remark \ref{re:generalX}. In $3+1$ dimensions, such a correspondence was shown in \cite{FRIEDRICH1986101}, which was then generalized to $n+1$ dimensions for $n$ odd in \cite{Anderson2005}. More recently, the full stability and scattering theory of asymptotically de~Sitter spaces in the case of $n$ even was solved by Cicortas in \cite{cicortas2024nonlinearscatteringtheoryasymptotically}. Said result is sharper than the one proved in this paper in the sense that it proves an isomorphism between finite-regularity Sobolev spaces between scattering data at the future and past conformal boundary, although at significant technical expense. Also, it crucially uses $n$ even. Our result goes through in all dimensions $n\geq3$. It is not based on any conformal properties of the field equations, but rather proves stability (much like \cite{Ringstrom2008-zu}), using methods of asymptotic analysis. It also shows the maximally precise asymptotics of theorem \ref{thm:expansion} for perturbations of asymptotically de~Sitter spaces in all dimensions (see remark \ref{re:generalX}), which was left partially open by previous works.\\

In even spatial dimensions, the determining factor whether the metric is smooth across $\mathcal{I}^+$ is the so-called \textit{obstruction tensor}. It arose in \cite{fefferman1985} as a conformal invariant, which obstructs the existence of a formal power series expansion for the ambient metric with prescribed conformal structure. Alternatively, it can be viewed as an obstruction to a power series expansion of a Poincaré metric for a given metric on an even dimensional manifold \cite{obstruction}. This is the way the obstruction tensor will appear in our proof and therefore it is not surprising to find it governs the smoothness of the solution. For a concrete definition in our setting, see \eqref{eq:obstructiontensor}.\\
\begin{remark}
\label{re:generalX}
Our proof of stability and expansion at the conformal boundary generalizes to the following setting. Let $X$ be a compact manifold of dimension $n$. Consider $M:=(0,\epsilon)_s\times X_\omega$ and suppose that there exists a smooth Lorentzian metric $g$ of signature $(-,+,\ldots,+)$ on $M$ such that $s^2 g$ is the sum of a smooth 2-tensor and a decaying (as $s\searrow 0$) conormal (infinite regularity along $s\partial_s$ and vector fields on $X$) tensor up to $\mathcal{I}^+$. Assume further that $g$ is asymptotically de~Sitter, meaning the leading-order term of $g$ near $\mathcal{I}^+$ is of the form $\frac{-\mathrm{d}s^2+H(\omega;\mathrm{d}\omega)}{s^2}$. If this is the case, the entire proof of this paper goes through very similarly. The only major difference is that we no longer have the two concrete charts of $\mathbb{S}^n$ as defined in section \ref{se:manifold}. Instead, we get a finite covering of $X$ by $V_i$ and then define $\Omega_i:=[0,\epsilon]\times V_i$. When proving the energy estimates, this requires different regions with space-like boundaries where the outward normal points in the correct direction. We constructed them explicitly ($U_{s_1}$ in section \ref{se:domainforesti}) but such domains can always be constructed. The other calculations to prove stability work analogously. If $X$ is not compact, our method of proof could still be applied to conclude the results of our paper in the future domain of dependence of local patches. But to get a global result, further arguments would be required to glue the solutions together.\\

Such asymptotically de~Sitter metrics can be constructed in a neighborhood around $\mathcal{I}^+$ from scattering data on a compact manifold $X$, as shown in \cite{Rodnianski2018-lt} or \cite{Hintzasydesi}. Our argument shows that small perturbations of such spacetimes at a spacelike hypersurface then evolve into future geodesically complete spacetimes and can be brought into the Fefferman-Graham form described above.
\end{remark}

\subsection{Method of proof and outline}
Our proof follows the method used in \cite{hintz2024stability}, where it was used to prove a local stability result and an asymptotic expansion for Kerr-de-Sitter space.\\

We control the linearized Einstein equations using energy methods (to control regularity) and indicial operator arguments (to control the decay of the solution near the conformal boundary).

As we work in general dimension, we find generalizations of the modified harmonic gauge and the constraint damping of \cite{hintz2024stability}. The requirement is that the indicial roots of the gauged Einstein operator all be nonnegative. We find a generalization that works in all $n+1$ dimensions simultaneously.

Unlike \cite{hintz2024stability}, in this paper we are interested in a \textit{global} stability result on $(0,\pi)\times\mathbb{S}^n$. We can therefore not work with global bundle splittings and Cartesian coordinates but rather have to control solutions and gauges globally throughout. This introduces modest complications compared to \cite{hintz2024stability} during the stability proof. 

To complete the non-linear analysis of the Einstein equations, we use tame estimates together with a Nash-Moser iteration scheme, as in \cite{hintz2024stability}.\\

The proof for the asymptotic expansion goes along the same lines as \cite{hintz2024stability}, but is more involved. The first reason is the global nature of our result. This requires us to work with flows along vector fields instead of concrete diffeomorphisms. To show that the behavior close to the conformal boundary is as desired, the somewhat technical Lemmas \ref{le:flowclose},\ref{le:flowsobolev},\ref{le:flowdif} and \ref{le:pullbackdiff} are required. With those in hand, we proceed similarly to
\cite{hintz2024stability} to show that the metric can be made log-smooth at $\mathcal{I}^+$ using a Mellin transformation-type argument.

The second complication comes from the general dimension in which we work. This leads to logarithmic terms in the expansion for even spatial dimensions not present in \cite{hintz2024stability}.

Lastly, we show that the metric can be brought into block diagonal form near the conformal boundary, a step not done in \cite{hintz2024stability}. We accomplish this by constructing boundary normal coordinates, as done in related settings in \cite{GRAHAM1991186} and \cite{Chrusciel2004-xh}.\\

We provide a short overview of the structure of this paper. In Section \ref{se:analysis}, we specify the domain on which we solve the equation and define the specific charts we will be working in. Next, we discuss the analysis near the conformal boundary and introduce the notion of 0- and b-operators in the terminology of \cite{Melrose1981-mz,Melrose1983EllipticOO,Melrose1993-uz} adapted to our setting. Lastly, we introduce the corresponding weighted Sobolev spaces and state some properties required for the analysis of the Einstein equation.

In Section \ref{se:stability}, we define the gauge-fixed Einstein operator and prove tame estimates for it. We compute the indicial operator of the linearization and compute its roots. Next we control the solution to the linearized equation using energy estimates, which is done by using the invertibility of the indicial operator. This allows us to prove our first main result, Theorem \ref{thm:solution}, the stability of de Sitter space.

In the last section, \ref{se:expansion}, we start by constructing pullbacks of the metric and analyzing their behavior. We use this together with the specific form of the gauge to show log-smoothness of the metric at the conformal boundary. The last part is then to extract the exact asymptotics, using log-smoothness of the metric together with the structure of the (ungauged) Einstein equation \eqref{eq:Einstein}. We also show that the metric can be brought into block diagonal form. This is the content of Theorem \ref{thm:expansion} and our second main result.
\subsection*{Acknowledgments}
The material in this paper is based on my Master's Thesis at ETH Zurich, written under the supervision of Professor Peter Hintz. I am extremely grateful for his ideas and all his help throughout the process of writing this paper. His enthusiasm for the subject was contagious, and his guidance always pointed me in the right direction to figure out the next step.

\section{Analysis}\label{se:analysis}
\subsection{Domain}
\label{se:manifold}
We consider de Sitter space $(M_{\mathrm{dS}},g_{\mathrm{dS}})$, with coordinates $s,\omega$:
\begin{equation}
M_{\mathrm{dS}}=(0, \pi)_s \times \mathbb{S}_{\omega}^n, \quad g_{\mathrm{dS}}=\frac{-\mathrm{d} s^2+\cancel{g}}{\sin^2(s)}.
\end{equation}
where $\cancel{g}$ is the standard metric on $\mathbb{S}^n$. We wish to study the behavior of perturbations to de Sitter space near the conformal boundaries.
\begin{definition}[Manifold]
    We define \begin{equation}
    M:=[0, \pi/2]_s\times \mathbb{S}^n_\omega.
    \end{equation}
    Equipped with $g_{\mathrm{dS}}$, $M^\circ=(0, \pi/2)\times\mathbb{S}^n$ is a piece of de Sitter space, and $M$ together with $s^2g_{\mathrm{dS}}$ is a pseudo-Riemannian manifold with boundary 
    \begin{equation}
        \mathcal{I}^+\cup\Sigma_0,\quad \mathcal{I}^+:=s^{-1}(0),\quad\Sigma_{0}:=s^{-1}( \pi/2).
    \end{equation}
    We call $\mathcal{I}^+$ future timelike infinity and consider the cone containing $-\partial_s$ to be future directed. Notice that $\Sigma_{0}$ is space-like.
\end{definition}
A schematic of the manifold is shown in figure \ref{fig:manifold}.\\

In order to do explicit calculations, we need explicit coordinate systems on $M$ and therefore $\mathbb{S}^n$. We cover $\mathbb{S}^n$ by two charts. Consider the stereographic projection $^\uparrow p:\mathbb{S}^n\backslash(0,...,0,-1)_{\omega}\rightarrow\mathbb{R}^n_{x}$. If we restrict $^\uparrow p$ to the set where $|^\uparrow p(w)|< 3$, we get a chart that covers the closed upper hemisphere, call it $^\uparrow\mathbb{S}^n$. Furthermore, the coefficients of the standard metric and its inverse in this coordinate system are smooth and bounded. This immediately gives us charts on any hypersurface of the form $\{s\}\times\mathbb{S}^n$ for an $s\in [0,\pi/2]$. We call $^\uparrow M := [0, \frac{\pi}{2}]\times ^\uparrow\mathbb{S}^n$ and on it we use the map $^\uparrow P(s,\omega):= (s,^\uparrow p(\omega))$. Performing the analogous construction that excludes the north pole, $(0,...,0,1)$, we get the chart $(^\downarrow M,^\downarrow P)$ around the south pole, and those two charts together cover $ M $. We fix a partition of unity $^\uparrow\varphi +{}^\downarrow\varphi=1$ subordinate to $^\uparrow M \cup{}^\downarrow M $, where we require $^\uparrow\varphi(s,\omega)>1/3$ for $\omega$ with $|^\uparrow p(\omega)|<5/2$ and analogously $^\downarrow\varphi(s,\omega)>1/3$ if $\omega$ satisfies $|^\downarrow p(\omega)|<5/2$. (This ensures a margin around the hemispheres, where we can divide out the cutoffs). Notice that the change of coordinates is non-singular on the intersection of the two charts.\\

Our methods allow us to solve the equation on any domain on which $s\leq s_0$ for an $s_0\in (0,\pi/2)$. This would require us to define the chart domain on $^\uparrow M$ as the set, where$|^\uparrow p(\omega)|$ is bounded by a function of $s_0$ that diverges for $s_0$ going to $\pi/2$. For the sake of concreteness, we define the domain on which we will solve the equation. The concrete value of $s_0$ is of no consequence, as any finite in time gaps will be bridged by finite in time results, namely \cite{ChoquetBruhatLocalEinstein}.
\begin{definition}[Domain]
    Let $s_0=0.1$. We define 
    \begin{equation}
        \Omega:=[0,s_0]_s\times\mathbb{S}^n_\omega\subset M
    \end{equation}
    Its boundary is 
    $$
    \Sigma\cup\mathcal{I}^+,\quad \Sigma:=s^{-1}(s_0).
    $$
    Thus, $\Sigma$ is space-like for $g_{\mathrm{dS}}$. Lastly, set $^\uparrow\Omega:=\Omega\cap{}^\uparrow M$ and $^\downarrow\Omega:=\Omega\cap{}^\downarrow M$.
\end{definition}
\begin{figure}[ht!]
    \centering
    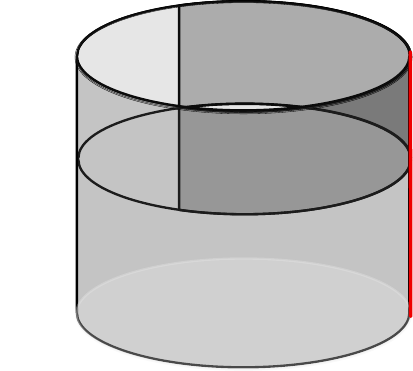
    \caption{Schematic drawing of the manifold $M$. The circles represent the level sets of $s$ of the form $\{s\}\times\mathbb{S}_\omega^n$. The red line corresponds to the set $\{[0,\frac{\pi}{2}]_s\times(0,\dots,0,1)_\omega\}$, the north pole of the sphere for all $s$. The chart domain containing the north pole, $^\uparrow\Omega$, is shown. In this schematic, $^\downarrow \Omega$ would correspond the same area as $^\uparrow \Omega$, but mirrored horizontally.}
    \label{fig:manifold}
\end{figure}

In this paper, we will be using the frames $e^\mu$ in each coordinate system
\begin{equation}
\label{eq:framedefinition}
     e^0=\frac{\mathrm{d} s}{s}, e^i=\frac{\mathrm{d} x^i}{s}, \quad e_0=s \partial_s, e_i=s \partial_{x^i},
\end{equation}
where we will always use Greek letters for spacetime indices $\mu=0,1,...,n$ and Latin letters for space indices $i= 1,...,n$.
\\

Consider the Laurent series of $\frac{1}{\sin^2(s)}$ around 0 given by $\frac{1}{s^2}(1 + s^2\mathcal{C}^\infty([0,\pi/2])$. We use this to write \begin{equation}
\label{eq:gunderline}
    g_{\mathrm{dS}} = \underline{g} + \overline{g} =  \frac{-\mathrm{d} s^2+ \cancel{g}}{s^2} + s^2\mathcal{C}^\infty([0,\pi/2])\times\frac{-\mathrm{d} s^2+ \cancel{g}}{s^2}
\end{equation}
For small $s$, the correction $\overline{g}$ is arbitrarily small (relative to $\frac{\mathrm{d}s}{s},\frac{\mathrm{d}\omega}{s}$) and for the analysis near $\mathcal{I}^+$ we can therefore perform the calculation with $\underline{g}$ modulo a small error term.
\subsection{Bundles and vector fields}
In order to do analysis close to the conformal boundary, we define the 0-bundle $^0T^*M$ and its dual $^0TM$ over the manifold $M$ as
\begin{align}
    ^0T^*M:= \mathbb{R}\frac{ds}{s}\oplus s^{-1}T^*\mathbb{S}^n\\
    ^0TM := \mathbb{R}(s\partial_s)\oplus sT\mathbb{S}^n.
\end{align}
By this we mean that $^0TM=\underline{\mathbb{R}}\oplus T\mathbb{S}^n$, where $\underline{\mathbb{R}}=M\times\mathbb{R}$ is the trivial bundle over $M^\circ$, an element $(a,X)$, where $a\in\mathbb{R},X\in T\mathbb{S}^n$ is identified with $as\partial_s+sX\in TM^\circ$. In its dual, we perform the analogous construction.\\

With this we can formulate the following Lemma:
\begin{lemma}
    The metrics $g_{\mathrm{dS}}$ and $\underline{g}$ are smooth Lorentzian signature sections of $S^2\ ^0T^*M$ over $M$ with 
    \begin{equation}
        g_{\mathrm{dS}}-\underline{g} \in s^2C^{\infty}(M;S^2\ ^0T^*M)
    \end{equation}
    Furthermore, $\frac{\mathrm{d}s}{s}$ and $-s\partial_s$ are uniformly time-like for $g_{\mathrm{dS}}$ on $ M $.
\end{lemma}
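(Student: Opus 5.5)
The plan is to compute directly in the 0-frame \eqref{eq:framedefinition} on each of the two charts ${}^\uparrow M$, ${}^\downarrow M$. These charts cover $M$, and the transition maps are smooth and $s$-independent on the overlap. Under a change of spatial coordinates $x\mapsto y(x)$ we have $\mathrm{d}y^j/s=(\partial_{x^i}y^j)\,\mathrm{d}x^i/s$ with smooth coefficients, so smoothness of a section of $S^2\,{}^0T^*M$ is a chart-independent notion. It therefore suffices to check the claims in one chart, say ${}^\uparrow M$. There I will write $\cancel{g}=\cancel{g}_{ij}(x)\,\mathrm{d}x^i\mathrm{d}x^j$, where the $\cancel{g}_{ij}$ and their inverse are smooth and bounded on the chart domain $|x|<3$, as noted in Section~\ref{se:manifold}.

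First, $\underline{g}=-e^0\otimes e^0+\cancel{g}_{ij}(x)\,e^i\otimes e^j$. Its frame coefficients are $-1$, $0$ and $\cancel{g}_{ij}(x)$, all smooth up to $s=0$. The coefficient matrix is block diagonal with blocks $-1$ and a positive definite matrix, so $\underline{g}$ is a smooth Lorentzian section of $S^2\,{}^0T^*M$ on all of $M$.

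Next, $\sin^2(s)/s^2$ is smooth and even on $[0,\pi/2]$, equals $1$ at $0$, and stays positive on that interval. Hence $s^2/\sin^2(s)=1+s^2 f(s)$ with $f\in\mathcal{C}^\infty([0,\pi/2])$, which is the Laurent expansion used in \eqref{eq:gunderline}. This gives $g_{\mathrm{dS}}=(1+s^2f)\,\underline{g}$, and therefore $g_{\mathrm{dS}}-\underline{g}=s^2 f\,\underline{g}\in s^2\mathcal{C}^\infty(M;S^2\,{}^0T^*M)$. Moreover $g_{\mathrm{dS}}$ is a positive smooth conformal multiple of $\underline{g}$, so it is also a smooth Lorentzian section.

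For the timelike claims, I compute $\underline{g}(s\partial_s,s\partial_s)=-1$ and $g_{\mathrm{dS}}(s\partial_s,s\partial_s)=-(1+s^2f)=-s^2/\sin^2 s$. The function $s^2/\sin^2 s$ is bounded between $1$ and $\pi^2/4$ on $[0,\pi/2]$. Uniform timelikeness means the norm is bounded away from $0$ relative to a fixed smooth fiber inner product on ${}^0TM$; for this I take the frame Euclidean norm, in which $s\partial_s$ has length $1$. Dually, $g_{\mathrm{dS}}^{-1}=(1+s^2f)^{-1}\underline{g}^{-1}$, so $g_{\mathrm{dS}}^{-1}(\mathrm{d}s/s,\mathrm{d}s/s)=-\sin^2 s/s^2\le -4/\pi^2$, which gives the claim for $\frac{\mathrm{d}s}{s}$. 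Both bounds are chart-independent, since $s\partial_s$ and $\mathrm{d}s/s$ are globally defined.

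I expect no real obstacle here. The only point needing care is being precise about what ``uniformly'' means, and fixing a bounded frame norm handles it. The chart compatibility of ${}^0T^*M$ is immediate from the $s$-independence of the transition maps.
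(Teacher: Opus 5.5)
Your proof is correct and follows the same route as the paper. The paper gives no separate proof; it relies on the Laurent expansion $\sin^{-2}(s)=s^{-2}(1+s^2\mathcal{C}^\infty([0,\pi/2]))$ written in \eqref{eq:gunderline}, i.e.\ $g_{\mathrm{dS}}=(1+s^2f)\underline{g}$, together with the frame form of $\underline{g}$. Your explicit bounds $g_{\mathrm{dS}}(s\partial_s,s\partial_s)\le -1$ and $g_{\mathrm{dS}}^{-1}(\tfrac{\mathrm{d}s}{s},\tfrac{\mathrm{d}s}{s})\le -4/\pi^2$ fill in the uniform timelikeness that the paper leaves implicit.
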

The set of smooth sections of $^0TM$ is called $\mathcal{V}_0(M)$ and its elements are precisely the vector fields that vanish at $s=0$. We call them 0-vector fields. They are spanned over $\mathcal{C}^\infty(M)$ by $s\partial_s$ and $sX$ with $X\in\mathcal{V}(\mathbb{S}^n)$. $\mathcal{V}_0(M)$ is a subspace of the larger set of all vector fields that are tangent to $\mathcal{I}^+$, named $\mathcal{V}_b(M)$. Its elements are spanned over $\mathcal{C}^\infty(M)$ by $s\partial_s$ and $X\in\mathcal{V}(\mathbb{S}^n)$. We investigate the structural properties of these spaces: 
\begin{lemma}[Structure of 0- and b-vector fields]
\label{le:ideal} 
    Let $V\in \mathcal{V}_0(M)$ and $W\in \mathcal{V}_b(M)$. Then we have the following.
    \begin{enumerate}
        \item The commutator $[V,W]$ lies in $\mathcal{V}_0(M)$.
        \label{it:1}
        \item Writing $[s\partial_s,V]$ as $as\partial_s + sX$ with $a\in\mathcal{C}^\infty(M)$ and $X\in\mathcal{C}^\infty([0,\tfrac{pi}{2}]_s;\mathcal{V}(\mathbb{S}^n))$, we have $a\in s\mathcal{C}^\infty(M)$.
        \label{it:2}
    \end{enumerate}
\end{lemma}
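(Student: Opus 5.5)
The plan is a direct computation in local coordinates. Work in one of the charts ${}^\uparrow M$ or ${}^\downarrow M$ with coordinates $(s,x)$; being in $\mathcal{V}_0(M)$ or $\mathcal{V}_b(M)$ is a local property, so one chart suffices. By the definitions above, any $V\in\mathcal{V}_0(M)$ can be written locally as
\[
V = a\, s\partial_s + \sum_i b^i\, s\partial_{x^i},\qquad a,b^i\in\mathcal{C}^\infty(M),
\]
and any $W\in\mathcal{V}_b(M)$ as
\[
W = c\, s\partial_s + \sum_j d^j\, \partial_{x^j},\qquad c,d^j\in\mathcal{C}^\infty(M).
\]
The useful feature is that $\mathcal{V}_b(M)$ is exactly the set of smooth vector fields $W$ with $W(s)\in s\mathcal{C}^\infty(M)$. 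Both statements will follow from this, together with the fact that smooth functions are preserved under differentiation by $W$.

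For (\ref{it:1}), expand $[V,W]$ using the Leibniz rule. The coefficient terms give $W(a)\,s\partial_s + W(b^i)\,s\partial_{x^i}$ and $-V(c)\,s\partial_s - V(d^j)\,\partial_{x^j}$. The terms $W(a)$, $W(b^i)$ and $V(c)$ are smooth, since both vector fields are smooth on $M$, so the corresponding pieces are already 0-vector fields. For $V(d^j)\,\partial_{x^j}$, note that $V = s\tilde V$ with $\tilde V$ a smooth vector field, so $V(d^j)\in s\mathcal{C}^\infty(M)$ and $V(d^j)\,\partial_{x^j}$ is a 0-vector field as well. It then remains to compute the brackets of the basis fields. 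We have $[s\partial_s,s\partial_s]=0$ and $[s\partial_{x^i},\partial_{x^j}]=0$. Next, $[s\partial_s, s\partial_{x^i}] = s\partial_{x^i}$ and $[s\partial_{x^i},s\partial_s] = -s\partial_{x^i}$, both in $\mathcal{V}_0$. Finally, $[s\partial_s,\partial_{x^j}]=0$. All of these lie in $\mathcal{V}_0(M)$, and multiplying them by the smooth coefficient products keeps them there.

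For (\ref{it:2}), take $W=s\partial_s$ in the computation above. Then
\[
[s\partial_s,V] = (s\partial_s a)\, s\partial_s + \bigl(s\partial_s b^i + b^i\bigr) s\partial_{x^i}.
\]
The $s\partial_s$-coefficient is $s\partial_s a\in s\mathcal{C}^\infty(M)$, because the $s\partial_s$-component of $V$ makes no contribution: $[s\partial_s,s\partial_s]=0$, and $s\partial_s$ applied to a smooth function gains a factor of $s$. The decomposition $as\partial_s+sX$ is unique, because $s\partial_s$ and $s\partial_{x^i}$ form a frame of ${}^0TM$ on each chart, so this identifies the coefficient in the statement. To make it chart-independent, note that the coefficient equals $[s\partial_s,V](s)/s$. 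This quantity is invariantly defined, and the partition of unity ${}^\uparrow\varphi+{}^\downarrow\varphi=1$ then globalizes the statement.

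I do not expect any real obstacle. The only point needing care is the uniqueness and invariance of the splitting in (\ref{it:2}), which the identity $a = s^{-1}[s\partial_s,V](s)$ resolves; this expression is smooth because $[s\partial_s,V]\in\mathcal{V}_0$ by (\ref{it:1}).
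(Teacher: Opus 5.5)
Your proposal is correct and is essentially the paper's proof: a direct computation in the chart coordinates with $V$ and $W$ expanded in $\{s\partial_s, s\partial_{x^i}\}$ and $\{s\partial_s,\partial_{x^j}\}$. Every term of $[V,W]$ carries a factor of $s$ (in particular $V(d^j)\in s\mathcal{C}^\infty(M)$), and the $s\partial_s$-coefficient of $[s\partial_s,V]$ is $s\partial_s a\in s\mathcal{C}^\infty(M)$. One cosmetic slip: the coefficient terms you list in the Leibniz expansion carry the signs of $[W,V]$ rather than $[V,W]$, but this does not affect membership in $\mathcal{V}_0(M)$.
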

\begin{proof}
    We work in $^\uparrow M$ and use the corresponding coordinates. Write $V=V^0s\partial_s + V^{i}s\partial_i$ and $W=W^0s\partial_s + W^{i}\partial_i$ for $V^0,V^{i},W^0,W^{i}\in\mathcal{C}^\infty(M)$ and $\partial_i$ for $i=1,\dots,n$ the spatial coordinate derivatives. After a short computation, we can see that each term of the commutator $[V,W]$ is of the form $s f\partial_s$ or $s f\partial_i$ for $f\in\mathcal{C}^\infty(M)$. Performing the same computation on $^\downarrow M$, this proves \ref{it:1}. Computing $[s\partial_s,V]$, we see that the only term proportional to $ \partial_s$ is given by $(\partial_sV^0)s^2\partial_s=s\cdot(\partial_sV^0)s\partial_s$, which is the content of \ref{it:2}.
\end{proof}
This result is the structural reason that the solutions to wave equations of 0-type are regular under applications of b-vector fields (which are stronger). We use this when proving higher order energy estimates in section \ref{se:stability}.
\\

While performing the calculations later on, we will often work in the following splittings of $S^2\ ^0T^*M$:
\begin{align}
    S^2\ ^0T^*M &= \mathbb{R}\frac{\mathrm{d}s^2}{s^2}\oplus \left(2 \frac{\mathrm{d}s}{s}\otimes_s s^{-1} T^*\mathbb{S}^n \right) \oplus s^{-2}S^2T^*\mathbb{S}^n
    \label{eq:splitting1}
    \\
    S^2T^*\mathbb{S}^n&= \mathbb{R} g_{(0)} \oplus \text{ker tr}_{g_{(0)}}.\label{eq:splitting2}
\end{align}
Here $g_{(0)}$ is determined dynamically from initial data, and varies in each step of the iteration scheme used to solve the gauge-fixed Einstein equations.
\\

The definition of the obstruction tensor in our case is the analogous definition as in \cite[Theorem~2.1]{obstruction}, the variant described in the footnote. For our concrete setting, this gives the following characterization.
\begin{definition}[Obstruction tensor]
    Let $n\geq4$ be even and $g_{(0)}$ be a Riemannian metric on $\mathbb{S}^n$. Let $g_+$ be a Lorentzian metric defined on $(0,\epsilon)_s\times\mathbb{S}^n$, such that $s^2g_+$ is smooth across $\{0\}\times\mathbb{S}^n$ and equals $g_{(0)}$ at the boundary. Assume further that $\operatorname{Ric}g_+-\Lambda g_+=\mathcal{O}(s^{n-2})$\footnote{By $\mathcal{O}(s^{n-2})$ we mean that each component of the tensor is of this order in a smooth chart. Also, remember $\Lambda =n$}. The obstruction tensor $O$ is given by 
    \begin{equation}
    \label{eq:obstructiontensor}
        O=c_n\operatorname{tf}(s^{2-n}(\operatorname{Ric}g_+-\Lambda g_+)\left|_{T(\{0\}\times\mathbb{S}^n)}\right.).\quad c_n\neq 0,
    \end{equation}
    where we denote by $\mathrm{tf}$ the tracefree part with respect to $g_{(0)}$. The obstruction tensor is independent of the choice of $g_+$.
\end{definition}
\subsection{Differential operators}
We define the set of (scalar) 0-differential operators of order $m$, $\operatorname{Diff}_0^m(M)$, as finite sums of up to $m$-fold compositions of elements of $\mathcal{V}_0$. If $E \rightarrow M$ is a vector bundle, we write $\operatorname{Diff}_{0}^m(M ; E)$ for operators $\mathcal{C}_{\mathrm{c}}^{\infty}\left(M^{\circ} ; E\right) \rightarrow \mathcal{C}_{\mathrm{c}}^{\infty}\left(M^{\circ} ; E\right)$ that in each local trivialization of $E$ are matrices of operators of class $\operatorname{Diff}_{0}^m$. We can then also consider weighted spaces, for example $s^{\beta}\operatorname{Diff}_{0}^m(M ; E)$, where in the trivializations, the operators are matrices with coefficients of the form $s^{\beta}L$ with $L\in \operatorname{Diff}_{0}^m$. Analogously, we define $b$-differential operators $\operatorname{Diff}_b^m(M;E)$ and its weighted counterparts using $\mathcal{V}_b$.
\\

Viewing $s\mathcal{V}(\mathbb{S}^n)$ as a subspace of $\mathcal{V}_0(M)$, we call $\operatorname{Diff}^m_{T,0}(M)$ the space of finite sums of up to $m$-fold compositions of elements in $s\mathcal{V}(\mathbb{S}^n)$, and then define the generalizations to differential operators acting on bundles. The purpose of this is to be able to write the leading-order behavior at $\mathcal{I}^+$ of an element of $L\in \operatorname{Diff}_{0}^m(M ; E)$ in the following way. Each such $L$ can be written as
$$
L=\sum_{l=0}^mL_l(s)(s\partial_s)^{m-l},
$$
where $L_l(s)$ are one-parameter families of operators of
class $\operatorname{Diff}^l_{T,0}(M;E)$. We define the \textit{indicial operator} $I(L)$ of $L$ as 
\begin{equation}
    I(L):=\sum_{l=0}^m L_l^{(0)}(0) (s\partial_s)^{m-l},
\end{equation}
where $L_l^{(0)}(s)$ is the $0$-order part of $L_l(s)$. This captures the leading-order behavior of $L$ as a b-differential operator, 
$$
L-I(L)\in s\operatorname{Diff}_b^m(M;E).
$$
The \textit{indicial family} $I(L,\lambda)$ is then obtained by formally conjugating $I(L)$ by the Mellin transform in $s$, so 
\begin{equation}
    I(L,\lambda):=\sum_{l=0}^m L_l^{(0)}(0) \lambda^{m-l}.
\end{equation}
In local coordinates and the induced local trivialization, we can directly compute the indicial operator in the following way. We can write operators $L \in \operatorname{Diff}_{0}^m(M ; E)$ as
\begin{equation}
L=\sum_{j+|\alpha| \leq m} \ell_{j \alpha}(s, \omega)(s\partial_x)^\alpha\left(s \partial_s\right)^j, 
\end{equation}
with $\ell_{j \alpha}$ matrices with coefficients in $\mathcal{C}^{\infty}(M)$. Here we mean that for a multi-index $\alpha\in \mathbb{N}^n$ $\left(s\partial_x\right)^{\alpha}=\left( s\partial_{x_1}\right)^{\alpha_1}...\left( s\partial_{x_n}\right)^{\alpha_n}$. The indicial operator is then given by
\begin{equation}
I(L)=\sum_{j \leq m} \ell_{j 0}(0, \omega)\left(s \partial_s\right)^j \in \operatorname{Diff}_{b}^m(M;E) .
\end{equation}
Now, $\ell_{j0}-\ell_{j0}|_{s=0}$ are matrices with coefficients in $s\mathcal{C}^\infty(M)$. (We often regard functions $f$ on $\mathcal{I}^+$ as functions on $M$ via $f(s,\omega):= f(\omega)$.) As $s\partial_i$ are in $s\mathcal{V}_b$, we see directly that $L-I(L)\in s\operatorname{Diff}_b^m(M;E).$ The indicial family is then locally
\begin{equation}
I(L, \lambda)=\sum_{j \leq m} \ell_{j 0}(0,\omega) \lambda^j.
\end{equation}
This is a matrix with polynomial entries in $\lambda$ and the roots of the determinant are called \textit{indicial roots}. They do in general depend on $\omega$, but all indicial roots in this paper will be constant along $\mathcal{I}^+$.

\subsection{Weighted Sobolev spaces on the domain}
We fix on $M$ and $\mathcal{I}^{+}$ the densities $\left|\frac{\mathrm{d} s}{s}\mathrm{d}\cancel{g} \right|$ and $\left|\mathrm{d}\cancel{g}\right|$, respectively. For $s_0$ fixed, we work in the coordinate systems described above for $^\uparrow\Omega$ and $^\downarrow\Omega$. 
\\

Then for $ \beta \in \mathbb{R}$ and $k \in \mathbb{N}_0$, we define $s^\beta H_b^k(\Omega)$ to be the space of elements of $L_{\mathrm{loc}}^2\left(\Omega^{\circ}\right)$ with finite norm
\begin{equation}
\begin{aligned}
    \|u\|_{s^\beta H_b^k(\Omega)}:=\sum_{j+|\gamma| \leq k}\left\|s^{-\beta}\left(s \partial_s\right)^j \partial_{x}^\gamma ({}^\uparrow\varphi u)\right\|_{L^2(^\uparrow\Omega)}\\
    +\left\|s^{-\beta}\left(s \partial_s\right)^j \partial_{x}^\gamma ({}^\downarrow\varphi u)\right\|_{L^2(^\downarrow\Omega)}
    \end{aligned}
\end{equation}
As above, $\partial_x^\gamma=\partial_{x_1}^{\gamma_1}\ldots\partial_{x_n}^{\gamma_n}$ for the derivative along the coordinate directions $\partial_{x_i}$ of the respective coordinate systems. We define analogously the space $s^\beta H_0^k(\Omega)$ but using $\left(s \partial_s\right)^j (s\partial_{x})^\gamma$ to test. We similarly define $H^k(\mathcal{I}^+)$ with norm
\begin{equation}
    \|u\|_{H^k(\mathcal{I}^+)}:= \sum_{|\gamma| \leq k} \left\|\partial_{x}^\gamma ({}^\uparrow\varphi u)\right\|_{L^2(^\uparrow\mathcal{I}^+)}+\left\|\partial_{x}^\gamma ({}^\downarrow\varphi u)\right\|_{L^2(^\downarrow\mathcal{I}^+)}.
\end{equation}
The space $s^\beta \mathcal{C}_b^k(\Omega)$ is defined using the $\mathcal{C}_b^0(\Omega)$-norm which is defined as the sup norm on the space $\mathcal{C}_b^0(\Omega)$ of all bounded continuous functions on $\Omega \backslash \mathcal{I}^{+}$. 
\\

For $v$ a section of the bundles ${}^0TM$, ${}^0 T^*M$, $s^{-2}S^2T^*\mathbb{S}^n$ and $S^2\ {}^0 T^*M$ over $M, \Sigma$ or $\mathcal{I}^+$, we define its norm by writing $v={}^\uparrow\varphi v+{}^\downarrow\varphi v$. We trivialize the bundles in the frames $e_\mu$, $e^\mu$, $e^j\otimes_s e^k$ or $e^\mu\otimes_s e^\nu$ of the coordinate charts. The norm of $v$ is then the sum of the norms of the coefficients of $v$ in these frames. This works for all norms in the section above.
\\

For operator classes with non-smooth coefficients, we define, given a function space $\mathscr{F}$ like the ones above, the class $\mathscr{F}\operatorname{Diff}_0^k(M)$ as finite sums of operators $aL$ with $a\in \mathscr{F}$ and $L\in \operatorname{Diff}_0^k(M)$. Then for operators acting on the bundle $S^2\ {}^0 T^*M$, $\mathscr{F}\operatorname{Diff}_0^k(M;S^2\ {}^0T^*M)$ is defined as operators that in each local trivialization are matrices with coefficients in $\mathscr{F}\operatorname{Diff}_0^k(M)$. In order to define norms on these operator spaces, we write $L={}^\uparrow\varphi L+{}^\downarrow\varphi L$ for $L \in \mathscr{F}\operatorname{Diff}_0^k(M;S^2\ {}^0T^*M)$. By trivializing $S^2\ {}^0T^*M$ on the respective chart domain using $e^\mu\otimes_s e^\nu$, each part can be written uniquely as
\begin{equation}
    ^\uparrow\varphi L = \sum_{j+|\gamma| \leq m} 
  {}^\uparrow\ell_{j \gamma}\left(s \partial_x\right)^\gamma\left(s \partial_s\right)^j,
\end{equation}
with $^\uparrow\ell_{j \gamma}$ matrices with coefficients $(^\uparrow\ell_{j \gamma})_{ab}$ in $\mathscr{F}$. The same is true for $\restr{L}{^\downarrow\Omega}$. We then define the norm of $L$ as
\begin{equation}
    \|L\|_{ \mathscr{F}\operatorname{Diff}_{0}^m(\Omega)}:=\sum_{j+|\gamma| \leq m}\sum_{ab}\left(\left\|(^\uparrow\ell_{j \gamma})_{ab}\right\|_{\mathscr{F}}+\left\|(^\downarrow\ell_{j \gamma})_{ab}\right\|_{\mathscr{F}} \right).
\end{equation}
Next, we discuss the algebra properties of the b-Sobolev spaces defined above. We write the following Lemmas for functions $u$ on $\Omega$ or $\mathcal{I}^+$. The obvious generalizations for sections of the bundles ${}^0TM$, ${}^0 T^*M$, $s^{-2}S^2T^*\mathbb{S}^n$ and $S^2\ {}^0 T^*M$ can be proved in the same manner. We fix a finite set $\mathscr{V}$ of spanning vector fields of $\mathcal{V}(\mathbb{S}^n)$, e.g. rotation vector fields.
\begin{lemma}[Sobolev embedding, restriction and algebra properties]
\label{le:2.9}
    See \cite[Lemma~2.9]{hintz2024stability}. cWrite $d_m:=\lceil\frac{m+1}{2}\rceil$ for the smallest integer larger than $\frac{m}{2}$. We write $D_b^p$ for any $p$-fold composition of $s\partial_s$ and elements of $\mathscr{V}$, or only $\mathscr{V}$ acting on functions $u$ on $\Omega$ or $\mathcal{I}^+$ respectively.\\
    \begin{enumerate}
    \item \textnormal{(Sobolev embedding.)} For every $k\in \mathbb{N}_0$ there exist constants $C_{\Omega,k}^S,\ C_{\mathcal{I}^+,k}^S$ such that
    \begin{equation}
    \label{eq:sobolev embedding}
        \|u\|_{\mathcal{C}^k_b(\Omega)}\leq C_{\Omega,k}^S\|u\|_{H^{d_{n+1}+k}_b(\Omega)},\quad  \|u\|_{\mathcal{C}^k(\mathcal{I}^+)}\leq C_{\mathcal{I}^+,k}^S\|u\|_{H^{d_{n}+k}(\mathcal{I}^+)}.
    \end{equation}
    \item \textnormal{(Restrictions).} Let $s_1\in(0,s_0]$ and write $\Sigma_{s_1}:=\Omega\cap\{s=s_1\}$. Then the restriction map $u \mapsto u\left|_{\Sigma_{s_1}}\right.$ defines a bounded linear map
    \begin{equation}
        H^k_b(\Omega) \rightarrow H^{k-1}(\Sigma),\quad k\in\mathbb{N}.
    \end{equation}
    \item \textnormal{(Estimates for products.)} For every $k\in \mathbb{N}_0$ we have constants $C_{k,\Omega}$, $C_{k,\mathcal{I}^+}$ and $C_{k,\Omega,\mathcal{I}^+}$ such that for all functions $u_1,\ u_2$ on $\Omega$ and $v_1,\ v_2 $ on $\mathcal{I}^+$ and $a,b\in\mathbb{N}_0$ with $a+b=k$ we have
\begin{equation}
    \begin{aligned}
    \label{eq:productest}
\left\|\left(D_{\mathrm{b}}^a u_1\right)\left(D_{\mathrm{b}}^b u_2\right)\right\|_{L^2(\Omega)} \leq C_{k, \Omega}&\left(\left\|u_1\right\|_{H_{\mathrm{b}}^{ d_{n+1}}(\Omega)}\left\|u_2\right\|_{H_{\mathrm{b}}^k(\Omega)}\right.\\
 &\left.+\left\|u_1\right\|_{H_{\mathrm{b}}^{k+ d_{n+1}}(\Omega)}\left\|u_2\right\|_{L^2(\Omega)}\right) \\
\left\|\left(D_{\mathrm{b}}^a v_1\right)\left(D_{\mathrm{b}}^b v_2\right)\right\|_{L^2( \mathcal{I}^+)} \leq C_{k,  \mathcal{I}^+}&\left(\left\|v_1\right\|_{H^{ d_{n}}( \mathcal{I}^+)}\left\|v_2\right\|_{H^k( \mathcal{I}^+)}\right.\\
&\left.+\left\|v_1\right\|_{H^{k+ d_{n}}( \mathcal{I}^+)}\left\|v_2\right\|_{L^2( \mathcal{I}^+)}\right) \\
\left\|\left(D_{\mathrm{b}}^a v_1\right)\left(D_{\mathrm{b}}^b u_2\right)\right\|_{L^2(\Omega)} \leq C_{k, \Omega,  \mathcal{I}^+}&\left(\left\|v_1\right\|_{H^{ d_{n}}( \mathcal{I}^+)}\left\|u_2\right\|_{H_{\mathrm{b}}^k(\Omega)}\right.\\
&\left.+\left\|v_1\right\|_{H^{k+ d_{n}}(\mathcal{I}^+)}\left\|u_2\right\|_{L^2(\Omega)}\right)
\end{aligned}
\end{equation}
\item \textnormal{(Estimate for nonlinear expressions)} Let $\delta>0$ and $F:[-\delta,\delta]\rightarrow \mathbb{R}$ be a smooth function with $F(0)=0$. Then for all $u\in H^k_b(\Omega)$ with $\|u\|_{H^{d_{n+1}}_b(\Omega)}\leq \frac{\delta}{C^S_{d_{n+1},\Omega}}$ we have $F(u)\in H^k_b(\Omega)$ with 
\begin{equation}
\label{eq:nonlinest}
    \|F(u)\|_{H^k_b(\Omega)}\leq C_{F,\Omega,k}\|u\|_{H^{k+d_{n+1}}_b(\Omega)}.
\end{equation}
Similarly, if $v\in H^k_b(\mathcal{I}^+)$ with $\|v\|_{ H^{d_n}_b(\mathcal{I}^+)}\leq \frac{\delta}{C^S_{d_n,\mathcal{I}^+}}$, then $f(v)\in  H^k_b(\mathcal{I}^+)$ with norm bounded by a constant times $\|v\|_{ H^{k+d_n}_b(\mathcal{I}^+)}$
\end{enumerate}
\end{lemma}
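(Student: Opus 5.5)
The plan is to reduce everything to the standard (unweighted, non-compact) Sobolev theory on $\mathbb{R}^{n+1}$ and $\mathbb{R}^n$ via the logarithmic change of variables $t=-\log s$ in each chart. On ${}^\uparrow\Omega$ the map $(s,x)\mapsto(t,x)$ sends $(0,s_0]\times\{|x|<3\}$ to $[-\log s_0,\infty)\times\{|x|<3\}$. It turns $s\partial_s$ into $-\partial_t$ and the density $|\frac{\mathrm{d}s}{s}\mathrm{d}x|$ into $|\mathrm{d}t\,\mathrm{d}x|$. Hence $\|{}^\uparrow\varphi u\|_{H^k_b}$ is exactly the usual $H^k$ norm of ${}^\uparrow\varphi u$ on a half-cylinder with bounded cross-section, and the same holds on ${}^\downarrow\Omega$. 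The b-norms in the statement are defined through the cutoffs ${}^\uparrow\varphi,{}^\downarrow\varphi$. The vector fields in $\mathscr V$ have smooth bounded coefficients in the charts and are spanned there by $\partial_{x_i}$, with smooth coefficients. So $D_b^p$ applied to $u$ is a bounded-coefficient combination of $(s\partial_s)^j\partial_x^\gamma$ derivatives of order $\le p$. This gives equivalence of norms up to constants. On $\mathcal{I}^+$ we use the ordinary compact-manifold Sobolev theory on $\mathbb{S}^n$.

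\textbf{(1) Sobolev embedding.} Extend ${}^\uparrow\varphi u$ across the boundary faces $t=-\log s_0$ and $|x|=3$ with a bounded Stein-type (or reflection) extension operator. This operator is uniform in translations in $t$, since the domain is translation invariant in $t$. Then apply $H^{d_{n+1}+k}(\mathbb{R}^{n+1})\hookrightarrow C^k$, which holds because $d_{n+1}>\frac{n+1}{2}$. Equivalently, cover the half-cylinder by unit cubes $[T,T+1]\times B$ and use the embedding on each cube with a uniform constant. Since $\sup$ is local, this gives the $\mathcal{C}^k_b$ bound. The $\mathcal{I}^+$ statement is the classical embedding on $\mathbb{S}^n$, with $d_n>\frac n2$.

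\textbf{(2) Restriction.} The trace theorem $H^k\to H^{k-1/2}\subset H^{k-1}$ onto $\{t=t_1\}$ in the cylinder applies, and the constant is again uniform by translation invariance. Alternatively, the elementary bound $\|u(t_1)\|^2_{L^2_x}\lesssim\|u\|^2_{L^2}+\|\partial_tu\|^2_{L^2}$ on $[t_1,t_1+1]$, applied to $\partial_x^\gamma u$ with $|\gamma|\le k-1$, gives it directly.

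\textbf{(3) Products.} I would use the Moser/Gagliardo--Nirenberg type argument. By the Leibniz rule it suffices to bound $\|(\partial^\alpha u_1)(\partial^\beta u_2)\|_{L^2}$ with $|\alpha|+|\beta|=k$. If $|\alpha|\le k$ is split so that one factor carries few derivatives, put that factor in $L^\infty$ using (1). Otherwise use Hölder with exponents $p=2k/|\alpha|$ and $q=2k/|\beta|$, together with Gagliardo--Nirenberg interpolation between $L^\infty$ (controlled by $H^{d_{n+1}}$) and $H^k$. Finally apply $ab\le a^{\theta}\ldots$ by convexity to land on the two-term right-hand side. Everything is done on unit cubes with uniform constants and then summed in $\ell^2$, using that $L^\infty$ norms are uniform. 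The mixed estimate treats $v_1$ as an $s$-independent function on $\Omega$, so $(s\partial_s)v_1=0$. Its $L^\infty$ and $L^p$ norms on each cube are controlled by $\mathcal{I}^+$ norms; this works cube by cube because the cross-section is fixed. The cutoff functions and the smooth chart transitions are absorbed into the constants.

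\textbf{(4) Nonlinear estimate.} Write $F(u)=uG(u)$ with $G$ smooth. The smallness assumption and (1) keep $u$ in $[-\delta,\delta]$ pointwise. For $D^\alpha F(u)$, the Faà di Bruno formula gives sums of $F^{(j)}(u)\prod_i D^{\alpha_i}u$ with $\sum|\alpha_i|=|\alpha|$. Here $F^{(j)}(u)$ is bounded, and for $j=0$ we use $|F(u)|\le C|u|$. The products are estimated by iterating (3), the largest-order factor going in $L^2$ and the others in $L^\infty$ via $H^{d_{n+1}+\cdot}$. Since $\|u\|_{H^{d_{n+1}}}$ is bounded, the result is at most linear in $\|u\|_{H^{k+d_{n+1}}}$. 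The argument is identical on $\mathcal{I}^+$. The main obstacle is the bookkeeping in (3): making sure the interpolation constants are uniform along the infinite $t$-direction and that the cutoff and chart-change terms do not spoil the tame two-term structure. The translation-invariant cube decomposition is the device I would rely on for this.
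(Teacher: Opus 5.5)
Your plan for (1), (2) and (4) is sound and matches the standard argument the paper defers to via \cite[Lemma~2.9]{hintz2024stability}: pass to $t=-\log s$, use uniform Sobolev embedding and trace on unit cells, and use Fa\`a di Bruno plus interpolation for $F(u)$. The gap is in (3).

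The estimate to be proved is \emph{asymmetric}. Its second term is $\|u_1\|_{H^{k+d_{n+1}}_b}\|u_2\|_{L^2}$, so no $L^\infty$ norm (hence no $H^{d_{n+1}}_b$ norm) of $u_2$ is available. Your Gagliardo--Nirenberg/Moser route interpolates \emph{both} factors between $L^\infty$ and $H^k$. It therefore yields $\|u_1\|_{L^\infty}\|u_2\|_{H^k_b}+\|u_1\|_{H^k_b}\|u_2\|_{L^\infty}$, not the stated right-hand side. Your first case, ``put the factor with few derivatives in $L^\infty$'', costs $\|u_2\|_{H_b^{b+d_{n+1}}}$ whenever that factor is $u_2$. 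Neither is controlled by the stated bound in general. For example, with $k=1$, $a=1$, $b=0$ you would estimate $\|(D_bu_1)u_2\|_{L^2}$ by $\|u_1\|_{H^1_b}\|u_2\|_{H^{d_{n+1}}_b}$, while the right-hand side contains only $\|u_2\|_{H^1_b}$ and $\|u_2\|_{L^2}$. The final ``convexity'' step cannot repair this. Converting $\|u_1\|_{H^k_b}\|u_2\|_{H^{d_{n+1}}_b}$ by interpolation works only when $k\ge d_{n+1}$.

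The missing idea is to always put the $u_1$-factor in $L^\infty$; this is exactly what the $d_{n+1}$-loss in the statement pays for. By (1),
\[
\|(D_b^au_1)(D_b^bu_2)\|_{L^2}\le\|D_b^au_1\|_{L^\infty}\|D_b^bu_2\|_{L^2}\le C\|u_1\|_{H_b^{a+d_{n+1}}}\|u_2\|_{H_b^b}.
\]
Next use the log-convexity of Sobolev norms in the order, valid on the half-cylinder via an extension operator and the Fourier transform:
\[
\|u_1\|_{H_b^{a+d_{n+1}}}\le C\|u_1\|_{H_b^{d_{n+1}}}^{b/k}\|u_1\|_{H_b^{k+d_{n+1}}}^{a/k},\qquad \|u_2\|_{H^b_b}\le C\|u_2\|_{L^2}^{a/k}\|u_2\|_{H^k_b}^{b/k}.
\]
Young's inequality then gives exactly the stated two-term bound for every $k$. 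The same argument handles the $\mathcal{I}^+$ estimate and the mixed estimate, with $v_1$ placed in $L^\infty$ via $H^{a+d_n}(\mathcal{I}^+)$. No $L^p$ H\"older or Gagliardo--Nirenberg is needed. The paper explicitly keeps the lossy version because it has this short proof; Gagliardo--Nirenberg-type methods are what one would use for sharper, loss-free bounds.
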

One could also prove sharper estimates, where in statements (3) and (4) of the above Lemma, $d_{n+1}$ and $d_n$ could be set to 0 using methods such as in \cite{Taylor}. We keep to these simpler versions so that the proof of several propositions in the paper proceed more similarly to \cite{hintz2024stability}. Also, the simpler statements have shorter, self-contained proofs that are analogous to the proofs in \cite{hintz2024stability}.
These estimates can also be easily generalized to weighted spaces. For example the Sobolev embedding estimate would become $$\|u\|_{s^\beta \mathcal{C}^k_b(\Omega)}\leq C_{\Omega,k,\beta}^S\|u\|_{s^\beta H^{k+d_{n+1}}_b(\Omega)}.$$ \\

Furthermore, we can let the nonlinearity also depend on $p\in\Omega$ explicitly (in a $\mathcal{C}^\infty$ way), as long as $F(p,u=0)=0$ for all $p\in \Omega$. This works because $\Omega$ is compact; otherwise one would need boundedness in this variable of all derivatives. We will use these generalizations frequently below, but do not state them here explicitly.\\

Next, we discuss smoothing operators.
\begin{lemma}[Smoothing Operators] Let $\beta\in\mathbb{R}$. There exist continuous linear maps 
$$
S_\theta:s^\beta L^2(\Omega)\rightarrow s^\beta H^\infty_b(\Omega),\quad \theta>1,
$$
so that 
$$
\begin{aligned}
k \leq k^{\prime} &\Longrightarrow\left\|S_\theta u-u\right\|_{ s^\beta H_{\mathrm{b}}^k} \leq C_{k, k^{\prime}} \theta^{k-k^{\prime}}\|u\|_{ s^\beta H_{\mathrm{b}}^{k^{\prime}}} \\
k \geq k^{\prime} &\Longrightarrow\left\|S_\theta u\right\|_{s^\beta H_{\mathrm{b}}^k}  \leq C_{k, k^{\prime}} \theta^{k-k^{\prime}}\|u\|_{ s^\beta H_{\mathrm{b}}^{k^{\prime}}}.
\end{aligned}
$$
There exists continuous linear maps $L^2(\mathcal{I}^+)\rightarrow H^\infty_b(\mathcal{I}^+)$ with analogous properties.
\label{le:smoothingops}
\end{lemma}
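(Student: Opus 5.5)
The plan is to build $S_\theta$ by conjugating a standard Fourier-multiplier smoothing family to the b-setting, keeping the weight $s^\beta$ as a harmless conjugation. Since $s^{\beta}$ commutes with the construction up to conjugation, it suffices to treat $\beta=0$: define $S^\beta_\theta:=s^\beta S_\theta s^{-\beta}$, which inherits all estimates because multiplication by $s^{\beta}$ is an isometry $H^k_b\to s^\beta H^k_b$ by definition of the norms.

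For $\beta=0$, we use the coordinates $t=-\log s\in[-\log s_0,\infty)$, in which $s\partial_s=-\partial_t$ and the density $\frac{\mathrm{d}s}{s}\,\mathrm{d}\cancel{g}$ becomes $\mathrm{d}t\,\mathrm{d}\cancel{g}$. Then $H^k_b(\Omega)$ is exactly the ordinary Sobolev space $H^k([-\log s_0,\infty)_t\times\mathbb{S}^n)$ on a half-cylinder with bounded geometry, with norm computed via the partition ${}^\uparrow\varphi,{}^\downarrow\varphi$. We fix a continuous extension operator $E$ across $t=-\log s_0$ which is bounded $H^k\to H^k(\mathbb{R}_t\times\mathbb{S}^n)$ for all $k$ simultaneously, for instance Seeley's reflection extension, which is translation-invariant in $\omega$ and so compatible with the chart structure. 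We then take a mollifier $\phi_\theta(t)=\theta\phi(\theta t)$ with $\hat\phi\equiv1$ near $0$ and $\hat\phi\in C_c^\infty$, together with a spatial smoothing $\chi(\theta^{-2}\Delta_{\cancel{g}})$ with $\chi\in C_c^\infty$ and $\chi=1$ near $0$. We set $S_\theta u:=R\,\big(\phi_\theta *_t \chi(\theta^{-2}\Delta_{\cancel{g}})Eu\big)$, where $R$ is restriction to $\Omega$. The full symbol is then $\hat\phi(\tau/\theta)\chi(\theta^{-2}|\eta|^2)$ in frequency on $\mathbb{R}\times\mathbb{S}^n$ (spectrally on the sphere). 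Alternatively, one could work chartwise, mollifying ${}^\uparrow\varphi u$ and ${}^\downarrow\varphi u$ in $(t,x)$ and summing, which avoids the spectral calculus.

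The estimates then reduce to multiplier bounds with $\langle\tau,\eta\rangle$ as the Sobolev weight. For $k\ge k'$, the multiplier is supported in $|(\tau,\eta)|\lesssim\theta$, so $\langle\cdot\rangle^{k-k'}\lesssim\theta^{k-k'}$. For $k\le k'$, the function $1-\hat\phi(\tau/\theta)\chi(|\eta|^2/\theta^2)$ vanishes for $|(\tau,\eta)|\lesssim\theta$, so it is bounded by $C\langle\cdot\rangle^{k-k'}\theta^{k'-k}\cdot\theta^{k-k'}$, giving the claimed $\theta^{k-k'}$. Boundedness of $E$ and $R$ transfers these bounds back to $\Omega$, and continuity $L^2\to H^\infty_b$ holds because each $S_\theta$ maps into every $H^k_b$. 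The $\mathcal{I}^+$ statement is the same argument using only $\chi(\theta^{-2}\Delta_{\cancel{g}})$ on $\mathbb{S}^n$.

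The main technical point is the extension operator. It must be uniformly bounded on all $H^k$ at once, which Seeley's construction provides, and it must be equivalent to the chartwise norm definition; that equivalence is routine since the charts have bounded transition maps. The unboundedness of the $t$-direction causes no trouble, because all operators used are translation-invariant in $t$ or have bounded coefficients.
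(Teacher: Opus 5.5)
Your proposal is correct and follows essentially the route the paper takes (it defers to \cite{hintz2024stability}): reduce to $\beta=0$ by conjugating with $s^\beta$, pass to $t=-\log s$ so that $H^k_b(\Omega)$ becomes an ordinary Sobolev space on a half-cylinder, extend across $t=-\log s_0$, apply a frequency cutoff at scale $\theta$, and restrict back. One small correction: multiplication by $s^\beta$ is an isomorphism $H^k_b\to s^\beta H^k_b$ with $k$-dependent norm equivalence rather than an isometry, since $s^{-\beta}(s\partial_s)s^{\beta}=s\partial_s+\beta$; this only changes the constants $C_{k,k'}$.
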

The proof is analogous to \cite{hintz2024stability}. For the Nash-Moser iteration, we need smoothing operators on sections of the bundles ${}^0 T^*M$, $s^{-2}S^2T^*\mathbb{S}^n$, and $S^2\ {}^0 T^*M$. But, since the construction is local, using a partition of unity, we can reduce these cases to smoothing operators on functions, the coefficients of sections in the local frames $e_\mu$. So the obvious generalization of this Lemma to the required bundles holds as well.

\section{Gauge-fixed Einstein equation and stability of de Sitter}\label{se:stability}
We study perturbations to the de Sitter space metric $g_{\mathrm{dS}}$ on the domain $\Omega$ with initial data posed at $\Sigma$. We wish to find a solution $g$ to the Einstein field equations.
\\

We work in a generalized harmonic gauge $\Upsilon\left(g ; g_0\right)+E_{g_0}\left(g-g_0\right)=0$. Here, $g_0$ is a dynamically chosen `background metric' and
\begin{align} 
\Upsilon\left(g ; g_0\right) &:=g\left(g_0\right)^{-1} \delta_g \mathrm{G}_g g_0 \\ E_{g_0} h  &:=\chi e^0\left(-2 \operatorname{tr}_{g_0} h-h\left(e_0, e_0\right)\right), \quad e_0:=s \partial_s, \quad e^0:=\frac{\mathrm{d} s}{s} .
\end{align}
Here, $(\delta_gh)_\mu=-g^{\kappa\lambda}h_{\mu\kappa;\lambda}$ and $G_gh=h-\tfrac{1}{2}g\operatorname{tr}_gh$. In coordinates, the expression for $\Upsilon$ reads $ \Upsilon\left(g ; g_0\right)_\mu=g_{\mu \nu} g^{\kappa \lambda}\left(\Gamma(g)_{\kappa \lambda}^\nu-\Gamma\left(g_0\right)_{\kappa \lambda}^\nu\right)$. We fix two cutoffs 
\begin{align}
\chi&=\chi(s) \in \mathcal{C}_{\mathrm{c}}^{\infty}([0, \tfrac{1}{2} s_0)),\left.\quad \chi\right|_{\left[0, \tfrac{1}{4} s_0\right]}=1, \\
\tilde{\chi}&=\tilde{\chi}(s) \in \mathcal{C}_{\mathrm{c}}^{\infty}\left(\left[0, s_0\right)\right),\left.\quad \tilde{\chi}\right|_{\left[0, \tfrac{1}{2} s_0\right]}=1,
\end{align}
Let $\delta_g^*$ be the symmetric gradient defined as  $(\delta_g^*\omega)_{\mu\nu}=\tfrac{1}{2}(\omega_{\mu;\nu}+\omega_{\nu;\nu})$. We define the modification $\tilde{E}$ and modify the symmetric gradient via
\begin{equation}
\label{eq:symmetricgradient}
\tilde{\delta}_g^*=\delta_g^*+\tilde{E}, \quad \tilde{E} \omega:=\chi\left(2 \omega\left(e_0\right) e^0 \otimes e^0-4 e^0 \otimes_s \omega\right).
\end{equation}
With that, we can now define the gauge-fixed Einstein operator $P$. It takes as input a section $h_0$ of $s^{-2}S^2T^*\mathbb{S}^n$, a section $\tilde{h}$ of $S^2\ ^0T^*M$ and a gauge one form $\theta$. The full metric we solve for is then $g=g_{\mathrm{dS}} + \chi h_0 + \tilde
h$, with background metric $g_0=g_{\mathrm{dS}}+\chi h_0$. The section $h_0$ captures the leading order change to $g_{\mathrm{dS}}$ on $\mathcal{I}^+$, whereas $\tilde{h}$ captures further corrections to $g_{\mathrm{dS}}$ and makes sure the full metric $g$ satisfies the initial conditions on $\Sigma$. The full gauge-fixed Einstein operator reads
\begin{equation}
\begin{aligned}
P(h_0, \tilde{h}, \theta&):=2\left( \operatorname{Ric}(g_{\mathrm{dS}}+\chi h_0+\tilde{h})-\Lambda(g_{\mathrm{dS}}+\chi h_0+\tilde{h})\right. \\
& \left.-\tilde{\delta}_{g_{\mathrm{dS}}+\chi h_0+\tilde{h}}^*(\Upsilon(g_{b}+\chi h_0+\tilde{h} ; g_{\mathrm{dS}}+\chi h_0)+E_{g_{b}+\chi h_0} \tilde{h}-\tilde{\chi} \theta)\right).
\label{eq:gaugefixedein}
\end{aligned}
\end{equation}
The choice of gauge is the $n$ dimensional generalization of the gauge used in \cite[Section~3]{hintz2024stability}. The requirement is that all indicial roots of the linearization of $P$ in $\tilde{h}$ are non-negative; we verify this for the choice \eqref{eq:symmetricgradient} of modified symmetric gradient in \ref{le:indicialroots}. The motivation for each term in \eqref{eq:gaugefixedein} is explained in \cite[Sections~1.3, 3.1.1]{hintz2024stability}. \\

We can now formulate the main Theorem of this section.
\begin{theorem}
    Let $d\in \mathbb{N}$ and $\delta_0>0$. Then there exist $D\in\mathbb{N}$ and $\epsilon>0$ such that the following holds. Let 
    $$
    \underline{h}_0, \underline{h}_1 \in H_{b}^{\infty}\left(\Sigma ; S^{2}\ ^0T^* M\right)
    $$
    and suppose that $\|\underline{h_{j}}\|_{H_{b}^D}<\epsilon, j=0,1$. Let $\beta \in(0,1)$. Then there exist

    \begin{equation}
        \begin{aligned}
            h_0 & \in H_{b}^{\infty}\left(\mathcal{I}^{+} ; s^{-2} S^2 T^* \mathbb{S}^n\right) \\
            \tilde{h} & \in s^\beta H_{b}^{\infty}\left(\Omega ; S^{2}\   ^0T^* M\right), \\
            \theta & \in s^\beta H_{b}^{\infty}\left(\Omega ;^0T^*    M\right),
        \end{aligned}
    \end{equation}
    with weighted $H_{b}^d$-norms less than $\delta_0$, so that $P\left(h_0, \tilde{h}, \theta\right)=0$, and so that $\tilde{h}$ (and thus also $\chi h_0+\tilde{h}$ ) satisfies the initial conditions $$\left.\tilde{h}\right|_{\Sigma}=\underline{h}_0,\left.\left(\mathcal{L}_{-s \partial_s} \tilde{h}\right)\right|_{\Sigma}=\underline{h}_1.$$
    \label{thm:nonlinsol}
\end{theorem}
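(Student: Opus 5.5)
The plan follows \cite[Section~3]{hintz2024stability}: solve $P(h_0,\tilde h,\theta)=0$ with the initial conditions by a Nash--Moser iteration. The iteration rests on tame estimates for the linearized operator, with a loss of a fixed number of derivatives. The unknown is the triple $(h_0,\tilde h,\theta)$. The role of $\theta$ is this: once $\tilde h|_\Sigma,\mathcal{L}_{-s\partial_s}\tilde h|_\Sigma$ are prescribed, we choose $\theta$ (supported where $\tilde\chi\neq 1$, i.e.\ near $\Sigma$) so that the gauge condition holds at $\Sigma$ to first order. The role of $h_0$ is to absorb the non-decaying, indicial-root-zero part of the solution at $\mathcal{I}^+$. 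In this way $\tilde h$ can be placed in the decaying space $s^\beta H_b^\infty$.

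\textbf{Step 1 (tame bounds on $P$).} Using Lemma \ref{le:2.9}, I would write $P$ as a smooth nonlinear function of $\chi h_0+\tilde h$ and its $0$-derivatives up to second order. The coefficients are smooth on $\Omega$ and lie in $\mathrm{Diff}_0$; Lemma \ref{le:ideal} shows that $0$-operators map $H_b^k$ to $H_b^{k-2}$. Since $h_0$ is $s$-independent, $\chi h_0$ lies in $H_b$ unweighted. The estimates \eqref{eq:productest} and \eqref{eq:nonlinest} then give tame bounds for $P$ and for its first and second derivatives in weighted $s^\beta H_b^k$ norms.

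\textbf{Step 2 (linear theory).} Linearize at a background $(h_0,\tilde h,\theta)$ that is small in $H_b^{D'}$. Let $L=D_{\tilde h}P$, a principally wave-type operator in $\mathrm{Diff}_0^2$. Lemma \ref{le:indicialroots} computes its indicial family at de Sitter and shows that all indicial roots are $\geq 0$. Moreover, $I(L,\lambda)$ is invertible for $\operatorname{Re}\lambda\in(0,1]$ apart from $\lambda=0$, and the $\lambda=0$ kernel consists exactly of $s^{-2}S^2T^*\mathbb{S}^n$ tensors. By smallness, this persists for the perturbed background, since the roots depend continuously on $g_{(0)}$.

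I would then solve $L u=f$ with Cauchy data on $\Sigma$ in three stages:
\begin{enumerate}
\item Energy estimates in $H_b^k$ on the regions $U_{s_1}$, commuting with $s\partial_s$ and the rotation fields $\mathscr V$. Lemma \ref{le:ideal} guarantees the commutators are lower order. This gives a solution in $s^{-\epsilon}H_b^k$ with tame bounds.
\item A normal-operator argument. Write $I(L)u=f-(L-I(L))u$; the remainder is in $s^{1-\epsilon}H_b$. Invert $I(L,\lambda)$ via the Mellin transform and shift the contour past $\operatorname{Re}\lambda=0$ up to $\operatorname{Re}\lambda=\beta<1$. This yields $u=h_0'+\tilde u$ with $h_0'$ the residue at $0$, an element of $H_b(\mathcal I^+;s^{-2}S^2T^*\mathbb S^n)$, and $\tilde u\in s^\beta H_b$.
\item Iterate the previous stage finitely many times to recover the losses.
\end{enumerate}

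\textbf{Step 3 (Nash--Moser).} With the smoothing operators of Lemma \ref{le:smoothingops}, run the standard scheme. At each step, first solve the linearized gauge constraint for $\theta$ at $\Sigma$; this is algebraic in the normal derivative because $\tilde\delta^*$ is a first-order operator. Then solve the linear problem of Step 2 with zero Cauchy data for the correction. Finally, update $(h_0,\tilde h,\theta)$, splitting off the $\lambda=0$ residue into $h_0$. The tame estimates with fixed loss $D-d$ and smallness $\epsilon$ then give convergence in every $H_b^k$, hence $H_b^\infty$ with norms below $\delta_0$.

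\textbf{Main obstacle.} I expect the hardest step to be Step 2 in the nonlinear setting. The indicial operator depends on the dynamical $g_{(0)}$ and $h_0$, and the $\lambda=0$ kernel must be captured uniformly so that the residue really lives in $s^{-2}S^2T^*\mathbb{S}^n$ with tame $H^k(\mathcal I^+)$ bounds. The spacetime components of the residue at $\lambda=0$ must vanish; the gauge terms $E$ and $\tilde E$ are designed precisely to ensure this. I would first check this at exact de Sitter and then argue by perturbation, using the splittings \eqref{eq:splitting1}, \eqref{eq:splitting2}.
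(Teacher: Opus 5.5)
\textbf{Gap: the role of $\theta$.} In $P$ the gauge one-form enters only through $\tilde\chi\theta$. Since $\tilde\chi\in\mathcal{C}^\infty_{\mathrm{c}}([0,s_0))$ vanishes near $\Sigma$ (it equals $1$ near $\mathcal{I}^+$), no choice of $\theta$ can affect $P$ in a neighborhood of $\Sigma$. A gauge condition at $\Sigma$ is also not part of this theorem. Here $\underline{h}_0,\underline{h}_1$ are arbitrary small data with no constraints imposed. The gauge one-form is made to vanish at $\Sigma$ only later, in the proof of Theorem \ref{thm:solution}, through the choice of $\underline{h}_1$; the constraint equations then give vanishing of its normal derivative. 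So your step ``first solve the linearized gauge constraint for $\theta$ at $\Sigma$'' cannot be carried out and is not needed, and $\theta$ ends up doing nothing in your scheme.

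\textbf{What $\theta$ is actually for.} Its real purpose is the step you pass over with ``splitting off the $\lambda=0$ residue into $h_0$''. Proposition \ref{prop:boundsdecay} solves $D_2|_{\tilde h}P(h_0,v,\theta)=f$ with $v=\chi v_0+\tilde v$. Moving $\chi v_0$ into the first slot does not, by itself, solve the linearized equation for the triple, because $h_0$ also enters the gauge background $g_0=g_{\mathrm{dS}}+\chi h_0$ of $\Upsilon(g;g_0)$ and $E_{g_0}$. As a result, $D_1|_{h_0}P(v_0,\tilde h,\theta)$ and $D_2|_{\tilde h}P(h_0,\chi v_0,\theta)$ differ by $\tilde\delta^*_g$ applied to a one-form. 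That difference is exactly $D_3|_\theta P$ applied to the gauge update $\tilde\theta$ of \eqref{eq:gaugeupdate}. Hence $(v_0,\tilde v,\tilde\theta)$ solves \eqref{eq:fulllin}, and this is the right inverse that goes into Nash--Moser.

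\textbf{What must then be proved.} You need $\tilde\theta\in s^\beta H_b^\infty$ with tame bounds. This uses that $v_0$ is a section of $s^{-2}\ker\operatorname{tr}_{g_{(0)}}$, not of all of $s^{-2}S^2T^*\mathbb{S}^n$ as you state; the pure-trace direction is not in the kernel of $I_{g_{(0)}}(0)$. For trace-free spatial $v_0$:
\begin{itemize}
\item $E_{g_0}(\chi v_0)$ vanishes to second order at $\mathcal{I}^+$.
\item The indicial part of $D_2|_{g_0}\Upsilon(g;\chi v_0)$, which is essentially $\delta_{g_0}\mathrm{G}_{g_0}(\chi v_0)$, vanishes because the last column of $I(\delta\mathrm{G},\lambda)$ is zero.
\item $\operatorname{supp}\chi\subset\{\tilde\chi=1\}$, so $\tilde\chi\tilde\theta=\tilde\theta$.
\end{itemize}
Without this mechanism you have no tame right inverse whose $\tilde h$-component lies in the decaying space, and that is the whole content of the theorem.

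\textbf{Minor points.} The energy estimates only give $s^{-N}H_b^k$ for some large $N$, not $s^{-\epsilon}$. Also, $L-I(L)$ gains only $s^\beta$ because of $\tilde R_{h_0,\tilde h}$, so the weight has to be raised in many steps of size at most $\beta$.
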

In order to prove this Theorem using the main result of \cite{nashmoser}, we need to control the solution of the linearized problem. To wit, we calculate the linearized Einstein operator. 
\begin{equation}
\begin{gathered}
L_{h_0, \tilde{h}}:=\left.D_2 P\right|_{\tilde{h}}\left(h_0, \cdot\right)=\square_g-2 \Lambda+2 \tilde{E} \delta_g \mathrm{G}_g+2 \mathscr{R}_g+2 \tilde{\delta}_g^* \circ\left(\mathscr{E}_{g ; g_0}-E_{g_0}\right), \\
+\left(D_g \tilde{\delta}_{.}^*\right)\left(\Upsilon\left(g ; g_0\right)+E_{g_0} \tilde{h}\right) \\
\left(\mathscr{R}_g u\right)_{\mu \nu}=\operatorname{Riem}(g)_{\kappa \mu \nu \lambda} u^{\kappa \lambda}+\frac{1}{2}\left(\operatorname{Ric}(g)_{\mu \lambda} u_\nu{ }^\lambda+\operatorname{Ric}(g)_{\nu \lambda} u_\mu{ }^\lambda\right), \\
\left(\mathscr{E}_{g ; g_0} u\right)_\mu=\left(\Gamma(g)_{\kappa \nu}^\lambda-\Gamma\left(g_0\right)_{\kappa \nu}^\lambda\right)\left(g_{\mu \lambda} u^{\kappa \nu}-u_{\mu \lambda} g^{\kappa \nu}\right).
\end{gathered}
\end{equation}
Here again, $g=g_{\mathrm{dS}}+\chi h_0 +\tilde{h}$ and $g_0=g_{\mathrm{dS}}+\chi h_0$, we raise and lower indices using $g$ and $(\square_gu)_{\mu\nu}=-g^{\kappa\lambda}u_{\mu\nu;\kappa\lambda}$ is the tensor wave operator. Moreover, $\left(D_g \tilde{\delta}^*_\cdot\right) \eta=\left(D_g \delta_\cdot^*\right) \eta$, for a fixed 1-form $\eta$, maps a symmetric 2 -tensor $h$ to $\restr{\frac{\mathrm{d}}{\mathrm{d} t}\left(\delta_{g+t h}^* \eta\right)}{t=0}$.
\\

We now investigate the structure of this operator and its indicial operator.
\begin{proposition}
\label{prop:indicialcalc}
 Let $k \geq 2$ and
$$
h_0 \in H_{b}^k\left(\mathcal{I}^{+} ;s^{-2} S^2 T^* \mathrm{S}^n\right)), \quad \tilde{h} \in   s^\beta H_{b}^k\left(\Omega ; S^{2}\ ^0T^* M\right)
$$
Suppose that $\left\|h_0\right\|_{H_{b}^{ d_n+2}}<\delta_0$ and $\|\tilde{h}\|_{  s^\beta H_{b}^{ d_{n+1}+2}}<\delta_0$ for some small $\delta_0>0$ (independently of $k$). Set $g:=g_{b}+\chi h_0+\tilde{h}$ and $g_0:=g_{b}+\chi h_0$.
\begin{enumerate}
\item (Structure.) As differential operators on $\Omega$ acting on sections of $S^{2}\ ^0T^* M$, we can write
$$
\begin{aligned}
& L_{h_0, \tilde{h}}=L_{0,0}+L_{(0), h_0}+\tilde{L}_{h_0, \tilde{h}} \\
& \quad L_{0,0} \in \operatorname{Diff}_{0}^2, \quad L_{(0), h_0} \in H_{b}^{k-2}\left(\mathcal{I}^{+}\right) \operatorname{Diff}_{0}^2, \quad \tilde{L}_{h_0, \tilde{h}} \in  s^\beta H_{b}^{k-2}\left(\Omega\right) \operatorname{Diff}_{0}^2,
\end{aligned}
$$
where $L_{(0), h_0}$ and $\tilde{L}_{h_0, \tilde{h}}$ satisfy the tame estimates
\begin{equation}
\label{eq:Lweakesti}
\begin{aligned}
&\left\|L_{(0), h_0}\right\|_{H_{b}^{k-2} \operatorname{Diff}_{\mathrm{0}}^2} \leq C_k\left\|h_0\right\|_{H_{b}^{k+ d_n}}, \\
&\left\|\tilde{L}_{h_0, \tilde{h}}\right\|_{  s^\beta H_{b}^{k-2} \operatorname{Diff}_{0}^2} \leq C_k\left(\left\|h_0\right\|_{H_{b}^{k+ d_n}}+\|\tilde{h}\|_{  s^\beta H_{b}^{k+ d_{n+1}}}\right)
\end{aligned}
\end{equation}
\item (Indicial operator.) Write
\begin{equation}
\label{eq:gonbound}
 g_{(0)}:=\cancel{g}+h_{(0)}, \quad h_{(0)}:=s^2 h_0  
\end{equation}
for the (rescaled) $s^{-2} S^2 T^* \mathbb{S}^n$ part of $\left.g\right|_{\mathcal{I}^{+}}$. In the splitting \eqref{eq:splitting1}, the indicial operator of $L_{h_0, \tilde{h}}$, recalling $\Lambda=n$, is given by
\begin{equation}
\begin{aligned}
I_{g_{(0)}}&\left(s \partial_s\right):=
\left(s \partial_s\right)^2-3 s \partial_s\\
\\
+&\left(\begin{array}{ccc}
-4 s \partial_s+2(n+2) & 0 & 2\left(s \partial_s-2\right) \operatorname{tr}_{g_{(0)}} \\
0 & -4 s \partial_s+3(n+1) & 0 \\
-2g_{(0)} & 0 & 2g_{(0)} \operatorname{tr}_{g_{(0)}}
\end{array}\right)
\end{aligned}
\end{equation}
With this, we mean 
\begin{equation}
\begin{aligned}
& L_{h_0, \tilde{h}}-I_{g_{(0)}}\left(s \partial_s\right)=R_0+\tilde{R}_{h_0, \tilde{h}}, \\
& \quad R_0 \in s \operatorname{Diff}_{b}^2, \quad \tilde{R}_{h_0, \tilde{h}} \in s^\beta H_{b}^{k-2} \operatorname{Diff}_{b}^2,
\end{aligned}\\
\end{equation}
where we have the following estimate
\begin{equation}
\left\|\tilde{R}_{h_0, \tilde{h}}\right\|_{s^\beta H_{}^{k-2} \operatorname{Diff}_{b}^2} \leq C_k\left(\left\|h_0\right\|_{H_{b}^{k+d_n}}+\|\tilde{h}\|_{ s^\beta H_{b}^{k+d_{n+1}}}\right) .
\end{equation}
\end{enumerate}
\end{proposition}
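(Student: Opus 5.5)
The plan is to compute everything in the local 0-frames $e^\mu$ of \eqref{eq:framedefinition} on ${}^\uparrow\Omega$ and ${}^\downarrow\Omega$ and then glue with the partition of unity ${}^\uparrow\varphi+{}^\downarrow\varphi=1$. In these frames, $g=g_{\mathrm{dS}}+\chi h_0+\tilde h$ has coefficients
\[
g_{\mu\nu}=(\underline g)_{\mu\nu}+s^2 c_{\mu\nu}+\chi\, (h_{(0)})_{ij}+\tilde h_{\mu\nu},
\]
where $(\underline g)_{\mu\nu}$ is constant in $s$ and smooth in $x$, $c_{\mu\nu}$ is smooth, and $h_{(0)}$ depends only on $x$. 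The dual frame satisfies
\[
[e_0,e_i]=e_i,\qquad [e_i,e_j]=0 .
\]
Hence every term of $L_{h_0,\tilde h}$ is a universal polynomial expression, smooth in the metric, in the following quantities: the frame coefficients $g_{\mu\nu}$ and $g^{\mu\nu}$; at most two $e_\mu$-derivatives of them; and the $x$-derivatives of the smooth background. These expressions multiply $e_\mu e_\nu$, $e_\mu$ or the identity.

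\textbf{Structure.} I split each coefficient by Taylor expanding the smooth nonlinearity around $h_0=\tilde h=0$:
\[
F(g_{\mathrm{dS}}+\chi h_0+\tilde h)=F(g_{\mathrm{dS}})+\bigl[F(g_{\mathrm{dS}}+\chi h_0)-F(g_{\mathrm{dS}})\bigr]+\bigl[F(g)-F(g_{\mathrm{dS}}+\chi h_0)\bigr].
\]
\begin{enumerate}
\item The first bracket gives $L_{0,0}\in\operatorname{Diff}_0^2$.
\item The second bracket is a smooth function of $(s,x,h_0,\partial_x h_0,\partial_x^2 h_0)$ vanishing at $h_0=0$. Since $e_i=s\partial_{x^i}$, derivatives of $h_{(0)}$ only enter with extra factors of $s$, and $e_0\chi$ is smooth. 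This gives $L_{(0),h_0}\in H_b^{k-2}(\mathcal I^+)\operatorname{Diff}_0^2$. The estimate follows from Lemma~\ref{le:2.9}(4) on $\mathcal I^+$, with the explicit $p$-dependence allowed; this is where the loss of $d_n$ derivatives appears.
\item The third bracket is a smooth function of both $h_0$ and $\tilde h$ (up to two b-derivatives), vanishing at $\tilde h=0$. Write it as $\tilde h\cdot G$ with $G$ smooth, and apply the weighted versions of the product estimate \eqref{eq:productest} (the mixed $\mathcal I^+$/$\Omega$ one) and of \eqref{eq:nonlinest}. This puts it in $s^\beta H_b^{k-2}(\Omega)\operatorname{Diff}_0^2$ with the stated tame bound.
\end{enumerate}
The smallness assumptions in $H_b^{d_n+2}$ and $s^\beta H_b^{d_{n+1}+2}$ guarantee, via Sobolev embedding, that $g$ stays Lorentzian and uniformly close to $g_{\mathrm{dS}}$, so that $g^{-1}$ is a smooth function of the coefficients. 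The $\Upsilon$ and $(D_g\tilde\delta^*_\cdot)(\Upsilon+E_{g_0}\tilde h)$ terms need separate care. Note that $\Upsilon(g_0;g_0)=0$, so $\Upsilon(g;g_0)$ is linear-plus-higher in $\tilde h$ and lies in the third bracket. Likewise $\mathscr E_{g;g_0}$ only involves $\Gamma(g)-\Gamma(g_0)$, hence also lies in the third bracket.

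\textbf{Indicial operator.} Since every $e_i$-derivative is $s\partial_{x^i}\in s\operatorname{Diff}_b^1$, only terms with pure $e_0$-derivatives and zeroth-order terms survive, frozen at $s=0$. The $\tilde h$-dependent parts are in $s^\beta H_b^{k-2}\operatorname{Diff}_b^2$ by the same estimates; these form $\tilde R_{h_0,\tilde h}$. The $s^2c_{\mu\nu}$ correction from \eqref{eq:gunderline}, and the $x$-derivatives of $g_{(0)}$ (which appear with a factor $s$), go into $R_0$ together with the $h_0$-remainders. I keep these in the structural estimate but note that their only contribution to the difference $L-I$ carries a factor $s$. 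Because $\chi=1$ near $s=0$, the indicial operator is that of the linearization of the operator at the model metric
\[
g_M=\frac{-\mathrm{d}s^2+g_{(0)}(x)}{s^2},
\]
with $\tilde E$ and $E_{g_0}$ taken with $\chi=1$, and with $\Upsilon$-terms absent.

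For $g_M$, I compute the frame Christoffel symbols modulo $s$:
\[
\Gamma(e_0,e_0)=-e_0,\qquad \Gamma(e_0,e_i)=0,\qquad \Gamma(e_i,e_0)=-e_i,\qquad \Gamma(e_i,e_j)=-(g_{(0)})_{ij}e_0,
\]
or the equivalent with the correct signs. The Riemann tensor is then, modulo $s$, the constant curvature tensor $\operatorname{Riem}=g_M\owedge g_M$ (with the normalisation fixed by $\operatorname{Ric}=ng_M$). With these I evaluate, block by block in the splitting \eqref{eq:splitting1}:
\begin{enumerate}
\item $\square_{g_M}$ acting on $u$ with $e_0$-derivatives only, which produces $(s\partial_s)^2-3s\partial_s$ on the diagonal (here I expect the $3$ to be $n$-independent after the frame conversion; to be checked), plus zeroth-order and first-order couplings from the connection;
\item $-2\Lambda+2\mathscr R_{g_M}$;
\item $2\tilde E\delta_{g_M}G_{g_M}$;
\item $-2\tilde\delta^*_{g_M}E_{g_M}$.
\end{enumerate}
Summing these gives the displayed matrix, with $\operatorname{tr}_{g_{(0)}}$ arising from $G_g$ and $E$, and $g_{(0)}$ arising from the $e^0\otimes e^0$ versus spatial couplings.

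\textbf{Main obstacle.} The hard step is this last bookkeeping: the connection terms of $\square$ acting on the three blocks, and the precise coefficients $2(n+2)$, $3(n+1)$ and $2(s\partial_s-2)$ coming from combining $\tilde E\delta G$ with $\tilde\delta^*E$. I would organize it by first computing $\square_{g_M}$ on each block on the exact model $g_M$ with $g_{(0)}$ flat. This is legitimate because $x$-derivatives of $g_{(0)}$ only enter at order $s$. I would then cross-check the result against the $n=3$ formula of \cite{hintz2024stability}.
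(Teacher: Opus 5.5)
The overall route is the paper's. You compute in the frames on ${}^\uparrow\Omega$ and ${}^\downarrow\Omega$ and glue with the partition of unity. You use the splitting $F(g_{\mathrm{dS}})+[F(g_0)-F(g_{\mathrm{dS}})]+[F(g)-F(g_0)]$ with Lemma~\ref{le:2.9} for the tame bounds. For part (2) you freeze at $s=0$, discard $e_i$-derivatives, $\overline{g}$, $\tilde h$, $\mathscr{E}_{g;g_0}$ and $(D_g\tilde\delta^*_\cdot)$, and sum the indicial parts of $\square_g$, $\mathscr{R}_g$, $\tilde E\delta_g\mathrm{G}_g$ and $\tilde\delta^*_gE_{g_0}$.

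The structural part is fine, with one bookkeeping correction. The $h_0$-dependent terms carrying a factor $s$ (e.g.\ from $e_i h_{(0)}$) cannot go into $R_0$, whose coefficients must be smooth and independent of $h_0$. Put them into $\tilde R_{h_0,\tilde h}$ instead, using $s\,H^{k-2}(\mathcal{I}^+)\subset s^\beta H^{k-2}_b(\Omega)$ for $\beta<1$.

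\textbf{First error: $\nabla_{e_0}e_0$.} The real problem is the indicial computation, which you rightly call the substance of part (2) but seed with a wrong input. In fact $\nabla_{e_0}e_0\equiv 0$, not $-e_0$:
\[
\nabla_{s\partial_s}(s\partial_s)=s\partial_s+s^2\nabla_{\partial_s}\partial_s=s\partial_s-s\partial_s=0 .
\]
You appear to have dropped the Leibniz term. Your value also violates metric compatibility, since $g(\nabla_{e_0}e_0,e_0)=\tfrac12 e_0(g_{00})=0$. Your other three symbols agree with the paper's ($\Gamma^i_{l0}\equiv-\delta^i_l$, $\Gamma^0_{ij}\equiv-g_{ij}$, $\Gamma^i_{0l}\equiv0$). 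Fed with $\Gamma^0_{00}=-1$, the formulas for $u_{\mu\nu;\kappa\lambda}$ would corrupt exactly the $e_0$-coefficients and the $00$-entries you are trying to determine.

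\textbf{Second error: the coefficient $3$.} Your expectation that the $3$ in $(s\partial_s)^2-3s\partial_s$ is $n$-independent is wrong. At the model metric $|\mathrm{d}g|=s^{-n-1}|\mathrm{d}s\,\mathrm{d}g_{(0)}|$, equivalently $\operatorname{div}_g e_0\equiv-n$. Hence on scalars $\square_g=s^{n+1}\partial_s\,s^{1-n}\partial_s+\dots=(s\partial_s)^2-n\,s\partial_s+\dots$. The tensorial computation gives $\square_g\equiv e_0e_0-ne_0+(\cdots)$, which is the paper's \eqref{eq:indwave}.

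The ``$-3$'' in the displayed formula agrees with this only for $n=3$, the case of \cite{hintz2024stability}. The paper's own proof yields $-n$, and Lemma~\ref{le:indicialroots} uses $\lambda^2-n\lambda$. This is not cosmetic:
\begin{itemize}
\item On the $g_{(0)}$-tracefree spatial block all zeroth-order contributions cancel ($-2-2n+2(n+1)=0$), leaving $\lambda(\lambda-n)$. Its root $\lambda=n$ is where the free datum $g_{(n)}$ later enters.
\item With $-3\lambda$ instead, the roots $0,2,3,n,n+1$ would not come out for $n\neq3$. For instance, the mixed $0i$-block would become $\lambda^2-7\lambda+3(n+1)$.
\end{itemize}
A cross-check against the $n=3$ formula cannot detect this. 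Check the $n$-dependence via $\operatorname{div}_g e_0$ or via the tracefree block instead.

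With $\nabla_{e_0}e_0\equiv0$ and $-n$ in place of $-3$, your block-by-block summation goes through exactly as in the paper. It produces the entries $-4s\partial_s+2(n+2)$, $-4s\partial_s+3(n+1)$, $2(s\partial_s-2)\operatorname{tr}_{g_{(0)}}$, $-2g_{(0)}$ and $2g_{(0)}\operatorname{tr}_{g_{(0)}}$.
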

\begin{proof}
    We split $L = {}^\uparrow\varphi L+ {}^\downarrow\varphi L$. Both terms can then be examined separately in the respective coordinate systems and frames. We show the calculations for $^\uparrow\Omega$, the lower part is completely analogous. For a function $f$ that is only defined on $^\uparrow\Omega$, when we say that it is in $s^\alpha H^l_b$ we mean that $^\uparrow\varphi f$ is in $s^\alpha H^l_b(\Omega)$, and similarly for $\mathcal{I}^+$. 
    \\
    
    We therefore have the coordinates $(s,x)$ and work in the frame
    \begin{equation}
    e^0=\frac{\mathrm{d} s}{s}, e^i=\frac{\mathrm{d} x^i}{s}, \quad e_0=s \partial_s, e_i=s \partial_{x^i}.
    \end{equation}
    We use Greek letters for space-time indices $\mu= 0,1,...,n$ and Latin letters for space indices $i=1,...,n$. The components of the metric are labeled 
    $$
    g_{\mu\nu}=g(e_\mu,e_\nu)=(g_{\mathrm{dS}})_{\mu\nu} + \chi (h_0)_{\mu\nu} + \tilde{h}_{\mu\nu},
    $$
    where $(h_0)_{\mu\nu}$ and $\tilde{h}_{\mu\nu}$ are real valued functions of class $H^k$ and $s^\beta H^k_b$ respectively, and $(h_0)_{\mu\nu}=0$ unless both indices are nonzero. In the splitting \eqref{eq:splitting1} and with dividing $g_{\mathrm{dS}}$ up like in \eqref{eq:gunderline} we have 
    \begin{equation}
        g_{\mu\nu} = \left(\underline{g}_{\mu\nu}+\chi(h_0)_{\mu\nu}\right) + \overline{g}_{\mu\nu} + \tilde{h}_{\mu\nu} \in (-1,0,g_{(0)}) + s^2\mathcal{C}^\infty + s^\beta H^k_b.
    \end{equation}
    Now, let $g^{\mu\nu}$ be the components of the inverse. They are given by Cramer's rule, so are rational functions of the $g_{\mu\nu}$ with denominator being the determinant of $g_{\mu\nu}$. Because of Sobolev embedding, $(h_0)_{\mu\nu}$ and $\tilde{h}_{\mu\nu}$ are small in $L^\infty$ on $^\uparrow\Omega$ and therefore also on the support of $^\uparrow\varphi$. Therefore, the inverse is well defined on the support of $^\uparrow\varphi$, because the determinant of $(g_{\mathrm{dS}})_{\mu\nu}$ is uniformly bounded away from 0. We can write $g^{-1}= g_0^{-1} - g_0^{-1}\tilde{h}g^{-1}$, and can use our estimate for nonlinear expression \eqref{eq:nonlinest} to see 
    \begin{equation}
    \left(g^{-1}\right)^{\mu \nu}-\left(g_0^{-1}\right)^{\mu \nu} \in s^\beta H_b^k
    \end{equation}
    where their norms are bounded by $C_k\|\tilde{h}\|_{H^{k+d_{n+1}}_b}$. Similarly, $g_0^{-1}= g_{\mathrm{dS}}^{-1}-g_{\mathrm{dS}}^{-1}h_0g_0^{-1}$ and therefore
    \begin{equation}
        \left(g_0^{-1}\right)^{\mu \nu}-\left(g_{\mathrm{dS}}^{-1}\right)^{\mu \nu} \in H_b^k
    \end{equation}
    with norm bounded by  $C_k\|h_0\|_{H^{k+d_n}_b}$. Finally, we can split the contribution from $g_{\mathrm{dS}} +h_0$ into a leading order  contribution from $\underline{g} + h_0$ and decaying terms of order $s^2\mathcal{C}^\infty$.\\
    
    With these estimates for the inverse in hand, we can write the operator $\operatorname{G}_g$ as 
    \begin{equation}
    \label{eq:indG}
\mathrm{G}_g \equiv\left(\begin{array}{ccc}
\frac{1}{2} & 0 & \frac{1}{2} \operatorname{tr}_{g_{(0)}} \\
0 & I & 0 \\
\frac{1}{2} g_{(0)} & 0 & I-\frac{1}{2} g_{(0)} \operatorname{tr}_{g_{(0)}}
\end{array}\right) \bmod s^2 \mathcal{C}^{\infty}+ s^\beta H_{  b}^k\left(\Omega\right)
\end{equation}
in the splitting of \eqref{eq:splitting1}. The first term is the indicial operator of $\operatorname{G}_g$, whereas the correction lying in $s^2\mathcal{C}^\infty$ is independent of $h_0,\tilde{h}$ and comes from $\overline{g}$. The last correction term is bounded by a constant times $\|h_0\|_{H_b^{k+d_n}} + \|\tilde h\|_{s^\beta H_b^{k+d_{n+1}}}$.
\\

In order to prove the structural properties of the operator $L_{h_0,\tilde{h}}$, we examine each of its components separately. Let us start by computing the connection coefficients
\begin{equation}\label{eq:connectiondef}
\begin{aligned}
\Gamma(g)_{\lambda \mu \nu}  =&g\left(\nabla_{e_\mu}^g e_\nu, e_\lambda\right)\\ 
=&\frac{1}{2}\left(e_\mu g_{\nu \lambda}+e_\nu g_{\mu \lambda}-e_\lambda g_{\mu \nu}\right.\\
&\left.-g\left(e_\mu,\left[e_\nu, e_\lambda\right]\right)-g\left(e_\nu,\left[e_\mu, e_\lambda\right]\right)+g\left(e_\lambda,\left[e_\mu, e_\nu\right]\right)\right).
\end{aligned}
\end{equation}
In our frame, $[e_0,e_0]=[e_i,e_j]=0$ and $[e_0,e_i]=e_i=-[e_i,e_0]$ and therefore we get
\begin{equation}
\label{eq:connectiondown}
\begin{aligned}
\Gamma(g)_{000} & =\frac{1}{2} e_0 g_{00},  \\
\Gamma(g)_{0 i 0} & =\frac{1}{2} e_i g_{00}, \\
\Gamma(g)_{00 j} & =\frac{1}{2} e_j g_{00}+g_{0 j},\\
\Gamma(g)_{0 i j} & =\frac{1}{2}\left(e_i g_{0 j}+e_j g_{0 i}-e_0 g_{i j}\right) +g_{i j}, \\
\Gamma(g)_{\ell 00} & =\left(e_0-1\right) g_{0 \ell}-\frac{1}{2} e_{\ell} g_{00},\\
\Gamma(g)_{\ell i 0} & =\frac{1}{2}\left(e_i g_{0 \ell}+e_0 g_{i \ell}-e_{\ell} g_{0 i}\right) -g_{i \ell},\\
\Gamma(g)_{\ell 0 j} & =\frac{1}{2}\left(e_0 g_{j \ell}+e_j g_{0 \ell}-e_{\ell} g_{0 j}\right),\\
 \Gamma(g)_{\ell i j} & =\frac{1}{2}\left(e_i g_{j \ell}+e_j g_{i \ell}-e_{\ell} g_{i j}\right), \\
\end{aligned}
\end{equation}
Now, using the formula for nonlinear estimates of Lemma \ref{le:2.9}, we get an estimate for the difference of the coefficients,
$$
\begin{aligned}
 \Gamma(g)_{\lambda\mu\nu}-\Gamma(g_0)_{\lambda\mu\nu} &\in s^\beta H_b^{k-1}\\
\Gamma(g)_{\lambda\mu\nu}-\Gamma(g_{\mathrm{dS}})_{\lambda\mu\nu} &\in H_b^{k-1} + s^\beta H_b^{k-1}.
\end{aligned}
$$
Raising the indices with $g^{-1}$, we get the same memberships
$$
\begin{aligned}
 \Gamma(g)_{\mu\nu}^\lambda-\Gamma(g_0)_{\mu\nu}^\lambda &\in s^\beta H_b^{k-1}\\
\Gamma(g)_{\mu\nu}^\lambda-\Gamma(g_{\mathrm{dS}})_{\mu\nu}^\lambda &\in H_b^{k-1} + s^\beta H_b^{k-1},
\end{aligned}
$$
again by the nonlinear estimates and using the considerations for the inverse above.
\\

Inspecting the operator $L_{h_0,\tilde{h}}$ in this frame, we see that each term is a 0-differential operator of order up to two, with coefficients being sums and products of $\Gamma^\lambda_{\mu\nu}$ and $e_\alpha \Gamma^\lambda_{\mu\nu}$. The only term where this needs further checking is the one involving $D_g \tilde{\delta}^*_{\cdot}$, but we can write it as
$$
\left(\left(D_g \tilde{\delta}^*_{\cdot}\right) \eta\right)_{\mu \nu}: h \mapsto-\frac{1}{2}\left(h_\mu{ }^\kappa{ }_{; \nu}+h_\nu{ }^\kappa{ }_{; \mu}-h_{\mu \nu}{ }^{; \kappa}\right) \eta_\kappa,
$$
for the one form $\eta:=\Upsilon\left(g ; g_0\right)+E_{g_0} h \in s^\beta H_b^{k-1}\left(\Omega\right)$. We then deal with each term in the same way. For example, writing the components of the Riemann tensor $\operatorname{Riem(g)}$ as $\operatorname{Riem}(g)_{\kappa \mu \nu \lambda}=g\left(e_\kappa,\left(\left[\nabla_{e_\nu}, \nabla_{e_\lambda}\right]-\nabla_{\left[e_\nu, e_\lambda\right]}\right) e_\mu\right)$, we see that it is of the desired form. Writing
$$
\operatorname{Riem}(g)=\operatorname{Riem}(g_{\mathrm{dS}})+\left(\operatorname{Riem}(g)-\operatorname{Riem}(g_0)\right)+\left(\operatorname{Riem}(g_0)-\operatorname{Riem}(g_{\mathrm{dS}})\right)
$$
we see that we can apply the nonlinear estimate on the second two terms when estimating in $H^{k-2}_b$. We get the same memberships for the differences of $\operatorname{Riem}(g)_{\kappa \mu \nu}^\lambda$ and $\operatorname{Riem}(g_0)_{\kappa \mu \nu}^\lambda$ or $\operatorname{Riem}(g_{\mathrm{dS}})_{\kappa \mu \nu}^\lambda$ as for the Connection coefficients, only with $k-1$ replaced by $k-2$. Again, those differences are bounded by $C_k\left( \|\tilde{h}\|_{s^\beta H^{k+d_{n+1}}_b(\Omega)} +\|h_0\|_{H^{k+d_n}_b(\mathcal{I}^+)}\right)$. The same reasoning can be applied to all the other terms. Then, repeating the proof for the chart on $^\downarrow\Omega$, we get the structural properties of the Proposition.
\\

Now we calculate the indicial operator. We can discard all terms involving $e_i$. Also, we can discard all contributions arising from $\overline{g}$ and $\tilde{h}$, as they will be of class $s^2\mathcal{C}^\infty$ and $s^\beta H^{k-2}_b$ respectively. Therefore, we can work with $\underline{g}+h_0$ and we can drop the terms arising from $\mathscr{E}$ and $D_g\tilde{\delta}_\cdot^*$.\\

As $e_0(h_0)_{\mu\nu}=0$ and $e_0\underline{g}_{\mu\nu}=0$, we have for the connection coefficients $\Gamma(g)_{\lambda\mu\nu}\equiv 0$ mod $s^2\mathcal{C}^\infty+s^\beta H^{k-1}_b(\Omega)$ for all $\lambda \mu \nu$ except for 
$$
\Gamma(g)_{li0}\equiv -g_{li}\quad \Gamma(g)_{0ij}\equiv g_{ij} 
$$
and therefore $\Gamma(g)^\lambda_{\mu\nu} \equiv 0$ except for
$$
\Gamma(g)^{i}_{l0} \equiv -\delta^{i}_l \quad \Gamma(g)^0_{ij} \equiv -g_{ij}$$
Notice that the coefficients are not symmetric in their lower indices, as $\Gamma(g)^{i}_{0l} \equiv 0$. 
\\

Using $R_{\kappa \mu \nu \lambda}=g\left(e_\kappa,\left(\left[\nabla_{e_\nu}, \nabla_{e_\lambda}\right]-\nabla_{\left[e_\nu, e_\lambda\right]}\right) e_\mu\right)$, we get that the only non-zero contributions to the indicial operator are $R_{0m0l}\equiv -g_{ml}$ and $R_{kmnl}\equiv g_{kn}g_{ml}-g_{kl}g_{mn}$ and those obtained from these two using the symmetries of the Riemann tensor. Calculating the trace of the spatial part, $g^{ij}g_{ij} = n$ with $n$ the number of spatial dimensions, this then yields for the Ricci tensor $\operatorname{Ric}(g)_{00} \equiv -n$, $\operatorname{Ric}(g)_{0i} \equiv 0$ and $\operatorname{Ric}(g)_{ij}\equiv n g_{ij}$, therefore $\operatorname{Ric}(g) \equiv ng$. In the splitting \eqref{eq:splitting1}, this gives for the operator $\mathscr{R}_g$
\begin{equation}
\label{eq:indR}
\mathscr{R}_g \equiv\left(\begin{array}{ccc}
n I & 0 & \operatorname{tr}_{g_{(0)}} \\
0 & (n+1) I & 0 \\
g_{(0)} & 0 & (n+1) I-g_{(0)} \operatorname{tr}_{g_{(0)}}
\end{array}\right).
\end{equation}
\\

Now we calculate the indicial operator of $\square_g$. As $g^{00} \equiv -1$ and $g^{0i}$ modulo terms which do not contribute to the indicial operator, we get $(\square_g u)_{\mu\nu} \equiv u_{\mu\nu;00}- g^{kl}u_{\mu\nu;kl}$, where $u$ is a section of $S^2\ ^0T^*M$. Now, using $u_{\mu\nu;\kappa}=e_\kappa u_{\mu\nu} - \Gamma^\rho_{\kappa\mu}u_{\rho\nu}-\Gamma^\rho_{\kappa\nu}u_{\mu\rho}$, and $u_{\mu\nu;\kappa\lambda}= e_\lambda u_{\mu\nu;\kappa} - \Gamma^\rho_{\lambda\kappa}u_{\mu\nu;\rho}- \Gamma^\rho_{\lambda\mu}u_{\rho\nu;\kappa}- \Gamma^\rho_{\lambda\nu}u_{\mu\rho;\kappa}$, we get first
$$
\begin{aligned}
u_{00 ; 0} & \equiv e_0 u_{00}, & & u_{00 ; k} \equiv 2 u_{0 k}, \\
u_{0 j ; 0} & \equiv e_0 u_{0 j}, & & u_{0 j ; k} \equiv u_{j k}+g_{j k} u_{00} \\
u_{i j ; 0} & \equiv e_0 u_{i j}, & & u_{i j ; k} \equiv u_{0 j} g_{i k}+u_{i 0} g_{j k},
\end{aligned}
$$
and then
$$
\begin{aligned}
u_{00 ; 00} & \equiv e_0 e_0 u_{00}, & & u_{00 ; k \ell} \equiv 2 u_{0 \ell ; k}+g_{k \ell} u_{00 ; 0}\\
u_{0 j ; 00} &\equiv e_0 e_0 u_{0 j}, & & u_{0 j ; k \ell} \equiv u_{\ell j ; k}+u_{00 ; k} g_{j \ell}+u_{0 j ; 0} g_{k \ell} \\
u_{i j ; 00} &\equiv e_0 e_0 u_{i j}, & & u_{i j ; k \ell} \equiv g_{i \ell} u_{0 j ; k}+g_{j \ell} u_{i 0 ; k}+g_{k \ell} u_{i j ; 0}
\end{aligned}
$$
This then gives for the indicial operator
\begin{equation}
\label{eq:indwave}
\square_g \equiv e_0 e_0-n e_0+\left(\begin{array}{ccc}
-2n & 0 & -2 \operatorname{tr}_{g_{(0)}} \\
0 & -(n+3) & 0 \\
-2 g_{(0)} & 0 & -2
\end{array}\right).
\end{equation}
Now, using the calculations above, we can also calculate the indicial operators for $\delta_g$ and the gauge modifications $E_{g_0}$ and $\tilde{E}$.
\begin{equation}
\label{eq:inddelta}
\delta_g \equiv\left(\begin{array}{ccc}
e_0-n & 0 & -\operatorname{tr}_{g_{(0)}} \\
0 & e_0-(n+1) & 0
\end{array}\right) ,
\end{equation}
\begin{equation}
E_{g_0} \equiv\left(\begin{array}{ccc}
1 & 0 & -2 \operatorname{tr}_{g_{(0)}} \\
0 & 0 & 0
\end{array}\right), \quad \tilde{E} \equiv\left(\begin{array}{cc}
-2 & 0 \\
0 & -2 \\
0 & 0
\end{array}\right).
\end{equation}
For the indicial operator of $\delta^*_g$ we use for the covariant derivative of a one form $\omega_{\mu;\nu} = e_\nu \omega_\mu - \Gamma^\rho_{\nu\mu}\omega_\rho$. This gives $\omega_{0;0} \equiv e_0 \omega_0$, $\omega_{0,j} \equiv \omega_j$, $\omega_{i;0}\equiv e_0\omega_i$, and $\omega_{i;j} = g_{ij}\omega_0$. This then gives
\begin{equation}
\label{eq:inddeltastar}
\delta_g^* \equiv\left(\begin{array}{cc}
e_0 & 0 \\
0 & \frac{1}{2}\left(e_0+1\right) \\
g_{(0)} & 0
\end{array}\right), \quad \tilde{\delta}_g^* \equiv\left(\begin{array}{cc}
e_0-2 & 0 \\
0 & \frac{1}{2}\left(e_0-3\right) \\
g_{(0)} & 0
\end{array}\right).
\end{equation}
Now, with the indicial operator of $L_{h_0,\tilde{h}}$ being equal to that of $\square_g-2 \Lambda+2 \tilde{E} \delta_g G_g+2 \mathscr{R}_g-2 \tilde{\delta}_g^* \circ E$, and remembering that $\Lambda = n$, we get the desired expression.
\end{proof}

\begin{lemma} [Mapping properties of the gauge-fixed Einstein operator.] Let $h_0$, $\tilde{h}$ and $\theta$ be as in Theorem \ref{thm:nonlinsol}. Then $P(h_0,\tilde{h},\theta)$ lies in $s^\beta H^\infty_b(\Omega;S^2\ ^0T^*M)$ with the tame estimate 
$$\|P(h_0,\tilde{h},\theta)\|_{s^\beta H^k_b(\Omega)}\leq C_k\left(\|h_0\|_{H^{k+2}(\mathcal{I}^+)}+\|\tilde{h}\|_{s^\beta H^{k+2}_b(\Omega)}+\|\theta\|_{s^\beta H^{k+1}_b(\Omega)} \right)$$
for all $k\in \mathbb{N}_0$.
\label{le:Pesti}
\end{lemma}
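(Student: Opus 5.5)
The key observation is that the background configuration is an exact solution of the gauged equation: $P(0,0,0)=0$. Indeed, for $h_0=0$, $\tilde h=0$, $\theta=0$ we have $g=g_0=g_{\mathrm{dS}}$, so $\Upsilon(g_{\mathrm{dS}};g_{\mathrm{dS}})=0$, $E_{g_{\mathrm{dS}}}0=0$, and $\operatorname{Ric}(g_{\mathrm{dS}})-\Lambda g_{\mathrm{dS}}=0$. The plan is to write $P(h_0,\tilde h,\theta)=P(h_0,\tilde h,\theta)-P(h_0,0,0)+P(h_0,0,0)$ and treat the two differences separately. In each, we work on ${}^\uparrow\Omega$ with the cutoff ${}^\uparrow\varphi$ and in the frame $e_\mu$; the chart ${}^\downarrow\Omega$ is identical. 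All weighted statements reduce to the unweighted Lemma~\ref{le:2.9} after factoring out $s^\beta$.

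First consider the term $P(h_0,0,0)$, which involves only $g_0=g_{\mathrm{dS}}+\chi h_0$. Since $\chi$ is supported in $s<\tfrac12 s_0$, and $h_0$ is independent of $s$ in the frame $e^i\otimes e^j$ up to the factor structure, its components lie in $H^k(\mathcal I^+)$. By the connection formulas \eqref{eq:connectiondown} and the curvature computation in the proof of Proposition~\ref{prop:indicialcalc}, the quantity $\operatorname{Ric}(g_0)-\Lambda g_0$ is a smooth nonlinear function $F(p,h_0,e h_0,ee h_0)$ with $F(p,0,0,0)=0$. The point is that it is not small merely in $H_b$: we need $s^\beta$ decay. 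Here the indicial computation is decisive. Because $e_0(h_0)_{ij}=0$, and because the $\underline g+h_0$ part of $\operatorname{Ric}(g_0)$ equals $n g_0$ modulo terms carrying at least one $e_i=s\partial_{x^i}$ derivative or coming from $\overline g$ (which is $O(s^2)$), we get $\operatorname{Ric}(g_0)-ng_0\in s\,H^{k}$-type spaces. The gauge term $\Upsilon(g_0;g_0)=0$ vanishes identically, and $E_{g_0}0=0$. Away from $\operatorname{supp}\chi$, the expression $P(h_0,0,0)$ vanishes, and on the cutoff transition region $s\sim s_0$ the weight is harmless. Hence $P(h_0,0,0)\in s H^{k}_b\subset s^\beta H^k_b$, with norm bounded by $C_k\|h_0\|_{H^{k+2}}$, using the product and nonlinear estimates \eqref{eq:productest}, \eqref{eq:nonlinest} (with loss $d_n$, absorbed since the higher norm of $h_0$ appears tamely and the low norm is bounded by $\delta_0$). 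For the tame form we use the standard interpolation trick: in each product, only one factor carries high derivatives.

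Next consider the $\tilde h$- and $\theta$-dependence. We write
\begin{equation*}
P(h_0,\tilde h,\theta)-P(h_0,0,0)=\int_0^1 L_{h_0,t\tilde h}\tilde h\,\mathrm dt+2\tilde\delta^*_{g}(\tilde\chi\theta).
\end{equation*}
By Proposition~\ref{prop:indicialcalc}(1), $L_{h_0,t\tilde h}\in\operatorname{Diff}^2_0+H^{k}_b\operatorname{Diff}_0^2+s^\beta H^{k}_b\operatorname{Diff}^2_0$, with coefficients bounded tamely. Applying a second-order 0-operator to $\tilde h\in s^\beta H^{k+2}_b$ gives $s^\beta H^k_b$, because $\mathcal V_0\subset\mathcal V_b$ and the vector fields commute with the weights up to harmless terms. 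The product estimates then yield the bound $C_k(\|\tilde h\|_{s^\beta H^{k+2}_b}+\|h_0\|_{H^{k+2}})$. Likewise, $\tilde\delta^*_g$ is a first-order 0-operator with coefficients depending on $g$, so $\tilde\delta^*_g(\tilde\chi\theta)$ is bounded by $\|\theta\|_{s^\beta H^{k+1}_b}$.

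The main obstacle is the bookkeeping of Sobolev losses. Lemma~\ref{le:2.9} as stated loses $d_{n+1}$ derivatives, while the claimed estimate has loss only $2$ and $1$. I would handle this by placing the $L^\infty$ (Sobolev-embedding) norms on the low-regularity factor, which is bounded by $\delta_0$-smallness hypotheses already assumed in Theorem~\ref{thm:nonlinsol} (the $H^d$ norms for $d$ large), so that the only high norm appearing is the top $H^{k+2}$ norm. A second subtle point is the decay claim for $P(h_0,0,0)$. There I would verify directly from \eqref{eq:connectiondown} that every $h_0$-dependent contribution to $\operatorname{Ric}(g_0)-ng_0$ carries an $e_i$ derivative (hence a factor $s$) or a factor from $\overline g$.
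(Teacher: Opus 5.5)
Your proposal is correct and is essentially the paper's argument made explicit: $P(h_0,0,0)=2(\operatorname{Ric}(g_0)-\Lambda g_0)$ decays by the frame/indicial computation of Proposition~\ref{prop:indicialcalc}, everything involving $\tilde h$ or $\theta$ carries the weight $s^\beta$, and the tame bound follows from product and nonlinear estimates. The decay of $P(h_0,0,0)$ is in fact like $s^2$ (cf.\ Lemma~\ref{le:einstein0order}), and this is what gives your stated membership in $sH^k_b$: with the b-density a single factor $s$ only yields $s^{1-}$, which is still enough since $\beta<1$. Your caveat about derivative losses is also accurate. With Lemma~\ref{le:2.9} as stated, the bound comes out only with extra losses of $d_n$ and $d_{n+1}$ derivatives, which is harmless for the Nash--Moser scheme. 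The loss-free form needs the sharper Moser-type estimates the paper mentions after that lemma.
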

\begin{proof}
    The calculations in the proof of the above Proposition imply that $\operatorname{Ric}(g)-\Lambda g$ vanishes modulo terms that decay as $s^\beta$. The gauge part of $P$ is in coordinates given by terms that do not change the order of decay acting on the difference $\Gamma(g)-\Gamma(g_0)$, $\tilde{h}$ and $\theta$, all of which decay as $s^\beta$. The tame estimates then follow easily from Lemma \ref{le:2.9}.
\end{proof}
Now we determine the indicial roots of this operator in the following Lemma.
\begin{lemma}
\label{le:indicialroots}
$I_{g_{(0)}}(\lambda):=I(L_{h_0,\tilde{h}},\lambda)$ has roots at $\lambda =0,2,3,n,n+1$. The space of indicial solutions corresponding to $\lambda=0$ is given by the bundle $s^{-2}\operatorname{ker}_{g_{(0)}}$ as a subbundle of $S^2\ ^0T^*M$ over $\mathcal{I}^+$, and $I_{g_{(0)}}(\lambda)^{-1}$ has a simple pole at $\lambda=0$.
\end{lemma}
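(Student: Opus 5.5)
The plan is a direct computation. Starting from the indicial operator of Proposition~\ref{prop:indicialcalc}, I replace $s\partial_s$ by $\lambda$. I then refine the splitting \eqref{eq:splitting1} using \eqref{eq:splitting2}, writing $u_{ij}=a\,g_{(0)}+u^{\mathrm{tf}}$ with $\operatorname{tr}_{g_{(0)}}u^{\mathrm{tf}}=0$, so that $\operatorname{tr}_{g_{(0)}}u=na$. In this finer splitting $I_{g_{(0)}}(\lambda)$ becomes block diagonal with three blocks:
\begin{itemize}
\item a $2\times2$ block acting on $(u_{00},a)$;
\item a scalar block acting on the mixed components $u_{0i}$;
\item a scalar block acting on $u^{\mathrm{tf}}$.
\end{itemize}
The determinant then factors as a product over these blocks, and the roots and the behaviour at $\lambda=0$ can be read off block by block.

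For the scalar part I will take the normalisation $\lambda^2-n\lambda$. This is what results from summing \eqref{eq:indwave} with the other contributions, because $\square_g$ contributes $e_0e_0-ne_0$ and no other term contributes a scalar multiple of $e_0$ outside the matrix. The displayed $-3s\partial_s$ agrees with this only for $n=3$. Checking this coefficient against \eqref{eq:indwave} is the one point I expect to need real care, since the stated roots $n,n+1$ depend on it.

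The three blocks are then handled as follows.
\begin{itemize}
\item On $u^{\mathrm{tf}}$, the term $2g_{(0)}\operatorname{tr}_{g_{(0)}}$ vanishes and $-2g_{(0)}$ does not feed back into this block. The block is therefore $\lambda^2-n\lambda=\lambda(\lambda-n)$, with roots $0$ and $n$.
\item On $u_{0i}$, the block is $\lambda^2-(n+4)\lambda+3(n+1)=(\lambda-3)(\lambda-n-1)$, with roots $3$ and $n+1$.
\item On $(u_{00},a)$, the first row gives $(\lambda-2)(\lambda-n-2)u_{00}+2n(\lambda-2)a$. The trace-part row gives $-2u_{00}+(\lambda^2-n\lambda+2n)a$. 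The determinant is
\[
(\lambda-2)\bigl[(\lambda-n-2)(\lambda^2-n\lambda+2n)+4n\bigr]=(\lambda-2)\bigl(\lambda^3-(2n+2)\lambda^2+(n^2+4n)\lambda-2n^2\bigr).
\]
The cubic vanishes at $\lambda=2$, and polynomial division gives the factorisation $(\lambda-2)^2(\lambda-n)^2$. So this block has roots $2$ and $n$.
\end{itemize}
Altogether the roots are $0,2,3,n,n+1$, and all of them are constant along $\mathcal{I}^+$.

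For the statement at $\lambda=0$, I use $n\geq3$. At $\lambda=0$ the $(u_{00},a)$ block has determinant $4n^2\neq0$ and the $u_{0i}$ block equals $3(n+1)\neq0$. Hence $\ker I_{g_{(0)}}(0)$ is exactly the trace-free spatial part $s^{-2}\ker\operatorname{tr}_{g_{(0)}}$. Near $\lambda=0$ the inverse is block diagonal. The first two blocks are holomorphic there, while on the trace-free part the inverse is $\bigl(\lambda(\lambda-n)\bigr)^{-1}$, which has a simple pole at $0$ because $n\neq0$. Therefore $I_{g_{(0)}}(\lambda)^{-1}$ has a simple pole at $\lambda=0$, with residue the projection onto $s^{-2}\ker\operatorname{tr}_{g_{(0)}}$ multiplied by $-1/n$.
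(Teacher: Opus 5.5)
Your proposal is correct and follows the paper's proof. The paper also passes to the refined splitting \eqref{eq:splitting3} and uses the scalar part $\lambda^2-n\lambda$, which confirms that the $-3s\partial_s$ in Proposition~\ref{prop:indicialcalc} is the $n=3$ value carried over. It then reads off the determinant $\lambda(\lambda-2)^2(\lambda-3)(\lambda-n)^3(\lambda-(n+1))$, counting each block once; your block-by-block factorisation and residue computation supply the details the paper leaves implicit.
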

\begin{proof}
We write the indicial family $I_{g_{(0)}}(\lambda)$ from Proposition \ref{prop:indicialcalc} using the refined splitting \eqref{eq:splitting3} and get
\begin{equation}
\begin{aligned}
I_{g_{(0)}}&(\lambda)=\\
&\lambda^2-n \lambda+\left(\begin{array}{cccc}
-4 \lambda+2(n+2) & 0 & 2n(\lambda-2) & 0 \\
0 & -4 \lambda+3(n+1) & 0 & 0 \\
-2 & 0 & 2n & 0 \\
0 & 0 & 0 & 0
\end{array}\right).
\end{aligned}
\end{equation}
We see that the determinant of $I_{g_{(0)}}(\lambda)$ factors as $\lambda(\lambda-2)^2(\lambda-3)(\lambda-n)^3(\lambda-(n+1))$. It has a simple zero at $\lambda=0$ with corresponding kernel $s^{-2}\operatorname{ker}_{g_{(0)}}$.
\end{proof}
For later use, we compute the indicial family of $(-\delta_gG_g+E_{g_0})\delta^*_g$ in the splitting $\eqref{eq:splitting2}$
\begin{equation}
\label{eq:gaugeind}
    I((-\delta_gG_g+E_{g_0})\delta^*_g,\lambda)=-\frac{1}{2}\left(\begin{array}{cc}
        (\lambda-2)(\lambda-n) &0  \\
        0 & (\lambda+1)(\lambda-(n+1)) 
    \end{array}\right),
\end{equation}
using the expressions in the proof of propostion \ref{prop:indicialcalc}, and note that it is invertible for $\lambda\notin\{-1,2,n,n+1\}$.

\subsection{Energy estimates}
\label{se:energy}
The goal of this section is to prove tame estimates for the solution of the linearized gauge-fixed Einstein Equation 
\begin{equation}
L_{h_0, \tilde{h}} v=f,\left.\quad\left(v, \mathcal{L}_{-s \partial_s} v\right)\right|_{\Sigma}=\left(v_0, v_1\right).
\end{equation}
It closely follows the approach of \cite{hintz2024stability}. We will show in detail the proof of the first-order estimate and how to gain an estimate of one order higher, then leave the further arguments for tame estimates of higher orders to the interested reader, as they proceed completely analogously to \cite{hintz2024stability}.
\\

In order to declutter the notation, we introduce two norms. Firstly, for control on the initial data and homogeneity:
$$
\|(f,v_0,v_1)\|_{k,\alpha}:=\|f\|_{s^\alpha H^k_b}+\|v_0\|_{H^{k+1}}+\|v_1\|_{H^k}
$$
We wish to bound the solution $v$ using this norm. The norm on the solution in question will control one 0-derivative, and $k$ b-derivatives. We write this as
$$
\|v\|_{s^{\alpha}H_{0;b}^{1;k}}:=\|v\|_{s^\alpha H^k_b}+\|s\partial_s  v\|_{s^\alpha H^k_b} + \sum_{j=1}^n\|s\partial_{x^j}v\|_{s^\alpha H^k_b}.
$$
(Here, we use the $x^j$ on the respective coordinate systems when calculating the norm.)
\\

We will in this section always assume
\begin{equation}
\label{eq:smallness}
\begin{aligned}
    \|h_0\|_{H^{2d_n+4}(\mathcal{I}^+;s^{-2}S^2\ T^*\mathbb{S}^n)}\leq \delta_0\\
    \|\tilde{h}\|_{s^\beta H_b^{2d_{n+1}+4}(\Omega;S^2\ ^0T^*M)}\leq \delta_0.
\end{aligned}
\end{equation}
By Sobolev embedding \eqref{eq:sobolev embedding}, we therefore have $\mathcal{C}^0$ bounds on the coefficients of $h_0$ and $\tilde{h}$ in the frames $e^\mu\otimes_se^\nu$ in the respective charts.

\subsubsection{Tame bounds on growing spaces}
First, we prove tame estimates on spaces that allow growth at the conformal boundary.
\begin{proposition}
\label{prop:boundsgrow}
    There exists an $N>0$ such that the following holds. Let
    $$
v_0, v_1 \in  H_{b}^{\infty}\left(\Sigma ; S^2\ ^0T^* M\right)), \quad f \in  s^{-N} H_{b}^{\infty}\left(\Omega ; S^{2}\ ^0T^* M\right).
$$
Then the unique solution $v\in s^{-N}H^\infty_b(\Omega;S\ ^0T^*M)$ of
\begin{equation}
    L_{h_0,\tilde{h}}v=f,\quad \restr{(v,\mathcal{L}_{-s\partial_s})}{\Sigma}=(v_0,v_1)
\end{equation}
satisfies the tame estimate
\begin{equation}
\begin{aligned}
\|v\|_{ s^{-N} H_{b}^k} \leq &C_k(\| \left(f, v_0, v_1\right) \|_{{k,-N}} \\
& \left.+\left(\left\|h_0\right\|_{ H^{k+2 d_n+2}}+\|\tilde{h}\|_{ s^\beta H_{b}^{k+2 d_{n+1}+2}}\right)\left\|\left(f, v_0, v_1\right)\right\|_{{0,-N}}\right).
\end{aligned}
\end{equation}
\end{proposition}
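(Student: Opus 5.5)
We prove the estimate by a weighted energy estimate for the 0-wave operator $L_{h_0,\tilde h}$, following the corresponding argument in \cite{hintz2024stability}. The operator is principally the tensor wave operator of $g$, a 0-differential operator whose principal part is $\square_g$ acting componentwise in the frames $e^\mu\otimes_s e^\nu$. The smallness assumption \eqref{eq:smallness} together with Sobolev embedding gives $\mathcal C^1_b$ control on the coefficients. The level sets $\Sigma_{s_1}=\{s=s_1\}$ are therefore uniformly spacelike, and $-s\partial_s$ is uniformly timelike.

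\textbf{First-order estimate.} We use the multiplier $W=-s^{2N}\,s\partial_s$, or equivalently we conjugate by $s^{N}$: set $w=s^{N}v$ and study $s^{N}L s^{-N}w=s^Nf$. Conjugation changes only lower-order terms, by shifting $s\partial_s\mapsto s\partial_s-N$. We pair the equation with $-s\partial_s w$ against the fixed density $|\tfrac{ds}{s}d\cancel g|$ and work in the frame $e_\mu$. The energy current $T(w)(W,\cdot)$ is integrated over $\{s_1\le s\le s_0\}$, with boundary terms on $\Sigma$ and $\Sigma_{s_1}$ (or the domains $U_{s_1}$ with spacelike boundary, if needed chart-wise).

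The bulk term contains $N\,(|s\partial_s w|^2+\sum_j|s\partial_{x^j}w|^2+|w|^2)$ with a good sign. This comes from the derivative of the weight $s^{2N}$, since integrating from $\Sigma$ toward $s=0$ means the weight grows in the direction of integration. Against it we have error terms bounded by $C(1+\delta_0)$ times the same quantity. These errors come from the coefficients of $L$, from first-order terms including the indicial zeroth-order part, and from the deformation tensor of $W$. All of them are $\mathcal C^0$-bounded uniformly in $N$, because $L\in(\mathcal C^0+\dots)\operatorname{Diff}^2_0$ by Proposition \ref{prop:indicialcalc}. Choosing $N$ large absorbs them. Cauchy--Schwarz on the $f$ term then gives $\|v\|_{s^{-N}H^{1;0}_{0;b}}\le C\|(f,v_0,v_1)\|_{0,-N}$ uniformly in $s_1$. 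Letting $s_1\to0$ yields existence in $s^{-N}H^{1}_0$; uniqueness also follows from this estimate.

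\textbf{Higher b-regularity.} We commute with b-vector fields, namely $s\partial_s$ and the rotation fields in $\mathscr V$. By Lemma \ref{le:ideal}, $[L,W]\in\operatorname{Diff}^2_0$ with a structure that produces at most one b-derivative of $v$ beyond the controlled $0$-derivatives. Lemma \ref{le:ideal}(2) is the key point: the commutator with $s\partial_s$ does not produce an uncontrolled $\partial_s$ term. Hence for $D_b^k v$ we get an equation $L D_b^k v=D_b^k f+\sum_{j<k}(\text{coefficient derivatives})\,D_b^{j}(\text{0-derivatives of }v)$. We apply the first-order estimate inductively, possibly increasing $N$ slightly to absorb the principal commutator terms; this is why $N$ is fixed independently of $k$ only if the commutator's leading part is weight-absorbable, which we check via the above structure. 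The initial data $D_b^k v$ on $\Sigma$ are controlled by $v_0,v_1$ and by using the equation to solve for $\partial_s^2 v$.

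\textbf{Tame structure.} For the lower-order terms where many derivatives fall on the coefficients $L_{(0),h_0}$ and $\tilde L_{h_0,\tilde h}$, we use the product estimates \eqref{eq:productest} together with \eqref{eq:Lweakesti}. Each term splits either as $\|\text{coeff}\|_{\text{low}}\|v\|_{H^k}$, which is absorbed by smallness, or as $\|\text{coeff}\|_{H^{k+d}}\|v\|_{\text{low}}$. The latter gives the factor $(\|h_0\|_{H^{k+2d_n+2}}+\|\tilde h\|_{s^\beta H_b^{k+2d_{n+1}+2}})\|(f,v_0,v_1)\|_{0,-N}$ once the low norm of $v$ is bounded by the case $k=0$.

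The main obstacle is the uniform choice of $N$ with respect to $k$, which requires checking that the commutator $[L,D_b]$ has principal part of the form (small or absorbable)$\cdot$0-derivative rather than a genuine b-derivative loss. I would first try to establish this directly from Lemma \ref{le:ideal}.
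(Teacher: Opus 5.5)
Your first-order step is essentially the paper's: the multiplier $s^{2N}(-s\partial_s)$ on the domains $U_{s_1}$, where the derivative of the weight gives a positive bulk term that absorbs all first-order errors once $N$ is large. One small correction: the current $T(W,\cdot)$ gives no $|v|^2$ term with a good sign. The paper obtains it from a second current, $\operatorname{div}_g(v^2 s^{2N}s\partial_s)$, whose divergence contains $2Ns^{2N}v^2$.

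The genuine gap is in the higher-order step, exactly where you locate the obstacle. The device you propose there, enlarging $N$ to absorb the top-order commutator terms, is precisely what the statement excludes. Lemma \ref{le:ideal} only controls the \emph{number} of derivatives: $[L_{h_0,\tilde h},V]\in\operatorname{Diff}^2_0$, so there is no b-derivative loss. It says nothing about the \emph{size} of the resulting couplings, which carry $k$-dependent constants, while a fixed weight $s^N$ supplies a bulk term of fixed strength.

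These couplings are not all small. In the chartwise frame trivialization you use, write $L_{h_0,\tilde h}=\sum\ell_{\gamma i}(s\mathscr V)^\gamma(s\partial_s)^i$. Then $[L_{h_0,\tilde h},V_a]$ contains $-(V_a\ell_{02})(s\partial_s)^2v$. This is a first-order $0$-operator applied to $s\partial_s v$, and its coefficient ($V_a$ of the partition-of-unity cutoff times $g^{00}$) neither decays at $\mathcal I^+$ nor is small. Absorbing it by the weight forces $N=N_k$. That matters because Proposition \ref{prop:boundsdecay} passes from $s^{-N}$ to $s^\beta$ in a number of indicial-inversion steps that grows with $N$, each costing two b-derivatives. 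A $k$-dependent $N$ therefore destroys the fixed loss $d$ in the tame estimate.

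What is missing is the paper's two-part mechanism.

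First, the weight is $s^{N'}e^{Fs^{2\beta}/2\beta}$, so the bulk term becomes $(1+Fs^{2\beta})|\partial^{\le1}v|^2$. Couplings carrying a factor $s$ or $s^\beta$ are then absorbed by taking $F$ large, which may depend on $k$, while $N'$ stays fixed.

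Second, the commuted unknowns are ordered, $v'=(s\partial_s v,V_1v,\dots,V_{n+1}v)$, and at higher order by the number of tangential derivatives. With this ordering every non-decaying coupling is strictly lower triangular:
\begin{itemize}
\item $[L,s\partial_s]$ produces only decaying couplings, because $s\partial_s\ell_{\gamma i}\in s\mathcal C^\infty+s^\beta H^\infty_b$ (the $\mathcal I^+$-part of the coefficients is $s$-independent) and $[s\partial_s,(s\mathscr V)^\gamma]=|\gamma|(s\mathscr V)^\gamma$.
\item In $[L,V_a]$, every term with $|\gamma|\ge1$ contains a factor $sV_b$, coming from $(s\mathscr V)^\gamma$ or from $[V_a,sV_b]=s[V_a,V_b]$. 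It is therefore $s$ times a first-order $0$-operator on $V_bv$.
\item The only non-decaying term is the one above, which feeds $s\partial_s v$ into the equation for $V_av$.
\end{itemize}
Summing the componentwise estimates with weights $\epsilon^I$ bounds these lower-triangular couplings by $C\epsilon$ times the left side, so they are absorbed for $\epsilon$ small. This yields \eqref{eq:systemwaveesti} with the same $N$ as the first-order estimate, and iterating \eqref{eq:weirdweakesti} gives the proposition. Without such a mechanism your argument only proves the tame bound with a $k$-dependent weight $s^{-N_k}$.
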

\begin{proof}
The proof is based on a system of second-order equations with a particular lower triangular structure. The control over the solutions to this system of wave equations are obtained using a simple first-order estimate of a scalar wave equation, which we study first.

\subsubsection{Domain for the estimates}
\label{se:domainforesti}
We prove an energy estimate by applying Stokes' Theorem on a domain that lies completely in one chart of $\Omega$. A global estimate can then be obtained by adding the two estimates. For the estimate, we need a domain with spacelike boundaries for $g=g_{\mathrm{dS}} + \chi h_0 + \tilde{h}$, where the boundary includes the upper hemisphere for all $s\in[s_1,s_{0}]$ for an arbitrary $s_1\in [0,s_0)$. Furthermore, the outward pointing normal must be future directed, except for the part of the boundary with $s=s_0$. For concreteness sake, we define our domain
$$
U_{s_1}:=\{(s,\omega)\in{}^\uparrow\Omega:|{}^\uparrow p(\omega)|\leq \frac{11}{10}+\sqrt{s},s\in[s_1,s_0]\}.
$$
The boundary $\partial U$ is given by
$$
\begin{aligned}
&\{(s_0,\omega):|{}^\uparrow p(\omega)|\leq 1.1+\sqrt{s_0} \}\\
&\cup \{(s_1,\omega):|{}^\uparrow p(\omega)|\leq 1.1+\sqrt{s_1} \}\\
&\cup \{(s,\omega):|{}^\uparrow p(\omega)|=\, 1.1+\sqrt{s},\ s\in[s_1,s_0]\}\\
&:=\Sigma_{s_0}\cup\Sigma_{s_1}\cup \Sigma_a.
\end{aligned}
$$
A schematic of the domain and its boundary is shown in Figure \ref{fig:Domain}.\\

We check that the boundary is spacelike for $g_{\mathrm{dS}}$. The metric in our coordinates $^\uparrow P(s,\omega)$ on $^\uparrow \Omega$ takes the form 
$$
\frac{1}{\sin^2(s)}\mathrm{diag}\{-1,\frac{4}{(1+|{}^\uparrow p(\omega)|^2)^2}\operatorname{id}_{n\times n}\}.
$$
Here, $|{}^\uparrow p(\omega)|$ is the standard Euclidian norm on $\mathbb{R}^n$. Immediately, this shows that $\Sigma_{s_0}$ and $\Sigma_{s_1}$ are spacelike. For a curve $\gamma(t)=(s(t),\omega(t))\in \Sigma_a$ (with $\dot{s}=0$, else $\gamma $ is constant), we get that $\partial_t|{}^\uparrow p(\omega(t))|=\frac{\dot{s}}{2\sqrt{s}}$ and therefore 
$$
\frac{\sin^2(s)}{\dot{s}^2}|\dot{\gamma}|_{g_{\mathrm{dS}}} = -1 + \frac{1}{s\left(1+(1.1+\sqrt{s})^2\right)^2},
$$
which is a positive function for $0\leq s\leq s_0=0.1$. Therefore, $\partial U$ is spacelike for $g_{g_{\mathrm{dS}}}$ and, by extension, also for $g$, provided $\delta_0$ is small enough. Also, the outward pointing normal for $\Sigma_{s_1}$ and $\Sigma_a$ have a component $-c\partial_s$ with $c>0$, so are future oriented in our convention.\\
\begin{figure}[h!]
    \centering
    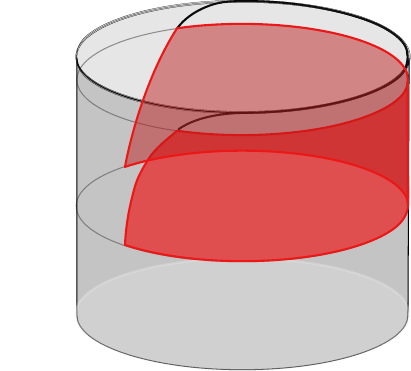
    \caption{Schematic of the Domain. The red shaded area corresponds to $U_{s_1}$. The top and bottom part of its border correspond to $\Sigma_{s_1}$ and $\Sigma_{s_0}$ respectively. An the curved parts correspond $\Sigma_a$}
    \label{fig:Domain}
\end{figure}

In order to apply this result to de Sitter type metrics (obtained from scattering data) far from exact de Sitter space, as mentioned in \ref{re:generalX}, one would of course need to adapt this concrete choice domain. It is clear, however, that such adaptions can be done, so no further details will be given here.
\subsubsection{Estimate for the scalar wave equation}
We consider the scalar wave equation $\Box_g v=f$ on $\Omega$ with initial data $(v_0,v_1)$. Define the energy-momentum tensor $T=T[v]$ as 
$$T(V,W) := V(v)W(v)-\frac{1}{2}g(V,W)g(\nabla v,\nabla v)$$
for two vector fields $V,W$ and for a given vector field $V$ the associated $J$-current 
$$^{(V)} J=T(V, \cdot).$$
Calculating the divergence of this one-form gives
\begin{equation}
    \operatorname{div}_g\left(^{(V)} J\right)=-\left(\square_g v\right) V v+{ }^{(V)} K, \quad^{(V)} K=\frac{1}{2}T \cdot \mathcal{L}_V g,
\end{equation}
where $\cdot$ denotes tensor contraction (using $g$).\footnote{Recall the convention $(\Box_gv)_{\mu\nu}=-g^{\kappa\lambda}v_{\mu\nu;\kappa\lambda}$}
\\

Now we get the following equation by applying Stokes' Theorem on $U_{s_1}$.
$$\begin{aligned} \int_{\Sigma_{s_1}\cup\Sigma_a}T(V,\nu) \mathrm{d} \sigma & +\int_{U_{s_1}}{ }^{(V)} K \mathrm{~d} g \\ & =\int_{\Sigma_{s_0}}T(V,\nu) \mathrm{d} \sigma+\int_{U_{s_1}}\left(\square_g v\right) V v \mathrm{~d} g\end{aligned}$$
Here, $\nu$ is the future pointing unit normal at the respective boundary. (Recall that since $-\partial_s$ is future oriented for $g_{\mathrm{dS}}$ and $h_0$ and $\tilde{h}$ are small in $\mathcal{C}^0$ as sections of $S^2\ ^0T^*M$ we have that the future unit normal is $-s\partial_s$ modulo small correction terms).\\

Now, by choosing a clever vector field $V$, we can exploit the positive definiteness of the energy momentum tensor to control a weighted norm of the solution. As long as we choose something that is future-time-like, the first term on the left-hand side is non-negative and will be dropped. Furthermore, the integral over $\Sigma_{s_0}$ is bounded by a constant times $\|v_0\|_{H^1}+\|v_1\|_{L^2}$, as long as the vector field is bounded on $\Sigma$. So, a first choice could be $V = -s\partial_s$. After taking the limit of $s_1$ going to 0, we would get
$$
\int_{U_0} { }^{(V)}K \mathrm{d}g \leq C (\|v_0\|_{H^1}^2+\|v_1\|_{L^2}^2) +\int_{U_0} (\square_g v) Vv\mathrm{d}g
$$
But in order to be able to control the solution with the left-hand side of the equation, we will need a modification. So we define
\begin{equation}
    V_0:=-s\partial_s, \quad w=w(s):=s^{N'}e^{Fs^{2\beta}/2\beta},\quad V:=w^2V_0,
\end{equation}
and see that $V$ is still future time like. The constants $N'$ and $F$ will be fixed later. In fact, to get a first-order energy estimate, a weight of $s^{N'}$ would suffice, but the exponential term allows us to absorb terms arising from higher-order estimates. The exponential term is uniformly bounded away from 0 and infinity, so we can easily estimate it. \\

Now we calculate
\begin{align}
    ^{(V)}K={}^{(w^2V_0)}K= w^2 {}^{(V_0)}K + 2wT(\nabla w,V_0)\\
    T(\nabla w,V_0)=sw'(s)T\left(\left(\frac{\mathrm{d}s}{s}\right)^\sharp,-s\partial_s\right).
\end{align}
Both $\left(\frac{\mathrm{d}s}{s}\right)^\sharp$ and $-s\partial_s$ are uniformly time-like on $\Omega$ and therefore $T\left(\left(\frac{\mathrm{d}s}{s}\right)^\sharp,-s\partial_s\right)$ controls a norm of derivatives along 0-vector fields of $v$. To explicitly see this, we work in the frame $e_\mu$ and compute $T_{\mu\nu}= (e_{\mu}v) (e_{\nu}v)-\frac{1}{2}g_{\mu\nu}g^{\kappa\lambda}(e_{\kappa}v) (e_{\lambda}v)$. By writing $\left(\frac{\mathrm{d}s}{s}\right)^\sharp$ and $-s\partial_s$ in this frame, we see that there exists a constant $c_0$ such that $T\left(\left(\frac{\mathrm{d}s}{s}\right)^\sharp,-s\partial_s\right)\geq c_0\sum_{\mu=0}^n \left|e_\mu v\right|^2$. In addition, there exists a constant $C_0$ such that $\left|^{(V_0)}K\right|\leq C_0\sum_{\mu=0}^n \left|e_\mu v\right|^2$ on $^\uparrow \Omega$. Together, we get the estimate
\begin{equation}
    \begin{aligned}
    \int_{U_0} (2sw'c_0-wC_0&)w \sum_{\mu=0}^n \left|e_\mu v\right|^2 \mathrm{d}g\\
    &\leq  C (\|v_0\|_{H^1}^2+\|v_1\|_{L^2}^2) +\int_{U_0} (\square_g v) Vv\mathrm{d}g.
    \end{aligned}
\end{equation}
Now we calculate
\begin{equation}
    sw'=(N'+Fs^{2\beta})w
\end{equation}
This shows that for $N'$ large enough, the left-hand side of the estimate controls a weighted norm of the 0-derivatives of $v$. \\

To also include the (weighted) $L^2$-norm of $v$ we again use Stokes' Theorem, this time for the vector field $v^2W$, where we define $W:=w^2W_0$ and $W_0=s\partial_s$. Again using the domain $U_{s_1}$ from above, we get
$$
\int_{\Sigma_{s_1}\cup\Sigma_a}\left\langle v^2 W, \nu\right\rangle \mathrm{d} \sigma+\int_{U_{s_1}} \operatorname{div}_g\left(v^2 W\right) \mathrm{d} g=\int_{\Sigma_{s_0}}\left\langle v^2 W, \nu\right\rangle \mathrm{d} \sigma.
$$
We again drop the integral over $\Sigma_{s_1}\cup\Sigma_a$ as it is nonnegative ($s\partial_s$ is past- and $\nu$ is future time-like). Furthermore, $\langle W,\nu \rangle$ is bounded from above and below by a positive multiple of $s^{2N'}$. Therefore, the right-hand side is bounded by a constant times $\|v_0\|_{L^2}$. Next, we calculate the divergence
\begin{equation}
    \begin{aligned}
        \operatorname{div}_g(v^2W)&= 2vWv + v^2\operatorname{div}_g(W) \\
        &=2vWv + v^2(2wW_0w + w^2 \operatorname{div}_g(W_0))\\
        &=2vWv + v^2w^2(2\left(N'+F s^{2 \beta}\right)+\operatorname{div}_g(W_0)).
    \end{aligned}
\end{equation}
The divergence $\operatorname{div}_g(W_0)$ equals $\Gamma(g)^\mu_{\mu0}$. With the same calculation as in the proof of proposition \ref{prop:indicialcalc}, we see that this gives $-n$ plus two correction terms, one from $\overline{g}$ and one from $h_0$ and $\tilde{h}$, the latter being small in $L^\infty$ by Sobolev embedding and the smallness assumption \eqref{eq:smallness}. Therefore we can estimate $\operatorname{div}_g(W_0)\leq C_1$ (independent of $h_0,\tilde{h}$). The first term we estimate as $|2vWv|\leq w^2v^2+w^2(e_0v)^2$. This gives for the integrand
\begin{equation}
\begin{aligned}
    \operatorname{div}_g(v^2W)&\geq v^2w^2(2\left(N'+F s^{2 \beta}\right)) -w^2v^2(1+C_1) -w^2(e_0v)^2\\
    &\geq v^2w^2(N'+Fs^{s\beta})-w^2(e_0v)^2,
\end{aligned}
\end{equation}
for $N'$ large enough. After letting again $s_1$ go to 0 we can add both integral inequalities, and by increasing $C_0$ by 1, we get the estimate
\begin{equation}
\label{eq:integralestimate}
    \begin{aligned}
        \int_{U_0}( v^2w^2(N'+Fs^{2\beta}) &+( 2c_0 s w'-C_0 w) w \sum_{\mu=0}^n|e_\mu v|^2 ) \mathrm{d}g\\
        &\leq C\left(\left\|v_0\right\|_{H^1}^2+\left\|v_1\right\|_{ L^2}^2\right)+\int_{U_0} w^2\left|\square_g v \| s \partial_s v\right| \mathrm{d} g .
    \end{aligned}
\end{equation}
This is the main estimate with which we will continue to work. We will fix the constants $N'$ and $F$ later. This is because the constants $C_0$ and $C$ will increase to absorb error terms.
\\

We will not need it here, but by adding the corresponding estimate on the lower hemisphere using $^\downarrow\Omega$, we can extract an energy estimate for the scalar wave equation from this, using the same arguments as below.
\subsubsection{First order estimate for the linearized Einstein equation}
\label{se:firstorder}
Now, let $v$ be the unique solution to the full tensor equation 
$$ 
L_{h_0, \tilde{h}} v=f,\left.\quad\left(v, \mathcal{L}_{-s \partial_s} v\right)\right|_{\Sigma}=\left(v_0, v_1\right).
$$
On $^\uparrow\Omega$, we can write $L_{h_0,\tilde{h}}$ as 
$$
\begin{aligned}
& L_{h_0, \tilde{h}}=\square_g \otimes \operatorname{Id}_{\mathbb{R}^{(n+1)(n+2)/2}}+Q, \quad Q=Q_0+Q_{(0)}+\tilde{Q}, \\
& \quad Q_0 \in \operatorname{Diff}_{0}^1, \quad Q_{(0)} \in \mathcal{C}^0\left(\mathcal{I}^{+}\right) \operatorname{Diff}_{0}^1, \quad \tilde{Q} \in s^\beta \mathcal{C}^0\left(\Omega\right) \operatorname{Diff}_{0}^1
\end{aligned}
$$
acting component wise on the $(n+1)(n+2)/2$ component vector $v$ ($(n+1)(n+2)/2$ is the rank of $S^2\ ^0T^*M$). $Q_0$ is independent of the metric perturbations $h_0 $ and $\tilde{h}$, while $Q_{(0)}$ and $\tilde{Q}$ have small norms in their respective spaces due to the smallness assumptions \eqref{eq:smallness}. Now we apply estimate \eqref{eq:integralestimate} to each component $v^{(i)}$ separately. The change from $\square_g v^{(i)}$ to $(L_{h_0, \tilde{h}}v)^{(i)}$ gives error terms of the form $\int_{\Omega}|(Qv)^{(i)}||s\partial_sv^{(i)}|\mathrm{d}g$. As $Q$ is a 0-differential operator of order 1 we can estimate the terms arising from it by $C_{Q}\int_{U_0} w^2\sum_{(j)}\left( (v^{(j)})^2+\sum_{\mu=0}^n(e_\mu v^{(j)})^2\right)$. The constant $C_Q$ can be taken to be independent of $h_0$ and $\tilde{h}$ because of the smallness assumption \eqref{eq:smallness}. Next, we sum the estimates obtained thus far, for all components $v^{(i)}$. We put the term arising from $Q$ on the left-hand side and absorb the derivative part of it again into a larger $C_0$ and the other part by exchanging $N'+Fs^{2\beta}$ to $N'/2+Fs^{2\beta}$ if $N'$ is sufficiently large. We use $|ab|\leq \frac{|a^2}{2}+ \frac{|b|^2}{2}$ on the terms $\int_{U_0} |(L_{h_0,\tilde{h}}v)^{(i)}||s\partial_sv^{(i)}|\mathrm{d}g$ and absorb the $e_0$ part into yet a larger $C_0$. 
\\

Now finally, we can fix $N'$ such that $2N'c_0-C_0 \geq 1$ and $N'/2 \geq 1$ to get the following estimate
\begin{equation}
\begin{aligned}
& \int_{U_0}  w^2\left(1+F s^{2 \beta}\right)\left|\partial^{\leq 1} v\right|^2 \mathrm{~d} g \\
& \quad \leq C\int_{U_0} w^2 |L_{h_0, \tilde{h}} v|^2\mathrm{d}g+C_{N', F}\left(\left\|v_0\right\|_{H^1}^2+\left\|v_1\right\|_{ L^2}^2\right),
\end{aligned}
\label{eq:integralesti2}
\end{equation}
where we write $|\partial_{\leq1}v|^2:=\sum_{(i)}\left( (v^{(i)})^2+\sum_{\mu=0}^n(e_\mu v^{(i)})^2\right)$. 
\\

With the same reasoning, we can get the identical estimate on $^\downarrow \Omega$. Summing the two, the left-hand side controls the $s^{-N'-1/2} H^1_0(\Omega;S^2\ ^0T^*M)$ norm of $v$ as the respective $U_0$ cover their respective hemispheres. We get the first order estimate
$$
    \|v\|_{s^{-N'+\frac{1-n}{4}}H^1_0(\Omega)}\leq C\|(f,v_0,v_1)\|_{(0,-N'+\frac{1-n}{4})}.
$$
The $\frac{1-n}{4}$ term arises because our choice of density for the Sobolev spaces on $\Omega$ differs from the metric density $\mathrm{d}g$ by a factor of $s^{\frac{n-1}{2}}$. After defining $N:=N'+\frac{1-n}{4}$ to improve the cosmetics of the equation this gives 
\begin{equation}
\label{eq:firstorderesti}
    \|v\|_{s^{-N}H^{1;0}_{0;b}(\Omega)}\leq C\|(f,v_0,v_1)\|_{0,-N}.
\end{equation}
which implies the estimate in Proposition \ref{prop:boundsgrow} for $k=0$.\\

In order to control up to $k$ b-derivatives of the solution $v$ we look at the wave equation for $Vv$, where $V$ stands for an up to $k$ fold composition of b-vector fields. For it, we have equation $L_{h_0,\tilde{h}}Vv = Vf+[L_{h_0,\tilde{h}},V]v$. If we write a vector with a basis for all such $V$, we get a system of equations where the error terms arising from the commutator have a particular lower triangular structure discussed below and because of that can be absorbed into the left-hand side of this estimate provided $F$ is fixed accordingly. So first, we prove an energy estimate for a system of wave equations with the sort of error terms that will arise later.
\subsubsection{System of wave equations}
Let $K\in \mathbb{N}$ and consider the first order differential operator $A$, where its components are operators $A_{I J}, 1 \leq I, J \leq K$ acting on sections of $S^2\ ^0T^*M$ of the form $A_{I J}=A_{0, I J}+A_{(0), I J}+\tilde{A}_{I J}$, where
$$
A_{0, I J} \in \operatorname{Diff}_{0}^1, \quad A_{(0), I J} \in  \mathcal{C}_b^0\left(\mathcal{I}^{+}\right) \operatorname{Diff}_{0}^1, \quad \tilde{A}_{I J} \in s^\beta \mathcal{C}_{b}^0\left( \Omega \right) \operatorname{Diff}_{0}^1
$$
Furthermore, these operators have the following lower triangular structure
\begin{equation}
\label{eq:lowtriang}
I \leq J \Longrightarrow A_{0, I J} \in s \operatorname{Diff}_{0}^1, \quad A_{(0), I J}=0.
\end{equation}
Consider the wave equation $\mathcal{L}v=f$, with $\mathcal{L}= L_{h_0,\tilde{h}}\otimes \operatorname{id}_{\mathbb{R}^K} +A$ and let $v$ be the unique solution with initial data $v_0,v_1$ at $s=s_0$. Investigating this equation on $U_0$ like above, we can apply our estimate \eqref{eq:integralesti2} to each $(n+1)(n+2)/2$ dimensional component $v^{I}$ of $v$. On the right-hand side, we get inside the integral $L_{h_0, \tilde{h}} v^I=f^I-\sum_{J=1}^K A_{I J} v^J$. We divide the sum $\sum_{J=1}^KA_{IJ}v^J$ into the parts $J<I$ and $J\geq I$. For the terms with $J\geq I$ we can estimate $\sum_{J\geq I}|A_{IJ}v^{J}|\leq \sum_{J\geq I}C_A s^\beta \left|\partial^{\leq 1} v^J\right|$ because of \eqref{eq:lowtriang}. For the terms $J<I$ we have no additional decay and we estimate them by $\sum_{J< I}|A_{IJ}v^{J}|\leq \sum_{J< I}C_A \left|\partial^{\leq 1} v^J\right|$. After some basic estimates to separate the $f^{I}$ from the terms containing $A_{IJ}$, we get
$$
\begin{aligned}
 \int_{U_0}  &w^2\left(1+F s^{2 \beta}\right)\left|\partial^{\leq 1} v^I\right|^2 \mathrm{~d} g \\
& \leq C_K\left(\left\| f^I\right\|_{s^{-N}L^2}^2\ +\|v_0^{I}\|^2_{H^1}+\|v_1^{I}\|^2_{L^2}+C_{A} \sum_{J<I} \int_{U_0}  w^2\left|\partial^{\leq 1} v^J\right|^2 \mathrm{~d} g \right. \\
 & \left. +C_A \sum_{J \geq I} \int_{U_0}  s^{2 \beta} w^2\left|\partial^{\leq 1} v^J\right|^2 \mathrm{~d} g \right)
\end{aligned}
$$
Call this estimate $(*)^{I}$. We then take the weighted sum of these estimates $\sum_{I=1}^K\epsilon^{I}(*)^{I}$ for a positive $\epsilon$ to be fixed later. The last term on the right-hand side is unproblematic. Because of the additional $s^{2\beta}$ term they can be absorbed into the left-hand side of the estimate by choosing $F$ sufficiently large. For the second term, we estimate for $\epsilon < 1$
$$
C_KC_A\sum_{I=2}^K \sum_{J=1}^{I-1} \epsilon^{I} \left|\partial^{\leq 1} v^J\right|^2 \leq C_KC_AK\epsilon\sum_{I=J}^{K}\epsilon^{J}\left|\partial^{\leq 1} v^J\right|^2.
$$
Now we fix $\epsilon$ such that $C_KC_AK\epsilon<1/2$ and absorb it into the left-hand side. \\

Summing with the corresponding estimate for $^\downarrow\Omega$ and taking the square root, the left-hand side now controls the $s^{-N}H^1_0(\Omega)$ norm of $v$ and therefore, we have proven the estimate,
\begin{equation}
\label{eq:systemwaveesti}
    \|v\|_{s^{-N}H^{1;0}_{0;b}(\Omega)}\leq C\|(f,v_0,v_1)\|_{0,-N}
\end{equation}
for this system of equations. (Importantly, we have the same $N$ as in the case studied in section \ref{se:firstorder}).
\subsubsection{Tame estimates, arbitrary order}
In order to use a Nash-Moser iteration later we need tame estimates of arbitrary order on the solution of the linearized Einstein equation. We claim the estimate 
\begin{equation}
\label{eq:weirdweakesti}
\begin{aligned}
& \|v\|_{ s^{-N} H_{0 ; b}^{1 ; k}} \\
& \quad \leq C_k\left(\left\|\left(f, v_0, v_1\right)\right\|_{{k,-N}}\right. \\
& \left.\quad+\left(\left\|h_0\right\|_{ H^{k+2+2 d_n}}+\|\tilde{h}\|_{ s^\beta H_{b}^{k+2+2 d_{n+1}}}\right)\|v\|_{ s^{-N} H_{0}^1}+\|v\|_{ s^{-N} H_{0 ; b}^{1 ; k-1}}\right) .
\end{aligned}
\end{equation}
Crucially, we prove this for all $k$ for a \emph{fixed} value of $N$ (independent of $k$). Applying this iteratively, and in the end using the first order estimate \eqref{eq:firstorderesti}, we get the desired tame estimates of the Proposition \ref{prop:boundsgrow}.\\

We prove only the case $k=1$. The structural reason why it works is Lemma \ref{le:ideal}. The general case then proceeds completely analogously and works for the same reason but introduces considerable notational complexity. \\

Fix $\mathscr{V}=\{V_1,...,V_{n+1}\}$ the rotation vector fields around the coordinate axes in $\mathbb{S}^n$. We now wish to show that the vector $v':= (s\partial_s v,V_1v,...,V_{n+1}v)$ fulfills a wave equation of the type discussed above. Write, for example, for the first component 
$$
L_{h_0,\tilde{h}}(s\partial_s v)=s\partial_sL_{h_0,\tilde{h}}v + [L_{h_0,\tilde{h}},s\partial_s]v
$$
and analogously for the other components. Our aim is to show that we can split the commutator terms into two terms. The first are differential operators of order 1 on $v'$ with the lower triangular structure discussed before. They will play the role of the $A$ in the system of wave equations discussed above. The other, we will be able combine with the first term on the right-hand side into an $f'$, where we will have tame estimates for this $f'$ estimated in $L^2$.
\\

We use the partition of unity to write $L_{h_0,\tilde{h}} = {}^\uparrow L_{h_0,\tilde{h}}+{}^\downarrow L_{h_0,\tilde{h}}:= {}^\uparrow\varphi L_{h_0,\tilde{h}}+{}^\downarrow\varphi L_{h_0,\tilde{h}}$. We split $[L_{h_0,\tilde{h}},s\partial_s]v$ into $[{}^\uparrow L_{h_0,\tilde{h}},s\partial_s]v + [{}^\downarrow L_{h_0,\tilde{h}},s\partial_s]v$ and analyze each term in the respective coordinate system. We show the calculations in $^\uparrow\Omega$ for ${}^\uparrow L_{h_0,\tilde{h}}$, in the other chart it works the same. In a slight abuse of notation in order to increase legibility, we drop the $\uparrow$ from the notation and just write $L_{h_0,\tilde{h}}$ as we will keep working in this chart.
\\

Now, trivializing $L_{h_0,\tilde{h}}$ in the frame $e_\mu$ and writing $(s\mathscr{V})^\gamma=(sV_1)^{\gamma_1}\ldots (sV_{n+1})^{\gamma_{n+1}}$, we can write it as
$$
L_{h_0,\tilde{h}}= \sum_{|\gamma|+i\leq2} \ell_{\gamma i}(s,x)(s\mathscr{V})^\gamma (s\partial_s)^{i},
$$ 
where $\ell_{\gamma i}(s,x)= \ell_{0,\gamma i}(s,x) +\ell_{(0),\gamma i}(x) + \tilde{\ell}_{\gamma i}(s,x)$ are $(n+1)(n+2)/2\times(n+1)(n+2)/2$ matrices of class
\begin{equation}
\ell_{0,\gamma i} \in \mathcal{C}^{\infty}\left(\Omega \right), \quad \ell_{(0),  \gamma i} \in H_b^{\infty}\left(\mathcal{I}^{+}\right), \quad \tilde{\ell}_{ \gamma i} \in  s^\beta H_{{b}}^{\infty}\left(\Omega\right).
\end{equation}
In fact, they obey tame estimates in terms of $h_0$ and $\tilde{h}$ because of proposition \ref{prop:indicialcalc}. In addition, they have compact support in $^\uparrow\Omega$. Now, we can expand the commutators
\begin{align}
    [L_{h_0,\tilde{h}},s\partial_s]v&=-\sum_{|\gamma|+i\leq 2} \left((s\partial_s \ell_{\gamma i})(s\mathscr{V})^\gamma (s\partial_s)^{i}v - |\gamma|\ell_{\gamma i}(s\mathscr{V})^\gamma (s\partial_s)^{i}\right)v \label{eq:commu1},\\
    [L_{h_0,\tilde{h}},V_a]v&=-\sum_{|\gamma|+i\leq 2} \left((V_a \ell_{\gamma i})(s\mathscr{V})^\gamma (s\partial_s)^{i}v - \ell_{\gamma i}[V_a,(s\mathscr{V})^\gamma] (s\partial_s)^{i}\right)v\label{eq:commu2}.
\end{align}
Consider first the terms with $|\gamma| + i\leq1$. We estimate them in $s^{-N}L^2$ by a constant times $\sum_{\gamma,i}\|\ell_{\gamma i}\|_{\mathcal{C}_b^1}\|v\|_{s^{-N}H^{1,0}_{0,b}(\Omega)}$ and then  
$$
\|\ell_{\gamma i}\|_{\mathcal{C}^1_b} \leq C(1+\|h_0\|_{H^{2d_n+3}(\mathcal{I}^+)}+\|\tilde{h}\|_{s^\beta H_b^{2d_{n+1}+3}})
$$
using Sobolev embedding and \eqref{eq:Lweakesti} with $k-2 = d_n+1$ and $k-2= d_{n+1}+1$ respectively\footnote{The partition of unity may increase the constant, but not destroy the estimate}. These terms we put into $f'$.\\

Now, we turn to the terms with $\gamma + i = 2$. Firstly, consider the first term on the right-hand side of the first equation \eqref{eq:commu1}. As $s\partial_s\ell_{\gamma i}\in s\mathcal{C}^\infty + s^\beta H^\infty$, we can write this term as $(s\operatorname{Diff}_0^1+s^\beta \mathcal{C}^0\operatorname{Diff}_0^1)$ acting on one component of $v'$, e.g. on $s\partial_sv$ if $i\geq1$ and to an $V_av$ otherwise. Therefore, we can count it as a contribution to $A_{11}$ or $A_{1a}$ respectively. Next, we consider the second term on the right-hand side of both equations, \eqref{eq:commu1}\eqref{eq:commu2}. For them to be non-zero, we need $|\gamma| \geq 1$. Therefore, we have at least one factor of $sV_b$ (this uses that $[V_a,V_b]=g^c_{ab}V_c$ for smooth functions $g^c_{ab}$). As $V_b$ and $s\partial_s$ commute, we can write this as 
\begin{equation}
\label{eq:umschribe}
    s\cdot q (V_a\ell_{\gamma i})(sV_c)^{\gamma'} (s\partial_s)^{i}\circ V_b v,
\end{equation}
for a smooth function $q$ and a multiindex $\gamma'$ with $|\gamma'|=|\gamma|-1$, and consider this as $s$ times a first order 0-differential operator acting on $V_b v$ and therefore a trivial contribution to the appropriate $A_{IJ}$. Lastly, we consider the first term of the second equation \eqref{eq:commu2}. In the case that $|\gamma|\geq1$ we can again write it like \eqref{eq:umschribe} and absorb it into an appropriate $A_{IJ}$. If $|\gamma| = 0$, then we have $i=2$ and we write this as a first order 0-differential operator acting on $s\partial_s v=v'^1$. Therefore, it is a contribution to $A_{a1}$, so a strictly lower triangular term. It does not decay toward $s=0$ and this is precisely why we allowed for such lower triangular terms in our estimate.\\

So now, to summarize, we have a system of wave equations of the form $\mathcal{L}v'=f'$ with $\mathcal{L}$ with the lower triangular structure described above, where $v'=(s\partial_sv,V_1v,...,V_nv)$ and $f'$ given by $(s\partial_sf,V_1f,...,V_nf)$ plus the terms from the right-hand side with $|\gamma|+i \leq1$ both from ${}^\uparrow L_{h_0,\tilde{h}}$ and ${}^\downarrow L_{h_0,\tilde{h}}$. The $A_{IJ}$ come from the terms with $|\gamma| +i =2$, again from both ${}^\uparrow L_{h_0,\tilde{h}}$ and ${}^\downarrow L_{h_0,\tilde{h}}$. We have control over $f'$ in $s^{-N}L^2$ with 
$$
\begin{aligned}
&\|f'\|_{s^{-N}L^2}\\
&\leq C\left(\|f\|_{s^{-N}L^2} + (1+\|h_0\|_{H^{2d_n+3}(\mathcal{I}^+)}+\|\tilde{h}\|_{H_b^{2d_{n+1}+3}})\|v\|_{s^{-N}H^{1,0}_{0,b}(\Omega)}\right).
\end{aligned}
$$
What is missing is control of the initial data of $v'$ in terms of $v_0$ and $v_1$. As the $V_a$ are tangent to $\Sigma$, the only part we cannot directly write in terms of $v_0$ and $v_1$ is $\restr{(s\partial_s)^2v}{\Sigma}$. Working in each coordinate chart, we notice that $\ell_{02}$, the coefficient of $(s\partial_s)^2$ in $L_{h_0,\tilde{h}}$, is diagonal with entries 
$$
-g^{-1}(\frac{\mathrm{d}s}{s},\frac{\mathrm{d}s}{s})=-g_{\mathrm{dS}}^{-1}(\frac{\mathrm{d}s}{s},\frac{\mathrm{d}s}{s})\bmod H_b^\infty(\mathcal{I}^+) + s^\beta H_b^\infty(\Omega)= -g_{\mathrm{dS}}^{-1}(\frac{\mathrm{d}s}{s},\frac{\mathrm{d}s}{s})
$$
plus small corrections in view of \eqref{eq:smallness}. Therefore, they are bounded away from 0 on the support of the respective $\varphi$, and $\ell_{02}$ can be inverted on this domain with bounded coefficients. To carefully deal with that term, we use the product estimate and the nonlinear expression estimate. We write
\begin{equation}
\left(s \partial_s\right)^2 v=\ell_{02}^{-1}\left(L_{h_0, \tilde{h}} v-L_0v-L_1(s\partial_sv)\right),
\end{equation}
where $L_0$ and $L_1$ are differential operators built purely from derivatives that are tangent to $\{s=\mathrm{const}\}$. Therefore, the restriction of that part to $\Sigma$ can be computed purely in terms of $v_0,v_1$. Also, the coefficients can be estimated in $L^\infty$ and then, using Sobolev embedding, we see that they obey tame estimates. The restriction of $L_{h_0, \tilde{h}} v= f$ to $\Sigma$ if bounded in $L^2(\Sigma)$ by the $s^{-N}H^1_b(\Omega)$ norm of $f$ by Lemma \ref{le:2.9} (2). Now, finally, we use the estimate \eqref{eq:systemwaveesti} to conclude the weak estimate \eqref{eq:weirdweakesti} for $k=1$. 
\\

The general case works for the same reasons. The exact estimates are analogous as in \cite{hintz2024stability}, remembering that we change $d_3$ and $d_4$ to $d_n$ and $d_{n+1}$ respectively, and we always split $L_{h_0, \tilde{h}} ={}^\uparrow L_{h_0, \tilde{h}} + {}^\downarrow L_{h_0, \tilde{h}} $.
\\

This finally proves the Proposition \ref{prop:boundsgrow}.
\end{proof}

\subsubsection{Tame bounds on decaying spaces}
Now we aim to use that the roots of the indicial family $I_{g_{(0)}}(\lambda)$ are nonnegative to iteratively control the solution to the linearized equation in spaces with slower decay. We need the following Lemma.
\begin{lemma}[Inversion of the indicial operator]
    Let $s_1\in(0,s_0)$ and $\eta_1 <\eta_2<1$ with $\eta_1, \eta_2 \neq 0$, and $k \in \mathbb{N}_0$. Recall the operator $I_{g_{(0)}}$ from proposition \ref{prop:indicialcalc}, where $g_{(0)}=\cancel{g}+h_{(0)}$ is defined as in \eqref{eq:gonbound}. Suppose $v \in s^{\eta_1} H_{b}^k\left(\Omega\right)$ vanishes for $s \geq s_1>0$, and $I_{g_{(0)}}\left(s \partial_s\right) v \in$ $ s^{\eta_2} H_{b}^k\left(\Omega\right)$.
\begin{enumerate}
    \item (Improving the weight) If $\eta_1<\eta_2<0$ or $0<\eta_1<\eta_2<1$, then $v \in  s^{\eta_2} H_{b}^k\left(\Omega\right)$ and
\begin{equation}
\|v\|_{ s^{\eta_2} H_{  b}^k} \leq C_k\left(\left\|I_{g_{(0)}} v\right\|_{s^{\eta_2} H_{b}^k}+\left\|h_0\right\|_{ H^{k+ d_{n}}}\left\|I_{g_{(0)}} v\right\|_{s^{\eta_2} L^2}\right)
\label{eq:indinv}
\end{equation}
\item (Extracting asymptotics) If $\eta_1<0<\eta_2$, then there exist tensors $\tilde{v}^{\prime} \in s^{\eta_2} H_b^k\left(\Omega\right)$ and $v_0 \in  H^k\left(\mathcal{I}^{+} ; s^{-2} \operatorname{ker}\operatorname{tr}_{g_{(0)}}\right)$ so that
\begin{equation}
v\left(s, \omega\right)=\chi(s)v_0(\omega)+\tilde{v}^{\prime}(s, \omega)
\label{eq:indasy}
\end{equation}

and $\left\|v_0\right\|_{ H^k}+\left\|\tilde{v}^{\prime}\right\|_{s^{\eta_2} H_{b}^k}$ is bounded by the right hand side of \eqref{eq:indinv}.
\label{item2}
\end{enumerate}
\end{lemma}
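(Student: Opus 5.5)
The plan is to treat $I_{g_{(0)}}(s\partial_s)$ as an $s$-independent ODE in $s\partial_s$ with coefficients depending only on $\omega$ through $g_{(0)}$, and to invert it on the Mellin side. Since $v$ vanishes for $s\ge s_1$, I extend it by zero to $s\in(0,\infty)$. The Mellin transform
\[
\hat v(\lambda,\omega)=\int_0^\infty s^{-\lambda}v(s,\omega)\,\tfrac{\mathrm{d}s}{s}
\]
is then holomorphic in $\operatorname{Re}\lambda<\eta_1$; I note that with this convention $s^{\eta}L^2(\mathrm{d}s/s)$ corresponds, by Plancherel, to $L^2$ on the line $\operatorname{Re}\lambda=\eta$. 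Writing $f:=I_{g_{(0)}}(s\partial_s)v$, we have $I_{g_{(0)}}(\lambda)\hat v(\lambda)=\hat f(\lambda)$. Here $\hat f$ is holomorphic in $\operatorname{Re}\lambda<\eta_2$, because $f\in s^{\eta_2}H^k_b$ also vanishes for $s\geq s_1$, which gives the required integrability near $s=\infty$. Hence $\hat v=I_{g_{(0)}}(\lambda)^{-1}\hat f$ continues meromorphically to $\operatorname{Re}\lambda<\eta_2$, with poles only at indicial roots in that strip. By Lemma \ref{le:indicialroots} the roots are $0,2,3,n,n+1$, so in the strip $\eta_1\le\operatorname{Re}\lambda\le\eta_2$ (recall $\eta_2<1$) the only possible pole is $\lambda=0$.

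In case (1) the strip contains no root. I would shift the contour from $\operatorname{Re}\lambda=\eta_1$ to $\operatorname{Re}\lambda=\eta_2$ and apply Plancherel to get $v\in s^{\eta_2}L^2$ with norm bounded by $\sup_{\operatorname{Re}\lambda=\eta_2}|I_{g_{(0)}}(\lambda)^{-1}|\,\|f\|_{s^{\eta_2}L^2}$. The contour shift requires decay of $I_{g_{(0)}}(\lambda)^{-1}$ as $|\operatorname{Im}\lambda|\to\infty$. This holds because the leading part is $\lambda^2-n\lambda$ times the identity plus lower order in $\lambda$, so $|I^{-1}|\lesssim\langle\lambda\rangle^{-2}$ uniformly on compact horizontal strips away from the roots. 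It also requires uniformity in $\omega$, which follows since $h_{(0)}$ is small in $L^\infty$ by Sobolev embedding. Factors of $s\partial_s$ become multiplication by $\lambda$, so the $s\partial_s$-derivatives are handled by this decay.

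The spatial derivatives are where the tame structure enters. The entries of $I_{g_{(0)}}(\lambda)^{-1}$ are rational in $\lambda$ with coefficients that are smooth functions of $g_{(0)}$ (through $\operatorname{tr}_{g_{(0)}}$ and multiplication by $g_{(0)}$), and since the roots are constant in $\omega$ the denominators are $\omega$-independent polynomials. Applying $\mathscr V^\gamma$ and distributing derivatives with the product and nonlinear estimates of Lemma \ref{le:2.9}, on $\mathcal I^+$ in $\omega$ and trivially in $s$, gives exactly
\[
\|h_0\|_{H^{k+d_n}}\,\|f\|_{s^{\eta_2}L^2}
\]
as the error term, which yields \eqref{eq:indinv}. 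I would carry this out chartwise using ${}^\uparrow\varphi,{}^\downarrow\varphi$, noting that the cutoffs commute with $I_{g_{(0)}}(s\partial_s)$ up to terms with no $s$-derivative of the cutoff.

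In case (2) I would shift the contour across $\lambda=0$ and pick up the residue. By Lemma \ref{le:indicialroots} the pole is simple, with residue $-\Pi\hat f(0)$, where $\Pi$ is essentially the projection onto $s^{-2}\ker\operatorname{tr}_{g_{(0)}}$ (normalized by the $\lambda$-coefficient there). This produces the term $v_0(\omega)=c\,\Pi\hat f(0,\omega)$, a constant in $s$, which I cut off by $\chi$. The remainder $\tilde v'=v-\chi v_0$ satisfies $I(\tilde v')=f-[I,\chi]v_0$. The commutator $[I,\chi]v_0$ is supported away from $s=0$, so it lies in $s^{\eta_2}H^k_b$, and the case (1) contour argument then puts $\tilde v'$ in $s^{\eta_2}H^k_b$. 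To bound $v_0$, I estimate $\hat f(0)$ by $\|f\|_{s^{\eta_2}L^2}$ using Cauchy--Schwarz, since $f$ vanishes for $s\ge s_1$ and $\eta_2>0$.

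The step I expect to be the main obstacle is the uniform resolvent bound for $I_{g_{(0)}}(\lambda)^{-1}$ and its $\omega$-derivatives on the relevant vertical lines, combined with tameness in $h_0$. I would handle it by using the block structure in the splitting \eqref{eq:splitting3}. Inverting explicitly, as in the determinant computation, shows the inverse is a polynomial in $g_{(0)}$, $g_{(0)}^{-1}$ and $\operatorname{tr}_{g_{(0)}}$ divided by fixed polynomials in $\lambda$.
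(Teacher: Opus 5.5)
Your proposal is correct and is essentially the paper's argument, which defers to Hintz--Vasy: take the Mellin transform in $s$, invert $I_{g_{(0)}}(\lambda)$, and shift the contour from $\operatorname{Re}\lambda=\eta_1$ to $\operatorname{Re}\lambda=\eta_2$. The only possible pole is the simple one at $\lambda=0$, coming from the trace-free block $\lambda(\lambda-n)$, and its residue gives $v_0=\frac1n\Pi\hat f(0)$, with $\Pi$ the projection onto $s^{-2}\ker\operatorname{tr}_{g_{(0)}}$ along the other summands of \eqref{eq:splitting3}.

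One small fix: in case (2) you cannot literally invoke case (1) for $\tilde v'$, since $\tilde v'$ only starts at weight $\eta_1<0$. You do not need it, because the shifted contour already gives $\tilde v'=\mathcal{M}_{\eta_2}^{-1}\bigl(I_{g_{(0)}}(\lambda)^{-1}\hat f\bigr)+(1-\chi)v_0$ directly. Equivalently, the residue for the corrected problem vanishes because $\widehat{[I_{g_{(0)}},\chi]v_0}(0)=n\,v_0$.
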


\begin{proof}
The proof is identical to \cite{hintz2024stability}. It uses a contour-shifting argument on the Mellin transform side. Because the indicial family $I_{g_{(0)}}(\lambda)$ has a simple pole at $\lambda = 0$, we get the zero-order contribution $v_0$ in part \ref{item2}.
\end{proof}
Now we use this Lemma to iteratively upgrade our estimate to decaying spaces.
\begin{proposition}[Tame bounds on decaying spaces]
\label{prop:boundsdecay}
 There exists $d \in \mathbb{N}$ so that the following holds. Whenever $\left\|h_0\right\|_{   H_{  b}^d},\|\tilde{h}\|_{ s^\beta H_{  b}^d}<\delta_0$, the unique solution $v$ of the initial value problem
\begin{align}
L_{h_0, \tilde{h}} v & =f \quad \in  s^\beta H_{  b}^{\infty}\left(\Omega   \right) \\
\left.\left(v, \mathcal{L}_{-   s\partial_s} v\right)\right|_{\Sigma   } & =\left(v_0, v_1\right) \in    H^{\infty}\left(\Sigma   ;S^2T^*\Sigma\right) \oplus    H^{\infty}\left(\Sigma  ;S^2T^*\Sigma \right)
\label{eq:initialconds}
\end{align}
can be written as
$$
v=\chi v_0+\tilde{v}
$$
where $v_0 \in    H^{\infty}\left(\mathcal{I}^{+};s^{-2}S^2T^* \mathbb{S}^n\right)$ and $\tilde{v} \in  s^\beta H_{b}^{\infty}\left(\Omega ;S^2\ ^0T^*M \right)$ satisfy for all $k \in \mathbb{N}_0$ a tame estimate
$$
\begin{aligned}
\left\|v_0\right\|_{   H^k}+\|\tilde{v}\|_{ s^\beta H_{  b}^k} \leq C_k(\| & \left(f, v_0, v_1\right) \|_{k+d, \beta} \\
& \left.+\left(\left\|h_0\right\|_{   H^{k+d}}+\|\tilde{h}\|_{ s^\beta H_{  b}^{k+d}}\right)\left\|\left(f, v_0, v_1\right)\right\|_{0,\beta}\right)
\end{aligned}
$$ 
\end{proposition}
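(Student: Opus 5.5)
The plan is the standard bootstrap from growing to decaying weights, as in \cite{hintz2024stability}. Start from Proposition \ref{prop:boundsgrow}: since $f\in s^\beta H_b^\infty\subset s^{-N}H_b^\infty$, it gives $v\in s^{-N}H_b^\infty$ with tame bounds in terms of $\|(f,v_0,v_1)\|_{k,-N}\le\|(f,v_0,v_1)\|_{k,\beta}$. Then improve the weight step by step, each step raising it by a fixed amount less than $\beta$ (say $\beta/2$, chosen so that no intermediate weight equals $0$). The mechanism is to cut off away from $\Sigma$: set $v_c:=\chi v$, which vanishes for $s\geq s_0/2$. By Proposition \ref{prop:indicialcalc} we have
\begin{equation*}
I_{g_{(0)}}(s\partial_s)v_c=\chi f+[L_{h_0,\tilde h},\chi]v-(R_0+\tilde R_{h_0,\tilde h})v_c .
\end{equation*}
The commutator term is supported in $s\in[s_0/4,s_0/2]$, so it lies in every weighted space, with norm controlled by the already established growing bound. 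The remainder satisfies $R_0\in s\operatorname{Diff}_b^2$ and $\tilde R\in s^\beta H_b^{k-2}\operatorname{Diff}_b^2$. Hence if $v\in s^{\eta}H^{k+2}_b$, the right-hand side lies in $s^{\min(\eta+\beta,\beta)}H^k_b$.

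Next apply the inversion lemma. Part (1) moves the weight from $\eta$ to $\eta+\beta/2$ while both weights stay on the same side of $0$. The single crossing of $0$ is handled by part (2), which produces $v_c=\chi v_0+\tilde v'$ with $v_0\in H^k(\mathcal I^+;s^{-2}\ker\operatorname{tr}_{g_{(0)}})$. Once the $\chi v_0$ term is split off, keep iterating part (1) on $\tilde v'$ with positive weights up to $\beta<1$. At that stage $I_{g_{(0)}}(s\partial_s)(\chi v_0)$ is harmless, since $v_0$ is an indicial solution for $\lambda=0$ and therefore only $\chi'$-terms survive. The other remainder contributions coming from $\chi v_0$ carry the factor $s$ or $s^\beta$, so they land in $s^\beta H_b$. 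There are no indicial roots in $(0,1)$ (the roots are $0,2,3,n,n+1$), so no further asymptotic terms appear before weight $\beta$. The number of steps is about $2N/\beta$, which is finite and independent of $k$. Each step loses two b-derivatives plus the Sobolev losses $d_n,d_{n+1}$ from the tame product estimates. This fixes the total derivative loss $d$.

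Tameness has to be tracked with care. In each step, the terms $\tilde R_{h_0,\tilde h}v_c$ are estimated using the product estimates of Lemma \ref{le:2.9}: high norm on $(h_0,\tilde h)$ times the low norm of $v$, plus low norm of $(h_0,\tilde h)$ (small, bounded by $\delta_0$) times the high norm of $v$. The factor $\|h_0\|_{H^{k+d_n}}\|I v\|_{s^{\eta_2}L^2}$ in \eqref{eq:indinv} is of the same tame form. Finally, the low norms of $v$ appearing in these products are themselves bounded by $\|(f,v_0,v_1)\|_{d,\beta}$ via the $k=0$ case. Away from the boundary ($s\geq s_0/4$), $v$ is controlled directly by Proposition \ref{prop:boundsgrow}, since the weights there are equivalent.

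I expect the main obstacle to be bookkeeping rather than any new idea. One point is to make sure that the growing-space weight $N$ and all regularity losses are uniform in $k$. The other is the derivative mismatch: Proposition \ref{prop:boundsgrow} controls $H^k_b$ and not $H^{1;k}_{0;b}$, and the indicial inversion does not gain derivatives. So the loss of two derivatives per step must be absorbed by asking for $k+d$ derivatives of the data. A second subtle point is confirming that $v_0$ indeed takes values in $s^{-2}\ker\operatorname{tr}_{g_{(0)}}\subset s^{-2}S^2T^*\mathbb S^n$, as claimed, using the simple pole structure from Lemma \ref{le:indicialroots}.
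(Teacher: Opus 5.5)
Your proposal is correct and is the argument the paper uses (it defers to \cite{hintz2024stability}): start from the growing bound of Proposition~\ref{prop:boundsgrow}, cut off near $\mathcal{I}^+$, and iterate the indicial inversion lemma in weight steps smaller than $\beta$, splitting off $\chi v_0$ at the single crossing of weight $0$. One bookkeeping point: as written, your low norms of $v$ produce $\|(f,v_0,v_1)\|_{d,\beta}$ rather than $\|(f,v_0,v_1)\|_{0,\beta}$ in the second term, and you need the standard interpolation inequality $\|h\|_{k+d}\|g\|_{d}\le C(\|h\|_{k+2d}\|g\|_{0}+\|h\|_{0}\|g\|_{k+2d})$ (with $\|h\|_m:=\|h_0\|_{H^m}+\|\tilde h\|_{s^\beta H^m_b}$ and $\|g\|_m:=\|(f,v_0,v_1)\|_{m,\beta}$), after enlarging $d$, to reach the stated form.
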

\begin{proof}
    Again, the proof is identical to \cite{hintz2024stability}. It inductively uses the previous Lemma to upgrade the estimate from Proposition \ref{prop:boundsgrow} to spaces with less growth. When we cross zero, we get the 0-order contribution plus a decaying remainder.
\end{proof}
\subsection{Solution of the gauge-fixed Einstein equations: proof of Theorem \ref{thm:nonlinsol} and stability of de Sitter space}
We wish to show existence of global solutions (on $\Omega$) to the gauge-fixed Einstein operator \eqref{eq:gaugefixedein} 
$$
\begin{aligned}
& P\left(h_0, \tilde{h}, \theta\right)=2\left(\operatorname{Ric}(g)-\Lambda g-\tilde{\delta}_g^*\left(\Upsilon\left(g ; g_0\right)+E_{g_0}\left(g-g_0\right)-\tilde{\chi} \theta\right)\right) \\
& \quad \hspace{18mm} g:=g_{\mathrm{b}}+\chi h_0+\tilde{h}, \quad g_0:=g_{\mathrm{b}}+\chi h_0.
\end{aligned}
$$
We write $\restr{D_1}{h_0}P(v_0,\tilde{h},\theta)=\restr{\frac{\mathrm{d}}{\mathrm{d}t}P(h_0+tv_0,\tilde{h},\theta)}{t=0}$ for the linearization in the first argument, evaluated at $h_0$ acting on $v_0$ and similarly $\restr{D_{1,2}}{h_0,\tilde{h}}P(v_0,\tilde{v},\theta)=\restr{\frac{\mathrm{d}}{\mathrm{d}t}P(h_0+tv_0,\tilde{h}+t\tilde{v},\theta)}{t=0} =\restr{D_1}{h_0}P(v_0,\tilde{h},\theta)+ \restr{D_2}{\tilde{h}}P(h_0,\tilde{v},\theta) $ for the linearization of the first two arguments at $h_0$ and $\tilde{h}$ acting on $v_0$ and $\tilde{v}$ respectively. For the third argument, we we use analogous notation.\\

What we have shown in Proposition \ref{prop:boundsdecay} is that the solution $v$ of 
\begin{equation}
\restr{D_2}{\tilde{h}}P(h_0,v,\theta)=f 
\label{eq:d2}
\end{equation}
with $f\in  s^\beta H^\infty_b(\Omega,S^2\ {}^0T^*M)$ can be written as $v= \chi v_0+\tilde{v}$, where $v_0\in H^\infty_b(\mathcal{I}^+;s^{-2}\operatorname{ker}\operatorname{tr_{g_{(0)}}})$ and $\tilde{v}\in s^\beta H^\infty_b(\Omega,S^2\ {}^0T^*M)$ and both obey tame estimates in their respective spaces. But because we require $\tilde{h}$ to decay at $\mathcal{I}^+$, this is not a suitable upgrade for $\tilde{h}$, which would be required when solving the equation using the Nash-Moser iteration scheme. So we rewrite it in such a way that the arguments of the linearizations are in the same spaces as $h_0,\tilde{h}$ and $\theta$. To wit, we compute
$$
\begin{aligned}
    \frac{1}{2}\restr{D_1}{h_0}&P(v_0,\tilde{h},\theta)=D_g \operatorname{Ric}\left(\chi v_0\right)-\Lambda \chi v_0\\
    &-\left(D_g \tilde{\delta}_{.}^*\right)\left(\chi v_0\right)\left(\Upsilon\left(g ; g_0\right)+E_{g_0}\left(g-g_0\right)-\tilde{\chi} \theta\right)\\
    &-\tilde{\delta}_g^*\left(\left.D_1\right|_g \Upsilon\left(\chi v_0 ; g_0\right)+\left.D_2\right|_{g_0} \Upsilon\left(g ; \chi v_0\right)+\left(D_{g_0} E .\right)\left(\chi v_0\right)\left(g-g_0\right)\right)\\
    \frac{1}{2}\restr{D_2}{\tilde{h}}&P(h_0,\tilde{v},\theta)= D_g \operatorname{Ric}(\tilde{v})-\Lambda \tilde{v}\\
    &-\left(D_g \tilde{\delta}_{.}^*\right)(\tilde{v})\left(\Upsilon\left(g ; g_0\right)+E_{g_0}\left(g-g_0\right)-\tilde{\chi} \theta\right)\\
    &-\tilde{\delta}_g^*\left(\left.D_1\right|_g \Upsilon\left(\tilde{v} ; g_0\right)+E_{g_0} \tilde{v}\right)\\
    \frac{1}{2}\restr{D_3}{\theta}&P(h_0,\tilde{h},\tilde{\theta})=-\tilde{\delta}_g^*(\tilde{\chi} \tilde{\theta}).
\end{aligned}
$$
We see that \eqref{eq:d2} is equivalent to 
\begin{equation}
\label{eq:fulllin}
    \restr{D_{1,2,3}}{h_0,\tilde{h},\theta}P(v_0,\tilde{v},\tilde{\theta})=f,
\end{equation}
if we define the gauge one-form update $\tilde{\theta}$ to be 
\begin{equation}
\label{eq:gaugeupdate}
    \tilde{\theta}:= -\left.D_2\right|_{g_0} \Upsilon\left(g ; \chi v_0\right)+E_{g_0}\left(\chi v_0\right)-\left(D_{g_0} E .\right)\left(\chi v_0\right) \tilde{h}.
\end{equation}
(Note that $\tilde{\theta} =\tilde{\chi}\tilde{\theta}$ due to the support condition on $\chi$). So, what Proposition \ref{prop:boundsdecay} shows is that the linear equation \eqref{eq:fulllin} with initial conditions for $\tilde{h}$ as in \eqref{eq:initialconds} has a unique solution $(v_0,\tilde{v},\tilde{\theta})$ where we have tame estimates for $h_0$ and $\tilde{h}$. So what is still missing is to show that the gauge update $\tilde{\theta}$ lies $s^\beta H^\infty_b(\Omega,{}^0T^*M)$ and obeys tame estimates. This is the content of the following Lemma.
\begin{lemma}[Tame estimates for the gauge modification]
     Suppose that we have tensors $h_0\in H^\infty(\mathcal{I}^+,s^{-2}S^2T^*\mathbb{S}^n)$ and $\tilde{h}\in s^\beta H^\infty_b(\Omega,S^2\ {}^0T^*M)$ with respective norms $\|h_0\|_{H^{d_n+2}}<\delta_0$ and $\|\tilde{h}\|_{s^\beta H^{d_{n+1}+2}_b}<\delta_0$ for some small $\delta_0$. Define $g_{(0)}$ and $h_{(0)}$ by \eqref{eq:gonbound} and let 
    $$
    v_0\in H^\infty(\mathcal{I}^+;s^{-2}\operatorname{ker}\operatorname{tr_{g_{(0)}}}).
    $$
    Define $\tilde{\theta}$ by \eqref{eq:gaugeupdate}. Then $\tilde{\theta}\in s^\beta H_b^\infty(\Omega;{}^0T^*M)$ with the tame estimate
    \begin{equation}
        \|\tilde{\theta}\|_{ s^\beta H_{b}^k} \leq C_k\left(\left\|v_0\right\|_{ H^{k+1}}+\left(\left\|h_0\right\|_{ H^{k+1+d_n}}+\|\tilde{h}\|_{ s^\beta H_{{b}}^{k+1+d_{n+1}}}\right)\left\|v_0\right\|_{ L^2}\right) .
    \end{equation}
\end{lemma}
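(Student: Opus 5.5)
The plan is to show that, for trace-free $v_0$, the leading-order part of $\tilde{\theta}$ at $\mathcal{I}^+$ vanishes identically. Everything that remains then carries either an explicit factor of $s$ or a factor of $\tilde{h}$, and hence lies in $s^\beta H_b^\infty$. As in the proof of Proposition \ref{prop:indicialcalc}, I would split $\tilde{\theta}={}^\uparrow\varphi\tilde{\theta}+{}^\downarrow\varphi\tilde{\theta}$ and work in the frame $e_\mu$ of one chart. On $\operatorname{supp}\chi'$, which is compact in $s\in[\tfrac14 s_0,\tfrac12 s_0]$, every weight $s^\beta$ is harmless. There the claimed estimate is a plain product estimate (Lemma \ref{le:2.9}), so the whole task is the region $s\le \tfrac14 s_0$, where $\chi=1$.

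\textbf{Step 1: reduce $D_2\Upsilon$ to a divergence.} Write $k:=\chi v_0$ and differentiate $\Upsilon(g;g_0)=g\,g_0^{-1}\delta_g\mathrm{G}_g g_0$ in its second argument. This gives
\[
D_2|_{g_0}\Upsilon(g;k)=g\,g_0^{-1}\delta_g\mathrm{G}_g k-g\,g_0^{-1}k\,g_0^{-1}\delta_g\mathrm{G}_g g_0 .
\]
Since $\delta_g\mathrm{G}_g g=0$, we have $\delta_g\mathrm{G}_g g_0=-\delta_g\mathrm{G}_g\tilde{h}$. This lies in $s^\beta H_b^{k-1}$ with tame bounds, by the connection-coefficient memberships in the proof of Proposition \ref{prop:indicialcalc}. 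So the second term is a product of $v_0$ with an $s^\beta H_b$ function, and Lemma \ref{le:2.9}(3) controls it. Likewise $g\,g_0^{-1}=\mathrm{Id}+s^\beta H_b$. Hence, modulo such acceptable terms, $\tilde{\theta}\equiv-\delta_g\mathrm{G}_g k+E_{g_0}k-(D_{g_0}E_\cdot)(k)\tilde{h}$. The last term has an explicit factor $\tilde{h}$ and is also acceptable.

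\textbf{Step 2: indicial cancellation.} I would decompose $-\delta_g\mathrm{G}_g+E_{g_0}$ into its indicial operator plus remainders. The remainders are of three kinds:
\begin{itemize}
\item remainders in $s\operatorname{Diff}_b^1$, coming from $\overline{g}\in s^2\mathcal{C}^\infty$ and from frame derivatives $e_i=s\partial_{x^i}$;
\item remainders in $s^\beta H_b\operatorname{Diff}_b^1$, coming from $\tilde{h}$;
\item coefficient errors in $H_b(\mathcal{I}^+)$ from $h_0$, which are already encoded in $g_{(0)}$.
\end{itemize}
By \eqref{eq:indG}, \eqref{eq:inddelta} and the formula for $E_{g_0}$, the indicial operator acts on $k=(0,0,v_0)$ in the splitting \eqref{eq:splitting1} as follows. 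Since $\operatorname{tr}_{g_{(0)}}v_0=0$, we get $\mathrm{G}_g k\equiv(0,0,v_0)$. Then $\delta_g$ gives $(-\operatorname{tr}_{g_{(0)}}v_0,0)=0$, and $E_{g_0}$ gives $(-2\operatorname{tr}_{g_{(0)}}v_0,0)=0$. The $e_0$-derivatives also vanish, because $v_0$ is $s$-independent in the frame $e^j\otimes_s e^k$ with the $s^{-2}$ normalization.

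One point to check is that trace-freeness is taken with respect to $g_{(0)}$, whereas the operators involve $g_0=g_{\mathrm{dS}}+\chi h_0$. The difference $g_0-(\underline{g}+h_0)$ is $s^2\mathcal{C}^\infty$ near $\mathcal{I}^+$, so it only produces $s^2$ errors, which is enough.

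\textbf{Step 3: tame estimates.} The surviving terms have one of three forms:
\begin{itemize}
\item $s$ times smooth coefficients times a first-order tangential derivative $\partial_x v_0$, which costs one derivative, hence $\|v_0\|_{H^{k+1}}$;
\item $s^\beta H_b$ coefficients (tamely bounded by $\|h_0\|_{H^{k+1+d_n}}+\|\tilde{h}\|_{s^\beta H_b^{k+1+d_{n+1}}}$) times $\partial^{\le1}v_0$;
\item $H(\mathcal{I}^+)$ coefficients depending on $h_0$, multiplied by $s\,\partial^{\le 1}v_0$.
\end{itemize}
Using $s\le s^\beta$ on $\Omega$ and the mixed product estimate of Lemma \ref{le:2.9}(3), placing the high norm on the coefficient and $L^2$ on $v_0$ or vice versa, yields the stated bound.

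\textbf{Main obstacle.} I expect the hardest part to be the bookkeeping in Steps 1–2. One must verify that every piece of $D_2\Upsilon$ not captured by $\delta_g\mathrm{G}_g k$ really carries $\tilde{h}$, or $\Upsilon$-type quantities that vanish at $g=g_0$, and does not carry an $O(1)$ factor of $h_0$. I would first compute in coordinates, $D_2\Upsilon(g;k)_\mu=-g_{\mu\nu}g^{\kappa\lambda}\,D\Gamma(g_0)(k)^\nu_{\kappa\lambda}$, and compare it with the same expression at $g=g_0$, where it equals $\delta_{g_0}\mathrm{G}_{g_0}k$ up to index lowering.
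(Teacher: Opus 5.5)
Your proposal is correct and is essentially the argument the paper defers to \cite{hintz2024stability}. The leading-order part of $\tilde\theta$ at $\mathcal{I}^+$ vanishes because the indicial operators of $\delta_g\mathrm{G}_g$ and $E_{g_0}$ act on the spatial component only through $\operatorname{tr}_{g_{(0)}}$, while every remaining term either carries a factor of $s$ or of $\tilde h$, or is supported away from $\mathcal{I}^+$, and so is handled by the product estimates of Lemma \ref{le:2.9}.
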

\begin{proof}
    Again, the proof works identically as in \cite{hintz2024stability}.
\end{proof}
Now we can finish the proof of Theorem \ref{thm:nonlinsol}
\begin{proof}[Proof of Theorem \ref{thm:nonlinsol}]
    Let $d$ be as in Proposition \ref{prop:boundsdecay}. For $k\in \mathbb{N}_0$ define the spaces 
    $$
    \begin{aligned}
    \mathbf{B}^k&:=H^k(\mathcal{I}^+;s^{-2}T^*\mathbb{S}^n)\oplus s^\beta H^k_b(\Omega;S^2\ ^0T^*M)\oplus s^\beta H^k_b(\Omega;{}^0T^*M)\\
    B^k&:=s^\beta H^k_b(\Omega;S^2\ ^0T^*M)\oplus H^k(\Sigma;S^2\ ^0T^*M)\oplus  H^k(\Sigma;S^2\ ^0T^*M)
    \end{aligned}
    $$
    For $k\geq d$ and $(h_0,\tilde{h},\theta)\in \mathbf{B}^{\infty}$ with $\|(h_0,\tilde{h},\theta)\|_{\mathbf{B}^{3d}}\leq \delta_0$ for $\delta_0>0$ sufficiently small set 
    $$
    \Phi(h_0,\tilde{h},\theta)=(P(h_0,\tilde{h},\theta),\tilde{h}\left|_\Sigma\right., \mathcal{L}_{-s\partial_s}\tilde{h}\left|_\Sigma\right.)-(0,\underline{h}_0,\underline{h}_1).
    $$
    This is a map from a subset of $\mathbf{B}^\infty$ to $B^\infty$. We apply the main result of \cite{nashmoser} to solve $\Phi(h_0,\tilde{h},\theta)=0$. The tame estimate on the operator $\Phi$ follows from Lemma \ref{le:Pesti} and the low regularity estimates of the first and second derivatives of $\Phi$ follow from the algebra properties of $H^d_b$. \\
    
    Considering the right inverse of the linearization $D_{1,2,3}\left|_{h_0,\tilde{h},\theta}\right.\Phi$, we use Proposition \ref{prop:boundsdecay}. For a given $f\in s^\beta H^\infty_b(\Omega;S^2\ ^0T^*M) $ and $\underline{v}_0',\underline{v}_1'\in H^\infty(\Sigma;S^2\ ^0T^*M)$, this gives us the sections $v_0\in H^\infty(\mathcal{I}^+,s^{-2}\operatorname{ker}\operatorname{tr}_{g_{(0)}})$ and $\tilde{v}\in s^\beta H^\infty_b(\Omega;S^2\ ^0T^*M)$ for $g_{(0)}$ defined as in \eqref{eq:gonbound}. Defining $\tilde{\theta}$ like in \eqref{eq:gaugeupdate}, we have that 
    $$
    D_{1,2,3}\left|_{h_0,\tilde{h},\theta}\right.\Phi(v_0,\tilde{v},\tilde{\theta})=\left(D_{1,2,3}\left|_{h_0,\tilde{h},\theta}\right.P(v_0,\tilde{v},\tilde{\theta}),\tilde{v}\left|_{\Sigma}\right.,\mathcal{L}_{-s\partial_s}\tilde{v}\left|_{\Sigma}\right.\right)=(f,\underline{v}_0',\underline{v}_1'),
    $$
    with $(v_0,\tilde{v},\tilde{\theta})$ satisfying tame estimates.\\
    
    Using the smoothing operators of Lemma \ref{le:smoothingops} we conclude the proof.
\end{proof}
We finish this section with a standard argument to show that we can construct a (future) solution to the Einstein vacuum equation with prescribed first and second fundamental forms on $\Sigma$ from the solution to $P(h_0,\tilde{h},\theta)=0$. By observing that de Sitter space is symmetric under the change of variables $s\rightarrow\pi-s$, we can immediately upgrade this to a past solution as well. Finally, we can easily glue them together to a global solution.\\

We call the first and second fundamental forms of $\Sigma$ induced by the de Sitter metric $\gamma_{\mathrm{dS}}$ and $k_{\mathrm{dS}}$ respectively. Also, call $\Omega^-:=[\pi-s_0,\pi]\times\mathbb{S}^n$ the neighborhood of $\mathcal{I}^-$, the analogon to $\Omega$ around $\mathcal{I}^+$
\begin{theorem}[Global stability of de Sitter space]
    \label{thm:solution}
    Let $d_{n+1}+2\leq d\in \mathbb{N}$. There exists a $D\in\mathbb{N}$ such that the following holds. For all $\beta\in(0,1)$ and $\delta_0>0$, there exists an $\epsilon>0$ such that if 
    $$
    \gamma=\gamma_{\mathrm{dS}}+\tilde{\gamma},\quad k=k_{\mathrm{dS}}+\tilde{k},\quad \gamma,k \in H^\infty(\Sigma,S^2T^*\Sigma),
    $$
    with $\|\tilde{\gamma}\|_{H^D}\leq \epsilon$ and $\|\tilde{k}\|_{H^D}\leq \epsilon$ and $(\gamma,k)$ satisfy the constraint equations, the maximally extended globally hyperbolic developments of these initial data isometric to 
    $$
    (M_{\mathrm{dS}},g),\quad g\left|_{\Omega}\right.= g_{\mathrm{dS}} + \chi h_0 + \tilde{h}, 
    $$
    with $h_0\in H^\infty(\mathcal{I}^+;s^{-2}S^2T^*\mathbb
    S^n)$ and $\tilde{h}\in s^\beta H^\infty_b(\Omega,S^2\ {}^0T^*M)$. Furthermore, the metric corrections are small in the low regularity norms; $\|h_0\|_{H^d_b}\leq\delta_0$ and $\|\tilde{h}\|_{s^\beta H^d_b}\leq\delta_0$. The metric near past conformal infinity $\mathcal{I}^-$ is of the same form
    $$
     g\left|_{\Omega^-}\right.=g_{\mathrm{dS}} + \chi^- h_0^- + \tilde{h}^-,
    $$
    where $\chi^-(s)=\chi(\pi-s)$ and $h_0^-,\tilde{h}^-$ lie in the same function spaces and fulfill the same smallness estimates if $s$ is replaced by $\pi-s$. Lastly, the constructed spacetime ist past and future geodesically complete.
\end{theorem}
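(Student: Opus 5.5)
The plan is to convert the initial data into gauge-fixed Cauchy data, solve with Theorem \ref{thm:nonlinsol}, and then show that the gauge condition propagates. I would start by running the local theory \cite{ChoquetBruhatLocalEinstein} for a short time around $\Sigma$, in a harmonic-type gauge relative to $g_{\mathrm{dS}}$. This produces Cauchy data $(\underline{h}_0,\underline{h}_1)$ for $\tilde{h}$ on $\Sigma$. I would choose the free parts of these data (the components of $g$ involving the normal direction, and their normal derivatives) so that:
\begin{itemize}
\item the induced first and second fundamental forms are $(\gamma,k)$;
\item the gauge one-form $\Upsilon(g;g_{\mathrm{dS}})+E_{g_{\mathrm{dS}}}(g-g_{\mathrm{dS}})$ vanishes on $\Sigma$.
\end{itemize}
Since $\chi=0$ and $\tilde{\chi}=0$ near $\Sigma$, the background there is just $g_{\mathrm{dS}}$ and $\theta$ plays no role. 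This is the standard construction, and it is linear algebra in the normal derivatives. Smallness $\|\underline{h}_j\|_{H^{D'}}\lesssim\epsilon$ follows from $\|\tilde\gamma\|_{H^D},\|\tilde k\|_{H^D}\le\epsilon$, losing a fixed number of derivatives; this loss fixes $D$ in terms of the $D'$ of Theorem \ref{thm:nonlinsol}.

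Theorem \ref{thm:nonlinsol} then gives $(h_0,\tilde h,\theta)$ with $P=0$ on $\Omega$. Set $\eta:=\Upsilon(g;g_0)+E_{g_0}(g-g_0)-\tilde\chi\theta$. Applying the second Bianchi identity in the form $\delta_g\mathrm{G}_g(\operatorname{Ric}-\Lambda g)=0$ yields the constraint propagation equation
\begin{equation*}
\delta_g\mathrm{G}_g\tilde\delta_g^*\eta=0 .
\end{equation*}
This is a principally scalar wave equation for $\eta$. Its Cauchy data vanish:
\begin{itemize}
\item $\eta|_\Sigma=0$ by the choice of data;
\item $\mathcal{L}_{s\partial_s}\eta|_\Sigma=0$, because the constraint equations together with the normal components of $P=0$ at $\Sigma$ give this, as in the standard argument.
\end{itemize}
Uniqueness for linear wave equations then gives $\eta\equiv0$ in the domain of dependence. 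The energy estimates of Proposition \ref{prop:boundsgrow} apply verbatim to this operator on growing spaces, since it is a 0-operator with the same principal part. Hence $\eta\equiv0$ on all of $\Omega$, and $g$ solves \eqref{eq:Einstein} there.

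On $[s_0,\pi-s_0]\times\mathbb{S}^n$ the evolution covers only a compact time interval. Cauchy stability \cite{ChoquetBruhatLocalEinstein} keeps the data small in the time-reversed picture on $\{\pi-s_0\}\times\mathbb{S}^n$. By the symmetry $s\mapsto\pi-s$ of $g_{\mathrm{dS}}$, the same argument as above then applies at $\mathcal{I}^-$ and gives $h_0^-,\tilde h^-$. By uniqueness of maximal globally hyperbolic developments, gluing the three pieces yields the stated isometry.

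For geodesic completeness, I would write $g=s^{-2}\bar g$ near $\mathcal{I}^+$ with $\bar g$ continuous up to $s=0$ and close to $-\mathrm{d}s^2+\cancel{g}$. A future causal geodesic has $s\searrow0$, and the affine parameter behaves like $\int\mathrm{d}s/s=\infty$. I would make this rigorous by comparing $\frac{\mathrm{d}}{\mathrm{d}\tau}$ of $g(\dot\gamma,s\partial_s)$, using the smallness and decay of $h_0,\tilde h$ in $\mathcal{C}^1_b$ obtained via Sobolev embedding. Spacelike geodesics are handled similarly, since they also cannot reach $\mathcal{I}^\pm$ in finite affine parameter.

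I expect the main obstacle to be the propagation step: checking that $\tilde E$, $E_{g_0}$ and the $\theta$ term do not spoil uniqueness up to $\mathcal{I}^+$. The point is that uniqueness is needed in the weighted spaces where $\eta$ lives, not just locally near $\Sigma$. I would handle this with the growing-space estimate, which requires no indicial information.
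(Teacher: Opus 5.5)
Your proposal is essentially the paper's proof: gauge-compatible Cauchy data on $\Sigma$ built by linear algebra in the normal components, then Theorem \ref{thm:nonlinsol}, then propagation of $\eta$ via $\delta_g\mathrm{G}_g\tilde\delta_g^*\eta=0$ with vanishing Cauchy data, then finite-time evolution across the middle region plus the $s\mapsto\pi-s$ symmetry at $\mathcal{I}^-$, and finally completeness from the near-de~Sitter form of $g$ (which you spell out in more detail than the paper). The differences are minor: the paper glues by keeping the middle and past pieces in the gauge $\Upsilon(\cdot;g_{\mathrm{dS}})=0$ and feeding the full tensor data of the middle solution at $\Sigma^-$ into Theorem \ref{thm:nonlinsol}, so the pieces coincide on overlaps and directly define one metric on $M_{\mathrm{dS}}$ (your appeal to uniqueness of maximal developments also works, but it leaves you to build a diffeomorphism onto $M_{\mathrm{dS}}$ that is the identity on $\Omega$ and $\Omega^-$); and your anticipated ``main obstacle'' is not one, since $\eta$ need only vanish on $(0,s_0]\times\mathbb{S}^n$, where ordinary uniqueness on each slab $[s_1,s_0]\times\mathbb{S}^n$, $s_1>0$, suffices ($E_{g_0}$ and $\theta$ do not enter the operator acting on $\eta$, and $\tilde E$ contributes only lower-order terms).
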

The fact that that the metric corrections are small in the low regularity norms imply that the geometry of $g$ at $\mathcal{I}^+$ and $\mathcal{I}^-$ is qualitatively the same as that of $g_{\mathrm{dS}}$ if we choose $d\geq d_{n+1}$. This constitutes a new proof of the stability of de Sitter space in $n+1$ spacetime dimensions.\\

Furthermore, metrics of this form are future and past geodesically complete. This follows from the form of the Cristoffel symbols for exact de~Sitter space.
\begin{proof}
First, we construct a future solution. We wish to use Theorem \ref{thm:nonlinsol} and then show that we can arrange it so that the gauge condition is satisfied on $\Omega$. This is a standard argument; see, e.g. \cite{ringstromcauchy}. 
\\

\underline{Step 1. Construction of initial data}\\
We construct initial data $\underline{h}_0,\underline{h}_1$ as in the notation of Theorem \ref{thm:nonlinsol} from $\gamma, k$. Because of the support conditions on $\chi$ and $\tilde{\chi}$, at the initial surface $\Sigma$ our gauge is just the standard deTurck gauge\cite{DeTurck}. Concretely, define the product neighborhood
$$
\tilde{s}= (s-s_0),\quad (-\epsilon_0,\epsilon_0)_{\tilde{s}}\times T\Sigma
$$
of $\Sigma$ for an $\epsilon_0>0$ small enough. In the product coordinates, $g_{\mathrm{dS}}=-\frac{1}{\sin^2(\tilde{s}+s_0)}\mathrm{d}\tilde{s}^2 + \gamma_{\mathrm{dS}}$. Define $\underline{h}_0=\tilde{\gamma}$, where we consider it as an element of $ H^\infty(\Sigma;S^2\ {}^0T^*M)$ by inclusion. Now $g_{\Sigma}:=g_{\mathrm{dS}}+\underline{h}_0=-\frac{1}{\sin^2(\tilde{s}+s_0)}\mathrm{d}\tilde{s}^2 + \gamma_{\mathrm{dS}}+\tilde{\gamma}$ has indeed first fundamental form $\gamma$. Furthermore, the $H^D$-norm of $\underline{h}_0$ can be made arbitrarily small by choosing $\epsilon$ small enough. The future unit normal to $\Sigma$ is $\nu:= -\sin(s_0)\partial_s$ for both $g_{\Sigma}$ and $g_{\mathrm{dS}}$. We define the total metric $g:=g_{\Sigma}+\tilde{s}\underline{h}_1$ with $\underline{h}_1$ to be determined by matching the second fundamental form of $g$ with $k$ and fixing the gauge condition to be 0 on $\Sigma$. So we require for all sections $X,Y\in C^\infty(\Sigma;T\Sigma)$ the equality
$$
\begin{aligned}
    k(X,Y)&\stackrel{!}{=} g(\nabla^g_X Y,\nu)\\
    &=g((\nabla^g_X-\nabla^{g_{\mathrm{dS}}}_X)Y,\nu) + g_{\mathrm{dS}}(\nabla^{g_{\mathrm{dS}}}_XY,\nu) + \underline{h}_0(\nabla^{g_{\mathrm{dS}}}_XY,\nu)\\
        &=g_{\Sigma}((\nabla^g_X-\nabla^{g_{\mathrm{dS}}}_X)Y,\nu)  + k_{\mathrm{dS}}(X,Y) 
\end{aligned}
$$
This is equivalent to 
\begin{equation}
\label{eq:secondff}
        g_{\Sigma}((\nabla^{g_{\Sigma} + \tilde{s}\underline{h}_1}_X-\nabla^{g_{\Sigma}}_X)Y,\nu)\left.\right|_{\tilde{s}=0} \stackrel{!}{=} \tilde{k}(X, Y) - g_{\Sigma}((\nabla^{g_{\Sigma}}_X-\nabla^{g_{\mathrm{dS}}}_X)Y,\nu) .
\end{equation}
The right hand side is the evaluation of an element of $H^\infty(\Sigma;S^2T^*\Sigma)$ on $(X,Y)$, which is small in $H^{D-1}$ because of the smallness of $\tilde{k}$ and $\underline{h}_0$. This uniquely determines the tangential part of $\underline{h}_1$. To see this explicitly, we write equation \eqref{eq:secondff} using the local frame $e_\mu$. Using $e_\mu \tilde{s}=0$ unless $\mu=0$ this gives at $\tilde{s}=0$ for $(X,Y) = (e_i,e_j)$
$$
 \frac{\sin(s_0)}{2}(\underline{h}_1)_{ij}=\tilde{k}_{ij}-\frac{\sin(s_0)}{s_0}(\underline{h}_0)_{ij}=\tilde{k}_{ij}-\frac{\sin(s_0)}{s_0}\tilde{\gamma}_{ij}.
$$
To fix the rest of the components, we turn to the gauge condition
\begin{equation}
    \operatorname{tr}_{g_{\Sigma}}(\nabla^{g_{\Sigma+\tilde{s}\underline{h}_1}}-\nabla^{g_{\mathrm{dS}}})=0
\end{equation}
In the frame this gives for the $i$- and 0-components of this equation
$$
\begin{aligned}
\frac{\sin^2(s_0)}{s_0}(\underline{h}_1)_{0i}&=\left(\operatorname{tr}_{g_{\Sigma}}(\nabla^{g_\Sigma}-\nabla^{g_{\mathrm{dS}}})\right)_i 
\\
\frac{\sin^2(s_0)}{2s_0}(\underline{h}_1)_{00}&=\left(\operatorname{tr}_{g_{\Sigma}}(\nabla^{g_\Sigma}-\nabla^{g_{\mathrm{dS}}})\right)_0 + \frac{s_0}{2}(g_\Sigma)^{ij}(\underline{h}_1)_{ij},
\end{aligned}
$$
where the index on the right hand side was lowered using $g_\Sigma$. Now, after application of Lemma \ref{le:2.9}, we see that the $H^{D-d_{n+1}-1}_b(\Sigma,S^2\ {}^0T^*M)$ norm of $\underline{h}_1$ is small.\\

\underline{Step 2: Finding the solution}\\
Let $D'\in\mathbb{N}$ be the large order required in Theorem \ref{thm:nonlinsol}, then we can apply said Theorem if we choose $D=D'+d_{n+1}+1$. This gives us $\tilde{h},h_0, \theta$ that fulfill $P(h_0,\tilde{h},\theta)=0$ and $(\tilde{h}\left|_\Sigma\right.,(\mathcal{L}_{-s\partial_s}\tilde{h})\left|_\Sigma\right.)=(\underline{h}_0,\underline{h}_1)$. Writing
\begin{equation}
    g=g_{\mathrm{dS}}+\chi h_0 + \tilde{h},\quad g_0=g_{\mathrm{dS}}+\chi h_0,
\end{equation}
we have 
\begin{equation}
\operatorname{Ric}(g)-\Lambda g-\tilde{\delta}_g^* \eta=0, \quad \eta:=\Upsilon\left(g ; g_0\right)+E_{g_0}\left(g-g_0\right)-\tilde{\chi} \theta.
\end{equation}
By construction, and the support conditions on $E$ and $\tilde{\chi}$, we have $\eta\left|_\Sigma\right.=0$. But because $\gamma,k$ fulfill the constraint equations, a standard argument such as, for example, in \cite{ringstromcauchy} shows that also $\mathcal{L}_{-s\partial_s} \eta$ vanishes at $\Sigma$. Because of the second Bianchi identity, $\eta$ fulfills the linear wave type equation $
\delta_g \mathrm{G}_g \tilde{\delta}_g^* \eta=0$ on $\Omega$. We conclude that $\eta$ vanishes on the entirety of $\Omega$. Therefore, also
$$
\operatorname{Ric}g=\Lambda g
$$
holds on $\Omega$. Lastly, we see that $g$ (so far only defined on $\Omega$) is exactly of the form of the Theorem.\\

\underline{Step 3. Global solution}\\
As the gauge condition at on $[s_0,s_0/2]\times\mathbb{S}^n$ is just $\Upsilon(g;g_{\mathrm{dS}})=0$, we can apply a standard finite in time evolution result \cite{ChoquetBruhatLocalEinstein}. Using the constructed initial data at $\Sigma$, we get a solution $g'$ to $\operatorname{Ric}g'-\Lambda g'=0$ on $[s_0/2,\pi-s_0/2]\times\mathbb{S}^n$ that fulfills $\Upsilon(g';g_{\mathrm{dS}})=0$ and the initial conditions at $\Sigma$.\\

Working in the coordinate $s':=\pi-s$, we see that $g'$ induces initial data at $\Sigma^-:=\{s'=s_0\}\times\mathbb{S}^n$ of the form $$
(\underline{h}_0^-,\underline{h}_1^-):=\left((g'-g_{\mathrm{dS}})\left|_{\Sigma^-}\right.,(-\mathcal{L}_{s'\partial_{s'}}(g'-g_{\mathrm{dS}}))\left|_{\Sigma^-}\right.\right).
$$
By the finite in time result, they have small $H^{D'}_b$ norm. This is initial data of the type required to apply Theorem \ref{thm:nonlinsol} on the domain $\Omega^-$. Therefore, performing step 2 of the above proof, gives us a metric $g''$ defined on $\Omega^-$ that is of the form claimed in the Theorem.\\

What remains to show, is that $g$ and $g'$ agree on $[s_0/2,s_0]\times\mathbb{S}^n$ and $g'$ and $g''$ agree on $[\pi-s_0,\pi-s_0/s]\times\mathbb{S}^n$. But as on those two domains the respective metrics are solutions to the Einstein equations in the gauge $\Upsilon(\cdot;g_{\mathrm{dS}})=0$ for the same initial data, by a standard argument they are the same: We write the difference of the gauged Einstein equations as a linear wave equation for the difference of the metrics. Uniqueness of solutions to linear wave equations gives the result. See \cite{ringstromcauchy} for arguments of this type.\\

The past and future geodesic completeness follows from the properties of the Christoffel symbols of exact de~Sitter space and the smallness of the metric corrections of our solution. 
\end{proof}

\section{Expansion at the conformal boundary}\label{se:expansion}
In this section we will use the diffeomorphism invariance of the Einstein equations to pull back the solution obtained in Theorem \ref{thm:solution} to improve control over the solution at $\mathcal{I}^+$. Of course the exact same proof also works near $\mathcal{I}^-$. First, we show that we can pull back the solution to be log-smooth at $\mathcal{I}^+$ and then we prove an explicit asymptotic expansion. Only the gauge one-form $\theta$ impedes log smoothness (see the proof of log smoothness \ref{le:logsmooth}). So, as a first step, we wish to show that we can construct a pullback such that the gauge form vanishes to infinite order at $\mathcal{I}^+$.
\subsection{Construction of pullbacks}\label{se:pullbacks}
Consider a 0-vector field $V$ on $M$ that vanishes near $\Sigma$. We define the diffeomorphism $\phi^t_V$ as the flow along $V$ for time $t$ and then 
\begin{equation}
\label{eq:diffeo}
\phi_V:=\phi_V^{t=1} 
\end{equation}
as the flow along $V$ for $t=1$.
\begin{lemma}
\label{le:flowclose}
    Let $X\in s^\beta H^\infty_b(\Omega;{}^0TM)$ for $\beta>0$, and assume that $X$ vanishes near $\Sigma$. Let $\epsilon>0$. Then there exists $\delta>0$ such that if $\|X\|_{C^0_b(\Omega;{}^0TM)}<\delta$, then for all $(s,\omega)\in\Omega$ we have 
    $$
    \phi _X^t (s,\omega) = (s(1+a), \omega'), 
    $$
    with $|a(t,s,\omega)|<\epsilon$ for all $t\leq2$ (independently of $(s,x)$). Recalling the notation of section \ref{se:manifold}, in a chart around $\omega$, say without loss of generality $\omega\in{}^\uparrow\mathbb{S}^n$, we have that $\frac{|\omega^\mu-\omega'^\mu|}{s}\leq \epsilon$.
\end{lemma}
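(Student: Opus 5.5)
The plan is to write the flow as an ODE in logarithmic-type coordinates adapted to the 0-structure and apply a Gronwall argument. Since $X$ is a 0-vector field, in the chart ${}^\uparrow P$ we write $X = X^0 s\partial_s + X^i s\partial_{x^i}$, where the frame coefficients $X^\mu$ satisfy $|X^\mu|\le C\|X\|_{\mathcal{C}^0_b}<C\delta$. We use Lemma \ref{le:2.9} only to know that $X$ is continuous and bounded in this frame; by definition of the frame norm, bounded coefficients in $e_\mu$ are exactly what we get from the $\mathcal{C}^0_b$ hypothesis. Because $X$ vanishes near $\Sigma$ and is tangent to $\mathcal{I}^+$ (it vanishes there), the flow $\phi^t_X$ preserves $\Omega$ and $\Omega^\circ$. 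The regularity $s^\beta H^\infty_b$ together with Sobolev embedding gives $\mathcal{C}^1_b$ coefficients, so the flow exists and is unique for all $t$.

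For the $s$-component, let $(s(t),x(t))$ be an integral curve with $s(0)=s$. Then $\frac{\mathrm{d}}{\mathrm{d}t}\log s(t) = X^0(s(t),x(t))$, so $|\log(s(t)/s)|\le C\delta |t|\le 2C\delta$ for $|t|\le 2$. Hence $s(t)=s(1+a)$ with $|a|\le e^{2C\delta}-1<\epsilon$ once $\delta$ is small. This bound holds uniformly, including as $s\to 0$, since it is a statement about $\log s$; the global nature of $\mathbb{S}^n$ is irrelevant here because $X^0=X(\frac{\mathrm{d}s}{s})$ is defined invariantly.

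For the angular component, $\dot x^i = s(t) X^i$, so $|\dot x| \le C\delta\, s(t) \le C\delta\, s\, e^{2C\delta}$, and integrating over $|t|\le 2$ gives $|x(t)-x(0)|\le 2C\delta e^{2C\delta} s$. This is $\le \epsilon s$ for $\delta$ small. The main obstacle is the chart issue: the curve must remain in a chart where the coordinate bounds on the frame hold. To handle it, we take $\omega$ with $|{}^\uparrow p(\omega)|\le 5/2$, noting that one of the two charts always contains such a neighbourhood. Since the displacement is at most $\epsilon s\le \epsilon s_0$, which is much smaller than the margin $3-5/2$, a continuity (bootstrap) argument in $t$ shows the curve never leaves the region $|{}^\uparrow p|<3$, where the coefficients of the round metric and the change of frame are uniformly bounded. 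Choosing $\delta$ as the minimum of the thresholds from the two steps, uniformly in both charts, concludes the proof.
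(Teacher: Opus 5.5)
Your proof is correct and is essentially the paper's argument. The paper writes the integral curve as $\gamma^\mu(t)=p^\mu+s\tilde{\gamma}^\mu(t)$, uses $X=sX^\mu\partial_\mu$ to bound the $\mu=0$ component by Gr\"onwall, and then feeds that bound into the angular components; your exact integration of $\frac{\mathrm{d}}{\mathrm{d}t}\log s(t)=X^0$ followed by integrating $\dot{x}^i=s(t)X^i$ does the same thing, and your continuity argument keeping the curve inside $|{}^\uparrow p|<3$ is a more explicit version of the paper's remark that the curve stays in a small neighbourhood of ${}^\uparrow\Omega$.
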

\begin{proof}
    Fix $p=(s,\omega)\in\Omega $ and assume without loss of generality that it lies in $^\uparrow\Omega$. Call $^\uparrow P(p)=p^\mu=(p^0,p^{i})$ its coordinates in the chart $^\uparrow P$. We have $\phi_X^t(p)=\gamma(t)$ where $\gamma$ is the integral curve of $X$ starting at $p$, namely $\gamma$ fulfills
    \begin{align}
    \label{eq:flowequation}
        \gamma'(t)&=X(\gamma(t))\\
        \gamma(0)&=p.
    \end{align}
    We can assume that the curve still lies in the domain, where the stereographic projection $^\uparrow p$ is still valid, as this curve will be contained in a $2\delta$ neighborhood of the original point and does not cross $\mathcal{I}^+$ in finite time. We call $U_\delta$ this $2\delta$ neighborhood of $^\uparrow\Omega$.
    \\
    
    We work in the coordinate system on $^\uparrow\Omega$ and define $\tilde{\gamma}^\mu$ via $\gamma^\mu(t)=p^\mu + s\tilde{\gamma}^\mu(t)$. It is the solution to the system
    \begin{align}
        (\tilde{\gamma}^\mu) '(t)&= \frac{1}{s}(X(p^\nu+s\tilde{\gamma}^\nu(t)))^\mu\\
        \tilde{\gamma}^\mu(0)&=0.
    \end{align}
    Now, integrating this ODE, we get
    $$
        \tilde{\gamma}^\mu=\int_0^t\frac{1}{s}(X(p^\nu+s\tilde{\gamma}^\nu(\tau)))^\mu\mathrm{d}\tau.
    $$
    We write $X(q)=X^\mu(q) e_\mu=q^0X^\mu(q)\partial_\mu$ in the frame $e_\mu$. This leads to the notation $X(q)^\mu=q^0X^\mu(q)$. (The $\mu$ component of $X$ in the coordinates on $^\uparrow\Omega$, $X(q)^\mu$, is given by $q^0$ times the $X^\mu(p)$, the $\mu$ component of $X$ in the frame $e_\mu$). In the integral, we therefore get for the $\mu$ component of $\tilde{\gamma}$
    $$
    \begin{aligned}
    \tilde{\gamma}^\mu(t)&=\int_0^t \frac{1}{s}\gamma^0(\tau) X^\mu(p^\nu+s\tilde{\gamma}^\nu(\tau))\mathrm{d}\tau\\
    &=\int_0^t \frac{1}{s}\left(s+s\tilde{\gamma}^0(\tau) \right)X^\mu(p^\nu+s\tilde{\gamma}^\nu(\tau))\mathrm{d}\tau
    \end{aligned}
    $$
    We estimate this as 
    
    \begin{equation}
    \begin{aligned}
    \label{eq:flowesti}
    |\tilde{\gamma}^\mu(t)|&\leq \int_0^t (\operatorname{sup}_{U_\delta}|X^\mu|)(1+|\tilde{\gamma}^0(\tau)|)\mathrm{d}\tau\\
    &\leq \int_0^t \|X\|_{\mathcal{C}^0_b}(1+|\tilde{\gamma}^0(\tau)|)\mathrm{d}\tau\\
    &\leq\int_0^t \delta(1+|\tilde{\gamma}^0(\tau)|)\mathrm{d}\tau.
    \end{aligned}
    \end{equation}
    
    First, look at the equation for $\mu=0$. We apply Grönwall's Lemma to get 
    $$
        |\tilde{\gamma}^0(t)|\leq \delta te^{\delta t}
    $$
    For $t<2$ we can estimate the right-hand side by $C_\delta$. Now, inserting this estimate into equation \eqref{eq:flowesti} for the $i$ components, we get for $t<2$, $|\tilde{\gamma}^\mu|\leq C_\delta'$. Importantly, this constant is independent of $p=(s,\omega)$, and can therefore be made smaller than $\epsilon$ uniformly, for all $p\in\Omega$. This proves the Lemma. 
\end{proof}
The purpose of this Lemma is to show that for functions $f$ on $M$ we have that $f\circ\phi_X$ has the same decay behavior as $f$ toward $\mathcal{I}^+$, as the $s$  coordinate of the input of $f$ changes by a small amount relative to the size of $s$. With it, we prove a Lemma that shows that the flow along a b-vector field which decays toward $\mathcal{I}^+$, only changes the coordinates in a way where the difference of coordinates and its b-derivatives also decay in the same manner.

\begin{lemma}
\label{le:flowsobolev}
    Let $X\in s^\beta H^\infty_b(\Omega;{}^0TM)$ for $\beta>0$ and assume that $X$ vanishes near $ \Sigma$. Let $\epsilon>0$. There exists $\delta>0$ such that if $\|X\|_{C^0_b(\Omega;{}^0TM)}<\delta$, then 
    $$
    ^\uparrow\varphi\cdot({}^\uparrow P\circ\phi_X-{}^\uparrow P)\in s\cdot s^\beta H^\infty_b(\Omega).
    $$
    The analogous statement for $^\downarrow\Omega$ holds as well.
\end{lemma}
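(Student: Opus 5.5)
We reuse the rescaled flow coordinates from the proof of Lemma~\ref{le:flowclose}. Fix $p=(s,\omega)$ in the support of ${}^\uparrow\varphi$, write $\gamma^\mu(t)=p^\mu+s\tilde\gamma^\mu(t)$, and set $\Psi^\mu(p):=\tilde\gamma^\mu(1)$. Then $({}^\uparrow P\circ\phi_X-{}^\uparrow P)^\mu=s\,\Psi^\mu$, so the claim is that ${}^\uparrow\varphi\,\Psi^\mu\in s^\beta H^\infty_b(\Omega)$. The ODE for $\tilde\gamma$ reads
\[
(\tilde\gamma^\mu)'(t)=(1+\tilde\gamma^0(t))\,X^\mu\bigl(p^0(1+\tilde\gamma^0(t)),\,p^i+p^0\tilde\gamma^i(t)\bigr),\qquad \tilde\gamma^\mu(0)=0,
\]
where $X^\mu$ are the frame coefficients of $X$, which lie in $s^\beta H^\infty_b$. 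We regard this as an ODE whose parameters are the initial point $p$, and we study $\Psi$ as a function of $p$.

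\emph{Step 1: decay.} We sharpen the Grönwall bound of Lemma~\ref{le:flowclose}. By Lemma~\ref{le:flowclose}, the $s$-coordinate along the curve stays comparable to $s$, with ratio in $(1-\epsilon,1+\epsilon)$. By the weighted Sobolev embedding, $|X^\mu(q)|\le C(q^0)^\beta\|X\|_{s^\beta H^{d_{n+1}}_b}$. Hence $|\tilde\gamma^\mu(t)|\le C s^\beta$ for $t\le 2$, so $\Psi\in s^\beta\mathcal{C}^0_b$.

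\emph{Step 2: b-derivatives.} We differentiate the ODE in the parameters along the b-vector fields $s\partial_s$ and $\partial_{x^j}$ applied to $p$. Write $\partial_b$ for either. Then $\partial_b\tilde\gamma$ satisfies the linearized ODE
\[
(\partial_b\tilde\gamma)'=A(t)\,\partial_b\tilde\gamma+B(t),\qquad \partial_b\tilde\gamma(0)=0 .
\]
The coefficients $A,B$ involve $X^\mu$ and $(\partial_\nu X^\mu)$ composed with $\gamma$. The key structural point is that the chain rule produces b-derivatives of $X^\mu$:
\[
\partial_{p^0}\bigl[X^\mu(p^0(1+\tilde\gamma^0),\dots)\bigr]\cdot p^0=(1+\tilde\gamma^0)\,(s\partial_sX^\mu)(\gamma)+\dots,
\]
and similarly $\partial_{p^i}$ produces $(\partial_{x}X^\mu)(\gamma)$. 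Derivatives in the directions of $\tilde\gamma^i$ carry an extra factor $p^0$, hence are 0-derivatives and even better. Therefore $A$ and $B$ are $O(s^\beta)$, and Grönwall gives $\partial_b\Psi\in s^\beta\mathcal{C}^0_b$. Iterating, $\partial_b^\alpha\Psi$ solves a linear ODE whose inhomogeneity is a polynomial in lower-order derivatives of $\tilde\gamma$ and in b-derivatives of $X$ of order $\le|\alpha|$ evaluated along $\gamma$. Induction gives $\partial_b^\alpha\Psi\in s^\beta\mathcal{C}^0_b$ whenever $X\in s^\beta\mathcal{C}^{|\alpha|}_b$.

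\emph{Step 3: Sobolev rather than sup norms.} This is the main obstacle. $X$ is only assumed to be in $s^\beta H^\infty_b$, so the top-order terms $(\partial_b^\alpha X)\circ\gamma$ are merely $L^2$. I would handle this in one of two ways. The first option is to note that $X\in s^\beta H^\infty_b$ implies $X\in s^{\beta}\mathcal{C}^\infty_b$ by the weighted Sobolev embedding, which already yields $\Psi\in s^\beta\mathcal{C}^\infty_b$. Since ${}^\uparrow\varphi\Psi$ has compact support in $s$, we then get ${}^\uparrow\varphi\Psi\in s^{\beta'}H^\infty_b$ for any $\beta'<\beta$, because $s^{\beta-\beta'}$ is $L^2$ with respect to $ds/s$. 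The second option, needed to keep the exact weight $\beta$, is to bound the top-order inhomogeneity $\int_0^1 |(\partial_b^\alpha X)(\gamma(t,p))|\,dt$ in $s^\beta L^2(dp)$. For this we change variables $p\mapsto\gamma(t,p)=\phi^t_X(p)$, using that this map is a diffeomorphism whose Jacobian with respect to the b-density $\frac{ds}{s}dx$ is uniformly close to $1$; this follows from Step 2 with $|\alpha|=1$ and $\delta$ small. Minkowski's inequality then gives a bound by $\|X\|_{s^\beta H^{|\alpha|}_b}$. The remaining lower-order terms are products, handled by $L^\infty$ bounds from the first option combined with Lemma~\ref{le:2.9}(3). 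Finally, the cutoff ${}^\uparrow\varphi$ and the fact that $X$ vanishes near $\Sigma$ (so that $\phi_X$ is the identity there) localize everything to the chart. The ${}^\downarrow\Omega$ case is identical.
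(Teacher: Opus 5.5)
Your proposal is correct and is essentially the paper's proof: Gr\"onwall applied to the flow ODE and to its derivatives in the initial point along $s\partial_s$ and $\partial_{x^j}$, by induction on the order, with Lemma~\ref{le:flowclose} controlling the point of evaluation. Two things you add are improvements: the rescaling $\tilde\gamma=(\gamma-p)/s$ makes the pairing of every $\partial_s$ falling on $X$ with a factor of $s$ automatic (replacing the paper's bookkeeping condition $m\le a+d-1$), and the change of variables $p\mapsto\phi^t_X(p)$ justifies $X^\mu\circ\gamma(\tau,\cdot)\in s^\beta L^2$, which the paper only asserts. One small correction: only $\|X\|_{\mathcal{C}^0_b}$ is assumed small, so the Jacobian of $\phi^t_X$ need not be uniformly close to $1$. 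It is, however, bounded above and below by Liouville's formula, because the divergence of $X$ with respect to $\frac{ds}{s}\,dx$ is $s\partial_sX^0+s\partial_{x^i}X^i$, which is bounded; that is all the change of variables needs.
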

\begin{proof}
    First, notice that the function is well defined because of the cutoff $^\uparrow\varphi$, and because for $p\in{}^\uparrow\Omega$, $\phi_X(p)$ still lies in a $\delta$ neighborhood of $^\uparrow\Omega$, where $^\uparrow P$ is still defined for $\delta$ small enough.\\

    Let $p=(s,\omega)\in {}^\uparrow\Omega$ and let $p^\mu=(p^0,p^{i})=(s,\omega^{i})$ be its coordinates. Writing $\gamma(t)$ for the flow starting at $p$ along $X$, working in coordinates, we split it as $\gamma^\mu(t)=p^\mu+\bar{\gamma}^\mu(t)$. Then the $\bar{\gamma}$ fulfills the system
    \begin{equation}
        \label{eq:bargammasyst}
        \begin{aligned}
            (\bar{\gamma}^\mu)'(t)&=X(\gamma(t))^\mu=X(p^\nu+\bar{\gamma}^\nu(t))^\mu\\
            \bar{\gamma}^\mu(0)&=0.
        \end{aligned}
    \end{equation}
    What we need to show is that arbitrary b-derivatives with respect to the starting point $p$ of $\bar{\gamma}^\mu(1)$ are (as functions of said starting point) integrable in $s^\beta L^2$. First, we show this for the case of no derivatives.\\

    Integrating the system, and again writing $X=X^\mu e_\mu$ and therefore $X(p)^\mu=p^0X^\mu(p)$, we get 
    \begin{equation}
    \label{eq:Xoorderesti}
    \begin{aligned}
        |\bar{\gamma}^\mu(t)|&\leq\int_0^t\left|(p^0+\bar{\gamma}^0(\tau))X^\mu(\gamma(\tau))\right|\mathrm{d}\tau\\
        &\leq\int_0^t\left| sX^\mu(\gamma(\tau))\right| + \left|\bar{\gamma}^0(\tau)\right|\|X\|_{\mathcal{C}_b^0}\mathrm{d}\tau
        \end{aligned}
    \end{equation}
    After applying Grönwalls Lemma to the $\mu=0$ component of this equation, we get 
    $$
    |\bar{\gamma}^0(t)|\leq e^{\delta t}\int_0^t\left| sX^\mu(\gamma(\tau))\right| \mathrm{d\tau}.
    $$
    Now, because of Lemma \ref{le:flowclose}, the evaluation of $X^\mu$ at $\gamma(\tau)$ for $0\leq\tau\leq2$ instead of $p$ does not change the integrability properties of $X^\mu$. Therefore, the right-hand side lies in $s\cdot s^\beta L^2$ for $t\leq1$ by definition of $X$.\footnote{In fact, this function is only defined in a neighborhood of $^\uparrow\Omega$, but as stated in the Lemma, we want to analyze functions after multiplication with the cutoff $^\uparrow\varphi$. So when we say that a function lies in a certain function space during this proof and the proof of Lemma \ref{le:flowdif}, we mean that the statement is true, after multiplication with the cutoff. } Plugging in this result for the $i$ components of equation \eqref{eq:Xoorderesti}, we get that also the $|\bar{\gamma}^i(t)|$ lie in $s\cdot s^\beta L^2$ (for $t\leq1$). All the $\bar{\gamma}^\mu$ also lie in $s\cdot \mathcal{C}^0_b$, which we will need later. This proves the claim in the case of 0 b-derivatives.\\
    
    In order to control over $k$ b-derivatives, we derive the system \eqref{eq:bargammasyst} with respect to the initial point $k$ times. We first consider one b-derivative in detail, after which the structure will become apparent.

    Differentiating the equation \eqref{eq:bargammasyst} with respect to $s\partial_s$ we get
    \begin{equation}
    \label{eq:firsdersyst}
        \begin{aligned}
            (s\partial_s\bar{\gamma}^\mu)'(t)&= s\partial_s(X(\gamma(t))^\mu)\\
            s\partial_s\bar{\gamma}^\mu(0)&=0
        \end{aligned}
    \end{equation}
    We evaluate the right-hand side as
    $$
    \begin{aligned}
        s\partial_s(X(\gamma(t))^\mu)&=s\partial_s\left((p^0+\bar{\gamma}^0(t))X^\mu(\gamma(t))\right)\\
        &=(s+s\partial_s\bar{\gamma}^0)X^\mu(\gamma(t)) + (s+\bar{\gamma}^0(t))(\partial_\nu X^\mu(\gamma(t)))s\partial_s\bar{\gamma}^\nu.\\
        &=sX^\mu(\gamma(t)) + [s\partial_s\bar{\gamma}^0\cdot X^\mu(\gamma(t))+s\partial_s\bar{\gamma}^\nu\cdot(s+\bar{\gamma}^0(t))(\partial_\nu X^\mu(\gamma(t)))],
    \end{aligned}
    $$
    where in the last line the equation was just rearranged to show that it has the schematic structure
    $$
        A + (s\partial_s \bar{\gamma})\cdot B,
    $$
    where $A\in s\cdot s^\beta L^2$, $A\in s\cdot \mathcal{C}^0_b$ and $B$ can be estimated by a constant. That $A$ is in said function spaces, uses Lemma \ref{le:flowclose} to show that the point of evaluation of $X^\mu$ does not change the integrability (here, we use $\beta>0$ for the boundedness). Lemma \ref{le:flowclose} will be used extensively like this from here on. That B can be estimated by a constant uses that $\bar{\gamma}^0$ lies in $s\cdot \mathcal{C}^0_b$, as the additional factor of $s$ is needed to be put together with the $\partial_\nu X^\mu$ term to produce $e_\nu X^\mu$ (otherwise in the case $\nu=0$ we would lose control of this term). Now, knowing this structure, we integrate the system \eqref{eq:firsdersyst} and sum over the components $\mu$ to get for $|\bar{\gamma}(t)|:=\sum_\mu|\bar{\gamma}^\mu(t)|$
    \begin{equation}
        |s\partial_s\bar{\gamma}(t)|\leq\int_0^t |A| + |(s\partial_s \bar{\gamma})|\cdot |B|\mathrm{d}\tau,
    \end{equation}
    for different $A$ and $B$ arising from the sum of all the component equations, but having the same structure. Grönwalls Lemma gives
    \begin{equation}
         |s\partial_s\bar{\gamma}(t)|\leq e^{Ct}\int_0^t|A|\mathrm{d}\tau = C'\tilde{A},
    \end{equation}
    where $\tilde{A}$ lies in the same function spaces as $A$. This proves $|s\partial_s\bar{\gamma}(t)|\in s\cdot s^\beta L^2$ and $\in s\cdot \mathcal{C}^0_b$ for $t\leq1$

    To study derivatives with respect to the spatial components $\partial_{p^{i}}=\partial_{\omega^{i}}$ we again take the derivative of equation \eqref{eq:bargammasyst}. We get
    \begin{equation}
\begin{aligned}
    (\partial_{\omega^{i}}\bar{\gamma}^\mu)'(t)&= \partial_{\omega^{i}}(X(\gamma(t))^\mu)\\
            \partial_{\omega^{i}}\bar{\gamma}^\mu(0)&=0
\end{aligned} 
    \end{equation}
    Computing the right-hand side, we get 
    $$
    \begin{aligned}
        \partial_{\omega^{i}}(X(\gamma(t))^\mu) &=\partial_{\omega^{i}}\left((p^0+\bar{\gamma}^0(t))X^\mu(\gamma(t))\right)\\
        &=\partial_{\omega^{i}} \bar{\gamma}^0(t)X^\mu(\gamma(t)) + (s+\bar{\gamma}^0(t))(\partial_\nu X^\mu(\gamma(t)))(\delta^\nu_i + \partial_{\omega_i}\bar{\gamma}^\nu(t))\\
        &=(s+\bar{\gamma}^0(t))\partial_i X^\mu(\gamma(t))\\
        &\hspace{5mm}+[\partial_{\omega^{i}} \bar{\gamma}^0(t)\cdot X^\mu(\gamma(t)) +\partial_{\omega_i}\bar{\gamma}^\nu(t)\cdot(s+\bar{\gamma}^0(t))(\partial_\nu X^\mu(\gamma(t)))].
    \end{aligned}
    $$
    This is again of the form
    $$
         A + (\partial_{\omega^{i}} \bar{\gamma})\cdot B,
    $$
    for $A$ and $B$ of the same structure as before (the term $\bar{\gamma}^0(t)$ in $A$ can be split into $s\cdot f$ with $f$ bounded by a constant because of the 0th order result). Indeed, the $B$ is the identical $B$ as in the $s\partial_s$ case. Therefore, we can perform the same proof as before and get $| \partial_{\omega^{i}}\bar{\gamma}|\in s\cdot s^\beta L^2$ and $\in s\cdot \mathcal{C}^0_b$ for $t\leq1$. This proves the statement of the Lemma for one b-derivative.\\

    For general order, the procedure goes by induction.  Writing $D^q_b$ for a $q$-fold composition of b-derivatives (so $s\partial_s$ and $\partial_{\omega^{i}}$), suppose for $q\leq k-1 $ we know $| D^{q}_b\bar{\gamma}(t)|\in s\cdot s^\beta L^2$ and $\in s\cdot \mathcal{C}^0_b$ for $t\leq1$ and that the right-hand side of equation \eqref{eq:bargammasyst} derived $k-1$ times is of the form 
    \begin{equation}
        A + (D^{k-1}_b \bar{\gamma})\cdot B
    \end{equation}
    The $A$ is a sum of terms of the form 
    $$
    s^{a}(\partial_i^l\partial_0^mX)\prod_{i=1}^k(D^{c_i}_b\bar{\gamma})^{d_i},
    $$
    where $\sum_ic_i <k-1$ and for $d:=\sum_id_i$, $m\leq a+d -1$. (The latter condition ensures that $A\in s\cdot s^\beta L^2$ and $\in s\cdot \mathcal{C}^0_b$. This is because the problem terms $\partial_o^mX$ can be combined with $s^{a}\prod_{i=1}^k(D^{c_i}_b\bar{\gamma})^{d_i}$, which in turn can be estimated as $|s^{a}\prod_{i=1}^k(D^{c_i}_b\bar{\gamma})^{d_i}|\leq C s^{a+d}$ and therefore $A\leq C s\cdot |\partial_\omega^ls^m\partial_0^mX|$.) The $B$ term is given by the exact same expression as in the first order case with $D_b\bar{\gamma}$ replaced by $D_b^{k-1}\bar{\gamma}$.

    Now, if we differentiate this expression with respect to $D_b$, if the derivative hits the $D_b^{k-1}\bar{\gamma}$ term, we get $D_b^k\bar{\gamma}\cdot B$. If the $D_b$ hits $B$, we get a new contribution to $A$, as can be seen by direct computation because of the explicit form of $B$. If it hits $A$, we get new terms of the same form as $A$ with $k-1$ replaced by $k$. The only term where this is not immediately obvious is when evaluating $D_b(\partial_i^l\partial_0^mX)$ we get an additional $\partial_0$ derivative acting on $X$. But by the chain rule, this also adds a $D_b\bar{\gamma}^0$ factor, which increases $d$ by one and therefore still, $a+(d+1)-1\geq m+1$. Therefore, we have 
    $$
    \begin{aligned}
        (D_b^k\bar{\gamma}^\mu)'(t)&=A + B\cdot D_b^k\bar{\gamma}^\mu(t)\\
        D_b^k\bar{\gamma}^\mu(0)&=0
    \end{aligned}
    $$
    and by the same argument as in the first order case using Grönwalls Lemma, we get $| D^{q}_b\bar{\gamma}(t)|\in s\cdot s^\beta L^2$ and $\in s\cdot \mathcal{C}^0_b$ for $t\leq1$ for $q\leq k$. This closes the induction and the proof.
\end{proof}

Now that we have under control how the flow along a b-vector field changes the coordinates, we investigate how the difference of two flows along two b-vector fields with different order of decay changes the coordinates.

\begin{lemma}
\label{le:flowdif}
    Let $V\in s^\beta H^\infty_b(\Omega;{}^0TM)$, $W\in s^\alpha H^\infty_b(\Omega;{}^0TM)$ with $\alpha\geq\beta >0$. Assume further that they vanish for $s>s_0/2$. There exists a $\delta>0$ such that if $\|V\|_{\mathcal{C}^0_b}<\delta$ and $\|W\|_{\mathcal{C}^0_b}<\delta$ we have that $|{}^\uparrow\varphi\cdot( {}^\uparrow P\circ\phi_{V+W}- {}^\uparrow P\circ\phi_V)|\in s\cdot s^{\alpha}H_b^\infty(M)$. (The analogous statement for $^\downarrow\Omega$ holds as well).
\end{lemma}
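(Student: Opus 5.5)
We follow the strategy of the proof of Lemma \ref{le:flowsobolev} and apply it to the difference of the two integral curves. Fix $p=(s,\omega)\in{}^\uparrow\Omega$ with coordinates $p^\mu$. Let $\gamma_1(t)$ be the integral curve of $V+W$ and $\gamma_2(t)$ the integral curve of $V$, both starting at $p$. By Lemma \ref{le:flowclose} and Lemma \ref{le:flowsobolev} (applied to $V+W\in s^\beta H^\infty_b$ and to $V$), both curves stay in the chart for $t\leq 2$. Write $\gamma_j^\mu=p^\mu+\bar\gamma_j^\mu$. Then $\bar\gamma_j$ and all its b-derivatives in $p$ lie in $s\cdot s^\beta L^2\cap s\cdot\mathcal{C}^0_b$. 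Set $\rho^\mu(t):=\gamma_1^\mu(t)-\gamma_2^\mu(t)$. It satisfies $\rho(0)=0$ and
\[
(\rho^\mu)'(t)=\gamma_1^0W^\mu(\gamma_1)+\big(\gamma_1^0V^\mu(\gamma_1)-\gamma_2^0V^\mu(\gamma_2)\big),
\]
where we write $V=V^\mu e_\mu$ and $W=W^\mu e_\mu$ in the frame, as before. The first term is the source. Since $\gamma_1^0=s(1+a)$ and evaluation at $\gamma_1(t)$ preserves integrability by Lemma \ref{le:flowclose}, it lies in $s\cdot s^\alpha L^2\cap s\cdot s^{\alpha}\mathcal{C}^0_b$. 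The second term we write as $\rho^0V^\mu(\gamma_1)+\gamma_2^0\big(\int_0^1\partial_\nu V^\mu(\gamma_2+\sigma\rho)\,\mathrm{d}\sigma\big)\rho^\nu$. The key point, as in Lemma \ref{le:flowsobolev}, is that the factor $\gamma_2^0\sim s$ combines with $\partial_\nu V^\mu$ to give $e_\nu V^\mu$ (up to the comparability $\gamma_2^0/(\gamma_2^0+\sigma\rho^0)\sim 1$, which again follows from Lemma \ref{le:flowclose}). Hence the coefficient is bounded, and we get $|\rho'|\leq |A|+C|\rho|$ with $A\in s\cdot s^\alpha L^2\cap s\cdot s^\alpha\mathcal{C}^0_b$. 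Grönwall's Lemma then gives $\rho(1)\in s\cdot s^\alpha L^2$, and also $\rho\in s\cdot\mathcal{C}^0_b$, which is what we need below.

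For b-derivatives we induct on $k$ exactly as in Lemma \ref{le:flowsobolev}. Differentiating the ODE for $\rho$ $k$ times in the starting point (by $s\partial_s$ and $\partial_{\omega^i}$) yields
\[
(D_b^k\rho)'=A_k+B\cdot D_b^k\rho,
\]
with the same bounded matrix $B$ as at order zero. The term $A_k$ is a sum of products of three kinds of factors. The first kind is derivatives of $W$ evaluated at $\gamma_1$, multiplied by powers of $s$ and b-derivatives of $\bar\gamma_1$. The second kind is differences $\partial^{l}V(\gamma_1)-\partial^lV(\gamma_2)$, which we rewrite via the mean value formula as an integral of $\partial^{l+1}V$ times $\rho$. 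The third kind is $D_b^{q}\rho$ with $q<k$, multiplied by factors controlled by Lemma \ref{le:flowsobolev}. We keep the bookkeeping used there: each $\partial_0$ falling on $V$ or $W$ must be accompanied by a factor $s$ coming from $\gamma^0$ or from some $D_b^c\bar\gamma_j^0$ or $D_b^c\rho^0$, which lie in $s\cdot\mathcal{C}^0_b$. This turns every such derivative into a b-derivative. Every term contains either a factor from $W$ or a factor $D_b^q\rho$ with $q<k$. By induction each such factor carries the weight $s\cdot s^\alpha$ in $L^2$ or in $\mathcal{C}^0_b$, while all other factors are bounded in $\mathcal{C}^0_b$ (here $\beta>0$ and Sobolev embedding are used). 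Hence $A_k\in s\cdot s^\alpha L^2$, and Grönwall's Lemma closes the induction. Multiplying by ${}^\uparrow\varphi$ gives the claim, and ${}^\downarrow\Omega$ is analogous.

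The main obstacle is the product estimate inside $A_k$. Some terms are products of two unbounded-in-$\mathcal{C}^0_b$ high derivatives, for example high b-derivatives of $V$ at a moving point times $D_b^q\rho$. In these we must place the $L^2$ factor on a single term and bound all others in $\mathcal{C}^0_b$. To do so we will exploit that $V,W\in H^\infty_b$, so all of their b-derivatives are bounded in $\mathcal{C}^0_b$ by Sobolev embedding (Lemma \ref{le:2.9}). This is crude, since it uses infinite regularity, but it suffices because no tame bound is claimed. A further technical point is justifying that composition with $\gamma_j(t)$ and with the interpolating points $\gamma_2+\sigma\rho$ preserves $s^\alpha L^2$. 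For this we will invoke Lemma \ref{le:flowclose} together with the smallness of the Jacobian deviation obtained from Lemma \ref{le:flowsobolev}, so that the change of variables $p\mapsto\gamma_j(t)$ has uniformly bounded Jacobian.
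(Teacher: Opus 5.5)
Your proposal is correct and follows essentially the same route as the paper: an ODE for the difference of the two integral curves, and a linearization of the $V$-difference along the segment so that the factor $\gamma^0\sim s$ turns $\partial_\nu V^\mu$ into the bounded $e_\nu V^\mu$ (your split is just a rearrangement of the paper's $\int_0^1[(e_\nu V^\mu)(f)h^\nu+V^\mu(f)h^0]\,\mathrm{d}\xi$), followed by Gr\"onwall and induction over b-derivatives with each $\partial_0$ paired with a factor of $s$. One minor fix concerns your change-of-variables remark, a point the paper passes over entirely: Lemma \ref{le:flowsobolev} gives only a bounded, not small, b-Jacobian deviation when just $\|V\|_{\mathcal{C}^0_b}$ is small, so the two-sided Jacobian bound for $p\mapsto\gamma_1(t;p)$ should come from Liouville's formula (the b-divergence of a $0$-vector field in $\mathcal{C}^1_b$ is bounded), and this bound is needed only for the $W$-factors, since everything evaluated at $\gamma_2+\sigma\rho$ is only ever estimated in $\mathcal{C}^0_b$.
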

\begin{proof}
    This proof works in a very similar way to the proof of the previous Lemma, just some more care needs to be taken to distinguish which term decays at which order.

    Fix $p=(s,\omega)\in\Omega$, say in $^\uparrow\Omega$. The other case is analogous. We work in the coordinate system on $^\uparrow\Omega$ and consider the integral curves $\gamma_{V},\gamma_{V+W}$ of the 0-vector fields $V, V+W$ starting at $(s,\omega)$, and their difference $h^\mu:=\gamma_{V+W}^\mu-\gamma_V^\mu$. They are well defined for $t<2$, lie in a $4\delta$ neighborhood of $^\uparrow\Omega$ and do not cross $\mathcal{I}^+$ and $\Sigma$. Because of the definition of the flow, $h(t)^\mu$ fulfills the following initial value problem
    $$
    \begin{aligned}
        (h^\mu)'(t)&=V(\gamma_{V+W}(t))^\mu-V(\gamma_V(t))^\mu + W(\gamma_{V+W}(t))^\mu\\
        h^\mu(0)&=0
    \end{aligned}
    $$
    We write the difference of the $V$ terms on the left-hand side as
$$
\left(V\right.(\gamma_{V+W}(t))-V(\gamma_V(t)\left.)\right)^\mu
    =\int_0^1 \frac{\mathrm{d}}{\mathrm{d}\xi} V(\gamma_{V}(t)+\xi h(t))^\mu\mathrm{d}\xi.
$$
    Simplifying the notation by writing $\gamma_V(t) + \xi h(t) = f(t)$, we compute
    $$
    \begin{aligned}
    \left(V(\gamma_{V+W}(t))-V(\gamma_V(t))\right)^\mu&=\int_0^1 \frac{\mathrm{d}}{\mathrm{d}\xi} V(f(t))^\mu\mathrm{d}\xi\\
    &=\int_0^1 \frac{\mathrm{d}}{\mathrm{d}\xi}\left( f(t)^0V^\mu (f(t))\right)\mathrm{d}\xi\\
    &=\int_0^1 [f(t)^0(\partial_\nu V^\mu)(f(t))\cdot h^\nu(t)]  \\&\hspace{6mm}+[V^\mu(f(t))\cdot h^0(t)]\mathrm{d}\xi\\
    &=\int_0^1 [(e_\nu V^\mu)(f(t))\cdot h^\nu(t)] + [V^\mu(f(t))\cdot h^0(t) ]\mathrm{d}\xi
    \end{aligned}
    $$
    Inserting in the initial value problem, this gives
    \begin{equation}
        \label{eq:ivpVWdiff}
        \begin{aligned}
        (h^\mu)'(t)&=\int_0^1 [(e_\nu V^\mu)(f(t))\cdot h^\nu(t)] + [V^\mu(f(t))\cdot h^0(t) ]\mathrm{d}\xi\\
        &\hspace{5mm}+ W(\gamma_{V+W}(t))^\mu\\
        h^\mu(0)&=0
        \end{aligned}
    \end{equation}
    Now, the proof goes similarly to \ref{le:flowsobolev}. Applying Lemma \ref{le:flowclose} to $V, W$ and $V+W$, we see that the evaluation of the components $V^\mu,W^\mu$ at $\gamma_V(t),\gamma_{V+W}(t),f(t)$ instead of $p$ does not change their integrability behavior.

    First, we look at an estimate for $|h(t)|$. To wit, integrate equation \eqref{eq:ivpVWdiff} to get $h^\mu(t)=\int_0^t(h^\mu)'(\tau)\mathrm{d}\tau$. We see, that all terms proportional to a $h^\mu(\tau)$ have prefactors of the form $V^\mu, e_\nu V^\mu$ evaluated at $f(\tau)$. We estimate them by $\|V\|_{\mathcal{C}^1_b}<\infty$ using Sobolev embedding. The $W$ term, we write as 
    $$W
    (\gamma_{V+W}(\tau))^\mu = \gamma_{V+W}^0(\tau)W^\mu(\gamma_{V+W}(\tau)) = s\cdot \frac{\gamma_{V+W}^0(\tau)}{s}\cdot W^\mu(\gamma_{V+W}(\tau)).
    $$
    Lemma \ref{le:flowsobolev} applied to $V+W$ shows that $\frac{\gamma_{V+W}^0(\tau)}{s}$ lies in $\mathcal{C}^0_b$ and by Lemma \ref{le:flowclose} the change of evaluation point of $W^\mu$ does not change the fact that it lies in $s^\alpha L^2$. Together, this shows that $W(\gamma_{V+W}(\tau))^\mu$ lies in $ s\cdot s^\alpha L^2$ and $s\cdot \mathcal{C}^0_b$. So we get
    $$
    |h^\mu(t)|\leq \int_0^t C|h(\tau)|+s\cdot \frac{\gamma_{V+W}^0(\tau)}{s}\cdot \left|W^\mu(\gamma_{V+W}(\tau))\right|\mathrm{d}\tau.
    $$
    Summing these equations over all $\mu$ and applying Grönwalls Lemma then yields, for $t<2$
    $$
    |h(t)|\leq C'\int_0^ts\cdot \frac{\gamma_{V+W}^0(\tau)}{s}\cdot \sum_\mu |W^\mu(\gamma_{V+W}(\tau))|\mathrm{d}\tau
    $$
    and therefore $|h(t)|$ lies in $s\cdot s^\alpha L^2$ and $s\cdot \mathcal{C}^0_b$ for $t<2$. This proves $|{}^\uparrow\varphi\cdot( {}^\uparrow P\circ\phi_{V+W}- {}^\uparrow P\circ\phi_V)|$ lies in these same function spaces. \\

    Proving that estimates of this type also hold for arbitrary b-derivatives of $h$ (with respect to the starting point $p$), goes by induction. It is in essence analogous to the inductive proof of Lemma \ref{le:flowsobolev}. At each order $k$, we have that $D_b^{q}h(t)$ for $q\leq k-1$ is in the function spaces $s\cdot s^\alpha L^2$ and $s\cdot \mathcal{C}^0_b$ for $t<2$. Also, the derived equation is schematically of the form, 
    $$
    (D_b^{k-1}h)'(t)=A + B\cdot D_b^{k-1}h(t)
    $$
    The term $B\cdot D_b^{k-1}h(t)$ is now given by 
    $$
    \int_0^1 (e_\nu V^\mu)(f(t))\mathrm{d}\xi\cdot D_b^{k-1}h^\nu(t) + \int_0^1V^\mu(f(t)) \mathrm{d}\xi\cdot D_b^{k-1}h^0(t),
    $$
    and is bounded by $C|h(t)|$. The term $A$ lies in $s\cdot s^\alpha L^2$ and $s\cdot \mathcal{C}^0_b$ and is a sum, where each term is schematically one of two types. Either, it has the form
    $$
    (\partial_i^l \partial_0^mW^\mu) \prod_{i=1}^{k}(D_b^{c_i}\gamma_{V+W})^{d_i}
    $$
    with $l+m<k$, $\sum_{i}c_i<k$ and $d=\sum_id_i\geq m+1$, or
    $$
    D_b^{a}h\int_0^1(\partial_i^l\partial_0^mV^\mu)\prod_{i=1}^{k-1}(D_b^{c_i}f)^{d_i}\mathrm{d}\xi
    $$
    with $a<k-1$, $\sum_ic_i<k$, $l+m<k$ and $d=\sum_id_i\geq m$. 
    
    The first type comes from when the $D_b$ derivatives hit the term containing the $W$ in \eqref{eq:ivpVWdiff}, the second from when they hit the term containing the $V$. In terms of the first type, the $d-1$ $D_b^c\gamma_{V+W}$ factors are needed to counteract the $\partial_0^mW$ part. The remaining factor then gives the `free' $s$, and the $W$ part contributes the $s^\alpha L^2$ part in $s \cdot s^\alpha L$. 
    
    In terms of the second type, the $d$ $D_b^cf$ factors are potentially all needed to combine with the $\partial_0^mV$ terms in a way that the product can be estimated by a constant. The function space dependence $s\cdot s^\alpha L^2$ then comes from the $D_b^{a}h$ part, which we assume by induction.

    Now, for the induction step, we apply an additional $D_b$ derivative on the system. It is easy to see that $D_b$ acting on $A+B\cdot D_b^{k-1}h$ is of the same form, with $k-1$ replaced by $k$. This gives the system
$$ 
\begin{aligned}
(D_b^kh)'(t) &= A + B\cdot D_b^kh(t) \\
D_b^kh(0)&=0.
\end{aligned}
$$
After integrating the equation to $t=1$ and applying Grönwalls Lemma like before, we get the result at order $k$. This closes the induction and the proof.
\end{proof}
Now we can characterize the difference of pulling back the metric along two different such diffeomorphism. This is the result we will be using later on. After the previous work, it is essentially only a computation coupled with multiple first and second order Taylor expansions.
\begin{lemma}[Pullback of metric]
    \label{le:pullbackdiff}
    Let $V\in s^\beta H^\infty_b(\Omega;{}^0TM)$, $W\in s^\alpha H^\infty_b(\Omega;{}^0TM)$ with $\alpha\geq \beta >0 $ and $g = g_0 + \tilde{h}\in \mathcal{C}^\infty_b(\Omega;S^2\ {}^0T^*M)+ s^\beta H^\infty_b(\Omega;S^2\ {}^0T^*M)$. Furthermore, define $\omega_W:=2g_0(W,\cdot)\in s^\alpha H^\infty_b(\Omega;{}^0T^*M)$. There exists a $\delta>0$ such that for $\|V\|_{\mathcal{C}^1_b}<\delta$ and $\|W\|_{\mathcal{C}^0_b}<\delta$ we have
    \begin{equation}
    \label{eq:pullback}
        \phi_{V+W}^*g-\phi_V^*g\equiv\delta^*_{g_0}\omega_W\  \mathrm{mod}\ s^{\alpha+\beta}H^\infty_b(\Omega;S^2\ ^0T^*M).   \end{equation}
\end{lemma}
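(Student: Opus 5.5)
The plan is to reduce the statement to a first-order Taylor expansion of the pullback in the "difference" vector field $W$, with the base diffeomorphism $\phi_V$ fixed. Work chartwise, multiplying by ${}^\uparrow\varphi$ and ${}^\downarrow\varphi$ as in Lemmas \ref{le:flowsobolev} and \ref{le:flowdif}, and trivialize $S^2\,{}^0T^*M$ in the frame $e^\mu\otimes_s e^\nu$. In coordinates, $(\phi^*g)_{\mu\nu}(p) = g_{\kappa\lambda}(\phi(p))\,\partial_\mu\phi^\kappa\,\partial_\nu\phi^\lambda$. Translated into the 0-frame, the Jacobian factors become $e_\mu\phi^\kappa/\phi^0$, since $e_\mu=s\partial_\mu$ at $p$ and the frame at $\phi(p)$ carries the factor $\phi^0$.

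Set $\psi:={}^\uparrow P\circ\phi_{V+W}$ and $\chi:={}^\uparrow P\circ\phi_V$. Lemma \ref{le:flowsobolev} gives $\chi-\mathrm{id}\in s\cdot s^\beta H^\infty_b$, and Lemma \ref{le:flowdif} gives $h:=\psi-\chi\in s\cdot s^\alpha H^\infty_b$. Hence $e_\mu h^\kappa/\chi^0\in s^\alpha H^\infty_b$, using $\chi^0/s\in \mathcal{C}^\infty_b$ bounded below by Lemma \ref{le:flowclose}. Expand $g_{\kappa\lambda}(\psi)=g_{\kappa\lambda}(\chi)+\int_0^1\partial_\rho g_{\kappa\lambda}(\chi+\xi h)\,h^\rho\,d\xi$, written as $e_\rho g_{\kappa\lambda}\cdot h^\rho/\chi^0$ up to harmless factors. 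Do the same for the Jacobian factors. Every term that is quadratic in $h$ lies in $s^{2\alpha}H^\infty_b\subset s^{\alpha+\beta}H^\infty_b$. This uses the algebra property of Lemma \ref{le:2.9} and the fact that composition with the flow preserves the conormal spaces, as in the proof of Lemma \ref{le:flowsobolev}. The first-order part has the form $\mathcal{L}$-type expression $e_\rho g_{\mu\nu}\,\tilde h^\rho + g_{\kappa\nu}e_\mu\tilde h^\kappa+g_{\mu\lambda}e_\nu\tilde h^\lambda$, with $\tilde h:=h/\chi^0$ regarded as a 0-vector field. This is exactly $\mathcal{L}_{\tilde h}g$ expressed in the 0-frame.

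Next, identify $\tilde h$ with $W$ modulo $s^{\alpha+\beta}$. From the ODE \eqref{eq:ivpVWdiff}, $h'(t)=W(\gamma_{V+W}(t))+O(|V|_{\mathcal{C}^1_b}|h|)$. The $V$-terms are products of $s^\beta$-quantities with $h\in s\cdot s^\alpha$, so they contribute $s\cdot s^{\alpha+\beta}$. Moreover $W(\gamma_{V+W}(t))-W(p)$ is controlled by $e W\cdot(\gamma_{V+W}-p)/s\in s^\alpha\cdot s^\beta$, using Lemma \ref{le:flowsobolev} again. Integrating up to $t=1$ then gives $h= W(p)+s\cdot s^{\alpha+\beta}H^\infty_b$ in coordinates, i.e.\ $\tilde h\equiv W$ modulo $s^{\alpha+\beta}H^\infty_b$. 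For the b-derivatives I would run the same Grönwall induction as in Lemma \ref{le:flowdif}. Similarly, evaluating $g$ and its derivatives at $\chi$ instead of at $p$ changes the $g_0$ part only by $s^\beta$, and this multiplies an $s^\alpha$ term. The $\tilde h$-part of $g$ likewise only contributes $s^\beta\cdot s^\alpha$. So the linear term is $\mathcal{L}_W g_0 \bmod s^{\alpha+\beta}H^\infty_b$.

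Finally, use $\mathcal{L}_W g_0 = 2\delta^*_{g_0}(g_0(W,\cdot))=\delta^*_{g_0}\omega_W$, a standard identity: the Killing operator is $\mathcal{L}_Wg_0=W_{\mu;\nu}+W_{\nu;\mu}$ with the index lowered by $g_0$. This gives \eqref{eq:pullback}.

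I expect the main obstacle to be the bookkeeping in the second step. One must show that the replacement $h/\chi^0\approx W$ holds with error $s^{\alpha+\beta}$ in all b-derivatives, and not merely pointwise. Two points need care there: the factor $\chi^0$ versus $s$, and the possible loss when $\partial_0$ falls on $W$. I would handle both with the same counting of factors $s^a\prod D_b^{c}\bar\gamma$ used in the inductions of Lemmas \ref{le:flowsobolev} and \ref{le:flowdif}.
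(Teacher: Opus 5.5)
Your proposal is correct and follows the paper's proof in substance. Both arguments establish $\phi_{V+W}-\phi_V = sW(p) + s\cdot\mathcal{O}(s^{\alpha+\beta})$ in coordinates, expand the coordinate pullback formula to first order, recognize the linear term as $\mathcal{L}_W g=\delta^*_g(2g(W,\cdot))$, and then pass from $g$ to $g_0$ at a cost of $s^{\alpha+\beta}$.

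The only real difference is how you get the flow difference. You integrate the difference ODE \eqref{eq:ivpVWdiff} from Lemma~\ref{le:flowdif} directly, whereas the paper uses a second-order Taylor expansion in the flow time and matches the $V$-only remainder with that of $\phi_V$; your route is slightly more economical. One small slip: your displayed 0-frame formula for the linear term omits the zero-order terms coming from $[e_0,e_i]=e_i$ and from expanding $1/\psi^0$. This is harmless, since you identify that term invariantly as a Lie derivative.
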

\begin{proof}
    Let $p\in \Omega$, say in $^\uparrow\Omega$, so we work in that coordinate system and write $p=(s,\omega)$. We write $V=V^\mu e_\mu=sV^\mu\partial_\mu$ and analogously for $W$. In this proof, we will write that a function $f$ is of order $s^\gamma $ if $^\uparrow\varphi f\in s^{\gamma}H^\infty_b(\Omega) $ for the partition of unity cutoff $^\uparrow\varphi$.\\
    
    Now, we expand the map $\phi_{V+W}$ using a second order Taylor expansion\footnote{To compute the second derivative, we differentiate the flow equation, the analogue to \eqref{eq:flowequation}  for $V+W$.}
    $$
    \begin{aligned}
    \phi_{V+W}(p)^\mu=&p^\mu + sV^\mu(p)+sW^\mu(p) \\
    &+ \int_0^1 \left(\frac{\mathrm{d}^2}{\mathrm{d}t^2}\phi_{V+W}^t(p)\left|_{t=\tau}\right.\right)^\mu\frac{(1-\tau)^2}{2!}\mathrm{d}\tau\\
    =&p^\mu + sV^\mu(p)+sW^\mu(p) \\
    &+ \int_0^1 \left[(\right.V^\mu + W^\mu)(sV^0+sW^0)+\\
    &\hspace{11mm}(s\partial_\nu(V^\mu+W^\mu))(sV^\nu+sW^\nu\left.)\right]\left|_{\phi^\tau_{V+W}(p)}\right.\frac{(1-\tau)^2}{2!}\mathrm{d}\tau.
    \end{aligned}
    $$
    The first term in the integral comes from when the $\partial_s$ derivative hits the first factor of $s$. We expand the products inside the integral and split them into the terms containing only factors of $V$, and ones with at least a factor of $W$.\\
    
    Consider first the terms in the integral with at least one $W$ factor. There, we use Lemma $\ref{le:flowclose}$ applied to the argument $\phi_{V+W}^\tau(p)$ to see that each of those terms of order $s^{\alpha+\beta}$ (recalling that $\alpha\geq\beta$, and therefore $2\alpha\geq\alpha+\beta$). \\
    
    We turn to the terms in the integral with only $V$ factors. They are evaluated at $\phi_{V+W}^\tau(p)$, to which we apply Lemma \ref{le:flowdif}. This shows $\phi_{V+W}^\tau(p)^\mu=\phi^\tau_V(p)^\mu + sf_W(s,\omega)$ with $f_W$ of order $ s^\alpha$. Let $u$ be one of the terms we wish to study. They are schematically of the form $u=s\cdot V (s\partial)V$ or $s\cdot V^2$. We write 
    \begin{equation}
    \label{eq:funcest}
    u(\phi_{V+W}^\tau(p))= u(\phi_{V}^\tau(p))+\int_0^1\partial_\sigma u(\phi_V^\tau(p)+\xi sf_W(s))(sf_W(s))\mathrm{d}\xi.
    \end{equation}
    The integrand consists of terms of the form $\left((s\partial)^{a} V (s\partial)^b V\right)\cdot s f_W$ with $a+b\leq2$, and therefore the integral is of order $s\cdot s^{2\beta+\alpha}$. This allows us to change the point of evaluation in the $V$-only terms from $\phi_{V+W}^\tau(p)$ to $\phi^\tau_V(p)$, committing an error of the form $s$ times order $s^{2\beta + \alpha}$. But these $V$-only terms, are exactly the second-order Taylor expansion terms of $\phi_V(p)$. So adding them together with $p^\mu+sV^\mu(p)$ we get $\phi_V(p)^\mu$ and therefore we have proven 
    \begin{equation}
    \label{eq:flowsplit}
    \phi_{V+W}(p)^\mu=\phi_V(p)^\mu + sW^\mu(p) + s\cdot \mathcal{O}(s^{\alpha + \beta}) 
    \end{equation}
    
    Now, we begin to compute $\phi_{V+W}^*g$:
    \begin{equation}
    \begin{aligned}
    \label{eq:gpullinter}
        (\phi_{V+W}^*g)_p(\partial_\mu,\partial_\nu)&=g_{\phi_{V+W}(p)}(\mathrm{d}\phi_{V+W}\partial_\mu,\mathrm{d}\phi_{V+W}\partial_\nu)\\
        &=g_{\phi_{V+W}(p)}(\partial_\mu (\phi_{V+W}(p))^\sigma \partial_\sigma,\partial_\nu (\phi_{V+W}(p))^\tau \partial_\tau).
    \end{aligned}
    \end{equation}
    We expand the terms inside the metric using \eqref{eq:flowsplit}
    $$
    \partial_\mu (\phi_{V+W}(p))^\sigma = \partial_\mu \phi_V(p)^\sigma + \partial_\mu (sW^\sigma(p)) + \mathcal{O}(s^{\alpha+\beta}).
    $$
    Notice that we lose one order on the error term because of the $\partial_s=\partial_0$ derivative. We use the bilinearity of $g$ to expand the expression \eqref{eq:gpullinter}. For the cross terms containing $\phi_V(p)$ we split it as $\phi_V(p)^\sigma = p^\sigma + s\cdot \mathcal{O}(s^\beta)$. This gives us 
    \begin{equation}
    \label{eq:gpulliter2}
    \begin{aligned}
        (\phi_{V+W}^*&g)_p(\partial_\mu,\partial_\nu) = g_{\phi_{V+W}(p)}(\partial_\mu (\phi_{V}(p))^\sigma \partial_\sigma,\partial_\nu (\phi_{V}(p))^\tau \partial_\tau)\\
        &+g_{\phi_{V+W}(p)}(\partial_\mu,\partial_\nu(sW^\tau(p))\partial_\tau)+g_{\phi_{V+W}(p)}(\partial_\mu(sW^\sigma(p))\partial_\sigma,\partial_\nu)\\
        &+\sum_{\sigma,\tau}g_{\phi_{V+W}(p)}(\partial_\sigma,\partial_\tau)\mathcal{O}(s^{\alpha+\beta}).
        \end{aligned}
    \end{equation}
    We investigate the second line of the right hand side. First, because those terms contain $W$ components, we can change the point of evaluation of $g$ in this line from $\phi_{V+W}(p)$ to $p$, by committing another error of the form of the third line. (Use Lemma \ref{le:flowdif} for $V'=0, W'=V+W$ and then the analogous argument to equation \eqref{eq:funcest} for $u=g_{\mu\nu}$).  We write 
    $$
    \partial_\mu(sW^\sigma(p)) = \nabla_\mu(sW^\sigma)(p) -\Gamma^\sigma_{\mu\lambda}(p)sW^\lambda(p).
    $$
    (Here, the connection coefficients are with respect to the coordinates and not to the frame $e_\mu$). This gives after a short computation for the second line in \eqref{eq:gpulliter2}
    
    \begin{equation}
    \begin{aligned}
       &(g_p)_{\mu\tau}\nabla_\nu(sW^\tau)(p) + (g_p)_{\sigma\nu}\nabla_\mu(sW^\sigma)(p) - (\Gamma_{\mu\nu\lambda}(p)+\Gamma_{\nu\mu\lambda}(p))sW^\lambda(p)\\
       &=(\delta^*_g 2g(W,\cdot))_{\mu\nu}(p) -sW^\lambda\partial_\lambda g_{\mu\nu}(p).
    \end{aligned}
    \end{equation}
    Now, we turn to the first line of \eqref{eq:gpulliter2}. We write $\phi_{V+W}(p)$ again via equation \eqref{eq:flowsplit} and expand the functions $u:= g_{\sigma\tau}(\phi_{V+W}(p))$ in the analogous fashion to \eqref{eq:funcest} to second order. Furthermore, splitting $\phi_V(p)^\mu = p^\mu + s\cdot \mathcal{O}(s^\beta)$ and expanding the products, we get for the first line of \eqref{eq:gpulliter2}
    \begin{equation}
        (\phi_V^*)_p(\partial_\mu,\partial_\nu) + sW^\lambda\partial_\mu g_{\mu\nu}(p) + \sum_{\sigma\tau}g_{\sigma\tau} \cdot \mathcal{O}(s^{\alpha + \beta}).
    \end{equation}
    Now, putting all together, we finally get 
    \begin{equation}
    \begin{aligned}
        (\phi_{V+W}^*g)_p(\partial_\mu,\partial_\nu) =&(\phi_{V}^*g)_p(\partial_\mu,\partial_\nu) +(\delta^*_g 2g(W,\cdot))_{\mu\nu}(p) \\
        &+\sum_{\sigma\tau}g_{\sigma\tau} (p)\cdot \mathcal{O}(s^{\alpha + \beta}).
        \end{aligned}
    \end{equation}
    Multiplying the equation by $s^2$ changes the error term to just $\mathcal{O}(s^{\alpha + \beta})$ (as $s^2 g_{\sigma\tau}$ are of order $s^0$) and we get the coefficients of the metrics in the frame $e^\mu\otimes_se^\nu$.
    \\
    
    Regarding the second term on the right-hand side, by changing $g $ to $g_0$ in the expression, we make another error of the form $\mathcal{O}(s^{\alpha+\beta})$ because $\tilde{h}\in s^\beta H^\infty_b$. The argument uses the same estimates for the difference of the connection coefficients as in the proof of Proposition \ref{prop:indicialcalc}.
    \\
    
    Now finally, write $\phi_{V+W}^*g = {}^\uparrow\varphi \phi_{V+W}^*g +{}^\downarrow\varphi \phi_{V+W}^*g$, and for each part we perform the calculation above. This closes the proof.
\end{proof}
\subsection{Log-smoothness of the metric}
Now we proceed to construct vector fields such that the metric pulled back along the diffeomorphisms we discussed is log-smooth at $\mathcal{I}^+$.
\begin{proposition}[Improving the gauge]
\label{prop:gaugeup}
Suppose $h_0,\tilde{h},\theta$ are as in Theorem \ref{thm:nonlinsol} with small respective weighted $H^d$- and $H^d_b$-norms. Let $g=g_{\mathrm{dS}}+\chi h_0+\tilde{h}$ and $g_0=g_{\mathrm{dS}}+\chi h_0$, and suppose $\operatorname{Ric}g-\Lambda g=0$ and $\Upsilon(g;g_0)+E_{g_0}(g-g_0) -\tilde{\chi}\theta=0$. Let $\beta^-\in(0,\beta)$ and $\epsilon>0$. Then there exists $V\in s^\beta H^\infty(\Omega;{}^0TM)$ with $s^{\beta^-}\mathcal{C}^1$ norm less than $\epsilon$ so that for $g':=\phi_V^*g$ we have
\begin{equation}
\label{eq:gaugeup}
    \theta':=\Upsilon(g';g_0)+E_{g_0}(g'-g_0)\in s^\infty H_b^\infty(\Omega;{}^0T^*M).
\end{equation} 
\end{proposition}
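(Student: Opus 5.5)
The plan is to build $V$ as an asymptotically summed series $V=\sum_j V_j$ in which the $j$-th correction raises the decay of the gauge one-form by a fixed amount. Set $g'_V:=\phi_V^*g$ and $\theta'(V):=\Upsilon(g'_V;g_0)+E_{g_0}(g'_V-g_0)$. For $V=0$ the gauge condition gives $\theta'(0)=\tilde\chi\theta\in s^\beta H^\infty_b$. Since $\operatorname{Ric}-\Lambda$ is diffeomorphism invariant, every $g'_V$ still solves the Einstein equation; only the gauge one-form changes, because $g_0$ stays fixed. The inductive hypothesis is that we have $V_{(j)}\in s^\beta H^\infty_b$, small in $s^{\beta^-}\mathcal{C}^1$, with $\theta'(V_{(j)})\in s^{\gamma_j}H^\infty_b$. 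We then look for $W\in s^{\gamma_j}H^\infty_b$ with $\theta'(V_{(j)}+W)\in s^{\gamma_j+\beta'}H^\infty_b$ for some fixed $\beta'\in(0,\beta)$.

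\textbf{Linearizing in $W$.} By Lemma~\ref{le:pullbackdiff}, applied with $V=V_{(j)}$ and $\alpha=\gamma_j\geq\beta$,
\[
\phi_{V_{(j)}+W}^*g-\phi_{V_{(j)}}^*g\equiv\delta^*_{g_0}\omega_W \bmod s^{\gamma_j+\beta}H^\infty_b,\qquad \omega_W=2g_0(W,\cdot).
\]
Substituting this into $\theta'$ and Taylor expanding $\Upsilon(\cdot;g_0)$ around $g'_{V_{(j)}}$ gives
\[
\theta'(V_{(j)}+W)=\theta'(V_{(j)})+(-\delta_{g}\mathrm{G}_{g}+E_{g_0})\delta^*_{g_0}\omega_W+\mathcal{O}(s^{\gamma_j+\beta}).
\]
Here I use that $D_1\Upsilon(\cdot;g_0)$ at $g'_{V_{(j)}}$ equals $-\delta_g\mathrm{G}_g$ up to terms carrying a factor $\Gamma(g'_{V_{(j)}})-\Gamma(g_0)$. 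Those factors lie in $s^\beta H^\infty_b$ by Lemmas~\ref{le:flowsobolev} and~\ref{le:2.9}, and the quadratic terms in $\omega_W$ are $\mathcal{O}(s^{2\gamma_j})$. The operator $(-\delta_g\mathrm{G}_g+E_{g_0})\delta^*_{g_0}$ equals its indicial operator modulo $s\operatorname{Diff}^2_b$ plus $s^\beta H^\infty_b\operatorname{Diff}^2_b$, and its indicial family is \eqref{eq:gaugeind}.

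\textbf{Solving the indicial equation.} We therefore solve $I\big((-\delta_g\mathrm{G}_g+E_{g_0})\delta^*_g,s\partial_s\big)\,\omega=-\chi\,\theta'(V_{(j)})$ by Mellin transform in $s$, as in the proof of the inversion lemma for $I_{g_{(0)}}$. The indicial family is diagonal in \eqref{eq:splitting2}, with scalar entries $-\tfrac12(\lambda-2)(\lambda-n)$ and $-\tfrac12(\lambda+1)(\lambda-n-1)$. It is therefore invertible on $s^{\gamma}H^\infty_b$ whenever $\gamma\notin\{-1,2,n,n+1\}$. The right-hand side vanishes for $s\geq s_0/2$, and the solution in the weighted space is obtained by integrating along $\operatorname{Re}\lambda=\gamma_j$; this gives $\omega\in s^{\gamma_j}H^\infty_b$. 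We take $W=\tfrac12 g_0^{-1}\omega$, cut off so that it vanishes for $s>s_0/2$. With this choice $\theta'(V_{(j)}+W)\in s^{\gamma_j+\beta'}H^\infty_b$.

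\textbf{Summing the corrections.} We choose the increments so that no $\gamma_j$ is an indicial root, for instance $\gamma_{j+1}=\gamma_j+\beta'$ with $\beta'$ slightly perturbed to avoid $2,n,n+1$. Each $V_j$ lies in $s^{\gamma_j}H^\infty_b$ with $\gamma_j\to\infty$. We replace $V_j$ by $\chi(s/\epsilon_j)V_j$ with $\epsilon_j\to0$ fast (Borel summation), so that $V=\sum_j V_j$ converges in every $s^\beta H^k_b$ and has $s^{\beta^-}\mathcal{C}^1$ norm below $\epsilon$. Lemma~\ref{le:pullbackdiff} controls the effect of a tail $\sum_{i>j}V_i$ on $\theta'$ modulo $s^{\gamma_{j+1}}$; hence $\theta'(V)\in s^{\gamma_j}H^\infty_b$ for every $j$, which is \eqref{eq:gaugeup}.

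\textbf{Main obstacle.} I expect the hardest step to be the bookkeeping in the linearization. One must check that all error terms really gain $s^{\beta}$ uniformly in $j$. This is delicate because $\delta^*_{g_0}$ differs from $\delta^*_{g'}$ and $g'_{V_{(j)}}$ differs from $g_0$ at order $s^0$ through $\chi h_0$. One must also check that the smallness hypotheses of Lemmas~\ref{le:flowclose}--\ref{le:pullbackdiff} survive the infinite summation. I would handle the first point by using that the $h_0$-contribution is $s$-independent, so it enters only the indicial operator through $g_{(0)}$; this is already built into \eqref{eq:gaugeind}. For the second, I would fix the cutoff scales $\epsilon_j$ only after all the $V_j$ are constructed.
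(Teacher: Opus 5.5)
Your proposal is essentially the paper's argument: an iteration driven by Lemma~\ref{le:pullbackdiff}, inversion of the indicial family \eqref{eq:gaugeind} by a Mellin transform along a line avoiding $\{-1,2,n,n+1\}$, and a Borel-type summation with cutoffs $\chi(s/\epsilon_j)$. The paper avoids the indicial roots by taking $\beta$ irrational and using increments of exactly $\beta$, instead of your perturbed $\beta'$.

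The one thing to change is when you apply the cutoffs. As in the paper, cut off each new $\omega$ (equivalently the new increment of $V$) immediately after solving for it. This is harmless, because the cutoff does not change the equation near $s=0$, and it is exactly what keeps your inductive smallness hypothesis valid, which Lemma~\ref{le:pullbackdiff} needs at the next step. If you defer all cutoffs to the end, the intermediate partial sums are not controlled in $\mathcal{C}^1_b$.
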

\begin{proof}
    We can assume that $\beta$ is irrational by reducing it by an arbitrarily small amount. This ensures that $k\beta\notin\mathbb{N}$ for all $k\in\mathbb{N}$ and thus avoids integer coincidences. Let $\delta:=\beta-\beta^-$. We will iteratively construct 
    $$
    \dot{V}_k\in s^{(k+1)\beta} H^\infty_b(\Omega;{}^0TM)
    $$
    for all $k\in \mathbb{N}_0$ with the following properties:
    \begin{enumerate}
    \item The $\dot{V}_k $ are supported in $s\leq s_0/2$.
    \item $\|\dot{V}_k\|_{s^{(k+1)\beta-\delta}H^{d_{n+1}+k}_b(\Omega;{}^0TM)}<\epsilon 2^{-k}$
    \item Setting $V_k:=\sum_{i=0}^{k-1}\dot{V}_i$ and $g_k' := \phi^*_{V_k}g$, we have
    \begin{equation}
        \Upsilon_E(g_k';g_0):=\Upsilon(g_k';g_0)+E_{g_0}(g_k'-g_0)\in s^{(k+1)\beta}H^\infty_b(\Omega;{}^0T^*M).
    \end{equation}
    \end{enumerate}
    Observe that the $V_k$ lie in $s^\beta H^\infty_b$. Suppose now that for $i=0,..,k-1$, the $\dot{V}_i$ have already been constructed. We need to find $\dot{V}_k$ such that 
    $$
    \Upsilon_E(\phi^*_{V_k+\dot{V}_k}g;g_0)\in s^{(k+2)\beta}H^\infty_b.
    $$
    We use equation \eqref{eq:pullback} with $V=V_k$, $W=\dot{V}_k$, with $\alpha = (k+1)\beta$ to rewrite this as
    $$
    \Upsilon_E(\phi^*_{V_k}g+\delta^*_{g_0}\omega_{\dot{V}_k} + h_k;g_0)\in s^{(k+2)\beta}H^\infty_b,
    $$
    where $\delta^*_{g_0}\omega_{\dot{V}_k} \in s^{(k+1)\beta}H^\infty_b$ and $h_k\in s^{(k+2)\beta}H^\infty_b$. By accepting errors of order at least $s^{(k+2)\beta}H^\infty_b$, we can expand $\Upsilon_E$ in the first argument and rewrite this as 
    $$
    \Upsilon_E(g_k';g_0)+\restr{D_1}{g_k'}\Upsilon_E(\delta^*_{g_0}\omega_{\dot{V}_k};g_0 )  \in s^{(k+2)\beta}H^\infty_b.
    $$
    Finally, we can replace the point of linearization $g'_k$ by $g_0$ and accrue another error in $s^{(k+2)\beta}H^\infty_b$. So in order to find our desired $\dot{V}_k$, we need to find a one form $\omega\in s^{(k+1)\beta}H^\infty_b(\Omega;{}^0T^*M)$ that satisfies 
    \begin{equation}
    \label{eq:omegafind}
        \restr{D_1}{g_0}\Upsilon_E(\delta_{g_0}^*\omega;g_0)\equiv-  \Upsilon_E(g_k';g_0)\ \mathrm{mod}\ s^{(k+2)\beta}H^\infty_b.
    \end{equation}
    Given this $\omega$, we can construct $\dot{V}_k$ with the required small norm using the following trick: The cut-off one-form $\chi(s/\epsilon_k)\omega$ fulfills the same equation because $\chi$ is constant near $s=0$. But when $\epsilon_k<1$ is sufficiently small, it will have an arbitrarily small norm in $s^{(k+1)\beta-\delta}H^{d_{n+1}+1+k}_b$. Then, we define $\dot{V}_k$ through $\omega=2g_0(\dot{V}_k,\cdot)$, whose norm is equivalent to that of $\omega$.\\
    
    To solve equation \eqref{eq:omegafind}, we can further replace $\restr{D_1}{g_0}\Upsilon_E(\delta_{g_0}^*\omega;g_0)$ by its indicial operator 
    $$I(\restr{D_1}{g_0}\Upsilon_E(\delta_{g_0}^*\omega;g_0),s\partial_s)=I(-\delta_{g_0}G_{g_0}+E_{g_0},s\partial_s)\circ I(\delta_{g_0}^*,s\partial_s).$$
    We have computed its indicial family, written here as $I(\lambda)$, in \eqref{eq:gaugeind}, which is invertible for $\lambda\neq-1,2,n,n+1$. Notice that $\Upsilon_E(g';g_0)$ is compactly supported in $[0,s_0)$ because the $V_k$ vanish near outside of the support of $\chi$ and therefore $g'=g$ there. So passing to the Mellin-transform side, using the convention and notation from \cite{hintz2024stability}, we set
    $$
\hat{\omega}(\lambda, x):=I(\lambda)^{-1}\left(\mathcal{M}( -\Upsilon_E(g_k';g_0))\right)(\lambda, x), \quad \operatorname{Re} \lambda=(k+1) \beta
$$ 
for $x\in \mathbb{S}^n$. Using the isomorphism induced by the Plancherel Theorem, we then have 
$$
\omega:=\mathcal{M}_{(k+1) \beta}^{-1} \hat{\omega} \in  s^{(k+1) \beta} H_b^{\infty}
$$
which is what we wanted to construct.
\\

Finally, we set $V:=\sum_{i=0}^\infty \dot{V}_i$. By construction, the sum converges in every $s^{\beta}H^N_b$ norm and defines an element of $s^{\beta^-}\mathcal{C}^1_b$ with small norm. Since $\Upsilon_E\left(\phi_{V}^* g ; g_0\right) \equiv \Upsilon_E\left(\phi_{V_k}^* g ; g_0\right)\bmod  s^{(k+1) \beta} H_b^{\infty}$ lies in $ s^{(k+1) \beta} H_{{b}}^{\infty}$ for all $k$, we obtain \eqref{eq:gaugeup}.
    \end{proof}

\begin{remark} The metric constructed in Theorem \ref{thm:solution} fulfills the conditions of this Lemma. So we get $g'$. We can write $g'$ as $g'=g_{\mathrm{dS}}+ \chi g_0 + \tilde{h}'$ with $\tilde{h}'$ small in the same weighted $H^d_b$ norm as $\tilde{h}$. We obviously still have $\operatorname{Ric}g'-\Lambda g'=0$ and therefore have $P(h_0,\tilde{h}',\theta')=\tilde{\delta}^*_{g'}((1-\tilde{\chi})\theta')$, which vanishes near $\mathcal{I}^+$. For metrics of this form, we can show log-smoothness. To avoid cumbersome notation, we will rename $g'=g$, $\tilde{h}'=\tilde{h}$, $\theta'=\theta$ in what follows.
\end{remark}
\begin{lemma}[Log-smoothness in the improved gauge]
\label{le:logsmooth}
    Suppose $h_0,\tilde{h}$ are as in \ref{thm:nonlinsol} with small weighted $H^d_b$-norms. Suppose that for $\theta\in s^\infty H^\infty_b(\Omega;{}^0T^*M)$
    \begin{equation}
        P(h_0,\tilde{h},\theta)=0 \bmod s^\infty H^\infty_b(\Omega;S^2\ ^0T^*M) .
    \end{equation}
    Then $\tilde{h}$ is log-smooth down to $\mathcal{I}^+$. This means that for every $i\in\mathbb{N}$ there exist $m_i\in\mathbb{N}$ and $\tilde{h}_{i,m}\in H^\infty_b(\mathcal{I}^+;S^2\ ^0T^*M)$ such that for all $N\in\mathbb{N}$
    \begin{equation}
        \tilde{h}(s,\omega)-\sum_{i=1}^N\sum_{m=0}^{m_i}s^{i}(\log(s))^m\tilde{h}_{i,m}\in s^N H^\infty_b(\Omega,S^2\ ^0T^*M).
    \end{equation}
    \begin{proof}
        The proof proceeds analogously to \cite{hintz2024stability}. It requires that for $g_0=g_{\mathrm{dS}} + \chi h_0$ we have $\operatorname{Ric}g_0-\Lambda g_0 \in s\mathcal{C}^\infty([0,s_0];H^\infty_b(\mathcal{I}^+))$, which follows from a quick computation. In fact, it is in $s^2\mathcal{C}^\infty$.
    \end{proof}
\end{lemma}
\subsection{Precise expansion at the conformal boundary}
Now that we know that $\tilde{h}$ is log-smooth at $s=0$, the gauge condition serves no further purpose. We now analyze the exact asymptotics using the ungauged Einstein equation. It proceeds similarly to \cite{hintz2024stability} with some added complications in even spatial dimensions.\\

Let $n\geq3$ be an integer. We work in the splitting of $S^2\ ^0TM^*$ obtained by combining the splittings \eqref{eq:splitting1}\eqref{eq:splitting2}
\begin{equation}
    \label{eq:splitting3}
    S^{2}\ ^0T^* M=\mathbb{R} \frac{\mathrm{d} s^2}{s^2} \oplus\left(2 \frac{\mathrm{~d} s}{s} \otimes_s s^{-1} T^* \mathbb{S}^n\right) \oplus \mathbb{R} s^{-2} g_{(0)} \oplus s^{-2} \operatorname{ker} \operatorname{tr}_{g_{(0)}}.
\end{equation}
We use the expressions of \eqref{eq:indG}\eqref{eq:indR}\eqref{eq:indwave}\eqref{eq:inddelta}\eqref{eq:inddeltastar} to derive
\begin{equation}
\label{eq:Idric}
\begin{aligned}
2 I(D \operatorname{Ric}-\Lambda, \lambda) & :=I\left(\square_{g_0}-2 \delta_{g_0}^* \delta_{g_0} \mathrm{G}_{g_0}+2 \mathscr{R}_{g_0}-2 \Lambda, \lambda\right) \\
& =\left(\begin{array}{cccc}
n (\lambda-2) & 0 & - n\lambda( \lambda-2) & 0 \\
0 & 0 & 0 & 0 \\
-\lambda+2n & 0 & \lambda(\lambda-2n) & 0 \\
0 & 0 & 0 & \lambda(\lambda-n )
\end{array}\right),
\end{aligned}
\end{equation}
remembering $\Lambda=n$. We study the relationship of its kernel with the range of 
\begin{equation}
    \label{eq:Id*}
    I\left(\delta^*, \lambda\right):=I\left(\delta_{g_0}^*, \lambda\right)=\left(\begin{array}{cc}
\lambda & 0 \\
0 & \frac{1}{2}(\lambda+1) \\
1 & 0\\
0&0
\end{array}\right).
\end{equation}
Further, we will investigate the relationship of the range of $I(D\operatorname{Ric}-\Lambda,\lambda)$ with the kernel of \begin{equation}
\begin{aligned}
2 I(\delta \mathrm{G}, \lambda):=&2 I\left(\delta_{g_0} \mathrm{G}_{g_0}, \lambda\right)\\
=&\left(\begin{array}{cccc}
\lambda-2n & 0 & n (\lambda-2) & 0 \\
0 &  2(\lambda-(n+1)) & 0 & 0
\end{array}\right).
\end{aligned}
\end{equation}
These operators depend on $\omega\in\mathcal{I}^+$ only through the splitting, i.e. through $g_{(0)}$. The diffeomorphism invariance of the Einstein equation gives for the indicial operators $I(D\operatorname{Ric}-\Lambda,\lambda)\circ I(\delta^*,\lambda)=0$. Furthermore, the linearized second Bianchi identity gives $I(\delta G,\lambda)\circ I(D\operatorname{Ric}-\Lambda,\lambda)$. Both facts can be explicitly checked by direct computation here.\\

We proceed to analyze these $\lambda$ dependent matrices as linear maps $\mathbb{R}^4\rightarrow\mathbb{R}^4$, $\mathbb{R}^2\rightarrow\mathbb{R}^4$ or $\mathbb{R}^4\rightarrow\mathbb{R}^2$. 

\begin{lemma} [Kernel of linearized Einstein modulo pure gauge]
    Let $\lambda >0$ and let $h$ lie in $\operatorname{ker}I(D\operatorname{Ric}-\Lambda,\lambda)$.\\
    (1) If $\lambda \neq n$, then $h\in \operatorname{ran}(I(\delta^*,\lambda))$.\\
    (2) If $\lambda=n$ and $\partial_\lambda I(D\operatorname{Ric}-\Lambda,\lambda)h\in\operatorname{ran}I(D\operatorname{Ric}-\Lambda,\lambda)$, then $h\in \operatorname{ran}(I(\delta^*,\lambda))$. And furthermore  
    \begin{equation}
    \label{eq:kermodrange}
    \operatorname{ker}(I(D\operatorname{Ric}-\Lambda,\lambda))/\operatorname{ran}(I(\delta^*,\lambda)) = \operatorname{span}\{(0,0,0,1)\}.
    \end{equation}.
    \label{le:410}
\end{lemma}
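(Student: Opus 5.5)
The proof is a direct linear-algebra computation with the explicit matrices \eqref{eq:Idric} and \eqref{eq:Id*}, regarded as maps on $\mathbb{R}^4$ in the splitting \eqref{eq:splitting3}. I write $h=(a,b,c,d)$. Then
\[
h\in\ker I(D\operatorname{Ric}-\Lambda,\lambda)\iff n(\lambda-2)a-n\lambda(\lambda-2)c=0,\quad (2n-\lambda)a+\lambda(\lambda-2n)c=0,\quad \lambda(\lambda-n)d=0,
\]
with $b$ unconstrained, since the second row of \eqref{eq:Idric} vanishes. The range of $I(\delta^*,\lambda)$ is $\{(\lambda x,\tfrac12(\lambda+1)y,x,0)\}$. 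Since $\lambda>0$ gives $\lambda+1\neq0$, every value of $b$ is attained by choosing $y$. So the task reduces to the $(a,c)$ block and the $d$ component.

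For the $(a,c)$ block I split into cases. If $\lambda\neq2$, the first equation forces $a=\lambda c$. The second equation is then automatic, because $(2n-\lambda)\lambda c+\lambda(\lambda-2n)c=0$. Thus $(a,c)=(\lambda x,x)$ with $x=c$, which lies in the range. If $\lambda=2$, the first row vanishes identically. The second row reads $(2n-2)a-2(2n-2)c=0$, so $a=2c$ because $n\ge3$, and again $(a,c)$ lies in the range. This argument does not depend on whether $\lambda=n$. For the last component, $\lambda(\lambda-n)d=0$ with $\lambda>0$ forces $d=0$ whenever $\lambda\neq n$. This proves (1).

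For $\lambda=n$ the computation above shows
\[
\ker I(D\operatorname{Ric}-\Lambda,n)=\operatorname{ran}I(\delta^*,n)\oplus\operatorname{span}\{(0,0,0,1)\},
\]
and $(0,0,0,1)$ is not in the range, because the fourth component of $I(\delta^*,\lambda)$ vanishes. This is \eqref{eq:kermodrange}. For the remaining claim, I use that the fourth row of \eqref{eq:Idric} is decoupled from the other entries. Its $\lambda$-derivative at $\lambda=n$ therefore contributes $(2\lambda-n)d=nd$ to the fourth component of $\partial_\lambda I(D\operatorname{Ric}-\Lambda,\lambda)h$. On the other hand, every element of $\operatorname{ran}I(D\operatorname{Ric}-\Lambda,n)$ has fourth component $n(n-n)d'=0$. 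The hypothesis $\partial_\lambda I h\in\operatorname{ran}I$ then forces $nd=0$, hence $d=0$, and the first part of the argument gives $h\in\operatorname{ran}I(\delta^*,n)$.

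The only delicate point is this derivative condition. One must check that the first three components of $\partial_\lambda I h$ impose no further constraint that could fail; it is enough to note that the fourth component alone already yields $d=0$. One must also remember that the degenerate case $\lambda=2$ requires $n\neq1$, which holds since $n\ge3$.
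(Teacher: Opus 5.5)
Your proof is correct and follows the same route as the paper: compute $\ker I(D\operatorname{Ric}-\Lambda,\lambda)$ and $\operatorname{ran}I(\delta^*,\lambda)$ directly from the explicit matrices, note the extra kernel vector $(0,0,0,1)$ at $\lambda=n$, and use that the fourth component of $\partial_\lambda I(D\operatorname{Ric}-\Lambda,n)h$ is a nonzero multiple of $d$, whereas every element of $\operatorname{ran}I(D\operatorname{Ric}-\Lambda,n)$ has vanishing fourth component. You are somewhat more careful than the paper, treating $\lambda=2$ separately and spelling out the decoupling of the fourth component for general $h$. The only slip is cosmetic and changes nothing: \eqref{eq:Idric} displays $2I(D\operatorname{Ric}-\Lambda,\lambda)$, so that fourth component is $\tfrac{n}{2}d$ rather than $nd$.
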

\begin{proof}
    For $\lambda>0$ and $\lambda\neq n$, we can compute that the kernel of $I(D\operatorname{Ric}-\Lambda,\lambda)$ is given by $\operatorname{span}\{(\lambda,0,1,0),(0,1,0,0)\}$. (Recall that we fixed $n\geq3$). This is exactly the range of $I(\delta^*,\lambda)$ for $\lambda\neq-1$.\\
    
    For $\lambda=n$, both the fourth row and fourth column of $I(D\operatorname{Ric}-\Lambda,\lambda)$ vanish. So we also find that $(0,0,0,1)$ lies in the kernel of $I(D\operatorname{Ric}-\Lambda,\lambda)$. Comparing to the range of $I(\delta^*,\lambda)$, we immediately get $\operatorname{ker}(I(D\operatorname{Ric}-\Lambda,\lambda))/\operatorname{ran}(I(\delta^*,\lambda)) = \operatorname{span}\{(0,0,0,1)\}$. We calculate
    \begin{equation}
    \label{eq:Iderivative}
        2 \partial_\lambda I(D \operatorname{Ric}-\Lambda, \lambda)=\left(\begin{array}{cccc}
n & 0 & -2n \lambda+2n & 0 \\
0 & 0 & 0 & 0 \\
-1 & 0 & 2 \lambda-n & 0 \\
0 & 0 & 0 & 2 \lambda-n
\end{array}\right) ,
    \end{equation}
    and see that $\partial_\lambda I(D \operatorname{Ric}-\Lambda, \lambda)(0,0,0,1)^T$ has non vanishing 4-component. Therefore, it cannot be in the range of $I(D\operatorname{Ric}-\Lambda,\lambda)$.
\end{proof}
Let $h\in\mathbb{R}^4$ and consider the function $s^\lambda h$. Then we have 
$$
I(D\operatorname{Ric}-\Lambda,s\partial_s)s^\lambda h=s^\lambda I(D\operatorname{Ric}-\Lambda,\lambda)h,$$
and thus
$$
    \begin{aligned}
    I(D\operatorname{Ric}-\Lambda,s\partial_s)&(s^\lambda \log(s)^mh)=\partial_\lambda^m(s^\lambda I(D\operatorname{Ric}-\Lambda,\lambda)h)\\
    &=\sum_{m'=0}^m \binom{m}{m'}s^\lambda \log(s)^{m'}\partial_\lambda^{m-m'}I(D\operatorname{Ric}-\Lambda,\lambda)h.
    \end{aligned}
$$
Comparing orders of $\log(s)$ we get the following implication for $k\geq 2$ and $f\in\mathbb{R}^4$
\begin{equation}
\label{eq:logind}
    \begin{aligned}
    I(D&\operatorname{Ric}-\Lambda,s\partial_s)\sum_{m=0}^k s^\lambda\log(s)^m h_m= s^\lambda f\\
    &\Rightarrow I(D\operatorname{Ric}-\Lambda,\lambda)h_k =0,\partial_\lambda I(D\operatorname{Ric}-\Lambda,\lambda)h_k\in\operatorname{ran}I(D\operatorname{Ric}-\Lambda,\lambda).
    \end{aligned}
\end{equation}
If $k=1$, then at least $I(D\operatorname{Ric}-\Lambda,\lambda)h_k =0$ still holds. If $f=0$, the full implication holds for $k=1$ and only the first part for $k=0$.\\

Now, using this implication, we can solve away the log terms of a certain order by gauge transformations if they lie in the kernel of $I(D\operatorname{Ric}-\Lambda,s\partial_s)$, with a complication for $\lambda=n$. This is the content of the following Lemma.
\begin{lemma}[Solving order $\lambda$ using gauge freedom]
\label{le:412}
    Let $\lambda>0$ and  $h_0,...,h_k\in\mathbb{R}^4$ for $k\in\mathbb{N}_0$. Define $h(s):=\sum_{m=0}^ks^\lambda \log(s)^mh_m$. Suppose $I(D\operatorname{Ric}-\Lambda,s\partial_s)h=0$. Then, there exist $\omega_0,...,\omega_k\in\mathbb{R}^2$ such that for $\omega(s):=\sum_{m=0}^ks^\lambda\log(s)^m\omega_m$ we have\\
    (1) In the case $\lambda \neq n$: $h+I(\delta^*,s\partial_s)\omega=0$\\
    (2) In the case $\lambda = n$: $h+I(\delta^*,s\partial_s)\omega= s^\lambda (0,0,0,\alpha)$ for $\alpha$ independent of $s$.
\end{lemma}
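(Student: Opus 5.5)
We argue by downward induction on the highest log power $k$, peeling off the top coefficient $h_k$ at each step. Throughout we use the expansion
\[
I(D\operatorname{Ric}-\Lambda,s\partial_s)\bigl(s^\lambda\log(s)^m h\bigr)=\sum_{m'=0}^m\binom{m}{m'}s^\lambda\log(s)^{m'}\,\partial_\lambda^{m-m'}I(D\operatorname{Ric}-\Lambda,\lambda)h,
\]
and the analogous identity for $I(\delta^*,s\partial_s)$. Since $I(\delta^*,\lambda)$ is affine in $\lambda$, only the terms with $m-m'\le 1$ survive there. We also use $I(D\operatorname{Ric}-\Lambda,\cdot)\circ I(\delta^*,\cdot)=0$: adding a pure-gauge term $I(\delta^*,s\partial_s)\omega$ to $h$ preserves the hypothesis $I(D\operatorname{Ric}-\Lambda,s\partial_s)h=0$.

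\textbf{Case $\lambda\neq n$.} Comparing the coefficients of $s^\lambda\log(s)^k$ in $I(D\operatorname{Ric}-\Lambda,s\partial_s)h=0$ gives $I(D\operatorname{Ric}-\Lambda,\lambda)h_k=0$; this is the implication \eqref{eq:logind} with $f=0$, valid for every $k$. Lemma~\ref{le:410}(1) then yields $\omega_k\in\mathbb{R}^2$ with $I(\delta^*,\lambda)\omega_k=-h_k$. Since $\lambda>0$, we have $\lambda\neq -1$, so $I(\delta^*,\lambda)$ is injective and $\omega_k$ is unique, though we do not need uniqueness. Set $h^{(1)}:=h+I(\delta^*,s\partial_s)(s^\lambda\log(s)^k\omega_k)$. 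Its $s^\lambda\log(s)^k$ coefficient is $h_k+I(\delta^*,\lambda)\omega_k=0$. The remaining contribution $k\,s^\lambda\log(s)^{k-1}\partial_\lambda I(\delta^*,\lambda)\omega_k$ is absorbed into the new coefficient of $\log(s)^{k-1}$. Thus $h^{(1)}$ has the same form with log power at most $k-1$, and it still satisfies the hypothesis. Iterating down to $m=0$ and summing the $\omega_m$ gives $h+I(\delta^*,s\partial_s)\omega=0$.

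\textbf{Case $\lambda=n$.} We run the same induction, but now need the stronger hypothesis of Lemma~\ref{le:410}(2) at each step.
\begin{enumerate}
\item When the top power $k\ge 1$, comparing the $\log(s)^k$ and $\log(s)^{k-1}$ coefficients (again \eqref{eq:logind} with $f=0$) gives $I(D\operatorname{Ric}-\Lambda,n)h_k=0$ and $k\,\partial_\lambda I(D\operatorname{Ric}-\Lambda,n)h_k\in\operatorname{ran}I(D\operatorname{Ric}-\Lambda,n)$.
\item Lemma~\ref{le:410}(2) then gives $h_k\in\operatorname{ran}I(\delta^*,n)$, so we can remove $h_k$ exactly as in the first case.
\item This continues until only the $\log^0$ coefficient $h_0$ remains. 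For it we only know $I(D\operatorname{Ric}-\Lambda,n)h_0=0$.
\item By \eqref{eq:kermodrange}, $h_0=-I(\delta^*,n)\omega_0+(0,0,0,\alpha)$ for some $\omega_0\in\mathbb{R}^2$ and $\alpha\in\mathbb{R}$, which is the claimed form $h+I(\delta^*,s\partial_s)\omega=s^n(0,0,0,\alpha)$.
\end{enumerate}

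\textbf{Main obstacle.} The key point is to check that the implication used in step 1 is legitimate at each stage of the induction. This rests on the hypothesis $I(D\operatorname{Ric}-\Lambda,s\partial_s)h=0$ being exact, with right-hand side $f=0$, and on that exactness being preserved after each gauge subtraction; the latter follows from $I(D\operatorname{Ric}-\Lambda)\circ I(\delta^*)=0$. Without exactness, the $\log(s)^{k-1}$ coefficient would not force the range condition. For $k=1$, the $\log^0$ equation gives $\partial_\lambda I(D\operatorname{Ric}-\Lambda,n)h_1=-I(D\operatorname{Ric}-\Lambda,n)h_0\in\operatorname{ran}$, which is exactly the condition needed. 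Finally, the order of operations matters: we must remove the highest log power first, because each gauge correction feeds lower-order log terms back into the expansion.
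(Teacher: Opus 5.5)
Your proof is correct and follows the paper's argument. You do a downward induction on the top log power, using Lemma~\ref{le:410}(1) when $\lambda\neq n$. When $\lambda=n$, you use the full implication \eqref{eq:logind} with $f=0$ together with Lemma~\ref{le:410}(2) for the $\log(s)^m$ coefficients with $m\geq 1$, and \eqref{eq:kermodrange} for the remaining $\log(s)^0$ coefficient. Your explicit remark that $I(D\operatorname{Ric}-\Lambda,\cdot)\circ I(\delta^*,\cdot)=0$ keeps the hypothesis exact after each gauge subtraction fills in a step the paper leaves implicit in its ``iterative argument''.
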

\begin{proof}
    By \eqref{eq:logind}, we have $h_k\in\operatorname{ker}I(D\operatorname{Ric}-\Lambda,\lambda)$. So, when $\lambda\neq n$, we can apply Lemma \ref{le:410} and get $\omega_k$ such that $h_k+I(\delta^*,\lambda)\omega_k$=0. Therefore we have
    \begin{equation}
    \begin{aligned}
        h + I(\delta^*,s\partial_s)(s^\lambda \log(s)^k\omega_k) =s^\lambda\log(s)^k(\underbrace{h_k+I(\delta^*,\lambda)\omega_k)}_{=0} \\
        +\sum_{m=0}^{k-1}s^\lambda \log(s)^mh_m + s^\lambda\log(s)^{k-1}\partial_\lambda I(\delta^*,\lambda)\omega_k,
    \end{aligned}
    \end{equation}
    and the right hand side is of the same form as $h$ but with the highest order of log reduced by one. An iterative argument finishes the proof in this case.\\
    
    Turning now to $\lambda=n$ we use the full implication \eqref{eq:logind} together with Lemma \ref{le:410} for $m\geq1$. Using the same argument as above, we get $\omega_1,...,\omega_k$ such that 
    $$
    h+I(\delta^*,s\partial_s)\sum_{m=1}^ks^\lambda\log(s)^m\omega_m =: s^\lambda f
    $$
    and $f$ is independent of $s$. Now, with only the first part of the implication \eqref{eq:logind} being true, we use \eqref{eq:kermodrange} to get $\omega_0$ such that $f = -I(\delta^*,\lambda)\omega_0 + (0,0,0,\alpha)$.
\end{proof}
When dealing with terms of order $n$ in the even-dimensional case, we will need a slightly more complicated version of this Lemma.
\begin{lemma}[Solving order $n$ even using gauge freedom]
    \label{le:412even}
    Let $h_0,...,h_k\in \mathbb{R}^4$ for $k\in \mathbb{N}_0$. Let $f\in \mathbb{R}^4$ with 4-component $=\alpha$. Suppose $I(D\operatorname{Ric}-\Lambda,s\partial_s)h=s^nf$. Then there exist $\omega_0,...,\omega_k\in\mathbb{R}^2$ and a $\gamma$ independent of $s$, such that for $\omega(s):=\sum_{m=0}^ks^n\log(s)^m\omega_m$ we have
    \begin{equation}
    \label{eq:ordernevengauge}
        h+I(\delta^*,s\partial_s)\omega = s^n\left(\log(s)(0,0,0,\frac{2\alpha}{n}) +(0,0,0,\gamma)\right).
    \end{equation}
\end{lemma}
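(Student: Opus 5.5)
The plan is to reduce the statement to the mechanism of Lemma \ref{le:412}. The one new ingredient is to track what the forcing term $s^n f$ does to the coefficient of $s^n\log(s)$ in the fourth (trace-free) component. Write $h=\sum_{m=0}^k s^n\log(s)^m h_m$ and $I(\lambda):=I(D\operatorname{Ric}-\Lambda,\lambda)$.

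Two facts are used throughout.

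\begin{itemize}
\item By diffeomorphism invariance, $I(s\partial_s)\circ I(\delta^*,s\partial_s)=0$. Hence adding $I(\delta^*,s\partial_s)\omega$ to $h$ never changes the right-hand side $s^n f$.
\item From \eqref{eq:Idric} and \eqref{eq:Iderivative}, the fourth row of $I(n)$ and its fourth column vanish. Moreover, the fourth column of $\partial_\lambda I(n)$ is $(0,0,0,\tfrac{n}{2})^T$.
\end{itemize}

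\textbf{Step 1: remove the logarithms of order at least $2$.} If $k\geq 2$, the full implication \eqref{eq:logind} applies to $h_k$: we get $h_k\in\ker I(n)$ and $\partial_\lambda I(n)h_k\in\operatorname{ran}I(n)$. Lemma \ref{le:410}(2) then gives $h_k\in\operatorname{ran}I(\delta^*,n)$, say $h_k=-I(\delta^*,n)\omega_k$. Adding $I(\delta^*,s\partial_s)(s^n\log(s)^k\omega_k)$ kills the $\log(s)^k$ term and only modifies lower log orders, exactly as in the proof of Lemma \ref{le:412}. The right-hand side is unchanged, so we iterate down to $k=1$. After this step we may assume $h=s^n(h_0+\log(s)h_1)$.

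\textbf{Step 2: fix the coefficient of $s^n\log(s)$.} Expanding $I(s\partial_s)h=s^nf$ and comparing orders of $\log(s)$ gives
\[
I(n)h_1=0,\qquad I(n)h_0+\partial_\lambda I(n)h_1=f .
\]
By \eqref{eq:kermodrange}, we can write $h_1=-I(\delta^*,n)\omega_1+(0,0,0,a)$. Adding $I(\delta^*,s\partial_s)(s^n\log(s)\omega_1)$ replaces $h_1$ by $(0,0,0,a)$ and changes $h_0$ by $\partial_\lambda I(\delta^*,n)\omega_1$. The two displayed identities remain valid for the new $h_0,h_1$.

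Now take the fourth component of the second identity. Since the fourth row of $I(n)$ vanishes, the left-hand side contributes only $\tfrac n2 a$ from $\partial_\lambda I(n)(0,0,0,a)^T$. This forces $\tfrac{n}{2}a=\alpha$, i.e.\ $a=\tfrac{2\alpha}{n}$, which is precisely the log coefficient claimed in \eqref{eq:ordernevengauge}.

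\textbf{Step 3: the $s^n$ coefficient.} Since $\partial_\lambda I(n)(0,0,0,a)^T=(0,0,0,\alpha)^T$, the second identity reduces to $I(n)h_0=f-(0,0,0,\alpha)^T$.

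\begin{itemize}
\item In the intended application the first three components of $f$ vanish, so $f=(0,0,0,\alpha)^T$. Then $h_0\in\ker I(n)$, and \eqref{eq:kermodrange} gives $\omega_0$ and $\gamma$ with $h_0+I(\delta^*,n)\omega_0=(0,0,0,\gamma)$. Adding $I(\delta^*,s\partial_s)(s^n\omega_0)$ completes the construction.
\item If the statement is meant for general $f$ with fourth component $\alpha$, this is the step I expect to be the main obstacle. I would try first to use the linearized Bianchi identity $I(\delta\mathrm G,n)\circ I(n)=0$ to show that the non-trace-free part of $f$ must vanish, or else that the particular solution of $I(n)h_0=f-(0,0,0,\alpha)^T$ can be chosen in $\operatorname{ran}I(\delta^*,n)$. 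Failing both, the hypothesis $f=(0,0,0,\alpha)^T$ is the form the lemma needs, and it is what the application supplies.
\end{itemize}
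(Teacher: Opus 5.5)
Your Steps 1 and 2 are correct and coincide with the paper's argument. You peel off the $\log(s)^m$ terms with $m\ge 2$ via \eqref{eq:logind} and Lemma \ref{le:410}(2), reduce the $\log(s)$ coefficient to $(0,0,0,a)$, and read off $a=\frac{2\alpha}{n}$ from the fourth row.

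The gap is the one you flagged in Step 3, and it cannot be closed, because for general $f$ the lemma is false. Apply $I(D\operatorname{Ric}-\Lambda,s\partial_s)$ to both sides of \eqref{eq:ordernevengauge}. The left side gives $s^nf$ by your first bullet. The right side gives $s^n(0,0,0,\alpha)$, since the fourth column of $I(n)$ vanishes and $\partial_\lambda I(n)(0,0,0,1)^T=(0,0,0,\frac n2)^T$. Here $I(\lambda)=I(D\operatorname{Ric}-\Lambda,\lambda)$ as in your notation. So the conclusion forces $f=(0,0,0,\alpha)$. Concretely, $k=0$ and $h=s^n(1,0,0,0)$ satisfy the hypothesis with $f=\frac n2(n-2,0,1,0)$ and $\alpha=0$. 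Yet $(1,0,0,0)+I(\delta^*,n)(a,b)=(1+na,\frac{n+1}{2}b,a,0)$ is never of the form $(0,0,0,\gamma)$.

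Neither of your fallbacks can work. The Bianchi identity $I(\delta G,n)f=0$ only yields $f_2=0$ and $f_1=(n-2)f_3$. That is automatic anyway, because $f-(0,0,0,\alpha)=I(n)h_0\in\operatorname{ran}I(n)=\operatorname{span}\{(n-2,0,1,0)\}$. A preimage inside $\operatorname{ran}I(\delta^*,n)$ is impossible, since $I(n)I(\delta^*,n)=0$. The paper's own proof passes over the same point with the assertion $h_0''-(f-(0,0,0,\alpha))\in\ker I(D\operatorname{Ric}-\Lambda,n)$. That does not follow from $I(n)h_0''=f-(0,0,0,\alpha)$.

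The first bullet of your Step 3 does prove the lemma under the extra hypothesis $f=(0,0,0,\alpha)$. The same argument gives the correct general version. The vector $\bar h:=(0,0,-\frac{2f_3}{n^2},0)$ satisfies $I(n)\bar h=f-(0,0,0,\alpha)$, so $h_0-\bar h\in\ker I(n)$, and \eqref{eq:kermodrange} yields
\[
h+I(\delta^*,s\partial_s)\omega=s^n\Bigl(\log(s)\bigl(0,0,0,\tfrac{2\alpha}{n}\bigr)+\bigl(0,0,-\tfrac{2f_3}{n^2},\gamma\bigr)\Bigr).
\]
This reduces to \eqref{eq:ordernevengauge} exactly when $f_3=0$.

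Your remark that the application supplies $f=(0,0,0,\alpha)$ is not correct. In step 4.1 of the proof of Theorem \ref{thm:expansion}, parity and the Bianchi identity give only $f_2=0$ and $f_1=(n-2)f_3$. The component $f_3$ is generically nonzero: it encodes the locally determined trace of the order-$n$ Fefferman--Graham coefficient. Already exact de~Sitter with $n=4$, in normal form $\tau^{-2}\bigl(-\mathrm{d}\tau^2+(1+\tau^2/4)^2\cancel{g}\bigr)$, has $\tau^4$-coefficient $\frac1{16}\cancel{g}$, which is not trace-free. So the pure-trace term has to be kept, both in this lemma and in the description of $\tilde h_n^0$.
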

\begin{proof}
    For $k\geq 2$ we use the same argument as in the previous proof. The implication \eqref{eq:logind} and Lemma \ref{le:410} give $\omega_2,...,\omega_k$ such that $$
    h+I(\delta^*,s\partial_s)\sum_{m=2}^ks^n\log(s)^m\omega_m= s^n(\log(s)h_1'+h_0')
    $$ with $h_1',h_0'\in \mathbb{R}^4$.
    Now we have
    $$
    I(D\operatorname{Ric}-\Lambda,s\partial_s)\left(s^n(\log(s)h_1'+h_0')\right) = s^nf.
    $$
    For the log term of this equation, we still get the first half of the implication \eqref{eq:logind}, namely $I(D\operatorname{Ric}-\Lambda,n)h_1=0$. By Lemma \ref{le:410}, we get $\omega_1$ and $\eta\in \mathbb{R}$ such that $h_1+I(\delta^*,n)\omega_1 =(0,0,0,\eta)$. Updating the gauge one form by this, we get
    \begin{equation}
    \begin{aligned}
     s^n(\log(s)h_1'+h_0') + &I(\delta^*,s\partial_s)\left(s^n\log(s)w_1\right) \\
     =&s^n\log(s)(0,0,0,\eta) + s^n\underbrace{\left(\partial_\lambda I(\delta^*,\lambda)\left|_{\lambda=n}\right.\omega_1+h_0'\right)}_{:=h_0''}.
     \end{aligned}
    \end{equation}
    Applying $I(D\operatorname{Ric}-\Lambda,s\partial_s)$ to this term gives
    \begin{equation}
    \label{eq:lastupdate}
    \begin{aligned}
        I(D\operatorname{Ric}&-\Lambda,s\partial_s)\left(s^n\log(s)(0,0,0,\eta)+s^nh_0''\right)\stackrel{!}{=}s^nf\\
        &=s^n\partial_\lambda I(D\operatorname{Ric}-\Lambda,\lambda)\left|_{\lambda=n}\right.(0,0,0,\eta) + s^n(I(D\operatorname{Ric}-\Lambda,n)h_0''.
    \end{aligned}
    \end{equation}
    Now, since the last row of $I(D\operatorname{Ric}-\Lambda,n)$ vanishes identically, and because the last column of $\partial_\lambda I(D\operatorname{Ric}-\Lambda,\lambda)\left|_{\lambda=n}\right.$ is equal to $(0,0,0,n/2)$, we get $\eta = 2\alpha/n$. From \eqref{eq:lastupdate}, we get that $h_0'' -(f-(0,0,0,\alpha))\in \operatorname{ker}I(D\operatorname{Ric}-\Lambda,n)$, and therefore equal to $-I(\delta^*,n)\omega_0 + (0,0,0,\gamma)$ for a $\omega_0\in \mathbb{R}^2$ by Lemma \ref{le:410}. Putting everything together, we get \eqref{eq:ordernevengauge}.
\end{proof}
Now we turn to the range of $I(D\operatorname{Ric}-\Lambda,\lambda)$, which is necessarily contained in $\operatorname{ker}I(\delta G,\lambda)$. We consider the cases $\lambda>0$ and $n>2$.
\begin{lemma}[Solvability of linearized Einstein]
\label{le:413}
    Let $\lambda>0$ and $f\in\mathbb{R}^4$. Suppose $I(\delta G,\lambda)f=0$.\\
    (1) If $\lambda\neq n,n+1$ then there exists $h=(0,0,h_3,h_4)$ with $f=I(D\operatorname{Ric}-\Lambda,\lambda)h$.\\
    (2) If $\lambda=n+1$ and $f$ is of the form $(f_1,0,f_3,f_4)$, then there exists $h=(0,0,h_3,h_4)$ with $f=I(D\operatorname{Ric}-\Lambda,\lambda)h$.
\end{lemma}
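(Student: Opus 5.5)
The statement is pure finite-dimensional linear algebra on the explicit matrices \eqref{eq:Idric} and $2I(\delta\mathrm{G},\lambda)$, so I would verify it by direct computation; the only care needed is the exceptional values of $\lambda$ where a coefficient vanishes. For an ansatz $h=(0,0,h_3,h_4)$, \eqref{eq:Idric} gives
$$
2I(D\operatorname{Ric}-\Lambda,\lambda)h=\bigl(-n\lambda(\lambda-2)h_3,\;0,\;\lambda(\lambda-2n)h_3,\;\lambda(\lambda-n)h_4\bigr).
$$
The hypothesis $I(\delta\mathrm{G},\lambda)f=0$ reads
$$
(\lambda-2n)f_1+n(\lambda-2)f_3=0,\qquad 2(\lambda-(n+1))f_2=0 .
$$
So the task is to solve $2f=2I(D\operatorname{Ric}-\Lambda,\lambda)h$ componentwise.

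\textbf{Second and fourth components.} In case (1), $\lambda\neq n+1$, so the second constraint forces $f_2=0$. In case (2), $f_2=0$ is assumed. Hence the second component matches automatically in both cases. For the fourth component I set $h_4:=2f_4/(\lambda(\lambda-n))$, which is legitimate because $\lambda>0$ and $\lambda\neq n$ (in case (2), $\lambda=n+1\neq n$). This decoupling of the trace-free part is exactly why $\lambda=n$ must be excluded.

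\textbf{First and third components.} These must be handled by the single unknown $h_3$, and the remaining constraint is what makes the overdetermined pair consistent. I split into three subcases, using $n\geq3$ so that $2\neq 2n$.
\begin{itemize}
\item[(a)] If $\lambda\neq 2,2n$, set $h_3:=2f_3/(\lambda(\lambda-2n))$. The third component then holds. The constraint gives $f_1=-n(\lambda-2)f_3/(\lambda-2n)$, and substituting shows $-n\lambda(\lambda-2)h_3=2f_1$.
\item[(b)] If $\lambda=2n$, the constraint becomes $n(2n-2)f_3=0$, so $f_3=0$. Set $h_3:=-2f_1/(n\lambda(\lambda-2))$, which is well defined since $\lambda=2n\neq 2$. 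The third component vanishes on both sides.
\item[(c)] If $\lambda=2$, the constraint becomes $(2-2n)f_1=0$, so $f_1=0$. Set $h_3:=2f_3/(2(2-2n))$. The first component vanishes on both sides.
\end{itemize}

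\textbf{Case (2).} For $\lambda=n+1$ the argument is identical, since $n+1\notin\{2,n,2n\}$ for $n\geq3$; only $f_2=0$ had to be imposed by hand, because the $\delta\mathrm{G}$ constraint no longer forces it.

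There is no real obstacle. The one step needing care is the bookkeeping in the first/third components: the rank drop at $\lambda\in\{2,2n\}$ must be compensated by the corresponding component of $f$ vanishing, and the $\delta\mathrm{G}$ constraint supplies precisely this.
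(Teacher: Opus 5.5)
Your proof is correct and is essentially the paper's argument. The paper computes $\ker I(\delta\mathrm{G},\lambda)=\operatorname{span}\{(n(\lambda-2),0,-\lambda+2n,0),(0,0,0,1)\}$ (enlarged by $(0,1,0,0)$ when $\lambda=n+1$) and observes that for $\lambda\neq n$ this is spanned by the last two columns of $I(D\operatorname{Ric}-\Lambda,\lambda)$, which is exactly what your componentwise solve checks. Phrased that way, your subcases (b) and (c) are unnecessary: the third column of $2I(D\operatorname{Ric}-\Lambda,\lambda)$ is $-\lambda$ times the kernel generator, and that generator is nonzero for every $\lambda$ when $n\geq 3$.
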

\begin{proof}
    The kernel of $I(\delta G,\lambda)$ is given by $\operatorname{span}\{(n(\lambda-2),0,-\lambda+2n,0),(0,0,0,1)\}$ for $\lambda\neq n+1$. If $\lambda =n+1$, it is given by the direct sum of the same two vectors and $\operatorname{span}(0,1,0,0)$. But since we excluded vectors with a 2-component in the case of $\lambda=n+1$, the Lemma follows after noticing that the basis of the kernel is spanned by the last two columns of $I(D\operatorname{Ric}-\Lambda,\lambda)$ for $\lambda\neq n$.
\end{proof}
Note that the Lemmas proved in this section remain true if we put back the parameter dependence on $x\in\mathbb{S}^n$; e.g, in Lemma \ref{le:410} if $h$ depends on $x$ in a $\mathcal{C}^\infty$ fashion, we find $\omega$ with the same parameter dependence that satisfies $I(\delta^*,\lambda)\omega=h$.\\

We record the following Lemma about the structure of the Einstein operator.
\begin{lemma}[Log order and parity of the Einstein operator]
\label{le:paritylog}
Let $l\in\mathbb{N}_0$ and let a tensor $g\in H^\infty_b(\Omega;S^2\ ^0T^*M)$ be of the form $g= \sum_{m=0}^l \log(s)^mg_m$, where $g_0\in -\frac{\mathrm{d}s^2}{s^2}+H^\infty(\mathcal{I}^+;s^{-2}S^2T^*\mathbb{S}^n)$ and the spatial part is positive definite. Furthermore, assume that for $m>0$, the $g_m$ are finite sums of elements of $s^{nm+r}\cdot H^\infty(\mathcal{I}^+;s^{-2}S^2T^*\mathbb{S}^n)$ for (potentially different) $r\in\mathbb{N}_0$. We regard $g$ as a function from $[0,s_0]$ to $H^\infty_b(\mathcal{I}^+;s^{-2}T^*\mathbb{S}^n)$ and consider the series expansion around $s=0$. We write $\tilde{\mathcal{O}}(s^\alpha)$ for log-smooth terms with almost $s^\alpha$ decay at $s=0$. Suppose
\begin{equation}
    \operatorname{Ric}g-\Lambda g= 0 + \tilde{\mathcal{O}}( s^{nl +k}) .
\end{equation}
for a $k\in \{0,..,n-1\}$. Then we get
\begin{equation}
    \label{eq:ferror}
    \operatorname{Ric}g-\Lambda g= s^{nl+k}\sum_{q=0}^l f^{(q)}\log(s)^q+\tilde{\mathcal{O}}(s^{nl+k+1})
\end{equation}
with $f^{(q)}\in H^\infty_b(\mathcal{I}^+;s^{-2}S^2T^*\mathbb{S}^n)$. Furthermore, if all $g_l$ contain only even powers of $s$ (i.e. $r$ is even in all terms) we have the additional implication. If $nl+k$ is odd, then $f^{(q)}=(0,f^{(q)}_2,0,0)$, and if $nl+k$ is even, $f^{(q)}=(f^{(q)}_1,0,f^{(q)}_3,f^{(q)}_4)$ in the splitting \eqref{eq:splitting3}, for all $q$.
\end{lemma}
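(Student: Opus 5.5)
Our plan is to treat $\operatorname{Ric}g-\Lambda g$ as a polynomial expression in the frame components of $g$, $g^{-1}$ and their $e_\mu$-derivatives, and to track separately the powers of $s$ and of $\log(s)$ that it produces. In the frame $e_\mu$ of either chart, the components of $\operatorname{Ric}g$ are polynomials in $g_{\mu\nu}$, $g^{\mu\nu}$, $e_\alpha g_{\mu\nu}$ and $e_\alpha e_\beta g_{\mu\nu}$, with constant coefficients coming from the commutators $[e_0,e_i]=e_i$. The inverse $g^{\mu\nu}$ is obtained by a Neumann series around the inverse of $g_0$. This is legitimate because $g-g_0$ is a sum of terms $s^{nm+r}\log(s)^m$ times smooth coefficients, and such terms are small near $s=0$.

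The first step is a closure statement. Let $\mathcal{A}_j$ denote finite sums of terms $s^{a}\log(s)^q c(\omega)$ with $c\in H^\infty$, $a\in\mathbb{N}_0$ and $q\le \lfloor a/n\rfloor$. We check that $\mathcal{A}:=\bigcup_j\mathcal{A}_j$ is an algebra under multiplication, since $a_1+a_2\ge n(q_1+q_2)$. We then check that it is preserved by $e_0=s\partial_s$, which maps $s^a\log^q$ to $a s^a\log^q+q s^a\log^{q-1}$, and by $e_i=s\partial_{x^i}$, which even gains a power of $s$. By hypothesis all frame components of $g$ lie in $\mathcal{A}$, and the Neumann series shows the same for $g^{-1}$. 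Hence every component of $\operatorname{Ric}g-\Lambda g$ is an asymptotic sum in $\mathcal{A}$. In particular, at order $s^{nl+k}$ with $k<n$, only powers $\log(s)^q$ with $q\le l$ can occur. Combined with the assumed vanishing up to $\tilde{\mathcal O}(s^{nl+k})$, this gives \eqref{eq:ferror}.

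The second step is the parity statement. Assume the $r$'s are even, so every component $g_{\mu\nu}$ in the frame contains only even powers of $s$ (up to logs). Note that the mixed components $g_{0i}$ vanish identically in the hypothesis, since each $g_m$ and the leading part $g_0$ are spatial. We assign a parity grading in which $s^a\log^q c$ times a frame slot has total parity $a+\#\{\text{spatial indices}\}$ mod $2$. It is cleaner to note that the hypotheses are invariant under $s\mapsto -s$ formally. Under that map $e^0=\mathrm{d}s/s$ is invariant and $e^i=\mathrm{d}x^i/s$ changes sign; the $\log$ is treated as a formal symbol, since $e_0$ acts on $\log(s)$ identically under this reflection. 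The metric is even under the reflection and $\operatorname{Ric}-\Lambda$ is natural, so $\operatorname{Ric}g-\Lambda g$ is even as well. Consequently its $00$ and $ij$ components contain only even powers of $s$, and its $0i$ components only odd powers. Concretely, we verify this by checking that $e_i$ flips the parity of the power of $s$, while contractions with the frame metric preserve the grading. Reading off the coefficient of $s^{nl+k}$ in the splitting \eqref{eq:splitting3} then gives the claimed vanishing: the second component vanishes when $nl+k$ is even, and the first, third and fourth components vanish when it is odd.

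The main obstacle is making this formal reflection rigorous in the presence of logarithms and finite-regularity remainders. We handle it by the bookkeeping above: we prove by induction on the number of $e_\alpha$'s and products that each term is a monomial $s^a\log^q c(\omega)$ whose power $a$ has parity equal to the number of spatial $e_i$ derivatives plus the number of spatial frame indices mod $2$. The only remaining care is the tail, which is controlled because truncating the expansions of $g$ and $g^{-1}$ at high order changes $\operatorname{Ric}g-\Lambda g$ only by $\tilde{\mathcal O}(s^{nl+k+1})$.
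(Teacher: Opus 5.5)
Your proposal is correct and follows essentially the same route as the paper. Your algebra $\mathcal{A}$ (monomials $s^a\log(s)^q c$ with $q\le\lfloor a/n\rfloor$, closed under products, $e_0$, $e_i$, the Neumann series for $g^{-1}$, and hence the frame formula for $\operatorname{Ric}$) is the paper's observation that $\log(s)^{m+1}$ first appears at order $s^{n(m+1)}$. Your inductive grading ($e_i$ flips parity, $e_0$ and contractions preserve it, $g_{0i}=0$) is exactly the paper's ``parity rule'', and the formal reflection $s\mapsto -s$ serves only as motivation.

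One small correction: ``all $r$ even'' gives even powers $nm+r$ only when $n$ is even or no log terms are present. The parity hypothesis should therefore be read, as the paper's proof does, as ``every frame component of $g$ contains only even powers of $s$'', which is all your argument actually uses.
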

\begin{proof}
    Using a partition of unity, we can work in a coordinate system on $\Omega$, say $^\uparrow\Omega$, where we use the frame $e_\mu$ from \eqref{eq:framedefinition}. In this proof, we write that a term is \textit{almost-even} or \textit{almost-odd} in $s$ if it is a sum of even or odd terms multiplied by powers of $\log(s)$. By construction, $g_{0i}$ are 0, which we will consider as almost-odd. If additionally, the $g_l$ are all even, we see that $g_{00}$ and $g_{ij}$ are almost-even. So they fulfill what we will call the parity rule: If the number of indices $>0$ in an expression is even, then the expression is almost-even. If it is instead odd, the expression is almost-odd. Notice, how $e_0$ preserves parity in $s$, whereas the $e_i$ reverse parity. So by using \eqref{eq:connectiondef} we see that the $\Gamma_{\lambda\mu\nu}$ satisfy the parity rule if it is fulfilled by $g$. Furthermore, the lowest order at which $\log(s)^{m+1}$ appears is $s^{n(m+1)}$. Next, by writing the series expansion of $g^{-1}$ around $g_0^{-1}$, we see that because of the block diagonal structure, the $g^{\mu\nu}$ satisfy the parity rule and therefore the $\Gamma^\lambda_{\mu\nu}$ do as well, if $g$ fulfills it. Again, the lowest order with $\log(s)^{m+1}$ terms is $s^{n(m+1)}$. Finally, writing 
\begin{equation}
\label{eq:riccicoords}
\begin{aligned}
\operatorname{Ric}_{00}&=e_\lambda \Gamma_{00}^\lambda-e_0 \Gamma_{\lambda 0}^\lambda+\Gamma_{\lambda \rho}^\lambda \Gamma_{00}^\rho-\Gamma_{0 \rho}^\lambda \Gamma_{\lambda 0}^\rho +\Gamma^l_{l0}\\
\operatorname{Ric}_{0j}&=e_\lambda \Gamma_{j0}^\lambda-e_j \Gamma_{\lambda 0}^\lambda+\Gamma_{\lambda \rho}^\lambda \Gamma_{j0}^\rho-\Gamma_{j \rho}^\lambda \Gamma_{\lambda 0}^\rho -\Gamma^0_{j0}\\
\operatorname{Ric}_{ij}&=e_\lambda \Gamma_{ji}^\lambda-e_j \Gamma_{\lambda i}^\lambda+\Gamma_{\lambda \rho}^\lambda \Gamma_{ji}^\rho-\Gamma_{j\rho}^\lambda \Gamma_{\lambda i}^\rho -\Gamma^0_{ji},
\end{aligned}
\end{equation}
we see that the parity rule holds also for $\operatorname{Ric}g-\Lambda g$ if it holds for $g$ (The last terms on the right hand sides arise because $e_0$ and $e_i$ do not commute). And again, the lowest order, where $\log(s)^{m+1}$ appears, is $s^{n(m+1)}$. This proves \eqref{eq:ferror}. Looking at $\operatorname{Ric}g-\Lambda g$ in the splitting \eqref{eq:splitting3}, we see that because of the parity rule, $(\operatorname{Ric}g-\Lambda g)_a$ is almost-even for $a=1,3,4$ and almost-odd for $a=2$. As the leading order term on the right hand side of \eqref{eq:ferror} has parity determined by whether $nm+k$ is even or odd, we get that the required components of the error terms $f^{(q)}$ vanish.
\end{proof}
As a final preparation for the expansion, we record the following
\begin{lemma}
    \label{le:einstein0order}
    Let $h_0\in H^\infty(\mathcal{I}^+;s^{-2}S^2T^*\mathbb{S}^n)$ with small $\mathcal{C}^2_b$ norm. Define the metric $g_0':=\underline{g} + \chi h_0$. Then in a series expansion around $s=0$, we have
    \begin{equation}
        \operatorname{Ric}g_0' -\Lambda g_0'= \mathcal{O}(s^2),
    \end{equation}
    where we write $\mathcal{O}(s^\alpha)$ for terms that decay as $s^\alpha$ and have $H^\infty(\mathcal{I}^+;s^{-2}S^2T^*\mathbb{S}^n)$ behaviour in $\omega$.
\end{lemma}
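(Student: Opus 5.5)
We must show $\operatorname{Ric}g_0'-\Lambda g_0'=\mathcal{O}(s^2)$ for $g_0'=\underline{g}+\chi h_0$. Near $s=0$ we have $\chi\equiv 1$, so $g_0'=s^{-2}(-\mathrm{d}s^2+g_{(0)})$ with $g_{(0)}=\cancel{g}+h_{(0)}$ independent of $s$. The plan is to compute the Ricci tensor of this explicit metric directly in the frame $e_\mu$ of \eqref{eq:framedefinition}, using \eqref{eq:connectiondown} and \eqref{eq:riccicoords}.

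\textbf{Step 1: connection coefficients.} In a chart we have $g_{00}=-1$, $g_{0i}=0$, and $g_{ij}=(g_{(0)})_{ij}(x)$. Since $e_0g_{\mu\nu}=0$, and since $e_i g_{jk}=s\,\partial_i (g_{(0)})_{jk}$ is exactly $\mathcal{O}(s)$, the frame formulas give
\[
\Gamma_{0ij}=g_{ij},\qquad \Gamma_{\ell i0}=-g_{i\ell},\qquad \Gamma_{\ell ij}=s\,\Gamma(g_{(0)})_{\ell ij}.
\]
Here $\Gamma(g_{(0)})_{\ell ij}$ are the coordinate Christoffel symbols of $g_{(0)}$ on $\mathbb{S}^n$. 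All remaining $\Gamma_{\lambda\mu\nu}$ vanish identically. Raising indices with the block-diagonal inverse yields the values used in the proof of Proposition \ref{prop:indicialcalc}, now exactly rather than modulo error, together with $\Gamma^\ell_{ij}=s\,\Gamma(g_{(0)})^\ell_{ij}$.

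\textbf{Step 2: Ricci tensor and parity.} We insert these into \eqref{eq:riccicoords}. The $s$-independent contributions reproduce the indicial computation of Proposition \ref{prop:indicialcalc}, namely $\operatorname{Ric}_{00}=-n$, $\operatorname{Ric}_{0i}=0$ and $\operatorname{Ric}_{ij}=ng_{ij}$, so $\operatorname{Ric}-\Lambda g$ vanishes at order $s^0$.

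The remaining terms come from $\Gamma^\ell_{ij}=\mathcal{O}(s)$ and from the derivatives $e_i=s\partial_i$, so they carry powers of $s$. We sort them by parity, as in Lemma \ref{le:paritylog}.
\begin{itemize}
\item Each $e_i$ and each spatial Christoffel symbol contributes exactly one factor of $s$.
\item The $00$ and $ij$ components have an even number of spatial indices, so all their corrections are even in $s$. Hence there is no $s^1$ term, and they are $\mathcal{O}(s^2)$. The $ij$ correction is $s^2\operatorname{Ric}(g_{(0)})_{ij}$ plus trace-type terms.
\item The $0j$ component is odd in $s$. Its $\mathcal{O}(s)$ terms must therefore be checked by hand.
\end{itemize}

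\textbf{Step 3: the $0j$ component (main obstacle).} The potential order-$s$ terms in $\operatorname{Ric}_{0j}$ are $e_\ell\Gamma^\ell_{j0}-e_j\Gamma^\lambda_{\lambda0}$ together with the products $\Gamma^\ell_{\ell k}\Gamma^k_{j0}-\Gamma^\ell_{jk}\Gamma^k_{\ell0}$.
\begin{itemize}
\item Since $\Gamma^\ell_{j0}=-\delta^\ell_j$ and $\Gamma^\lambda_{\lambda 0}=-n$ are constants, both derivative terms vanish.
\item The product terms reduce to $-s\Gamma(g_{(0)})^\ell_{\ell j}+s\Gamma(g_{(0)})^\ell_{j\ell}=0$, because the Christoffel symbols of $g_{(0)}$ are symmetric.
\end{itemize}
Hence $\operatorname{Ric}_{0j}$ vanishes identically. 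It is this cancellation, the symmetry of the Levi-Civita connection of $g_{(0)}$ against the frame non-commutativity, that must be verified carefully.

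\textbf{Step 4: assembly.} On the support of $\chi$ where $\chi\neq1$ we have $s\geq s_0/4$, so that region is trivially $\mathcal{O}(s^2)$. In $\omega$, the coefficients are smooth polynomial expressions in $h_{(0)}$, its first two derivatives, and $g_{(0)}^{-1}$. Their $H^\infty$ regularity follows from Lemma \ref{le:2.9}, with the small $\mathcal{C}^2_b$ norm guaranteeing invertibility. Finally, we patch the two charts together with the partition of unity.
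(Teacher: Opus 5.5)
Your proposal is correct and is the same argument as the paper's, whose proof simply says the statement follows from a short computation in the frame $e_\mu$. Your Steps 1--3 carry out that computation correctly, including the key cancellation $-s\Gamma(g_{(0)})^\ell_{\ell j}+s\Gamma(g_{(0)})^\ell_{j\ell}=0$ in the $0j$ component. One small correction: the $ij$ component has no extra trace-type terms. Where $\chi=1$ one gets exactly $\operatorname{Ric}g_0'-\Lambda g_0'=\operatorname{Ric}(g_{(0)})$, which is $s^2\operatorname{Ric}(g_{(0)})_{ij}$ in the $0$-frame; this also follows from the conformal-change formula applied to $s^{-2}(-\mathrm{d}s^2+g_{(0)})$.
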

\begin{proof}
    This follows from a short computation in the frame $e_\mu$.
\end{proof}
Now we turn to the precise expansion at the conformal boundary. Notice how the solution produced by Proposition \ref{prop:gaugeup} applied to the metric from Theorem \ref{thm:solution} fulfills the conditions of Lemma \ref{le:logsmooth}. Therefore, it fulfills the assumptions stated in the following Theorem.
\begin{theorem}[Expansion at the conformal boundary]
\label{thm:expansion} Let $n\geq 3$, $h_0\in H^\infty(\mathcal{I}^+;s^{-2}S^2T^*\mathbb{S}^n)$ with small $H^d$-norm and $\tilde{h}\in s^\beta H^\infty_b(\Omega;S^2\ ^0T^*M)$ for an arbitrary $\beta\in(0,1)$ that is log-smooth at $\mathcal{I}^+$. Suppose that $g:=g_{\mathrm{dS}}+\chi h_0+\tilde{h}$ satisfies
\begin{equation}
\label{eq:Einsteineq}
    \operatorname{Ric}g -\Lambda g=0
\end{equation}
on $\Omega$. Then there exists a diffeomorphism on a collar neighborhood of $\mathcal{I}^+$, which restricts to the identity on $\mathcal{I}^+$, such that the pullback metric $\phi^* g$ has the block diagonal form
\begin{equation}
 \label{eq:blockdiag}
    \phi^*g=\frac{-\mathrm{d}s^2+H(s,\omega;\mathrm{d\omega)}}{s^2 },
\end{equation}
for a one parameter family in $s$ of positive definite metrics $H(s,\omega;\mathrm{d}\omega)$ on $\mathbb{S}^n$. Furthermore, the pulled back metric has the following expansion at $\mathcal{I}^+$. 
\begin{enumerate}
\item If the dimension $n$ is odd:\\
    The conformally rescaled metric $s^2\phi^*g$ is smooth down to $\mathcal{I}^+$. More precisely there exist $\tilde{h}_i\in H^\infty(\mathcal{I}^+;s^{-2}S^2T^*\mathbb{S}^n)$ for $ i \in \mathbb{N}_{\geq2}$ such that for all $N\in \mathbb{N}$ we have
    \begin{equation}
    \label{eq:expansionodd}
        (\phi^* g)(s,\omega)-\left((\underline{g}+h_0)(s,\omega) +\sum_{i=2}^Ns^{i}\tilde{h}_i(\omega)\right)\in s^{N}H^\infty_b(\Omega;S^2\ ^0T^*M).
    \end{equation}
    Furthermore, for $i<n$, $\tilde{h}_i$ is computable using only $h_0$. In fact, for odd $i<n$ $\tilde{h}_i=0$. So the first odd order that appears is $\tilde{h}_n$, which is a transverse-traceless tensor in the following sense: Let $g_{(n)}:=s^2\tilde{h}_n$ and $g_{(0)}:= s^2(g_{\mathrm{dS}} +h_0)\left|_{s=0}\right.$, then $\operatorname{tr}_{g_{(0)}}g_{(n)}=0$ and $\delta_{g_{(0)}}g_{(n)}=0$. All further $\tilde{h}_i$ can then be computed using $h_0,\tilde{h}_n$.
\item If the dimension $n$ is even:\\
    The conformally rescaled metric $s^2\phi^*g$ is smooth down to $\mathcal{I}^+$ if and only if the obstruction tensor of $\underline{g}+h_0$ vanishes. More precisely, for all $i\in \mathbb{N}_{\geq2}$ and $m\leq \lfloor i/n\rfloor$ there exist $\tilde{h}_{i}^m\in H^\infty(\mathcal{I}^+;s^{-2}S^2T^*\mathbb{S}^n)$ such that for all $N\in\mathbb{N}$
    \begin{equation}
    \label{eq:expansioneven}
        (\phi^* g)(s,\omega)- \left((\underline{g}+h_0)(s,\omega) +\sum_{i=2}^N\sum_{m=0}^{\lfloor i/n\rfloor} s^{i}\log(s)^m \tilde{h}_i^m\right)
    \end{equation}
    lies in $s^{N}H^\infty_b(\Omega;S^2\ ^0T^*M)$. Furthermore, $\tilde{h}_i^m=0$ if $i$ is odd. For $i<n$ the $\tilde{h}_i^m=\tilde{h}_i^0$ can be computed with only $h_0$. $\tilde{h}_n^0$ is a traceless tensor and its divergence is a certain one form computable from $h_0$. All further $\tilde{h}_i^m$ are then determined by $h_0,\tilde{h}_i^0$. The first log-order that appears is $\tilde{h}_n^1$, which vanishes if and only if the obstruction tensor of $\underline{g}+h_0$ vanishes. If it does, all further $\tilde{h}_i^m$ for $m>0$ vanish.
\end{enumerate}
\end{theorem}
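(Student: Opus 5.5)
The plan is an induction on the order of decay. At each order the metric is corrected by a log-polynomial gauge transformation, using the indicial computations of Lemmas \ref{le:410}--\ref{le:413}; the block diagonal form is obtained only at the very end.

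\textbf{Setup.} Start from $g=g_{\mathrm{dS}}+\chi h_0+\tilde h$ with $\tilde h$ log-smooth (Lemma \ref{le:logsmooth}). Write $\underline g+h_0=:g_0'$. By Lemma \ref{le:einstein0order}, $\operatorname{Ric}g_0'-\Lambda g_0'=\mathcal O(s^2)$. The inductive hypothesis at stage $j\ge 2$ is the following. There are a diffeomorphism $\phi_j$ (a time-one flow of a $0$-vector field with log-polynomial coefficients of order $\geq s^2$, restricting to the identity on $\mathcal I^+$) and coefficients $\tilde h_i^m$, $i<j$, with the claimed parity and log structure, such that
\[
\phi_j^*g-\Big(g_0'+\sum_{i=2}^{j-1}\sum_m s^i\log(s)^m\tilde h_i^m\Big)=\tilde{\mathcal O}(s^{j}).
\]
Lemma \ref{le:paritylog} then gives $\operatorname{Ric}-\Lambda$ of the truncated metric as $s^j\sum_q\log(s)^qf^{(q)}+\tilde{\mathcal O}(s^{j+1})$, with the parity constraints on the components of $f^{(q)}$.

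\textbf{Inductive step.} Write the leading remainder as $s^j\sum_m\log(s)^m h_m$. Linearizing $\operatorname{Ric}-\Lambda$ at the truncated metric, whose indicial operator is $I(D\operatorname{Ric}-\Lambda)$ at $g_{(0)}$, gives
\[
I(D\operatorname{Ric}-\Lambda,s\partial_s)\Big(\textstyle\sum_m s^j\log(s)^m h_m\Big)=-s^j\sum_q\log(s)^qf^{(q)}.
\]
I will choose new truncation terms $k$ with components only in slots $3,4$, using Lemma \ref{le:413}. The inputs are $I(\delta G)f=0$, which follows from the Bianchi identity, and, for $j=n+1$, the vanishing of the $2$-component, which follows from parity since $n+1$ is even exactly when $n$ is odd; for even $n$ one must use the log structure instead. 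Adding $k$ to the truncation cancels $f$. The difference $h-k$ then lies in $\ker I(D\operatorname{Ric}-\Lambda)$, so by Lemma \ref{le:412} it equals $-I(\delta^*,s\partial_s)\omega$ modulo a $(0,0,0,\alpha)s^n$ term when $j=n$.

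Using Lemma \ref{le:pullbackdiff} with $W$ dual to $\omega$ (of order $s^j$, so the error is $s^{j+\beta}$), pull back once more; this removes $h-k$ up to higher order. At $j=n$ the undetermined $(0,0,0,\alpha)$ is $\tilde h_n$. Its trace-freeness is automatic, and its divergence condition comes from the $\delta G$ constraint at order $n+1$.

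For even $n$, Lemma \ref{le:412even} handles order $n$. There the forcing $f$ has $4$-component equal to a multiple of $\operatorname{tf}(s^{-n}(\operatorname{Ric}-\Lambda))$, i.e.\ the obstruction tensor. This forces the coefficient $\tilde h_n^1=\tfrac{2\alpha}{n}$, which vanishes iff $O=0$. It also propagates $\log$-powers at most $\lfloor i/n\rfloor$, consistent with Lemma \ref{le:paritylog}. Parity is checked at each step: $I(\delta^*)$ maps even-parity data to the same parity pattern, so odd $i<n$ coefficients vanish.

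\textbf{Convergence and block diagonal form.} Compose the diffeomorphisms; the generating vector fields are summable in decay, as in Proposition \ref{prop:gaugeup}. By Borel summation this yields a single $\phi$ with the full expansion modulo $s^\infty H^\infty_b$. Finally, to reach the block diagonal form, construct boundary normal coordinates. With $\bar g=s^2\phi^*g$ (log-smooth, with $|ds|_{\bar g}=1$ at $s=0$ by the leading form), solve the eikonal equation $|d\rho/\rho|^2_{\phi^*g}=-1$, equivalently $2\bar g^{-1}(ds,d\rho)\rho/s-\dots$, for $\rho=s e^{u}$ as a noncharacteristic first-order ODE in $s$. Then flow along $\nabla\rho$ from $\mathcal I^+$, as in Graham–Lee. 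Log-smoothness, the parity structure and the expansion form survive this change of coordinates because $u$ inherits the expansion and the Einstein equation is invariant; the identification of $\tilde h_n$ and of the obstruction term is likewise invariant.

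\textbf{Main obstacle.} I expect the main obstacle to be the even-$n$ order-$n$ step. One must identify the $4$-component of $f$ at order $n$ with $c_n O$ via the definition \eqref{eq:obstructiontensor}, using that the truncated metric satisfies the hypothesis $\operatorname{Ric}-\Lambda=\mathcal O(s^{n-2})$ in the original $s$-normalization. One must also show that the subsequent log terms all vanish when $O=0$, by checking that without the $\log$ seed the induction never creates one.
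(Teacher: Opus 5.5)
Your overall architecture (an order-by-order gauge induction using Lemmas \ref{le:410}--\ref{le:413}, \ref{le:412even}, \ref{le:pullbackdiff} and \ref{le:paritylog}, followed by Graham--Lee normal coordinates) is the paper's. However, the step at order $n+1$ is justified incorrectly.

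\textbf{Order $n+1$.} For odd $n$ you claim the $2$-component of the forcing vanishes ``by parity''. But at that stage the truncation is $g_{n-1}'+s^n\tilde h_n$, which contains the odd power $s^n$, so the evenness hypothesis of Lemma \ref{le:paritylog} fails. In fact the $0i$-component of $\operatorname{Ric}-\Lambda$ of this truncation at order $s^{n+1}$ is $-\tfrac12 s^{n+1}(\delta_{g_{(0)}}g_{(n)})_i$, which is not zero a priori.

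The divergence condition cannot come from the Bianchi ($\delta G$) constraint either. The second row of $2I(\delta G,\lambda)$ is $2(\lambda-(n+1))$, which vanishes at $\lambda=n+1$, so $I(\delta G,n+1)f=0$ carries no information on $f_2$. For even $n$, parity only gives $f=(0,f_2,0,0)$ at order $n+1$, and the log structure does not help.

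The missing observation is that the second row of $I(D\operatorname{Ric}-\Lambda,\lambda)$ vanishes identically. Since $g$ solves the equation exactly, the relation $I(D\operatorname{Ric}-\Lambda,s\partial_s)(\text{remainder})=-s^{n+1}\sum_q\log(s)^q f^{(q)}$ forces all $2$-components of the forcing to vanish. This supplies the hypothesis of Lemma \ref{le:413}(2) in both parities. Combined with the explicit computation of $\operatorname{Ric}(g_0'+s^n\tilde h_n)_{0i}-\operatorname{Ric}(g_0')_{0i}$, it also yields the transversality $\delta_{g_{(0)}}g_{(n)}=0$. For odd $i<n$, what kills the coefficient is that parity gives $f=(0,f_2,0,0)$ and then $f_2=0$. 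The claimed parity-preservation of $I(\delta^*)$ plays no role.

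\textbf{Block-diagonal step.} Your reason that the expansion ``survives because $u$ inherits the expansion and the Einstein equation is invariant'' is not an argument. If $u$ had a nontrivial log-smooth expansion, the coefficients (the vanishing odd orders, $\tilde h_n$, the first log term) would all change. You would then have to redo the whole analysis in the new coordinates, for a singular eikonal equation with merely log-smooth data.

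The fact you need is already implicit in your construction. All truncation terms lie in slots $3,4$ and everything else is removed by gauge. Hence $s^2\phi^*g$ is block diagonal modulo $s^\infty H^\infty_b$, i.e.\ $1+|ds|^2_{s^2\phi^*g}\in s^\infty H^\infty_b$ (note $|ds|^2=-1$, not $1$). You must then show, by a careful analysis of the characteristic system of the eikonal equation, that $u\in s^\infty H^\infty_b$. The normal-coordinate diffeomorphism is then the identity modulo $s^\infty H^\infty_b$ and leaves every coefficient untouched.

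A minor point: your induction starts at $j=2$. You also need the $j=1$ step, which gauges away the $s\log(s)^m$ terms of $\tilde h$ via Lemma \ref{le:412}, with vanishing forcing by Lemma \ref{le:einstein0order}.
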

\begin{remark}
    (Expansion at $\mathcal{I}^-$) The metric $g$ produced in Theorem \ref{thm:solution} is of the form that we can apply all results of this section in the coordinate $\tilde{s}=\pi-s$. Therefore, this expansion can also be done near $\mathcal{I}^-$, with the exact same proof.
\end{remark} 
\begin{proof}
    The proof proceeds in two parts. First, we prove the expansion of equations \eqref{eq:expansionodd}\eqref{eq:expansioneven}. This uses the structure of the indicial operators involved, and the pullbacks constructed using vector fields discussed in the Lemmas in the section leading up to this proof, section \ref{se:expansion}. This already proves the block diagonal structure of \eqref{eq:blockdiag} modulo terms of order $s^\infty$.\\
    
    In part two, we show that these terms of order $s^\infty$ can be made to vanish as well, using a further pullback, proving the \emph{full} metric is block-diagonal. This is done by constructing boundary normal coordinates. By inspecting the proof of the construction of the boundary normal coordinates carefully, we can show that this last pullback does not change the structure of the expansion already constructed beforehand. \\

    \textbf{Part one: Expansion}\\
    We prove the expansion of equations \eqref{eq:expansionodd}\eqref{eq:expansioneven}. The proof proceeds identically for $n$ even or odd until order $n$. For the series expansion in $s$ around $s=0$ we write $\tilde{\mathcal{O}}(s^\alpha)$ for log-smooth terms in $s$ with almost $s^\alpha$ decay compared to $\frac{-\mathrm{d}s}{s^2}$ and $H^\infty(\mathcal{I}^+;s^{-2}S^2T^*\mathbb{S}^n)$ behavior in $\omega\in\mathbb{S}^n$.\\
    
    \underline{1. Elimintating $s^1$ terms.} Set $g_0':=\underline{g}+\chi h_0$ and $\tilde{h}'=\tilde{h} + (g_{\mathrm{dS}}-\underline{g})\in s^\beta H^\infty_b$ log-smooth at $\mathcal{I}^+$. We write 
    \begin{equation}
    \label{eq:linearized einstein}
        0=\operatorname{Ric}(g_0' + \tilde{h}')-\Lambda(g_0' + \tilde{h}') = \operatorname{Ric}g_0' -\Lambda g_0' + \int_0^1 D_{g_0'+\tau\tilde{h}'}\operatorname{Ric}\tilde{h}' -\Lambda \tilde{h}'\mathrm{d}\tau.
    \end{equation}
    We investigate this equation at order $\tilde{\mathcal{O}}(s)$. Because of Lemma \ref{le:einstein0order} the Einstein operator applied to $g_0'$ vanishes at that order. In view of Lemma \ref{le:logsmooth}, we can replace the operator in the integral by its indicial operator $I(D\operatorname{Ric}-\Lambda,s\partial_s)$ and get
    $$
    I(D\operatorname{Ric}-\Lambda,s\partial_s)\sum_{m=0}^{m_1}s\log(s)^m \tilde{h}_{1,m}=0.
    $$
    Now, applying Lemma \ref{le:412} with $H^\infty$ dependence on $\omega\in\mathcal{I}^+$, we get for $m=0,...,m_1$ $\omega_{1,m}\in H^\infty(\mathcal{I}^+;s^{-1}T^*\mathbb{S}^n)$ such that for $\omega_1:=\sum_{m=0}^{m_1}s\log(s)^m\omega_{1,m}$
    \begin{equation}
    \label{eq:blabla}
    \sum_{m=0}^{m_1}s\log(s)^m \tilde{h}_{1,m} + I(\delta^*,s\partial_s)\omega_1=0.
    \end{equation}
    We multiply $\omega_1$ by cutoffs $\chi(s/\epsilon)$ for arbitrary $\epsilon$ then, near $\mathcal{I}^+$, \eqref{eq:blabla} is still fulfilled. By choosing $\epsilon$ sufficiently small, we can make its $s^{1^-}H^{d_{n+1}+1}_b(\Omega;{}^0T^*M)$ arbitrarily small, for $1^-:=1-\delta$ with $\delta>0$ arbitrarily small. We call this cut-off version still $\omega_1$. We define $V_1$ via $\omega_1=2g_0(V_1,\cdot)$ and get for the pullback $\phi^*_{V_1} g$
    $$
    \phi^*_{V_1} g = \underbrace{g_0'}_{:=g_1'}+ \underbrace{\sum_{m=0}^{m_1}s\log(s)^m \tilde{h}_{1,m}+I(\delta^*,s\partial_s)\omega}_{=0 +\mathcal{O}(s^\infty)} + \tilde{\mathcal{O}}(s^2),
    $$
    where we used Lemma \ref{le:pullbackdiff}. 
    \\
    
    \underline{2. Terms of order less than $n$}
    Assume inductively that for $i\in\mathbb{N}_0$ $i\leq n-1$ that $V_{i-1}$ has been constructed, such that 
    $$
    \phi^*_{V_{i-1}} g = \underbrace{g_{0}' +\sum_{j=2}^{i-1}s^{j}\tilde{h}_{j}}_{:=g_{i-1}'} + \tilde{\mathcal{O}}(s^{i})
    $$
    and the $\tilde{h}_j=0$ if $j$ is odd. Assume further that 
    $$
    \operatorname{Ric}g_{i-1}'-\Lambda g_{i-1}'\equiv0 \bmod \tilde{\mathcal{O}}(s^{i}). 
    $$
    Write equation \eqref{eq:linearized einstein} with $g_0'$ replaced with $g_{i-1}'$ and $\tilde{h}'$ replaced by $\phi^*_{V_{i-1}} g-g_{i-1'}$. Notice, how $g_{i-1}'$ is of the form such that we can apply Lemma \ref{le:paritylog} (for $m=0$, $k= i$) to get 
    \begin{equation}
    \label{eq:iorder}
    I(D\operatorname{Ric}-\Lambda,s\partial_s)\sum_{m=0}^{m_i}s^{i}\log(s)^m \tilde{h}_{i,m_i} = s^{i}f+\tilde{\mathcal{O}}(s^{i+1})
    \end{equation}
    for an $f\in H^\infty_b(\mathcal{I}^+;s^{-2}T^*\mathbb{S}^n)$. Because of the linearized second Bianchi identity $I(\delta G,s\partial_s)(s^{i}f)=s^{i}I(\delta G,i)f=0$. Lemma \ref{le:413} then gives 
    $$
    \tilde{h}_i \in H^\infty(\mathcal{I}^+;s^{-2}S^2T^*\mathbb{S}^n),
    $$
    such that $I(D\operatorname{Ric}-\Lambda,i)\tilde{h}_i=f$.\footnote{In the $n=$ even dimensional case, these $\tilde{h}_i$ are called $\tilde{h}_i^0$ in the statement of the Theorem.} By Lemma \ref{le:412} we have 
    $$
    \sum_{m=0}^{m_i}s^{i}\log(s)^m \tilde{h}_{i,m_i} - s^{i}\tilde{h}_i\in \operatorname{ker}I(D\operatorname{Ric}-\Lambda,s\partial_s)
    $$
    can be expressed as $-I(\delta^*,s\partial_s)\sum_{m=0}^{m_i}s^{i}\log(s)^m\omega_{i,m}:= -I(\delta^*,s\partial_s)\omega_i$ for one forms $\omega_{i,m}$ as above. Again, we can cut off $\omega_i$ like before such that it has $s^{1^-}H^{d_n+i}_b$ norm smaller than $1/2^{i}$, which later gives us an error $\mathcal{O}(s^\infty)$. Now, defining $\dot{V_i}$ via $\omega_i=2g_0(\dot{V_i},\cdot)$ and $V_i := V_{i-1}+\dot{V_i}$ we have
    $$
    \phi_{V_i}^* g= \phi^*_{V_{i-1}} + \underbrace{\sum_{m=0}^{m_i}s^{i}\log(s)^m \tilde{h}_{i,m_i}  +I(\delta^*,s\partial_s)\omega_i}_{s^{i}\tilde{h}_i + \mathcal{O}(s^\infty)} +\tilde{\mathcal{O}}(s^{i+1}).
    $$
    Now, if $i$ is odd, then $f$ was of the form $(0,f_2,0,0)$ by Lemma \ref{le:paritylog}. But because the second row of $I(D\operatorname{Ric}-\Lambda,\lambda)$ vanishes for all $\lambda$, equation \eqref{eq:iorder} implies that indeed $f_2=0$ and therefore $f=0$. This then allows us to choose $\tilde{h}_i=0$, so no term of odd order to the expansion is added. This closes the induction and we get the desired expansion up to order $n-1$.\\
    
    \underline{3. The case $n$ odd} \\
    \textit{3.1 Order $n$} We have constructed $V_{n-1}$ such that 
    $$
    \phi^*_V g = \underbrace{g_0' + \sum_{j=2}^{n-1}s^j\tilde{h}_j}_{:=g_{n-1}'} + \tilde{\mathcal{O}}(s^n),
    $$
    with $\tilde{h}_j=0$ for $j$ odd. Also, the lowest order terms of the Einstein operator applied to $g_{n-1}'$ are of order $s^n$. Using log-smoothness and Lemma \ref{le:paritylog}, the same argument as for the inductive step gives
    \begin{equation}
    I(D\operatorname{Ric}-\Lambda,s\partial_s)\sum_{m=0}^{m_n}s^{n}\log(s)^m \tilde{h}_{n,m_n} = s^{n}f+\tilde{\mathcal{O}}(s^{n+1}).
    \end{equation}
    As $n$ is odd, $f$ is of the form $(0,f_2,0,0)$. As the second line of $I(D\operatorname{Ric}-\Lambda,\lambda)$ vanishes for all $\lambda$, also $f_2=0$ and therefore $f=0$. This shows that 
    $$
    \sum_{m=0}^{m_n}s^{n}\log(s)^m \tilde{h}_{n,m_n}\in \operatorname{ker}I(D\operatorname{Ric}-\Lambda,s\partial_s),
    $$
    and now Lemma \ref{le:412} for $\lambda=n$ gives $\omega_{n,m}$ and 
    $$
    \tilde{h}_n\in H^\infty_b(\mathcal{I}^+;s^{-2}\operatorname{ker}\operatorname{tr}_{g_{(0)}})
    $$
    such that for $\omega_n:=\sum_{m=0}^{m_n}s^n\log(s)^m\omega_{n,m_n}$
    $$
    \sum_{m=0}^{m_n}s^{n}\log(s)^m \tilde{h}_{n,m_n} + I(\delta^*,s\partial_s)\omega_n=s^n\tilde{h}_n.
    $$
    Again, by multiplying with a suitable cutoff, we can get its $s^{1^-}H^{d_{n+1}+n}$ norm to be smaller than $1/2^n$. Defining $\dot{V_n}$ via $\omega_n=2g_0(\dot{V_n},\cdot)$ and $V_n:=V_{n-1}+\dot{V_n}$ we get the expansion to the next order:
    $$
    \phi^*_{V_n}g=\phi^*_{V_{n-1}}g + \underbrace{\sum_{m=0}^{m_n}s^{n}\log(s)^m \tilde{h}_{n,m_n} +I(\delta^*,s\partial_s)\omega_n}_{s^n\tilde{h}_n + \mathcal{O}(s^\infty)}+\tilde{\mathcal{O}}(s^{n+1}).
    $$
    \\
    \textit{3.2 Order $n+1$ and extracting information on $\tilde{h}_n$} Set $g_n':=g_{n-1}' + s^n\tilde{h}_n$. Repeating the same argument as before gives 
    \begin{equation}
        \label{eq:extractstart}
       ( \operatorname{Ric}g_n'-\Lambda g_n')+  I(D\operatorname{Ric}-\Lambda,s\partial_s)\sum_{m=0}^{m_{n+1}}s^{n+1}\log(s)^m=0 +\tilde{\mathcal{O}}(s^{n+2}),
    \end{equation}
    and $\operatorname{Ric}g_n'-\Lambda g_n'=s^{n+1}f + \mathcal{O}(s^{n+2})$. Because the entire second row of $I(D\operatorname{Ric}-\Lambda,\lambda)$ vanishes for all $\lambda$, we have that $f=(f_1,0,f_3,f_4)$. The second Bianchi identity gives $I(\delta G,n+1)f=0$ and because $f_2=0$ we can still apply Lemma \ref{le:413} to deduce that there exists $\tilde{h}_{n+1}$ such that $I(D\operatorname{Ric}-\Lambda,n+1)\tilde{h}_{n+1}=f$. From there, we continue exactly like in the inductive step to find $V_{n+1}$ such that
    $$
    \phi^*_{V_{n+1}}g = g_n'+\tilde{h}_{n+1}+\tilde{\mathcal{O}}(s^{n+2}).
    $$
    \\
    
    We extract more information on $\tilde{h}_n$ from the vanishing of $f_2$. Writing $g_n'=g_{n-1}'+s^n\tilde{h}_n$ we get 
    $$
    \operatorname{Ric}g_n'-\Lambda g_n'\equiv \operatorname{Ric}g_{n-1}'-\Lambda g_{n-1}'+(D_{g_{0}'}\operatorname{Ric}(s^n\tilde{h}_n) -\Lambda(s^n\tilde{h}_n)) \bmod \mathcal{O}(s^{n+2}).
    $$
    This uses that $g_0'$ differs from $g_{n-1}'$ only at order $s^2$ when changing the point of linearization. Now, as we know from the argument for order $s^n$, $\operatorname{Ric}g_{n-1}'-\Lambda g_{n-1}'$ vanishes at that order. Therefore, by applying Lemma \ref{le:paritylog} to $g_{n-1}'$ we get that the 2-component of $\operatorname{Ric}g_{n-1}'-\Lambda g_{n-1}'$ also vanishes at order $s^{n+1}$. Together with the vanishing of $f_2$, yields that the 2-component of $(D_{g_{0}'}\operatorname{Ric}(s^n\tilde{h}_n)\equiv \operatorname{Ric}(g_0'+s^n\tilde{h}_n)-\operatorname{Ric}(g_0') \bmod \mathcal{O}(s^{2n})$ is zero at order $s^{n+1}$. Working in the local frame $e_\mu$ this is equivalent $\operatorname{Ric}(g_0'+s^n\tilde{h}_n)_{0i}-\operatorname{Ric}(g_0')_{0i}\equiv0\bmod\mathcal{O}(s^{n+2})$. Let $g_{(n)}:=s^2\tilde{h}_n\in\operatorname{ker}\operatorname{tr}_{g_{(0)}}$. Computing the connection coefficients of the metric $g_0'+s^n\tilde{h}_n$, using \eqref{eq:connectiondef} we get $\Gamma^\lambda_{\mu\nu}\equiv 0$, except for
    $$
    \begin{aligned}
        \Gamma^0_{ij}&=-(g_{(0)})_{ij} -\frac{s^n}{2}(g_{(n)})_{ij},\\ \Gamma^l_{i0}&\equiv-\delta^l_i+\frac{s^n}{2}(g_{(0)})^{lm}(g_{(n)})_{mi},\\
        \Gamma^l_{0j}&\equiv\frac{s^n}{2}(g_{(0)})^{lm}(g_{(n)})_{mi},\\ \Gamma^l_{ij}&\equiv s\Gamma(g_{(0)})^l_{ij}+s^{n+1}(g_{(0)})^{lm}\left(\Gamma(g_{(n)})_{mij}-(g_{(n)})_{mb}\Gamma({g_{(0)}}^b_{ij}\right),
    \end{aligned}
    $$
    where the equivalence is modulo $\mathcal{O}(s^{n+2})$ (In fact even $\mathcal{O}(s^{2n})$). Now we can use \eqref{eq:riccicoords} to compute
    $$
    0\stackrel{!}{\equiv}\operatorname{Ric}(g_0'+s^n\tilde{h}_n)_{0i}-\operatorname{Ric}(g_0')_{0i}\equiv \frac{s^{n+1}}{2} (g_{(0)})^{lm}(g_{(n)})_{mi;l}=-\frac{s^{n+1}}{2}\left(\delta_{g_{(0)}}g_{(n)}\right)_i.
    $$\\
    
    \textit{3.3 Arbitrary higher order $i$} This proceeds now exactly like the proof of the general order $i\leq n-1$. The only difference is that the contribution of $s^{n}\tilde{h}_n$ makes it so that the expansion no longer contains only even powers in $s$. Therefore, we cannot eliminate odd orders for $i\geq n$ anymore. By construction, $V_i$ converges in every $s^{1^-}H^k_b$ and therefore lies in $s^\beta H^\infty_b(\Omega;{}^0TM)$\footnote{In fact its $\mathcal{C}^k_b$-norm can be made arbitrarily small}. This closes the proof of the odd-dimensional case.\\
    
    \underline{4. Even-dimensional case}\\
    \textit{4.1 Order $n$} We have constructed $V_{n-1}$ such that 
    $$
    \phi^*_V g = \underbrace{g_0' + \sum_{j=2}^{n-1}s^j\tilde{h}_j}_{:=g_{n-1}'} + \tilde{\mathcal{O}}(s^n),
    $$
    with $\tilde{h}_j=0$ for $j$ odd. Also the lowest order terms of the Einstein operator applied to $g_{n-1}'$ are of order $s^n$. Using log-smoothness and Lemma \ref{le:paritylog}, the same argument as for the inductive step gives
    \begin{equation}
    I(D\operatorname{Ric}-\Lambda,s\partial_s)\sum_{m=0}^{m_n}s^{n}\log(s)^m \tilde{h}_{n,m_n} = s^{n}f+\tilde{\mathcal{O}}(s^{n+1}).
    \end{equation}
    In contrast to when $n$ is odd, we cannot use Lemma \ref{le:paritylog} to show that $f=0$. Instead, we must use the more complicated Lemma \ref{le:412even}, to get one-forms $\omega_{n,m}\in H^\infty(\mathcal{I}^+;s^{-1}T^*\mathbb{S}^n)$ such that for $\omega_n:=\sum_{m=0}^{m_n}s^n\log(s)^m\omega_{n,m}$ we have 
    \begin{equation}
        \sum_{m=0}^{m_n}s^{n}\log(s)^m \tilde{h}_{n,m_n} + I(\delta^*,s\partial_s)\omega_n=s^n(\tilde{h}_n^0+\log(s)\tilde{h}_n^1),
    \end{equation}
    where 
    $$
    \tilde{h}_n^0,\tilde{h}_n^1\in H^\infty(\mathcal{I}^+;s^{-2}\operatorname{ker}\operatorname{tr}_{g_{(0)}}).
    $$ 
    Furthermore, $\frac{2}{n}\tilde{h}_n^1=f_4$ is the 4-component of $f$. Comparing this to definition \eqref{eq:obstructiontensor} we see that $f_4$ is a constant times the obstruction tensor. Therefore $\tilde{h}_n^1$ vanishes, if and only if the obstruction tensor of $g_0'=\underline{g}+h_0$ vanishes. After cutting off $\omega_n$ to ensure it has small norm in $s^{1^-}H^{d_{n+1}+n}_b$, we define $\dot{V}_n$ via $\omega_n=2g_0(\dot{V}_n,\cdot)$ and $V_n:=V_{n-1}+\dot{V}_n$ to get
    $$
    \phi^*_{V_n}g=\phi^*_{V_{n-1}}g + \underbrace{\sum_{m=0}^{m_n}s^{n}\log(s)^m \tilde{h}_{n,m_n} +I(\delta^*,s\partial_s)\omega_n}_{s^n(\tilde{h}_n^0+\log(s)\tilde{h}_n^1) + \mathcal{O}(s^\infty)}+\tilde{\mathcal{O}}(s^{n+1}).
    $$
    \textit{4.2 General order $i$} We proceed inductively. Let $l\in\mathbb{N}$ and $k=0,...,n-1$ and $i=ln+k$ (Notice that the first case, $l=1$, $k=0$ is $i=n$, which we have already proven, so suppose $i>n$). Suppose we have constructed $V_{i-1}\in s^{1^-}H^\infty_b(\Omega;{}^0TM)$ such that 
    $$
    \phi^*_{V_{i-1}}g=\underbrace{g_0'+\sum_{j=2}^{i-1}\sum_{m=0}^{\lfloor j/n\rfloor} s^j\log(s)^m\tilde{h}_j^m}_{:=g_{i-1}'}+\tilde{\mathcal{O}(s^{i})}
   $$
   with $\tilde{h}_j^m=0$ if $j$ is odd. Assume further that
   $$
   \operatorname{Ric}g_{i-1}'-\Lambda g_{i-1}'\equiv0\bmod \tilde{O}(s^{i}).
   $$
   Now, we write equation \eqref{eq:linearized einstein} with $g_0'$ replaced by $g_{i-1}'$ and $\tilde{h}'$ replaced by $\phi^*_{V_{i-1}}g-g_{i-1}'$ to get 
   $$
   I(D\operatorname{Ric}-\Lambda,s\partial_s)\sum_{m=0}^{m_i}s^{i}\log(s)^m \tilde{h}_{i,m_i}  + (\operatorname{Ric}g_{i-1}'-\Lambda g_{i-1}')\equiv0\bmod\tilde{\mathcal{O}}(s^{i+1}) .
   $$
   Notice, how $g_{i-1}'$ is of the form that we can apply Lemma \ref{le:paritylog} for the same $l$ and $k$ as in this proof. So we get $f^{(q)}\in H^\infty(\mathcal{I}^+;S^2T\mathbb{S}^n)$ for $q=0,...,l$, such that
   $$
    I(D\operatorname{Ric}-\Lambda,s\partial_s)\sum_{m=0}^{m_i}s^{i}\log(s)^m \tilde{h}_{i,m_i} =\sum_{q=0}^ls^{i}\log(s)^qf^{(q)}.
   $$
    Because of the second Bianchi identity, $I(\delta G,s\partial_s)\sum_{q=0}^ls^{i}\log(s)^qf^{(q)}=0$. Expanding this out, we get at the highest log order $I(\delta G,i)f^{(l)}=0$. This lets us use Lemma \ref{le:413}\footnote{The fact that the second line of $I(D\operatorname{Ric}-\Lambda,\lambda)$ vanishes for all $\lambda$, lets us deduce that $f_2^{(q)}$, the two component of all forcing terms, is $=0$. This lets us use Lemma \ref{le:413} also in the case $i=n+1$} to find $\tilde{h}_i^l\in H^\infty_b(\mathcal{I}^+;s^{-2}T^*\mathbb{S}^n)$, such that $I(D\operatorname{Ric}-\Lambda,i)\tilde{h}_i^l=f^{(l)}$ and therefore $$
    \begin{aligned}
    I(D\operatorname{Ric}&-\Lambda,s\partial_s)\left(\sum_{m=0}^{m_i}s^{i}\log(s)^m \tilde{h}_{i,m_i} - s^{i}\log(s)^l\tilde{h}_i^l \right)\\
    =&\sum_{q=0}^{l-1}s^{i}\log(s)^qf^{(q)} -s^{i}\log(s)^{l-1}\partial_\lambda I(D\operatorname{Ric}-\Lambda,\lambda)\left|_{\lambda=i}\right.\\
    &-s^{i}\log(s)^{l-2}\partial_\lambda^2 I(D\operatorname{Ric}-\Lambda,\lambda)\left|_{\lambda=i}\right.\\
    =&\sum_{q=0}^{l-1}s^{i}\log(s)^qf'^{(q)}{},
    \end{aligned}
    $$
    for new functions $f'^{(q)}$, but crucially, with the highest order of log reduced by one. Applying the linearized second Bianchi identity to this equation and looking at the highest log order, we get that $f'^{l-1}$ lies in the kernel of $I(\delta G,i)$. Proceeding inductively, we get $\tilde{h}_i^q\in H^\infty_b(\mathcal{I}^+;s^{-2}T^*\mathbb{S}^n)$ for all $q=0,...,l$, such that 
    $$
    I(D\operatorname{Ric}-\Lambda,s\partial_s)\left(\sum_{m=0}^{m_i}s^{i}\log(s)^m \tilde{h}_{i,m_i} - \sum_{q=0}^{l}s^{i}\log(s)^q\tilde{h}_i^q \right)=0.
    $$
    So, as $i\neq n$, we can apply Lemma \ref{le:412} to get one-forms $\omega_{i,m}$ \footnote{If $l> m_i$, then we get $m_i'=l$ for the highest log order of $\omega_i$} such that for $\omega_i$ defined as $\sum_{m=0}^{m_i}s^{i}\log(s)^m\omega_{i,m}$ we have
    $$
    \sum_{m=0}^{m_i}s^{i}\log(s)^m \tilde{h}_{i,m_i} +I(\delta^*,s\partial_s)\omega_i=0.
    $$
    Using the same cutoff argument as before, we can make the $s^{1^-}H^{d_{n+1}+i}_b$ norm of $\omega_i$ smaller than $1/2^{i}$. Defining $\dot{V_i}$ via $\omega_i=2g_0(\dot{V}_i,\cdot)$ and $V_i:=V_{i-1}+\dot{V}_i$ we get
    $$
    \phi_{V_i}^* g= \phi^*_{V_{i-1}} + \underbrace{\sum_{m=0}^{m_i}s^{i}\log(s)^m \tilde{h}_{i,m_i}  +I(\delta^*,s\partial_s)\omega_i}_{\sum_{q=0}^{l}s^{i}\log(s)^q\tilde{h}_i^q  + \mathcal{O}(s^\infty)} +\tilde{\mathcal{O}}(s^{i+1}).
    $$
    Notice that if $i=nl+k$ is odd, then Lemma \ref{le:paritylog} gives us that the $f^{(q)}$ have the form $(0,f^{(q)}_2,0,0)$. But as we have seen multiple times now, $f^{(q)}_2$ must be 0 by the vanishing of the second line of $I(D\operatorname{Ric}-\Lambda,\lambda)$. This gives $f^{(q)}=0$ and therefore lets us choose $\tilde{h}_i^q=0$. This shows that the expansion contains only even powers of $s$, which closes the induction. 
    \\
    
    If the obstruction tensor of $\underline{g}+h_0$ was equal to zero in the step at order $n$ above, in each subsequent step of the above proof we can use Lemma \ref{le:paritylog} for $l=0$ to conclude that the forcing terms $f^{(q)}=0$ for $q\geq 1$. This allows us to choose $\tilde{h}_i^q=0$ for $q\geq 1$ and shows that the metric is smooth down to $\mathcal{I}^+$.
    \\
    
    Using similar arguments as in the $n=$ odd case, by investigating the vanishing of the $0i$ component of $D\operatorname{Ric}g_n'-\Lambda g_n'$ now at the even order $n+2$, we get an equation for the divergence $\tilde{h}_n^0$, but the expression is no longer simple, so we do not compute it explicitly.
    \\
    
    By construction, the sequence $V_i$ converges in every $s^{1^-}H_b^k$ norm and therefore converges to an element in $s^\beta H_b^\infty(\Omega;{}^0TM)$. This finishes the proof of the expansion in the even dimensional case and of part one.\\
    
\textbf{Part two: Block diagonal structure}\\
Now, we solve away the order $s^\infty$ remainder terms obstructing the block-diagonal structure of the metric near $s=0$. To wit, we construct the boundary normal coordinates analogous to \cite{GRAHAM1991186} in a collar neighborhood of $s=0$. In order to not destroy the already constructed expansion, we need to ensure that this last pullback changes the metric only at order $s^\infty H^\infty_b$. Renaming the $\phi_V^*g$ constructed in part one of the proof to $g$, we assume that $g$ has the form \eqref{eq:expansionodd} or \eqref{eq:expansioneven}.
\\

Analogously to \cite{GRAHAM1991186}, we wish to construct a defining function $\tilde{s}$ for $\mathcal{I}^+$ that fulfills \begin{equation}
\label{eq:deffunccond}
    |\mathrm{d}\tilde{s}|^2_{\tilde{s}^2g}=-1
\end{equation}
in a neighborhood of $\mathcal{I}^+$ in $M$\footnote{In the Riemannian setting discussed in \cite{GRAHAM1991186}, the analogous condition reads $|\mathrm{d}\tilde{s}|_{\tilde{s}^2g}=1$}. Notice that our initial defining function $s$ already fulfills $|\mathrm{d}s|_{s^2g}^2\equiv-1 \bmod s^\infty H^\infty_b$ by construction. Setting $\tilde{s}=e^{u}s$ for a function $u$ on $M$ with $u=0$ on $\mathcal{I}^+$, the condition \eqref{eq:deffunccond} is equivalent to 
\begin{equation}
\label{eq:ueq}
    \frac{1+|\mathrm{d}s|^2_{s^2g}}{s}+s|\mathrm{d}u|^2_{s^2g}+2\langle\mathrm{d}s,\mathrm{d}u\rangle_{s^2g}=0.
\end{equation}
This is a non-characteristic first-order ODE for $u$ and we immediately get a solution in a neighborhood of $\mathcal{I}^+$. But in order to prove that $u$ decays fast enough toward $\mathcal{I}^+$, the proof of existence has to be inspected in detail. This is the content of proposition \ref{prop:technicalblockdiag}, which for legibility is stated after the proof of this theorem. The proposition tells us that $u$ lies in $s^\infty H^\infty.$\\

Now we proceed to construct the final pullback of our metric $g$. Set $\tilde{s}:=e^{u}s$ and $V:=-\operatorname{grad}_{\tilde{s}^2g}\tilde{s}$. Let $\gamma_\omega(\lambda)$ be the integral curve of $V$ starting at $\omega\in \mathcal{I}^+$. Then by direct computation we get $\partial_\lambda \tilde{s}(\gamma_w(\lambda)) = -|\mathrm{d}\tilde{s}|_{\tilde{s}^2g}^2=1$ and $\tilde{s}(0)=0$ and therefore $\lambda=\tilde{s}$. So we get a diffeomorphism $\Phi$ from $[0,\epsilon]_{\tilde{s}}\times\mathcal{I}^+_\omega$ to a neighborhood of $\mathcal{I}^+$ in $M$ by letting $\Phi(\tilde{s},\omega)=\gamma_\omega(\tilde{s})$. By construction (or by direct computation in a chart), the metric $\Phi^*g$ is of the desired block-diagonal form $\frac{-\mathrm{d}\tilde{s}^2 + H(\tilde{s},\omega;\mathrm{d}\omega)}{\tilde{s}^2}$. A quick computation shows that $-\operatorname{grad}_{\tilde{s}^2g}\tilde{s}\equiv \partial_s\bmod s^\infty H^\infty_b\equiv \partial_{\tilde{s}} \bmod s^\infty H^\infty_b$. Using a coordinate system on $\mathcal{I}^+$ we can write
$$
\begin{aligned}
\Phi(\tilde{s},\omega)&=(0,\omega) +\int_0^{\tilde{s}}-\operatorname{grad}_{\tilde{s}^2g}\tilde{s}\left|_{\gamma_\omega(t')}\right.\mathrm{d}t'\\
&\equiv(0,\omega)+\int_0^{\tilde{s}} (1,0)\mathrm{d}t'\bmod s^\infty H^\infty_b\\
&\equiv (\tilde{s},\omega)\bmod s^\infty H^\infty_b
\end{aligned}
$$
and therefore
$$
H_{ij}=\partial_i\Phi^\mu\partial_j\Phi^\nu g_{\mu\nu}\equiv g_{ij} \bmod s^\infty H^\infty_b.
$$
This proves that this last pullback only changes the metric at infinite order of decay and therefore does not change the constructed asymptotic expansion. This concludes the proof of the theorem.
\end{proof}
This last proposition is concerned with the technical part of the proof of the decay behavior of the solution $u$ of \eqref{eq:ueq}. It is used in the proof of the preceding theorem \ref{thm:expansion}.
\begin{proposition}
\label{prop:technicalblockdiag}
    Let $n\geq3$ and let $g$ be the metric produced in part one of the proof of theorem \ref{thm:expansion}, satisfying \eqref{eq:expansionodd} or \eqref{eq:expansioneven}.\footnote{With $g$ in the place of the $ \phi^*g$ of the Theorem} Let $u$ be the solution to \eqref{eq:ueq}, namely
    $$
    \frac{1+|\mathrm{d}s|^2_{s^2g}}{s}+s|\mathrm{d}u|^2_{s^2g}+2\langle\mathrm{d}s,\mathrm{d}u\rangle_{s^2g}=0.
    $$
    Then $u$ lies in $s^\infty H^\infty_b$.
\end{proposition}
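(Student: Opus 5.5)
We treat \eqref{eq:ueq} as a noncharacteristic first-order nonlinear PDE for $u$, transversal to $\mathcal{I}^+$. Set $\bar g:=s^2g$ and $F:=(1+|\mathrm{d}s|^2_{\bar g})/s$. The first step is to record that $F\in s^\infty H^\infty_b$. By part one of the proof of Theorem \ref{thm:expansion}, $\bar g$ is of the form $-\mathrm{d}s^2+H$ modulo $s^\infty H^\infty_b$, with $H$ log-smooth and positive definite. Hence $\bar g^{-1}(\mathrm{d}s,\mathrm{d}s)+1\in s^\infty H^\infty_b$, and dividing by $s$ keeps this class.

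Next we rewrite the equation as a transport equation along the vector field $X:=2\,\bar g^{-1}(\mathrm{d}s,\cdot)^\sharp$, that is,
$$Xu=-F-s|\mathrm{d}u|^2_{\bar g}.$$
Since $\bar g^{-1}(\mathrm{d}s,\mathrm{d}s)=-1+s^\infty H^\infty_b$ and $\bar g^{-1}(\mathrm{d}s,\mathrm{d}x^i)\in s^\infty H^\infty_b$, we have $X=-2\partial_s+Y$ with $Y$ having coefficients in $s^\infty H^\infty_b$. Existence of $u$ near $\mathcal{I}^+$ with $u|_{\mathcal{I}^+}=0$ follows from the method of characteristics, since $X$ is transversal to $s=0$. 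For the decay, the plan is a bootstrap in weights that mirrors Lemmas \ref{le:flowsobolev} and \ref{le:flowdif}. Integrating along the integral curves of $X$ starting at $\mathcal{I}^+$ gives
$$u(s,\omega)=\tfrac12\int_0^s\bigl(F+s'|\mathrm{d}u|^2_{\bar g}\bigr)\circ\gamma\,\mathrm{d}s'.$$
First we get a crude bound: smallness of $u$ and $\mathrm{d}u$ in $\mathcal{C}^1$ for small $s$, obtained from continuity and Grönwall on a short interval. Then we improve by induction on the weight. If $u\in s^{N}\mathcal{C}^0$ and $\mathrm{d}u\in s^{N-1}\mathcal{C}^0$, the quadratic term $s|\mathrm{d}u|^2$ lies in $s^{2N-1}$, and $F\in s^\infty$. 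Integration then gains one power, so $u\in s^{\min(2N,\,\infty)}$, which improves the weight as long as $N\geq1$. Alternatively, and more cleanly, we use the factorization $s|\mathrm{d}u|^2=(s\,\mathrm{d}u)\cdot\mathrm{d}u$ so that the equation is linear in $u$ with a small coefficient, $(-2\partial_s+Y+s\,\bar g^{-1}(\mathrm{d}u,\cdot)^\sharp)u=-F$. Integrating $\partial_s$ from $0$ then gives $u=O(s^{N+1})$ whenever the right-hand side is $O(s^N)$, and we iterate to infinite order.

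Finally we need regularity: arbitrary $b$-derivatives must be in $s^\infty L^2$ (equivalently, to be safe, $\mathcal{C}^0$ with all $s^N$ weights, then convert using compactness). We commute $D_b^k$ through the transport equation. Using $[s\partial_s,\partial_s]=-\partial_s$ and the fact that $\partial_\omega$ commutes with $\partial_s$, the differentiated equation has the form $(-2\partial_s+\dots)D_b^ku=A_k+B_k\cdot D_b^k u$. Here $A_k$ involves lower derivatives already known to be $O(s^\infty)$ together with derivatives of $F$ and $Y$, and $B_k$ is bounded. Grönwall plus integration from $s=0$ then gives $D_b^ku\in s^\infty\mathcal{C}^0$ by induction, exactly as in the inductive step of Lemma \ref{le:flowsobolev}. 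Restricting to each chart with the partition of unity ${}^\uparrow\varphi,{}^\downarrow\varphi$ concludes the proof.

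The main obstacle is making the nonlinear term and the initial smallness rigorous near $s=0$. Because \eqref{eq:ueq} is singular-looking, with a $1/s$ factor, we must make sure that the characteristic solution with $u=0$ on $\mathcal{I}^+$ is actually $C^1$ up to the boundary with $\mathrm{d}u$ bounded, before the bootstrap can start. I would handle this by solving for $v:=u/s$ via a fixed-point argument in $s\,\mathcal{C}^1_b$ on a thin collar. This works because $F\in s^\infty$ makes the source arbitrarily small there.
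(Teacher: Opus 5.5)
Your route differs from the paper's: you treat the equation as a transport PDE in $(s,\omega)$, whereas the paper works with the full characteristic system in $(s,x,z,p)$. Your zeroth-order decay argument works and is more direct than the paper's. Since $F=(1+|\mathrm{d}s|^2_{\bar g})/s\in s^\infty H^\infty_b$, you write the equation as $Zu=-F$ with $Z=X+s\operatorname{grad}_{\bar g}u$ and integrate along integral curves of $Z$ down to $\mathcal{I}^+$. This gives $|u|\le C_Ns^N$ for every $N$, and it only needs $u$ to be $\mathcal{C}^1$ up to $\mathcal{I}^+$ with $u|_{\mathcal{I}^+}=0$, which the method of characteristics supplies.

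Your fixed-point scheme for $v=u/s$ in $s\,\mathcal{C}^1_b$ would not close. The nonlinearity contains $|\partial_\omega u|^2$, and a transport solve does not recover the tangential derivative lost on the right-hand side. You do not need it, though.

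\textbf{The gap is the regularity step.} Commuting $D_b^k$ through the equation and integrating from $s=0$ is only an a priori estimate. It presupposes that $D_b^ku$ exists, is continuous up to $\mathcal{I}^+$, and vanishes there, since that boundary value is what starts your Gr\"onwall integration. Nothing you have established gives this.
\begin{itemize}
\item The coefficients coming from $\bar g$ are only conormal at $\mathcal{I}^+$: there are terms $s^i\log(s)^m$ for even $n$, and the remainders lie merely in $s^NH^\infty_b$.
\item So the characteristic system has only finitely many classical $s$-derivatives, and characteristics yield $u$ of finite regularity only (the paper gets $\mathcal{C}^4$).
\item You cannot appeal to interior smoothness either, because $u$ at $s>0$ is obtained by flowing out of $\mathcal{I}^+$ through the rough region.
\end{itemize}

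This upgrade is exactly the nontrivial content of the paper's proof.
\begin{itemize}
\item It differentiates the characteristic system for $(s,x,z,p)$ in the initial parameter $r$.
\item Every $\partial_s$-derivative of the coefficients produced by the chain rule comes with a factor $\partial_r^ms$, which vanishes at $t=0$ and is therefore $O(s)$. Conormal bounds thus suffice to get smoothness in $r$ and $|\partial_r^k(z,p)|\le Ct^{N-1}$.
\item It then converts these into $b$-derivatives in $(s,x)$ via the inverse function theorem, tracking a ``critical factor'' in every term.
\end{itemize}
Your analogy with Lemma \ref{le:flowsobolev} really applies to this characteristic flow, not to the transport PDE.

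To complete your route you need a substitute. One option is to differentiate the characteristic flow in its initial point, as the paper does. Another is to run your commutator argument on tangential difference quotients and on dilation quotients $u(e^hs,\omega)-u(s,\omega)$, which satisfy transport equations of the same type with uniformly bounded coefficients. In either case, two further points matter:
\begin{itemize}
\item The terms $2s\bar g^{-1}(\mathrm{d}u,\mathrm{d}D_b^ku)$ are top order and belong to the transport operator, so you integrate along the true characteristic field $X+2s\operatorname{grad}_{\bar g}u$.
\item The commutators $[s\partial_s,\partial_s]$ produce $\partial_sD_b^{k-1}u$, which is not a lower-order $b$-derivative. It must be eliminated using the order-$(k-1)$ equation rather than placed in your ``known'' source $A_k$.
\end{itemize}
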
 
\begin{proof}
    As the solution is unique, we can work in a local coordinate system $(s,x)$ in a neighborhood $[0,\epsilon]\times\mathcal{I}^+$. Taking the expansion \eqref{eq:expansionodd} or \eqref{eq:expansioneven} for $g$ at order $N$, we can write, for any $N\in\mathbb{N}$,
\begin{equation}
    (s^2g)^{-1}=\left(\begin{array}{cc}
        -1 & 0 \\
        0 & G_N(s,x)
    \end{array}\right)+h_N(s,x),
\end{equation}
where $G_N(s,x)$ is a $n\times n$ matrix that is smooth in $x$, log-smooth and $\mathcal{C}^3$ in $s$ up to $\mathcal{I}^+$\footnote{Because the lowest even dimension considered is $n=4$, we get the lowest log contribution at $s^4$ and therefore $G_N$ is $\mathcal{C}^3$}, and $h_N\in s^N H^\infty_b$. We write $G_N^{ij}$ and $h_N^{\mu\nu}$ for the components of the matrices. Because of Sobolev embedding, we have $|(s\partial_s)^kh_N^{\mu\nu}|\leq C_{N,k}s^N$ and therefore $h_N^{\mu\nu}\in \mathcal{C}^{N-1}$. We will drop the subscript $N$ from $G$ and $h$ to increase legibility from here on.\\

We investigate the proof of the existence and uniqueness of the solution to nonlinear first order PDEs, as described in \cite{evans}. We parameterize a  region of $\mathcal{I}^+$ by $r$ in a compact subset of $\mathbb{R}^n$ and write an ODE for the characteristic curves in $t$ starting at $r$. Set $z:=u$, $p_0:=\partial_su$ and $p_i:=\partial_iu$ and consider them, as well as $s$ and $x$, as functions of $r,t$. Writing $v=(s,x,z,p)$ we can rewrite our problem into a solution of a first order ODE in the following way. Defining the left-hand side of \eqref{eq:ueq} as $F$ we get in coordinates
$$F(s,x,z,p_\mu)=-\frac{h^{00}}{s} + 2(-p_0+h^{\mu0}p_\mu) + s(-p_0^2+G^{ij}p_ip_j+h^{\mu\nu}p_{\mu\nu})\stackrel{!}{=}0.$$
Our first order system\footnote{For motivation of this proof, see \cite{evans}} is then
\begin{equation}
\label{eq:ODEsyst}
    \begin{aligned}
        \frac{\mathrm{d}s}{\mathrm{d}t}&=\partial_{p_0}F=2(-1 + h^{00} -sp_0 + h^{\mu0}p_\mu)\\
        \frac{\mathrm{d}x^{i}}{\mathrm{d}t}&=\partial_{p_i}F=2(h^{0i}+sG^{ij}p_j+sh^{i\mu}p_\mu\\
        \frac{\mathrm{d}z}{\mathrm{d}t}&=p_{\mu}\partial_{p_\mu}F=2(-p_0-sp_0^2+sG^{ij}p_ip_j +h^{0\mu}p_\mu + h^{\mu\nu}p_\mu p_\nu)\\
        \frac{\mathrm{d}p_0}{\mathrm{d}t}&=-\partial_sF=\frac{s\partial_sh^{00}-h^{00}}{s^2} + p_0^2 -G^{ij}p_i p_j-h^{\mu\nu}p_\mu p_\nu \\
        &\quad\quad\quad\quad\quad-2\partial_sh^{0\mu}p_\mu-s\partial_sh^{\mu\nu}p_\mu p_\nu-s\partial_s G^{ij}p_ip_j\\
        \frac{\mathrm{d}p_i}{\mathrm{d}t}&=-\partial_{p_i}F=\frac{\partial_ih^{00}}{s} -s\partial_i G^{ab}p_ap_b-s\partial_ih^{\mu\nu}p_\mu p_\nu.
    \end{aligned}
\end{equation}
Define the right-hand side as $f(v)$ and notice that for $N$ large enough, $f(v)$ depends on $s$ in a $\mathcal{C}^{3}$ fashion, and on the other variables in a $\mathcal{C}^\infty$ fashion. The initial conditions for this system are given by $s(0,r)=z(0,r)=p_\mu(0,r)=0$ and $x(0,r)=r$. They fulfill the compatibility conditions as defined in \cite{evans}. Therefore, by Picard-Lindelöf, we get a unique $C^1$ solution $v$ for $t\leq\epsilon>0$. Because of the regularity conditions on $f$, we immediately get that it is $\mathcal{C}^4$ in $t$ and $r$. Apriori, we do not get more, as when differentiating the equation with respect to $t$ or $r$, by the chain rule we get $\partial_sf$ terms, which lose one order of differentiability. We show later by closer examination, that the solution is smooth in $r$ as expected.\\

Now, as $\frac{\mathrm{d}s}{\mathrm{d}t}(0,r)=-2\neq 0$, we see that we can invert $s(t,r),x(t,r)$ to $t(s,x),r(s,x)$ in a collar neighborhood of $\mathcal{I}^+$ in a $\mathcal{C}^4 $ fashion and set $u(s,x)=z(t(s,x),r(s,x))$, which solves the equation \eqref{eq:ueq} by construction. We wish to investigate this solution more closely to show that it decays faster than any $s^M$ towards $s=0$.
\\

Looking at the equation for $\frac{\mathrm{d}s}{\mathrm{d}t}$, we see that we can estimate 
\begin{equation}
    \label{eq:steq}
    C_1t\leq s\leq C_2t,
\end{equation}
for positive constants $C_1$ and $C_2$ independent of $r$ because of the continuous dependence of the solution on $r$. Because of this, we can estimate all terms containing $h$ in $f$ like 
\begin{equation}
\label{eq:hesti}
|\partial_s^lh(s(t,r),x(t,r))|\leq Cs^{N-l}\leq Ct^{N-l}
\end{equation}
We begin by extracting the behavior of $w:=(z,p)$ as $t$ goes to 0. By considering $s$ and $x$ to as functions of $t$ we can write the $w$ part of system \eqref{eq:ODEsyst}
\begin{equation}
\label{eq:weqstruct}
w'=f_1(t,w)+f_2(t)
\end{equation}
with $f_1,f_2\in \mathcal{C}^4$. They further fulfill $f_1(t,0)=0$ and $|f_2(t)|\leq C t^{N-2}$. All terms of $f$ not containing $h^{\mu\nu}$ can be put into $f_1$ (the explicit $t$ dependence of $f_1$ stems from the fact that we no longer consider $s$ and $x$ to be variables of our equation). All terms containing $h^{\mu\nu}$ can be put into $f_2$ and by estimates of the type $\eqref{eq:hesti}$, we get the decay\footnote{The $\frac{s\partial_sh^{00}-h^{00}}{s^2}$ term in $f$ is responsible for the loss of two orders of decay, but as in the end $N$ was arbitrary, it does not matter}. Now integrating the differential equation for $w$ for $t\leq\epsilon$ we get
\begin{equation}
    \begin{aligned}
        |w(t)|&=\left|\int_0^tf_1(t',w(t'))+f_2(t')\mathrm{d}t'\right|\\
        &\leq \int_0^t C|w(t')| + C t'^{N-2}\mathrm{d}t'\\
        &\leq Ct \operatorname{max}_{t'\in[0,t]}|w(t')| + Ct^{N-1}\\
        &\leq C\epsilon \operatorname{max}_{t'\in[0,\epsilon]}|w(t')| + Ct^{N-1}.
    \end{aligned}
\end{equation}
By choosing $0<\epsilon'\leq 1/2C\operatorname{max}_{t'\in[0,\epsilon]}|w(t')|$, we thus get 
\begin{equation}
\label{eq:westimate}
    |w(t)|\leq C t^{N-1}
\end{equation}
for $t \leq \epsilon'$. This gives for our solution $|u(s,x)|= |z(t(s,x),r(s,x))|\leq Ct^{N-1}\leq Cs^{N-1}$.
\\

Later, we will also need that arbitrary $r$ derivatives of $w$ decay as $t^{N-1}$ toward $t=0$. First we prove that indeed the solution is $\mathcal{C}^\infty$ in $r$.\\

Differentiating the ODE $\partial_tv=f$ in $r$ gives us a system of ODEs for $\partial_rv$. The right-hand side of this ODE contains the terms $\partial_rf=\partial_sf\partial_rs+\partial_xf\partial_rx+\partial_pf\partial_rp$, which lies in $C^2$, as we apriori lose one order of regularity in the term $\partial_sf$. The initial conditions for the components $I$ of $v$ are  $\partial_rv^{I}(0,r)=0$, except for $\partial_rx^{i}(0,r)=1$. By Piccard-Lindelöf we get a $\mathcal{C}^3$ solution. A standard argument shows that this solution is indeed the derivative of $v$ with respect to $r$. Furthermore, as $\partial_rs(0,r)=0$, we see that $|\partial_rs|\leq Ct\leq C's$. Now we can inductively derive the ODE $k$ times with respect to $r$, always with right-hand sides $\partial_r^kf$ being in $\mathcal{C}^2$. We do not lose regularity, as the potentially problematic terms, $\partial_s^lf$, are always accompanied by terms that are bounded by $Cs^l$. They come from the application of the chain rule $\partial_rf=\partial_sf\partial_rs +\ldots$ and are $l$ factors of terms of the form $\partial^m_rs$ for $m< k$. As the initial condition for $\partial_r^ks$ is $\partial_r^ks(0,r)=0$, by the result at order $k-1$, they are bounded by $Cs$. The initial conditions for all components read $\partial_r^kv(0,r)=0$ for $k\geq 2$. Again, it can be shown that the unique solutions to the ODEs are in fact the real derivatives of $v$. In this way, we get smoothness of $v$ in $r$.\\

To now show that arbitrary $r$ derivatives of $w=(z,p)$ decay as $s^{N-2}$, we again look at the ODEs for $\partial_r^k w$ and proceed inductively. Using that for the orders $l<k$ we already have $|\partial_r^lw|\leq Ct^{N-1}$, we can show that we can split the right-hand side of $\partial_t\partial_r^kw=\partial_r^kf$ into the same form as \eqref{eq:weqstruct}. By the same proof as above we therefore get the decay of $|\partial_r^kw|\leq C t^{N-1}$.
\\

Now, we are prepared to investigate the behavior of our solution to the original equation \eqref{eq:ueq},  $u(s,x)=z(t(s,x),r(s,x))$. We wish to show that an arbitrary number of b-derivatives of $u$ exist and are bounded by $Cs^{N-1}$. This relies on iterative applications of the chain rule together with the explicit expression of the derivative of the inverse using the inverse function Theorem and Cramer's rule. We call $f=(f^s,f^{x^{i}},f^z,f^{p_\mu})$ the components of $f$. Notice that because of \eqref{eq:westimate}, all $f^{I}$ except for $f^s$ are bounded by $Cs^{N-2}$. We have to be careful when examining the derivatives of $f$. Now, we are considering $f$ as a function of only $s,x$, as we consider $p_\mu$ to be functions of $s,x$ through its $t,r$ dependence. We write primed derivatives when we mean partial derivatives with respect to only the explicit dependence of $f$ on the variables $s,x$. This means for example $\partial_xf=\partial'_xf + \partial_pf\partial_xp$.\\

To illustrate the general procedure, we first consider one b-derivative of $u$ in detail. Let $A = \partial_ts\partial_rx-\partial_tx\partial_rs=f^s\partial_rx-f^x\partial_rs$ be the determinant of the Jacobi matrix of the change of coordinates. It is uniformly bounded away from $0$ and $\infty$ on our domain. By the chain rule, we have $s\partial_su=s(\partial_t{z}\partial_st+\partial_rz\partial_sr)$. Using the inverse function formula
\begin{equation}
\label{eq:inversion}
\left(\begin{array}{cc}
     \partial_st&\partial_xt  \\
     \partial_sr& \partial_xr
\end{array}\right)=\frac{1}{A}\left(\begin{array}{cc}
    \partial_rx &-\partial_rs  \\
     -\partial_tx&\partial_ts 
\end{array}\right)
\end{equation}
we get $s\partial_su=\frac{1}{A}(f^zs\partial_rx-f^xs\partial_rz)$. This can easily be estimated by $Cs^{N-2}$ because of the bounds on $f^z,\ f^x$ and the continuity of the other terms ($A\approx -2\neq0$). For a derivative along $\partial_x$ of $u$ we compute $\partial_xu=\partial_tz\partial_xt+\partial_rz\partial_xr=\frac{1}{A}(-f^z\partial_rs+\partial_rzf^s)$. The first summand can be estimated by $Cs^{N-2}$ because of the decay of $f^z$, the second summand because of the decay of $\partial_rz$. \\

The proof in general dimensions is analogous, the only change is that the formula for the inverse gets more complicated, the numerator will now be a sum of products. The relevant terms for the decay will always appear analogously in each term, the other factors can just be estimated by a constant.\\

Now to general order (again for $n=1$ to increase readability). Assume by induction that for $k\in \mathbb{N}$ we have that $\partial_r^{a}(s\partial_s)^bu$ with $a+b\leq k-1$ exists and is of the following form, with $c+d,l\leq k-1$ and $m\leq2$:
\begin{equation}
    \label{eq:bderivform}
    P\left(R_c(\partial'_s)^c(\partial'_x)^df^{I},\partial_r^l v^{I},\frac{1}{A},\partial_p^m(R_c(\partial'_s)^c(\partial'_x)^df^{I})\right).
\end{equation}
Here $P$ is a polynomial and the $R_e$ are a product of $e$ factors of either $s$ or terms of the form $\partial_r^bs$ with $b\leq e$. This means that the $R_e$ can be estimated by $Cs^{e}$. Furthermore, we assume each summand of $P$ contains at least one \textit{critical factor}. A critical factor is either a term of the form $R_d(\partial'_s)^d(\partial'_x)^cf^{I}$ with $I\neq s$ if there are no derivatives, or a term of the form $\partial_r^lv^{I}$ with $I=z,p$. By estimating the critical factor by $Cs^{N-2}$ and all the remaining factors by a constant, we get that $|\partial_r^{a}(s\partial_s)^bu|\leq Cs^{N-2}$. We wish to show that $s\partial_sP$ and $\partial_xP$ exists and also is of the same form, with $k$ changed to $k+1$.\\

We investigate what happens when either a $s\partial_s$ or a $\partial_x$ derivative hits a factor of a summand of $P$. First, suppose it hits a critical factor of the form $\partial_r^lz$. We compute for example
$$
\begin{aligned}
    \partial_x(\partial_r^lz)&=\partial_r^{l+1}z\partial_xr+\partial_r^l\partial_tz\partial_xt\\
    &=\partial_r^{l+1}z\frac{f^s}{A}-\partial_r^lf^z\frac{\partial_rs}{A},
\end{aligned}
$$
using the inversion formula \eqref{eq:inversion} and $\partial_tv=f$. The first summand contains a critical factor of $\partial_r^{l+1}z$, multiplied by allowed inputs to $P$. For the $\partial_r^lf^z$ term we compute
$$
    \partial_r^lf^z=\partial_r^{l-1} (\partial_s'f^z\partial_rs+\partial_x'f^z\partial_rx +\partial_pf^z\partial_rp).
$$
Performing all $\partial_r$ derivatives in this fashion, we see that the amount of $\partial_s'$ derivatives acting on $f^s$ is identical to the number of factors of terms of the form $\partial_r^ms$. So the terms with at least one $\partial_s'f$, are of the form $R_a(\partial_s')^a(\partial_x')^mf^z$ and therefore contain a critical factor. In the terms with only $\partial_x'$ derivatives, $(\partial_x')^mf^z$ is a critical factor, and in the remaining terms, we have at least one factor of $\partial_r^mp$. Therefore, each summand contains at least one critical factor and we have shown that $\partial_x(\partial_r^lz)$ is of the same form as $P$ with $k$ increased by one. The terms $s\partial_s\partial_r^lz,\partial_x\partial_r^lp,s\partial_s\partial_r^lp$ work analogously.\\

Suppose next, a b-derivative hits a critical factor of the form $R_d(\partial'_s)^d(\partial'_x)^cf^{I}$. Taking for example the $\partial_x$ derivative, we get two terms, 
\begin{equation}
\label{eq:Rderivs}
\partial_x (R_d)\cdot (\partial'_s)^d(\partial'_x)^cf^{I}+R_d\cdot\partial_x ((\partial'_s)^d(\partial'_x)^cf^{I}).
\end{equation}
When in the first term the derivative hits $R_d$, we have to calculate
$$
\partial_x\partial_r^ls=\partial_r^{l+1}s\partial_xr+\partial_r^lf^s\partial_xt=\partial_r^{l+1}s\frac{f^s}{A}-\partial_r^lf^s\frac{\partial_rs}{A}.
$$
We see that both summands still contain a factor of $\partial_r^ms$ and therefore $\partial_x R_d$ is given by a sum of $R_d$-type terms multiplied by factors that are inputs of $P$ (The $\partial_r^lf^s$ can be evaluated into terms that are inputs to $P$ like $\partial_r^lf^z$ above). Therefore, in the first term in \eqref{eq:Rderivs} each summand has a critical factor. Turning to the second term in \eqref{eq:Rderivs}, we compute
$$
\begin{aligned}
\partial_x ((\partial'_x)^c(\partial'_s)^df^{I})&=(\partial'_x)^{c+1}(\partial'_s)^df^{I} + \partial_p((\partial'_x)^c(\partial'_s)^df^{I})\cdot\partial_xp\\
&=(\partial'_x)^{c+1}(\partial'_s)^df^{I} + \partial_p((\partial'_x)^c(\partial'_s)^df^{I})\cdot (\partial_tp\partial_xt +\partial_rp\partial_xr)\\
&=(\partial'_x)^{c+1}(\partial'_s)^df^{I} + \partial_p((\partial'_x)^c(\partial'_s)^df^{I})\cdot (-f^p\frac{\partial_rs}{A}+\partial_rp\frac{f^s}{A}).
\end{aligned}
$$
After multiplying each summand by $R_d$, it has the form of factors of inputs of $P$ multiplied by a critical factor. The first summand is the critical factor itself, in the second it is $f^p$ and in the third it is $\partial_rp$. The computation for $s\partial_s$ is analogous. This shows that a b-derivative of a critical factor is a product of the inputs of $P$ multiplied by at least one critical factor.\\

For the rest of the inputs of $P$ we only have to show that b-derivatives only give rise to terms that are again inputs of $P$. We no longer need to keep track of critical factors. They follow after similar but simpler versions of the arguments above.\\

This finally closes the induction and shows that $|(s\partial_s)^a\partial_x^bu|\leq C s^{N-2}$ for arbitrary $a,b\in\mathbb{N}$. But as $N\in\mathbb{N}$ was arbitrary, we get that $u\in s^\infty H^\infty_b$ as was our claim.\\
\end{proof}
\newpage
\nocite{*}
\bibliography{references} 

\end{document}

%% file: Mani.pdf_tex
\begingroup%
  \makeatletter%
  \providecommand\color[2][]{%
    \errmessage{(Inkscape) Color is used for the text in Inkscape, but the package 'color.sty' is not loaded}%
    \renewcommand\color[2][]{}%
  }%
  \providecommand\transparent[1]{%
    \errmessage{(Inkscape) Transparency is used (non-zero) for the text in Inkscape, but the package 'transparent.sty' is not loaded}%
    \renewcommand\transparent[1]{}%
  }%
  \providecommand\rotatebox[2]{#2}%
  \newcommand*\fsize{\dimexpr\f@size pt\relax}%
  \newcommand*\lineheight[1]{\fontsize{\fsize}{#1\fsize}\selectfont}%
  \ifx\svgwidth\undefined%
    \setlength{\unitlength}{197.1877427bp}%
    \ifx\svgscale\undefined%
      \relax%
    \else%
      \setlength{\unitlength}{\unitlength * \real{\svgscale}}%
    \fi%
  \else%
    \setlength{\unitlength}{\svgwidth}%
  \fi%
  \global\let\svgwidth\undefined%
  \global\let\svgscale\undefined%
  \makeatother%
  \begin{picture}(1,0.90262844)%
    \lineheight{1}%
    \setlength\tabcolsep{0pt}%
    \put(0,0){\includegraphics[width=\unitlength,page=1]{Mani.pdf}}%
    \put(-0.00142744,0.52604297){\color[rgb]{0,0,0}\makebox(0,0)[lt]{\lineheight{0.1}\smash{\begin{tabular}[t]{l}$s=s_0$\end{tabular}}}}%
    \put(-0.00008261,0.78311843){\color[rgb]{0,0,0}\makebox(0,0)[lt]{\lineheight{0.1}\smash{\begin{tabular}[t]{l}$s=0$\end{tabular}}}}%
    \put(0.00617942,0.13048092){\color[rgb]{0,0,0}\makebox(0,0)[lt]{\lineheight{0.1}\smash{\begin{tabular}[t]{l}$s=\frac{\pi}{2}$\end{tabular}}}}%
    \put(0.51584615,0.51843611){\color[rgb]{0,0,0}\makebox(0,0)[lt]{\lineheight{0.1}\smash{\begin{tabular}[t]{l}$^\uparrow\Omega$\end{tabular}}}}%
  \end{picture}%
\endgroup%

%% file: Doma.pdf_tex
\begingroup%
  \makeatletter%
  \providecommand\color[2][]{%
    \errmessage{(Inkscape) Color is used for the text in Inkscape, but the package 'color.sty' is not loaded}%
    \renewcommand\color[2][]{}%
  }%
  \providecommand\transparent[1]{%
    \errmessage{(Inkscape) Transparency is used (non-zero) for the text in Inkscape, but the package 'transparent.sty' is not loaded}%
    \renewcommand\transparent[1]{}%
  }%
  \providecommand\rotatebox[2]{#2}%
  \newcommand*\fsize{\dimexpr\f@size pt\relax}%
  \newcommand*\lineheight[1]{\fontsize{\fsize}{#1\fsize}\selectfont}%
  \ifx\svgwidth\undefined%
    \setlength{\unitlength}{196.99636865bp}%
    \ifx\svgscale\undefined%
      \relax%
    \else%
      \setlength{\unitlength}{\unitlength * \real{\svgscale}}%
    \fi%
  \else%
    \setlength{\unitlength}{\svgwidth}%
  \fi%
  \global\let\svgwidth\undefined%
  \global\let\svgscale\undefined%
  \makeatother%
  \begin{picture}(1,0.90289459)%
    \lineheight{1}%
    \setlength\tabcolsep{0pt}%
    \put(0,0){\includegraphics[width=\unitlength,page=1]{Doma.pdf}}%
    \put(-0.00142868,0.39152323){\color[rgb]{0,0,0}\makebox(0,0)[lt]{\lineheight{0.1}\smash{\begin{tabular}[t]{l}$s=s_0$\end{tabular}}}}%
    \put(-0.00008262,0.77448538){\color[rgb]{0,0,0}\makebox(0,0)[lt]{\lineheight{0.1}\smash{\begin{tabular}[t]{l}$s=0$\end{tabular}}}}%
    \put(-0.00134601,0.67407246){\color[rgb]{0,0,0}\makebox(0,0)[lt]{\lineheight{0.1}\smash{\begin{tabular}[t]{l}$s=s_1$\end{tabular}}}}%
  \end{picture}%
\endgroup%

%% file: references.bib
@misc{hintz2024stability,
      title={Stability of the expanding region of Kerr-de Sitter spacetimes and smoothness at the conformal boundary}, 
      author={Peter Hintz and András Vasy},
      year={2024},
      eprint={2409.15460},
      archivePrefix={arXiv},
      primaryClass={gr-qc},
      howpublished={arXiv:2409.15460},
      url={https://arxiv.org/abs/2409.15460}, 
}

@book{taylor,
  author    = {Michael E. Taylor},
  title     = {Partial Differential Equations III: Nonlinear Equations},
  series    = {Applied Mathematical Sciences},
  volume    = {117},
  year      = {2023},
  publisher = {Springer},
  address   = {New York}
}

@article {nashmoser,
    AUTHOR = {Saint Raymond, Xavier},
     TITLE = {A simple {N}ash-{M}oser implicit function theorem},
   JOURNAL = {Enseign. Math. (2)},
  FJOURNAL = {L'Enseignement Math\'ematique. Revue Internationale. 2e
              S\'erie},
    VOLUME = {35},
      YEAR = {1989},
    NUMBER = {3-4},
     PAGES = {217--226},
      ISSN = {0013-8584},
   MRCLASS = {58C15},
  MRNUMBER = {1039945},
MRREVIEWER = {Taruvai\ Subramaniam},
}

@BOOK{ringstromcauchy,
  title     = "The Cauchy problem in general relativity",
  author    = "Ringstrom, Hans",
  publisher = "European Mathematical Society",
  series    = "ESI Lectures in Mathematics \& Physics",
  month     =  jun,
  year      =  2009,
  address   = "Z{\"u}rich, Switzerland",
  language  = "en"
}

@ARTICLE{DeTurck,
  title     = "Existence of metrics with prescribed Ricci curvature: Local
               theory",
  author    = "DeTurck, Dennis M",
  journal   = "Invent. Math.",
  publisher = "Springer Nature",
  volume    =  65,
  number    =  2,
  pages     = "179--207",
  month     =  jun,
  year      =  1981,
  language  = "en"
}

@article{GRAHAM1991186,
title = {Einstein metrics with prescribed conformal infinity on the ball},
journal = {Advances in Mathematics},
volume = {87},
number = {2},
pages = {186-225},
year = {1991},
issn = {0001-8708},
doi = {https://doi.org/10.1016/0001-8708(91)90071-E},
url = {https://www.sciencedirect.com/science/article/pii/000187089190071E},
author = {C.Robin Graham and John M Lee}
}

@book{evans,
  author    = {Lawrence C. Evans},
  title     = {Partial Differential Equations},
  edition   = {2},
  series    = {Graduate Studies in Mathematics},
  volume    = {19},
  publisher = {American Mathematical Society},
  year      = {2010}
}

@article{FRIEDRICH1986101,
title = {Existence and structure of past asymptotically simple solutions of Einstein's field equations with positive cosmological constant},
journal = {Journal of Geometry and Physics},
volume = {3},
number = {1},
pages = {101-117},
year = {1986},
issn = {0393-0440},
doi = {https://doi.org/10.1016/0393-0440(86)90004-5},
url = {https://www.sciencedirect.com/science/article/pii/0393044086900045},
author = {Helmut Friedrich}
}

@ARTICLE{Ringstrom2008-zu,
  title     = "Future stability of the Einstein-non-linear scalar field system",
  author    = "Ringstr{\"o}m, Hans",
  journal   = "Invent. Math.",
  publisher = "Springer Science and Business Media LLC",
  volume    =  173,
  number    =  1,
  pages     = "123--208",
  month     =  jul,
  year      =  2008,
  language  = "en"
}

@incollection{fefferman1985,
  author    = {Charles Fefferman and C. Robin Graham},
  title     = {Conformal invariants},
  booktitle = {{\'E}lie Cartan et les math{\'e}matiques d'aujourd'hui -- Lyon, 25--29 juin 1984},
  series    = {Ast{\'e}risque},
  volume    = {S131},
  year      = {1985},
  pages     = {95--116},
  publisher = {Société Mathématique de France},
  url       = {https://www.numdam.org/item/AST_1985__S131__95_0/}
}

@misc{fefferman2008ambientmetric,
  author  = {Charles Fefferman and C. Robin Graham},
  title   = {The ambient metric},
  howpublished = {arXiv:0710.0919},
  year    = {2008}
}

@ARTICLE{Hintzasydesi,
  title    = "Asymptotically de Sitter metrics from scattering data in all
              dimensions",
  author   = "Hintz, Peter",
  journal  = "Philos. Trans. A Math. Phys. Eng. Sci.",
  volume   =  382,
  number   =  2267,
  pages    = "20230037",
  month    =  mar,
  year     =  2024,
  language = "en"
}

@ARTICLE{Rodnianski2018-lt,
  title     = "The asymptotically self-similar regime for the Einstein vacuum
               equations",
  author    = "Rodnianski, Igor and Shlapentokh-Rothman, Yakov",
  journal   = "GAFA Geom. Funct. Anal.",
  publisher = "Springer Science and Business Media LLC",
  volume    =  28,
  number    =  3,
  pages     = "755--878",
  month     =  jun,
  year      =  2018,
  language  = "en"
}

@ARTICLE{Melrose1981-mz,
  title     = "Transformation of boundary problems",
  author    = "Melrose, Richard B",
  journal   = "Acta Math.",
  publisher = "International Press of Boston",
  volume    =  147,
  number    =  0,
  pages     = "149--236",
  year      =  1981,
  language  = "en"
}

@techreport{Melrose1983EllipticOO,
  author      = {Richard B. Melrose and Gerardo Mendoza},
  title       = {Elliptic operators of totally characteristic type},
  institution = {Mathematical Sciences Research Institute},
  year        = {1983},
  address     = {Berkeley, CA}
}

@BOOK{Melrose1993-uz,
  title     = "The {Atiyah-Patodi-Singer} Index Theorem",
  author    = "Melrose, Richard",
  publisher = "CRC Press",
  series    = "Research Notes in Mathematics",
  month     =  mar,
  year      =  1993,
  address   = "Boca Raton, FL"
}

@article{Anderson2005,
  title = {Existence and Stability of Even-dimensional Asymptotically de Sitter Spaces},
  volume = {6},
  ISSN = {1424-0661},
  url = {http://dx.doi.org/10.1007/s00023-005-0224-x},
  DOI = {10.1007/s00023-005-0224-x},
  number = {5},
  journal = {Annales Henri Poincaré},
  publisher = {Springer Science and Business Media LLC},
  author = {Anderson,  Michael T.},
  year = {2005},
  month = oct,
  pages = {801–820}
}

@article{Friedrich1986,
  title = {On the existence ofn-geodesically complete or future complete solutions of Einstein’s field equations with smooth asymptotic structure},
  volume = {107},
  ISSN = {1432-0916},
  url = {http://dx.doi.org/10.1007/BF01205488},
  DOI = {10.1007/bf01205488},
  number = {4},
  journal = {Communications in Mathematical Physics},
  publisher = {Springer Science and Business Media LLC},
  author = {Friedrich,  Helmut},
  year = {1986},
  month = dec,
  pages = {587–609}
}

@article{Kichenassamy2004,
  title = {On a conjecture of Fefferman and Graham},
  volume = {184},
  ISSN = {0001-8708},
  url = {http://dx.doi.org/10.1016/S0001-8708(03)00145-2},
  DOI = {10.1016/s0001-8708(03)00145-2},
  number = {2},
  journal = {Advances in Mathematics},
  publisher = {Elsevier BV},
  author = {Kichenassamy,  Satyanad},
  year = {2004},
  month = jun,
  pages = {268–288}
}

@article{Rendall2004,
  title = {Asymptotics of Solutions of the Einstein Equations with Positive Cosmological Constant},
  volume = {5},
  ISSN = {1424-0661},
  url = {http://dx.doi.org/10.1007/s00023-004-0189-1},
  DOI = {10.1007/s00023-004-0189-1},
  number = {6},
  journal = {Annales Henri Poincaré},
  publisher = {Springer Science and Business Media LLC},
  author = {Rendall,  Alan D.},
  year = {2004},
  month = dec,
  pages = {1041–1064}
}

@book{Christodoulou1994,
  title = {The Global Nonlinear Stability of the Minkowski Space (PMS-41)},
  ISBN = {9781400863174},
  url = {http://dx.doi.org/10.1515/9781400863174},
  DOI = {10.1515/9781400863174},
  publisher = {Princeton University Press},
  author = {Christodoulou,  Demetrios and Klainerman,  Sergiu},
  year = {1994},
  month = dec 
}

@article{Choquet_Bruhat_1969, title={Global aspects of the Cauchy problem in general relativity}, volume={14}, ISSN={1432-0916}, url={http://dx.doi.org/10.1007/BF01645389}, DOI={10.1007/bf01645389}, number={4}, journal={Communications in Mathematical Physics}, publisher={Springer Science and Business Media LLC}, author={Choquet-Bruhat, Yvonne and Geroch, Robert}, year={1969}, month=dec, pages={329–335} }

@article{ChoquetBruhatLocalEinstein,
  title={Th{\'e}or{\`e}me d'existence pour certains syst{\`e}mes d'{\'e}quations aux d{\'e}riv{\'e}es partielles non lin{\'e}aires},
  author={Choquet-Bruhat, Yvonne},
  journal={Acta mathematica},
  volume={88},
  number={1},
  pages={141--225},
  year={1952},
  publisher={Springer}
}

@misc{cicortas2024nonlinearscatteringtheoryasymptotically,
      title={Nonlinear Scattering Theory for Asymptotically de Sitter Vacuum Solutions in All Even Spatial Dimensions}, 
      author={Serban Cicortas},
      year={2024},
      eprint={2410.01558},
      archivePrefix={arXiv},
      primaryClass={gr-qc},
      howpublished={arXiv:2410.01558},
      url={https://arxiv.org/abs/2410.01558}, 
}

@article{vasy2007-to,
  author  = {Andr{\'a}s Vasy},
  title   = {The wave equation on asymptotically de Sitter-like spaces},
  journal = {Advances in Mathematics},
  volume  = {223},
  number  = {1},
  pages   = {49--97},
  year    = {2010},
  doi     = {10.1016/j.aim.2009.07.005}
}

@INCOLLECTION{obstruction,
  title     = "The ambient obstruction tensor and Q-curvature",
  booktitle = "{IRMA} Lectures in Mathematics and Theoretical Physics",
  author    = "Graham, C Robin and Hirachi, Kengo",
  publisher = "EMS Press",
  pages     = "59--71",
  month     =  may,
  year      =  2005,
  address   = "Zuerich, Switzerland",
  language  = "en"
}

@article{Chrusciel2004-xh,
  author  = {Piotr T. Chru{\'s}ciel and Erwann Delay and John M. Lee and Dale N. Skinner},
  title   = {Boundary regularity of conformally compact Einstein metrics},
  journal = {Journal of Differential Geometry},
  volume  = {69},
  number  = {1},
  pages   = {111--136},
  year    = {2005},
  doi     = {10.4310/jdg/1121540341}
}
